\newcounter{chapter}
\numberwithin{equation}{chapter}
\newtheorem{proposition}{Proposition}[chapter]
\newtheorem{theorem}{Theorem}[chapter]
\newtheorem{lemma}{Lemma}[chapter]
\theoremstyle{definition}
\newtheorem{definition}{Definition}[chapter]
\theoremstyle{remark}
\newtheorem{remark}{Remark}[chapter]
\newtheorem{myexample}{Example}[chapter]
\crefname{myexample}{example}{examples}
\renewcommand{\PrintDOI}[1]{%
  \href{http://dx.doi.org/#1}{doi:#1}%
}
\newcommand{\Rset}{\mathbb{R}}
\newcommand{\Cset}{\mathbb{C}}
\newcommand{\Deriv}{\mathrm{D}}
\newcommand{\Nset}{\mathbb{N}}
\newcommand{\Sset}{\mathbb{S}}
\newcommand{\defeq}{\coloneqq}
\newcommand{\dif}{\;\mathrm{d}}
\DeclarePairedDelimiter{\abs}{\lvert}{\rvert}
\DeclarePairedDelimiter{\norm}{\lVert}{\rVert}
\DeclarePairedDelimiter{\seminorm}{\lvert}{\rvert}
\DeclarePairedDelimiter{\brk}{(}{)}
\DeclarePairedDelimiterX\dualprod[2]{\langle}{\rangle}{#1, #2}
\DeclarePairedDelimiterX\intvo[2]{(}{)}{#1, #2}
\DeclarePairedDelimiterX\intvc[2]{[}{]}{#1, #2}
\DeclarePairedDelimiterX\intvl[2]{(}{]}{#1, #2}
\DeclarePairedDelimiterX\intvr[2]{[}{)}{#1, #2}
\providecommand{\st}{\,\vert\,}
\newcommand\stSymbol[1][]{%
\nonscript\;#1\vert
\allowbreak
\nonscript\;
\mathopen{}}
\DeclarePairedDelimiterX\set[1]\{\}{%
\renewcommand\st{\stSymbol[\delimsize]}
#1
}
\DeclareMathOperator{\Lin}{Lin}
\newcommand{\compose}{\,\circ\,}
\DeclareMathOperator{\supp}{supp}
\DeclareMathOperator{\trace}{tr}
\DeclareMathOperator{\divergence}{div}
\newcommand{\familyname}[1]{\textsc{#1}}
\begin{document}

\title[Cancelling operators and endpoint Sobolev inequalities]{Injective ellipticity, cancelling operators, and endpoint Gagliardo-Nirenberg--Sobolev inequalities for vector fields}

\author{Jean Van Schaftingen}
\address{Université catholique de Louvain (UCLouvain), Institute de Recherche en Mathématique et Physique (IRMP), Chemin du Cyclotron 2 bte L7.01.01, 1348 Louvain-la-Neuve, Belgium}
\email{Jean.VanSchaftingen@UCLouvain.be}

\date{June 12, 2023}
\thanks{These lecture notes were written on the occasion of the summer school ``Geometric and analytic aspects of functional variational principles'' organized by the Centro Internazionale Matematico Estivo (CIME) in Cetraro (Cosenza, Italy) between June 27 and July 1, 2022
and published in A. \familyname{Cianchi}, V. \familyname{Maz'ya} and T. \familyname{Weth}, \emph{Geometric and Analytic Aspects of Functional Variational Principles: Cetraro, Italy 2022}, Springer (Cham), Lecture Notes in Mathematics, volume 2348, 259–317 (\url{https://doi.org/10.1007/978-3-031-67601-7_5}).}
\subjclass{35A23 (26D15, 35E05, 42B30, 42B35, 46E35)}

\maketitle

\begin{abstract}
Although Ornstein's nonestimate entails the impossibility to control in general all the \(L^1\)-norm of derivatives of a function by the \(L^1\)-norm of a constant coefficient homogeneous vector differential operator, the corresponding endpoint Sobolev inequality has been known to hold in many cases: the gradient of scalar functions (Gagliardo and Nirenberg), the deformation operator (Korn-Sobolev inequality by M.J. Strauss), and the Hodge complex (Bourgain and Brezis).
The class of differential operators for which estimates holds can be characterized by a cancelling condition. 
The proof of the estimates rely on a duality estimate for \(L^1\)-vector fields lying in the kernel of a cocancelling  differential operator, combined with classical linear algebra and harmonic analysis techniques. 
This characterization unifies classes of known Sobolev inequalities and extends to fractional Sobolev and Hardy inequalities. 
A similar weaker condition introduced by Raiță characterizes the operators for which there is an \(L^\infty\)-estimate on lower-order derivatives.
\end{abstract}

\tableofcontents 

\section{Introduction}
The central question in the present notes is to determine for which constant coefficient homogeneous vector differential operator \(A (\Deriv)\) an estimate of the form 
\begin{equation}
\label{eq_oz3Eib2iefoh7igh2Au6uCho}
 \norm{\Deriv^\ell u}_{L^q(\Rset^n)} \le \norm{A (\Deriv) u}_{L^1 (\Rset^n)}
\end{equation}
holds for every vector field \(u \in C^\infty_c (\Rset^n, V)\).
After reviewing how classical harmonic analysis, including Fourier analysis and singular integral theory settle the case where \(L^p(\Rset^n)\) with \(p \in \intvo{1}{+\infty}\) replaces \(L^1(\Rset^n)\) in the right-hand side of \eqref{eq_oz3Eib2iefoh7igh2Au6uCho} and how Ornstein's nonestimate dooms this approach for \eqref{eq_oz3Eib2iefoh7igh2Au6uCho} in \cref{section_ellipticity_nonestimates},
we present the cancellation condition characterizing Bourgain-Brezis endpoint Sobolev estimates in \cref{section_cancelling} and the corresponding duality estimates for cocancelling operators in \cref{section_cocancelling}.
Finally, in \cref{section_variants} we connect the duality estimates with functions of bounded mean oscillation, we study the corresponding fractional Sobolev and Hardy inequalities, and we present Raiță’s characterization of operators for which uniform estimates holds.

The original material covered in this notes corresponds essentially to the references \citelist{\cite{VanSchaftingen_2008}\cite{VanSchaftingen_2006}\cite{VanSchaftingen_2013}\cite{Bousquet_VanSchaftingen_2014}\cite{Raita_2019}}, complemented by the survey article \cite{VanSchaftingen_2014} and the more informal lecture notes \cite{PasekyLectCanc}.

\section{Estimates for Vector Elliptic Operators}
\label{section_ellipticity_nonestimates}

The central topic of the present lecture notes is \emph{constant coefficients vector differential operators}.

\begin{definition}
  Given \(n \in \Nset \setminus \set{0}\) and finite-dimensional (real) vector spaces \(V\) and \(E\),
\(A (\Deriv)\) is a \emph{homogeneous constant coefficient differential operator
of order \(k \in \Nset \setminus \set{0}\) from 
\(V\) to \(E\) on \(\Rset^n\)} whenever 
there exists \(A \in \smash{\Lin (\Lin_{\mathrm{sym}}^k (\Rset^n, V), E)}\) such that for every \(u \in C^\infty (\Rset^n, V)\) and every \(x \in \Rset^n\), one has
\(A (\Deriv)u (x)= A [\Deriv^k u (x)]\).
\end{definition}

In other words, \(A (\Deriv)\) is a homogeneous constant coefficient differential operator
of order \(k \in \Nset \setminus \set{0}\) from 
\(V\) to \(E\) on \(\Rset^n\) when there exists a linear map \(A : \smash{\Lin_{\mathrm{sym}}^k} (\Rset^n, V) \to E\) such that 
the differential operator \(A (\Deriv)\) can be represented at any point \(x \in \Rset^n\) as the linear map \(A\) applied to the total \(k\)-th order derivative \(\Deriv^k u : \Rset^n \to \smash{\Lin_{\mathrm{sym}}^k} (\Rset^n, V)\), seen as a function from \(\Rset^n\) to symmetric \(k\)-linear maps from \(\smash{(\Rset^n)^k}\) to \(V\).

In terms of partial derivatives, \(A (\Deriv)\) is a homogeneous constant coefficient differential operator
  of order \(k \in \Nset \setminus \set{0}\) from 
  \(V\) to \(E\) on \(\Rset^n\) if and only if for each multiindex \(\alpha = (\alpha_1, \dotsc, \alpha_n) \in \Nset^n\) satisfying \(\abs{\alpha} \defeq \alpha_1 + \dotsb + \alpha_n = k\) there exists a linear map \(A_\alpha \in \Lin (V, E)\), such that for every \(u \in C^\infty (\Rset^n, V)\) and for every \(x \in \Rset^n\), we have
\begin{equation}
  \label{eq_Io6oigh0kee2ocaev}
  A (\Deriv) u (x) = \sum_{\substack{\alpha \in \Nset^n\\ \abs{\alpha} = k}} A_\alpha [\partial^\alpha u (x)] \; .
\end{equation}
Alternatively, the representation \eqref{eq_Io6oigh0kee2ocaev} can be written as 
\begin{equation*}
A (\Deriv) u (x) = \sum_{\substack{\alpha \in \Nset^n\\ \abs{\alpha} = k}} \partial^\alpha (A_\alpha [u]) (x)\;,
\end{equation*}
since \(A_\alpha\) does not depend on the variable \(x\) and commutes thus with the partial derivative \(\partial^\alpha\).
Our question in the present notes will be to determine when, whether and how a \(k\)-th order differential operator \(A (\Deriv)u\) is a good substitute for the total \(k\)-th order derivative \(\Deriv^k u\).

\subsection{Injective Ellipticity}
We first examine what the Fourier theory tells us about \(u\) in terms of \(A (\Deriv)u\). 
Given a Schwartz test function \(u \in \mathcal{S} (\Rset^n, V)\), its Fourier transform \(\mathcal{F} u : \Rset^n\to V + i V\) (here \(V + iV\) is understood as the complexification of the linear space \(V\)) is defined for every 
\(\xi \in \Rset^n\) as 
\begin{equation*}
(\mathcal{F} u) (\xi) 
\defeq 
\int_{\Rset^n} e^{-2\pi i \dualprod{\xi}{x}}  \, u (x) \dif x\;.
\end{equation*}
By classical properties of the Fourier transform, we have for every \(\xi \in \Rset^n\)
\begin{equation}
  \label{eq_du0zoovuociuBi9nu}
\mathcal{F} (A (\Deriv) u) (\xi)
= A \brk[\big]{(2 \pi i)^k \mathcal{F} u (\xi)  \otimes \xi^{\otimes k}}
= (2 \pi i)^k A (\xi)[\mathcal{F} u (\xi)]\;,
\end{equation}
where \(A (\xi) \in \Lin (V + iV, E + iE)\) also denotes the complexification of the operator \(A(\xi)\), which is defined for every \(v \in V\) by
\begin{equation}
  \label{eq_deeque4ungie3eijeMooCiof}
  A (\xi) [v] \defeq A (v \otimes \xi^{\otimes k})
\end{equation}
and is the symbol of the differential operator \(A (\Deriv)\).
Equivalently, the identity \eqref{eq_deeque4ungie3eijeMooCiof} can be rewritten by the representation formula \eqref{eq_Io6oigh0kee2ocaev} as 
\begin{equation}
  A (\xi) =\sum_{\substack{\alpha \in \Nset^n\\ \abs{\alpha} = k}} \xi^\alpha A_\alpha
  \in \Lin (V, E)\;,
\end{equation}
where for every \(\alpha =  (\alpha_1, \dotsc, \alpha_n) \in \Nset^n\) and \(\xi= (\xi_1, \dotsc, \xi_n) \in \Rset^n\), we have written \(\xi^\alpha \defeq \xi_1^{\alpha_1} \dotsm \xi_n^{\alpha_n} \in \Rset\).
By the Parseval identity for the Fourier transform and by the identity \eqref{eq_du0zoovuociuBi9nu}, we observe that 
\begin{equation}
  \label{eq_mi2eijaefaazae5io1ef7ohM}
\int_{\Rset^n} \abs{A (\Deriv) u}^2 = 
(2 \pi)^{2k} \int_{\Rset^n} \abs{A (\xi) [\mathcal{F} u(\xi)]}^2  \dif \xi\;.
\end{equation}

In order to use the right-hand side of the inequality \eqref{eq_mi2eijaefaazae5io1ef7ohM} to control derivatives of \(u\), we will use the \emph{injective ellipticity} condition, which will guarantee that the integrand in the right-hand side integral of \eqref{eq_mi2eijaefaazae5io1ef7ohM} does not vanish at \(\xi \in \Rset^n \setminus \set{0}\) unless \(\mathcal{F} u(\xi)\) does.

\begin{definition}[Injective ellipticity%
\footnote{%
The definition of injective ellipticity (\cref{definition_injectively_elliptic}) appeared for overdetermined differential operators \citelist{\cite{Hormander_1958}*{Th.\ 1}\cite{Spencer_1969}*{Def.\ 1.7.1}\cite{Cantor_1981}*{Def.\ 3.4}\cite{Solonnikov_1971}\cite{Schulze_1979}*{Def.\ 1.1}} (when \(\dim V = 1\), see also \citelist{\cite{Agmon_1959}*{\S 7}\cite{Agmon_1965}*{Def.\ 6.3}}); it is also called \emph{right ellipticity} \cite{Dacorogna_Gangbo_Kneuss_2018}*{Def.\ 26}, which might seem somehow inconsistent with the \emph{left}-invertibility of the operator it is equivalent to;
in the study of Sobolev inequalities it is standard \cite{VanSchaftingen_2013}*{Def.\ 1.1}.
The terminology \emph{injectively elliptic} is borrowed from \cite{Grubb_1990}*{\S 4} in the context of pseudo-differential operators (see also \cite{BoossBavnbek_Wojciechowski_1993}*{Rem.\ 18.2 (d) (2a)}).}%
]
  \label{definition_injectively_elliptic}
  Given \(n \in \Nset \setminus \set{0}\) and finite-dimensional vector spaces \(V\) and \(E\),
a homogeneous constant coefficient differential operator \(A (\Deriv)\)
    of order \(k \in \Nset \setminus \set{0}\) from 
    \(V\) to \(E\) on \(\Rset^n\) 
    is \emph{injectively elliptic} whenever for every \(\xi \in \Rset^n\setminus \set{0}\), one has
    \(\ker A (\xi) = \set{0}\). 
\end{definition}

In other words, the operator \(A (\Deriv)\) is injectively elliptic if and only if for every \(\xi \in \Rset^n\) and \(v \in V\), the condition
\(A(\xi)[v] = 0\) implies either \(\xi = 0\) or \(v = 0\).

In the one-dimensional case \(n = 1\), injectively elliptic operators are those operators that can be written as an injective function of the derivative; this allows then one to recover the full derivative \(\Deriv^k\) from \(A (\Deriv)\).

\begin{proposition}[One-dimensional injectively elliptic operators]
\label{proposition_1d_injectively_elliptic}
Given finite-dimensional vector spaces \(V\) and \(E\),
a homogeneous constant coefficient differential operator \(A (\Deriv)\) of order \(k \in \Nset \setminus \set{0}\) from \(V\) to \(E\) on \(\Rset\) is injectively elliptic if and only if there exists  \(A \in \Lin (V,E)\) such that \(\ker A = \set{0}\) and for every \(u \in C^\infty (\Rset, V)\), 
\[
 A (\Deriv) u = A (u^{(k)})\;.
\] 
\end{proposition}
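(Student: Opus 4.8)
The plan is to reduce everything to the elementary observation that in one space dimension there is only one multi-index of length \(k\), so that both the operator and its symbol collapse to a single linear map \(V \to E\).

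First I would unravel the representation \eqref{eq_Io6oigh0kee2ocaev} for \(n = 1\): the only \(\alpha \in \Nset\) with \(\abs{\alpha} = k\) is \(\alpha = k\), so there is a unique \(A_k \in \Lin (V, E)\) with \(A (\Deriv) u = A_k [u^{(k)}]\) for every \(u \in C^\infty (\Rset, V)\). Setting \(A \defeq A_k\) already delivers the claimed formula \(A (\Deriv) u = A (u^{(k)})\). The uniqueness of \(A_k\) among linear maps \(V \to E\) representing \(A (\Deriv)\) is seen by testing against the polynomials \(u (x) = \tfrac{x^k}{k!}\, v\) with \(v \in V\), for which \(u^{(k)} \equiv v\); this uniqueness will also be needed for the converse implication.

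Next I would compute the symbol. From \(A (\xi) = \sum_{\abs{\alpha} = k} \xi^\alpha A_\alpha\) one gets, for \(n = 1\) and \(\xi \in \Rset\), simply \(A (\xi) = \xi^k A\). Hence for \(\xi \neq 0\) the scalar \(\xi^k\) is nonzero and \(\ker A (\xi) = \ker A\), while \(\ker A (1) = \ker A\) in particular. Therefore \(A (\Deriv)\) is injectively elliptic, i.e.\ \(\ker A (\xi) = \set{0}\) for all \(\xi \in \Rset \setminus \set{0}\), if and only if \(\ker A = \set{0}\). This proves the ``only if'' implication with the \(A\) just constructed; reading the same equivalence in the other direction, together with the uniqueness from the first step, also yields the ``if'' implication: if some \(A \in \Lin(V, E)\) with \(\ker A = \set{0}\) satisfies \(A (\Deriv) u = A (u^{(k)})\) for all \(u\), then \(A = A_k\), and the symbol \(\xi \mapsto \xi^k A\) is injective for every \(\xi \neq 0\).

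There is essentially no hard step here; the only point requiring a little care is the clean identification of the abstract coefficient \(A \in \Lin (\Lin_{\mathrm{sym}}^k (\Rset, V), E)\) from the definition of a differential operator with a linear map \(V \to E\), which rests on the canonical isomorphism \(\Lin_{\mathrm{sym}}^k (\Rset, V) \cong V\) obtained by evaluating a symmetric \(k\)-linear form at \((1, \dotsc, 1)\). One should check that under this isomorphism the map \(v \mapsto A (v \otimes \xi^{\otimes k})\) of \eqref{eq_deeque4ungie3eijeMooCiof} is indeed \(\xi^k A\), which is immediate from multilinearity. Once that bookkeeping is in place, the statement follows from the two displayed identities above.
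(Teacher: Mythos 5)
Your argument is correct. The paper states \cref{proposition_1d_injectively_elliptic} without proof, and your reduction is exactly the intended one: for \(n=1\) the only multi-index of length \(k\) is \(\alpha = k\), so \(A(\Deriv)u = A_k[u^{(k)}]\) and the symbol collapses to \(A(\xi) = \xi^k A_k\), whence \(\ker A(\xi) = \ker A_k\) for every \(\xi \ne 0\). Your care with the identification \(\Lin_{\mathrm{sym}}^k(\Rset, V) \cong V\) (evaluation at \((1,\dotsc,1)\)) and with the uniqueness of the representing map \(A_k\), checked on the polynomials \(u(x) = x^k v / k!\), closes both directions cleanly; nothing is missing.
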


Let us now review a few examples of injectively elliptic operators.

\begin{myexample}[Total derivative]
  The \emph{\(k\)-th order total derivative} \(A (\Deriv) = \Deriv^k\) is injectively elliptic as a differential operator from \(\Rset\) to \(\smash{\Lin_{\mathrm{sym}}^k (\Rset^n, \Rset)}\) on \(\Rset\).
Indeed, if \(\xi \in \Rset^n\) and \(v \in \Rset\) satisfy \(A (\xi)[v] = \xi^{\otimes k} v = 0\),
then either \(\xi = 0\) or \(v = 0\).
\end{myexample}

\begin{myexample}[Laplacian]
  The \emph{Laplacian} \(A (\Deriv) \defeq \Delta\) is injectively elliptic as a differential operator from \(\Rset\) to \(\Rset\) on \(\Rset^n\).
Indeed, if \(\xi \in \Rset^n\) and \(v \in \Rset\) satisfy the condition \(A (\xi)[v] =  \abs{\xi}^2 v = 0\),
then either \(\xi = 0\) or \(v = 0\).
\end{myexample}

\begin{myexample}[Cauchy-Riemann operator]
  The \emph{Cauchy-Riemann operator} \(A (\Deriv) = \Bar{\partial}\), which is the differential operator from \(\Cset\) to \(\Cset\) on \(\Rset^2\) defined for each function \(u \in C^\infty (\Rset^2, \Cset)\) by \(A (\Deriv) u = (\partial_1 u + i \partial_2 u)/2\) is injectively elliptic. Indeed, for every \(\xi = (\xi_1, \xi_2) \in \Rset^2\) and \(v \in \Cset\), if \(A (\xi) v = (\xi_1 + i \xi_2) v/2 =0\) then either \(\xi = 0\) or \(v = 0\).
\end{myexample}

\begin{myexample}[Hodge complex]
  The \emph{Hodge complex operator} \(A (\Deriv) = (d, d^*)\) on differential forms from \(\smash{\bigwedge^m \Rset^n}\) to
  \(\smash{\bigwedge^{m + 1} \Rset^n \times \bigwedge^{m - 1} \Rset^n}\), with \(m \in \set{1, \dotsc, n - 1}\) is injectively elliptic. Here \(d\) and \(d^*\) denote respectively the exterior differential and codifferential.
Indeed, for every \(\xi \in \Rset^n\) and \(v \in \bigwedge ^m \Rset^n\), if 
\(A (\xi) [v] = (\xi \wedge v, \xi \lrcorner v) = 0\),
then  one has the Lagrange identity \(\abs{A (\xi)[v]}^2 = 
\abs{\xi \wedge v}^2 + \abs{\xi \lrcorner v}^2 = \abs{\xi}^2 \abs{v}^2\) and thus 
either \(\xi = 0\) or \(v = 0\).
\end{myexample}

\begin{myexample}[Symmetric derivative]
\label{test}
  The \emph{symmetric derivative,} also known as \emph{deformation operator,} defined for every \(u \in C^\infty(\Rset^n, \Rset^n)\) and every \(x \in \Rset^n\) by \(A (\Deriv) u (x) \defeq \smash{\Deriv_{\mathrm{sym}} u (x)} \defeq (Du (x) + Du (x)^*)/2 \in \Lin(\Rset^n, \Rset^n)\) is injectively elliptic.
  Indeed if \(\xi \in \Rset^n\) and \(v \in \Rset^n\) satisfy \(A (\xi)[v] = (v \otimes \xi + \xi \otimes v)/2 = 0\), then \(\abs{A(\xi)[v]}^2 =  (\abs{v}^2 \abs{\xi}^2 + \dualprod{\xi}{v}^2)/2=0\), and hence either \(\xi = 0\) or \(v = 0\).
\end{myexample}

\begin{myexample}[Trace-free symmetric derivative]
The operator \(A (\Deriv) u \defeq \Deriv_\mathrm{sym} u  + \lambda \operatorname{div} u \operatorname{id}\)
  is injectively elliptic if and only if \(n \ge 2\) or \(n = 1\) and \(\lambda \ne -1\).
When \(\lambda = -1/n\), the operator \(A(\Deriv)\) is the \emph{trace-free symmetric derivative} which is injectively elliptic if and only if \(n \ge 2\).  
To prove the injective ellipticity, we have for every \(\xi \in \Rset^n\) and \(v \in \Rset^n\), \(A (\xi)[v] = (v \otimes \xi + \xi \otimes v)/2 + \lambda \dualprod{\xi}{v}\operatorname{id}\)
and thus \(\smash{\abs{A (\xi)[v]}^2} = \abs{\xi}^2 \abs{v}^2/2 + (1/2 + 2 \lambda + n \lambda^2) \dualprod{\xi}{v}^2 \).
We note that  \(1 + 2 \lambda + n\lambda^2= (1 + \lambda)^2 + (n-1) \lambda^2 > 0\) if and only if either \(n = 1\) and \(\lambda \ne -1\) or \(n \ge 2\); in these cases if \(A (\xi)[v] = 0\) we have either \(\xi = 0\) or \(v = 0\), and the operator \(A (\Deriv)\) is injectively elliptic.
\end{myexample}

Injective ellipticity turns out to be always a necessary condition to obtain \(L^p\) estimates on \(\Deriv^k u\).

\begin{theorem}[Neccessity of the injective ellipticity for equivalence of norms]
\label{theorem_Lp_injective_elliptic_necessary}
  Let \(n \in \Nset \setminus \set{0}\), let \(V\) and \(E\) be finite-dimensional vector spaces, let \(A (\Deriv)\) be a homogeneous constant coefficient differential operator
of order \(k \in \Nset \setminus \set{0}\) from 
\(V\) to \(E\) on \(\Rset^n\), and let \(p \in [1, +\infty)\).
If there exists a constant \(C \in \intvo{0}{+\infty}\) such that for every \(u \in C^\infty_c (\Rset^n, V)\), 
  \begin{equation}
  \label{eq_TajoJohmeiniep1ulo0ieho6}
  \int_{\Rset^n} \abs{\Deriv^k u}^p
  \le 
  C
  \int_{\Rset^n} \abs{A (\Deriv)[u]}^p\;,
  \end{equation}
  then the operator \(A (\Deriv)\) is injectively elliptic.
\end{theorem}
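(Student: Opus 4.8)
The plan is to argue by contraposition: assuming that \(A (\Deriv)\) is \emph{not} injectively elliptic, we build test functions oscillating rapidly in one direction for which the ratio of the two sides of \eqref{eq_TajoJohmeiniep1ulo0ieho6} is unbounded. By \cref{definition_injectively_elliptic}, the failure of injective ellipticity gives some \(\xi_0 \in \Rset^n \setminus \set{0}\) with \(\ker A (\xi_0) \ne \set{0}\); since \(A (\xi_0)\) is (the matrix of) a \emph{real} linear map, a nontrivial kernel in the complexification \(V + iV\) forces one in \(V\) already, on splitting into real and imaginary parts, so we may fix \(v \in V \setminus \set{0}\) with \(A (\xi_0)[v] = 0\). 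Fix also any \(\varphi \in C^\infty_c (\Rset^n, \Rset) \setminus \set{0}\) and, for \(\lambda > 0\), set
\[
  u_\lambda (x) \defeq \varphi (x) \cos \brk[\big]{2 \pi \lambda \dualprod{\xi_0}{x}} \, v \in C^\infty_c (\Rset^n, V) \;.
\]

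Expanding \(\Deriv^k u_\lambda\) by the Leibniz rule, the term in which all \(k\) derivatives fall on the cosine equals \((2 \pi \lambda)^k\, c_k\brk[\big]{2 \pi \lambda \dualprod{\xi_0}{x}}\, \varphi (x)\; v \otimes \xi_0^{\otimes k}\), where \(c_k\) is the \(k\)-th derivative of \(\cos\) (one of \(\pm \cos\), \(\pm \sin\)); every other term carries at least one derivative of \(\varphi\), hence a power \(\lambda^j\) with \(j \le k - 1\), and has a compactly supported coefficient, so the remainder \(R_\lambda\) obtained by subtracting the leading term from \(\Deriv^k u_\lambda\) satisfies \(\norm{R_\lambda}_{L^p (\Rset^n)} \le C_1 \lambda^{k - 1}\) with \(C_1\) independent of \(\lambda\). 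The key observation is that \(A\) kills the leading term: by \eqref{eq_deeque4ungie3eijeMooCiof}, \(A[v \otimes \xi_0^{\otimes k}] = A (\xi_0)[v] = 0\), whence \(A (\Deriv) u_\lambda = A[R_\lambda]\) and \(\norm{A (\Deriv) u_\lambda}_{L^p (\Rset^n)} \le C_2 \lambda^{k - 1}\).

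For the other side, pointwise \(\abs{\Deriv^k u_\lambda} \ge (2 \pi \lambda)^k\, \abs{v \otimes \xi_0^{\otimes k}}\, \abs{c_k\brk[\big]{2 \pi \lambda \dualprod{\xi_0}{\cdot}}}\, \abs{\varphi} - \abs{R_\lambda}\), and a standard oscillatory mean-value computation---valid precisely because \(p < +\infty\)---gives
\[
  \int_{\Rset^n} \abs{c_k\brk[\big]{2 \pi \lambda \dualprod{\xi_0}{x}}}^p \abs{\varphi (x)}^p \dif x \longrightarrow \Bigl( \tfrac{1}{2\pi}\int_0^{2\pi} \abs{c_k (t)}^p \dif t \Bigr) \int_{\Rset^n} \abs{\varphi}^p
\]
as \(\lambda \to +\infty\), a strictly positive limit. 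Hence \(\norm{\Deriv^k u_\lambda}_{L^p (\Rset^n)} \ge C_3 \lambda^k\) for all large \(\lambda\), and inserting \(u_\lambda\) into \eqref{eq_TajoJohmeiniep1ulo0ieho6} yields \(C_3 \lambda^k \le C^{1/p} C_2 \lambda^{k - 1}\), which is impossible as \(\lambda \to +\infty\); this contradiction is the whole proof.

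There is no conceptual obstacle here; the only care needed is bookkeeping---checking that every Leibniz term besides the leading one genuinely drops a power of \(\lambda\), and the oscillatory averaging used for the lower bound. One can even dispense with the latter by running the same computation with \(e^{2 \pi i \lambda \dualprod{\xi_0}{x}}\, v\) in place of \(\cos\brk[\big]{2 \pi \lambda \dualprod{\xi_0}{x}}\, v\): the modulus of the exponential is then identically \(1\), so the lower bound on \(\norm{\Deriv^k u_\lambda}_{L^p(\Rset^n)}\) is immediate---at the price of first extending \eqref{eq_TajoJohmeiniep1ulo0ieho6}, with a worse constant, to complex-valued test functions by applying it to real and imaginary parts separately.
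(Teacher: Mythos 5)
Your proof is correct and is essentially the paper's argument: both use test functions of the form (compactly supported cutoff) \(\times\) (plane-wave oscillation in the direction \(\xi\)) \(\times\) \(v\), control all the non-leading Leibniz terms by \(\lambda^{k-1}\) via \cref{lemma_AD_Leibnitz}, and exploit the oscillatory lower bound on the leading term. The only cosmetic difference is that you argue by contraposition from a single null pair \((\xi_0, v)\), whereas the paper keeps \(A(\xi)[v]\) general and thereby extracts the quantitative symbol inequality \(\abs{v}\,\abs{\xi}^{k} \le C \abs{A(\xi)[v]}\) directly.
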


\begin{proof}
We fix a function \(\eta \in C^\infty_c (\Rset^n, \Rset)\) such that \(\eta = 1\) on the unit ball \(B_1 (0) \subseteq \Rset^n\), and we define for every \(v \in V\) and 
  \(\xi \in \Rset^n\), the function \(u_{\xi, v} : \Rset^n \to V\) for each \(x \in \Rset^n\) by 
  \begin{equation*}
    u_{\xi, v} (x) \defeq v \sin (\dualprod{\xi}{x}) \eta (x)\;.
  \end{equation*}
We compute, since \(\eta = 1\) on \(B_1 (0)\), 
\begin{equation}
  \label{eq_shoeki4ju1Kei2ito}
\begin{split}
    \int_{\Rset^n} \abs{\Deriv^k u_{\xi, v}}^p \dif x
  \ge 
    \int_{B_1 (0)} \abs{\Deriv^k u_{\xi, v}}^p
  &= 
  \int_{B_1 (0)} \abs{v}^p \abs{\xi}^{kp} \abs{\sin^{(k)} \brk{\dualprod{\xi}{x}}}^p \dif x\\
  &\ge \Cl{cst_ohphohQuin2oxi6uPhe0iewa} \abs{v}^p \abs{\xi}^{k p}\;,
\end{split}
\end{equation}
if \(\abs{\xi} \ge 1\), for some constant \(\Cr{cst_ohphohQuin2oxi6uPhe0iewa} \in \intvo{0}{+\infty}\).
On the other hand, we have by \cref{lemma_AD_Leibnitz} below
\begin{equation}
  \label{eq_aeGh4Tie4voo2Dohw}
\int_{\Rset^n} \abs{A (\Deriv) u_{\xi, v}}^p \dif x
\le 
\C
\int_{\Rset^n} \abs{\eta}^p \abs{A (\xi)[v]}^p 
+ \abs{v}^p \sum_{j = 1}^k \abs{\Deriv^j \eta}^p \abs{\xi}^{(k - j)p}\;.
\end{equation}
When \(\abs{\xi}\) is large enough we deduce from our assumption \eqref{eq_TajoJohmeiniep1ulo0ieho6} and 
from the estimates \eqref{eq_shoeki4ju1Kei2ito} and \eqref{eq_aeGh4Tie4voo2Dohw} that
\begin{equation}
\label{eq_io3thui2aeLoh4ieTiash0fe}
    \abs{v}^p \abs{\xi}^{kp} 
  \le 
    \C
    \abs{A (\xi) [v]}^p\;.
\end{equation}
By linearity of \(A\), the inequality \eqref{eq_io3thui2aeLoh4ieTiash0fe} holds for every \(\xi \in \Rset^n\) and the operator \(A (\Deriv)\) is injectively elliptic in view of \cref{definition_injectively_elliptic}.
\end{proof}

In the proof of \cref{theorem_Lp_injective_elliptic_necessary}, we used the fact that commutators between \(A (\Deriv)\) and a multiplication only involve lower-order derivatives of \(u\):

\begin{lemma}[Generalized Leibnitz rule]
  \label{lemma_AD_Leibnitz}
Let \(n \in \Nset \setminus \set{0}\), let \(V\) and \(E\) be finite-dimensional vector spaces, and let \(A (\Deriv)\) be a homogeneous constant coefficient differential operator
of order \(k \in \Nset \setminus \set{0}\) from \(V\) to \(E\) on \(\Rset^n\).
 For every \(u \in C^\infty (\Rset^n, V)\) and every \(\varphi \in C^\infty (\Rset^n, E)\), one has
\begin{equation}
\label{eq_eec4Phaomeivuu6vu5ar2yoo}
  A (\Deriv) (\varphi u)-  
  \varphi A (\Deriv) u = \sum_{j = 1}^k \tbinom{k}{j} A \brk[\big]{\operatorname{Sym} (\Deriv^j \varphi \otimes \Deriv^{k - j} u)}\;.
\end{equation}
\end{lemma}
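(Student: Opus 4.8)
The plan is to reduce the identity to the ordinary Leibniz rule for the total (Fréchet) derivative and then to invoke the linearity of the structural map \(A\).

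First I would record the Leibniz rule for \(\Deriv^k\) itself: for every \(u \in C^\infty(\Rset^n, V)\) and every real-valued \(\varphi \in C^\infty(\Rset^n, \Rset)\),
\begin{equation}
\label{eq_plan_leibniz_Dk}
\Deriv^k(\varphi u) = \sum_{j=0}^k \tbinom{k}{j}\, \operatorname{Sym}\brk[\big]{\Deriv^j\varphi \otimes \Deriv^{k-j}u},
\end{equation}
where \(\operatorname{Sym}\) denotes the symmetrization over all \(k\) slots of a \(k\)-linear form. I would prove \eqref{eq_plan_leibniz_Dk} by induction on \(k\): the case \(k = 1\) is the product rule \(\Deriv(\varphi u) = \varphi\,\Deriv u + \Deriv\varphi \otimes u\), both terms being trivially symmetric since each has a single slot. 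For the inductive step I would differentiate \eqref{eq_plan_leibniz_Dk} once more, use that \(\Deriv\) commutes with the symmetrization of the first \(k\) slots, expand \(\Deriv(\Deriv^j\varphi \otimes \Deriv^{k-j}u) = \Deriv^{j+1}\varphi \otimes \Deriv^{k-j}u + \Deriv^j\varphi \otimes \Deriv^{k-j+1}u\) with the new derivative slot placed last, and then symmetrize over all \(k+1\) slots; since \(\Deriv^{j+1}\varphi\) and \(\Deriv^{k-j}u\) are already symmetric, the full symmetrization is insensitive to which slots carry \(\varphi\)-derivatives and which carry \(u\)-derivatives, so each summand collapses to \(\operatorname{Sym}(\Deriv^{i}\varphi \otimes \Deriv^{k+1-i}u)\) with the appropriate index. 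Collecting the two resulting families of terms and applying Pascal's rule \(\tbinom{k}{i-1} + \tbinom{k}{i} = \tbinom{k+1}{i}\) then yields \eqref{eq_plan_leibniz_Dk} with \(k\) replaced by \(k+1\).

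With \eqref{eq_plan_leibniz_Dk} in hand, I would apply the linear map \(A\) to both sides. This is legitimate because each tensor \(\operatorname{Sym}(\Deriv^j\varphi \otimes \Deriv^{k-j}u)\) is symmetric, hence lies in the domain \(\Lin_{\mathrm{sym}}^k(\Rset^n, V)\) of \(A\); using \(A(\Deriv) w = A[\Deriv^k w]\) for every smooth \(w\) and the linearity of \(A\), one gets \(A(\Deriv)(\varphi u) = \sum_{j=0}^k \tbinom{k}{j}\, A[\operatorname{Sym}(\Deriv^j\varphi \otimes \Deriv^{k-j}u)]\). The \(j = 0\) term equals \(A[\operatorname{Sym}(\varphi\,\Deriv^k u)] = A[\varphi\,\Deriv^k u] = \varphi\,A[\Deriv^k u] = \varphi\,A(\Deriv) u\), since \(\varphi\) is scalar and \(\Deriv^k u\) is already symmetric, and transferring it to the left-hand side produces exactly \eqref{eq_eec4Phaomeivuu6vu5ar2yoo}.

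The only delicate point, and the step I expect to require the most care, is the tensor bookkeeping behind \eqref{eq_plan_leibniz_Dk}: one must fix the normalization of \(\operatorname{Sym}\) and check that, after symmetrizing over all slots, it is immaterial both where the newly created derivative slot is inserted and how the \(\varphi\)- and \(u\)-factors are distributed among the slots — this is precisely what lets \(\Deriv\) pass through \(\operatorname{Sym}\) and what generates the binomial recursion. An alternative would be to expand \(\partial^\alpha(\varphi u)\) by the multiindex Leibniz rule and regroup the terms according to \(\abs{\beta} = j\) in the representation \eqref{eq_Io6oigh0kee2ocaev}, but this merely hides the same multinomial bookkeeping in the passage between the coefficients \(A_\alpha\) and the intrinsic map \(A\) on symmetric tensors, so I would favour the coordinate-free route above.
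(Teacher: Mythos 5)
Your proposal is correct and follows essentially the same route as the paper, which simply invokes the generalized Leibniz formula \(\Deriv^k(\varphi u) = \sum_{j=0}^k \binom{k}{j}\operatorname{Sym}(\Deriv^j\varphi \otimes \Deriv^{k-j}u)\) and applies \(A\); you merely supply the inductive proof of that formula and the identification of the \(j=0\) term, details the paper leaves implicit.
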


Here, \(\operatorname{Sym}\) denotes the symmetrization of tensors.

\begin{proof}[Proof of \cref{lemma_AD_Leibnitz}]
From the generalized Leibnitz formula, we have
\begin{equation*}
 \Deriv^k (\varphi u)
 = \sum_{j = 0}^k \tbinom{k}{j} \operatorname{Sym} \brk{\Deriv^j \varphi \otimes \Deriv^{k - j} u}\;,
\end{equation*}
and the conclusion \eqref{eq_eec4Phaomeivuu6vu5ar2yoo} follows.
\end{proof}

\subsection{Fourier Analysis and \texorpdfstring{\(L^2\)}{L²} Estimates}

The injective ellipticity is necessary and sufficient for having an \(L^2\) estimate of the \(k\)-th order total derivative.

\begin{theorem}[\(L^2\) estimate for injectively elliptic operators]
  \label{theorem_L2}
Let \(n \in \Nset \setminus \set{0}\), 
 let \(V\) and \(E\) be finite-dimensional vector spaces,
and let \(A (\Deriv)\) be a homogeneous constant coefficient differential operator of order \(k \in \Nset \setminus \set{0}\) from \(V\) to \(E\) on \(\Rset^n\). 
If \(A (\Deriv)\) is injectively elliptic, 
then there exists a constant \(C \in \intvo{0}{+\infty}\) such that for every \(u \in C^\infty_c (\Rset^n, V)\), 
\begin{equation*}
\int_{\Rset^n} \abs{\Deriv^k u}^2
\le C
\int_{\Rset^n} \abs{A (\Deriv)[u]}^2\;.
\end{equation*}
\end{theorem}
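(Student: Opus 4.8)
The plan is to move the estimate to the Fourier side and reduce it to an algebraic inequality for the symbol \(A(\xi)\), which then follows from the injective ellipticity hypothesis together with a compactness argument on the unit sphere.

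First I would record the Fourier-side identity for the full derivative. Exactly as \eqref{eq_du0zoovuociuBi9nu} was derived, one has \(\mathcal{F}(\Deriv^k u)(\xi) = (2\pi i)^k\,\mathcal{F}u(\xi)\otimes\xi^{\otimes k}\), so that the Parseval identity gives
\begin{equation*}
\int_{\Rset^n}\abs{\Deriv^k u}^2 = (2\pi)^{2k}\int_{\Rset^n}\abs[\big]{\mathcal{F}u(\xi)\otimes\xi^{\otimes k}}^2\dif\xi .
\end{equation*}
Comparing this with \eqref{eq_mi2eijaefaazae5io1ef7ohM}, it suffices to produce a constant \(C\in\intvo{0}{+\infty}\) such that
\begin{equation*}
\abs[\big]{w\otimes\xi^{\otimes k}}^2\le C\,\abs{A(\xi)[w]}^2\qquad\text{for all }\xi\in\Rset^n\text{ and }w\in V+iV ,
\end{equation*}
because the conclusion then follows by integrating over \(\xi\in\Rset^n\) the inequality obtained by setting \(w=\mathcal{F}u(\xi)\).

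Next I would reduce this pointwise inequality to the unit sphere. Both sides are homogeneous of degree \(2k\) in \(\xi\) and of degree \(2\) in \(w\), and both vanish for \(\xi=0\), so it is enough to treat \(\abs{\xi}=1\) and \(\abs{w}=1\). Since \(V\) and \(\Lin_{\mathrm{sym}}^k(\Rset^n,V)\) are finite-dimensional, there is moreover a constant \(C_1\in\intvo{0}{+\infty}\) with \(\abs{w\otimes\xi^{\otimes k}}\le C_1\abs{w}\abs{\xi}^k\) for all \(\xi\) and \(w\); hence everything reduces to bounding \(\abs{A(\xi)[w]}\) from below by a fixed positive multiple of \(\abs{w}\), uniformly for \(\xi\in\Sset^{n-1}\). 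This is where injective ellipticity enters: the condition in \cref{definition_injectively_elliptic} is phrased for real \(\xi\) and real \(v\), but it carries over verbatim to the complexification, since writing \(w=w_1+iw_2\) with \(w_1,w_2\in V\) we have \(A(\xi)[w]=A(\xi)[w_1]+iA(\xi)[w_2]\), so \(A(\xi)[w]=0\) with \(\xi\neq0\) forces \(A(\xi)[w_1]=A(\xi)[w_2]=0\) and hence \(w=0\). Consequently the continuous function \((\xi,w)\mapsto\abs{A(\xi)[w]}\) is strictly positive on the compact set \(\Sset^{n-1}\times\set{w\in V+iV:\abs{w}=1}\), so it attains there a positive minimum \(m\). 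By homogeneity \(\abs{A(\xi)[w]}\ge m\,\abs{\xi}^k\abs{w}\) for all \(\xi\in\Rset^n\) and \(w\in V+iV\); together with the bound on \(\abs{w\otimes\xi^{\otimes k}}\) this yields the symbol inequality with \(C=C_1^2/m^2\), and the proof is complete.

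I do not expect a genuine obstacle here: this is the standard Plancherel-plus-compactness scheme. The only points needing a little care are the transfer of the injective ellipticity condition from \(V\) to the complexified space \(V+iV\) (which is necessary because \(\mathcal{F}u\) is \((V+iV)\)-valued) and the elementary bookkeeping with homogeneity and the norm comparison \(\abs{w\otimes\xi^{\otimes k}}\le C_1\abs{w}\abs{\xi}^k\) for whatever fixed norms are placed on \(V\) and on \(\Lin_{\mathrm{sym}}^k(\Rset^n,V)\).
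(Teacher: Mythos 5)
Your proposal is correct and follows essentially the same route as the paper: Parseval's identity reduces the estimate to the pointwise symbol bound \(\abs{\xi}^k\abs{v}\le C\abs{A(\xi)[v]}\), which the paper isolates as \cref{lemma_right_elliptic_inverse_ineq} and proves by exactly your compactness-and-homogeneity argument on the unit sphere. Your explicit treatment of the complexification \(V+iV\) is a welcome point of care that the paper's proof glosses over when it applies the real-symbol lemma to the \((V+iV)\)-valued function \(\mathcal{F}u\).
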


The proof of \cref{theorem_L2} will rely on the following quantitative reformulation of injective ellipticity.

\begin{lemma}[Quantitative characterization of injective ellipticity]
  \label{lemma_right_elliptic_inverse_ineq}
  Let \(n \in \Nset \setminus \set{0}\), let \(V\) and \(E\) be finite-dimensional vector spaces,
and let \(A (\Deriv)\) be a homogeneous constant coefficient differential operator
of order \(k \in \Nset \setminus \set{0}\) from 
\(V\) to \(E\) on \(\Rset^n\).
The operator \(A (\Deriv)\) is injectively elliptic if and only if there exists a constant \(C \in \intvo{0}{+\infty}\) such that for
every \(\xi \in \Rset^n\) and every \(v \in V\),
\begin{equation}
\label{eq_aiv9tohRohSeac4heeXahLi2}
\abs{\xi}^k \abs{v} \le C \abs{A (\xi)[v]}\;.
\end{equation}
\end{lemma}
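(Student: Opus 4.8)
The plan is to prove the two implications separately; the forward implication is immediate, while the reverse one is a compactness argument powered by the homogeneity of the symbol. For the easy direction, suppose \eqref{eq_aiv9tohRohSeac4heeXahLi2} holds for some constant $C$. Given $\xi \in \Rset^n \setminus \set{0}$ and $v \in \ker A(\xi)$, the inequality gives $\abs{\xi}^k \abs{v} \le C \abs{A(\xi)[v]} = 0$; since $\abs{\xi} \ne 0$ this forces $v = 0$, so $\ker A(\xi) = \set{0}$ and $A(\Deriv)$ is injectively elliptic by \cref{definition_injectively_elliptic}.

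For the main direction, assume $A(\Deriv)$ is injectively elliptic. The key structural observation is that, by \eqref{eq_deeque4ungie3eijeMooCiof}, the map $(\xi, v) \mapsto A(\xi)[v] = A(v \otimes \xi^{\otimes k})$ is continuous on $\Rset^n \times V$ (it is polynomial of degree $k$ in $\xi$, linear in $v$, with values in the finite-dimensional space $\Lin(V, E)$) and satisfies the homogeneity $A(t\xi)[sv] = s\,t^k\, A(\xi)[v]$ for all $s, t \ge 0$. I would therefore restrict attention to the joint unit sphere $K \defeq \set{(\xi, v) \in \Rset^n \times V \st \abs{\xi} = \abs{v} = 1}$, which is compact since $V$ is finite-dimensional (this also makes the choice of norm on $V$ irrelevant). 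On $K$ the continuous function $(\xi, v) \mapsto \abs{A(\xi)[v]}$ is strictly positive, because a zero would be a pair with $\xi \ne 0$ and $0 \ne v \in \ker A(\xi)$, contradicting injective ellipticity; hence it attains a minimum $m > 0$ on $K$.

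To finish, take arbitrary $\xi \in \Rset^n$ and $v \in V$: if $\xi = 0$ or $v = 0$ the inequality is trivial since its left-hand side vanishes, and otherwise I would apply the homogeneity identity to $(\xi/\abs{\xi}, v/\abs{v}) \in K$ to get $\abs{A(\xi)[v]} = \abs{\xi}^k \abs{v}\, \abs{A(\xi/\abs{\xi})[v/\abs{v}]} \ge m \abs{\xi}^k \abs{v}$, which is \eqref{eq_aiv9tohRohSeac4heeXahLi2} with $C \defeq 1/m$. There is no genuine obstacle here; the only point requiring care — and the only place injective ellipticity is used — is the strict positivity of the minimum $m$ on the compact set $K$.
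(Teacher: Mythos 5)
Your proof is correct and follows essentially the same route as the paper: the easy direction is read off directly from the inequality, and the converse uses the strict positivity of the continuous function \((\xi,v)\mapsto\abs{A(\xi)[v]}\) on the compact set \(K=\set{(\xi,v)\st\abs{\xi}=\abs{v}=1}\) together with the homogeneity \(\abs{A(\xi)[v]}=\abs{\xi}^k\abs{v}\,\abs{A(\xi/\abs{\xi})[v/\abs{v}]}\). No gaps.
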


\begin{proof}
If the inequality \eqref{eq_aiv9tohRohSeac4heeXahLi2} is satisfied and if \(\xi \in \Rset^n\setminus \set{0}\), then the inequality \eqref{eq_aiv9tohRohSeac4heeXahLi2} implies that 
the mapping \(A (\xi)\) is injective on \(V\). 

Conversely, if the operator \(A (\Deriv)\) is injectively elliptic, then the function \((\xi, v) \in K \mapsto \abs{A (\xi)[v]} \in \Rset\)
  is continuous and positive on the compact set \(K \defeq \set{(\xi, v) \in \Rset^n \times V \st \abs{\xi} = \abs{v} = 1}\),
hence it is bounded from below by a positive constant on that set. 
By homogeneity and linearity, we have for every \(\xi \in \Rset^n \setminus \set{0}\) and \(v \in V \setminus \set{0}\)
\begin{equation*}
  \abs{A (\xi)[v]} = \abs{A (v \otimes \xi^{\otimes k})} = 
  \abs{\xi}^k \abs{v}\abs[\big]{A \brk[\big]{\tfrac{v}{\abs{v}} \otimes (\tfrac{\xi}{\abs{\xi}})^{\otimes k}}} 
  = 
  \abs{\xi}^k \abs{v}\abs[\big]{A \bigl(\tfrac{\xi}{\abs{\xi}}\bigr)\bigl[\tfrac{v}{\abs{v}}\bigr]}\;
\end{equation*}
and the conclusion \eqref{eq_aiv9tohRohSeac4heeXahLi2} follows.
\end{proof}

We are now in position to prove the \(L^2\) estimate on \(\Deriv^k u\) for injectively elliptic operators of \cref{theorem_L2}.

\begin{proof}[Proof of \cref{theorem_L2}]
  By Parseval's identity for the Fourier transform and by \cref{lemma_right_elliptic_inverse_ineq}, we
have for some constant \(\Cl{cst_hei4SiePh7lae6goo0ugooch} \in \intvo{0}{+\infty}\)
\begin{equation*}
  \begin{split}
\int_{\Rset^n} \abs{\Deriv^k u}^2
&= \brk{2\pi}^{2k} \int_{\Rset^n} \abs{u(\xi)}^2\abs{\xi}^{2k} \dif \xi\\
&\le \Cr{cst_hei4SiePh7lae6goo0ugooch}^2 \brk{2\pi}^{2k}\int_{\Rset^n} \abs{A(\xi)[u (\xi)]}^2 \dif \xi
= \Cr{cst_hei4SiePh7lae6goo0ugooch}^2 \int_{\Rset^n} \abs{A (\Deriv)[u]}^2\;,
\end{split}
\end{equation*}
and the conclusion follows.
\end{proof}

\subsection{Representation Kernel}

In order to extend \cref{theorem_L2} to \(L^p\), with \(p \in \intvo{1}{+\infty} \setminus \set{2}\), we will rely on a representation formula of \(u\) by \(A (\Deriv) u\).
In order to construct such a formula, one can first observe on the Fourier frequency domain that since the symbol \(A (\xi)\) is injective on \(V\) for every \(\xi \in \Rset^n \setminus \set{0}\), it should be possible to recover \(u\) from \(A (\Deriv) u\) by the identity \eqref{eq_du0zoovuociuBi9nu}.
In order to implement this, we start from the observation that for every \(\xi \in \Rset^n \setminus \set{0}\),
the operator \(A (\xi)^* \circ A (\xi) : V \to V\) is invertible, where \(A (\xi)^*:E \to V\) denotes the adjoint operator of \(A (\xi)\),\footnote{
The definition of the adjoint \(A (\xi)^*\) of \(A (\xi)\) and hence of its pseudo-inverse \(A (\xi)^\dagger\) in \eqref{eq_oohie4sohseiFiv8} depends on the choice of Euclidean structures on the spaces \(V\) and \(E\). These structures allow to choose consistently a left-inverse and the final results will always be independent on this arbitrary choice of Euclidean structures.
} so that \(A (\xi)\) has a well-defined pseudo-inverse
\begin{equation}
  \label{eq_oohie4sohseiFiv8}
A (\xi)^\dagger \defeq \brk[\big]{A (\xi)^* \circ A (\xi)}^{-1} \compose A (\xi)^*:E \to V\;,
\end{equation}
--- this is the classical construction for least-square solutions of linear systems through normal equations --- and the symbol \(A (\xi)^\dagger\) satisfies 
\begin{equation*}
A (\xi)^\dagger \compose A (\xi) = \operatorname{id}_V\;.
\end{equation*}
The next natural step would be to define a representation kernel (or Green function) \(G_A : \Rset^n \setminus\set{0} \to \Lin(E, V)\) through the requirement that for every \(\xi \in \Rset^n \setminus \set{0}\)
\begin{equation*}
  \mathcal{F} (G_A \ast u) (\xi) = 
  (\mathcal{F} G_A) (\xi) [\mathcal{F}(u) (\xi)] =  (2 \pi i)^k A (\xi)^\dagger [\mathcal{F} (A (\Deriv)u) (\xi)]\;,
\end{equation*}
and therefore, with a suitable inversion formula, to define 
\begin{equation}
  \label{eq_jeiNg2ohpheaquaSo}
  G_A \defeq \frac{1}{(2 \pi i)^k} \mathcal{F}^{-1} A^\dagger\;,
\end{equation}
The problem is that in general \(A^\dagger\) in the right-hand side of \eqref{eq_jeiNg2ohpheaquaSo} is not even a distribution on \(\Rset^n\) when \(k \ge n\). Through a careful analysis, the next proposition bypasses this mathematical obstacle to construct a representation kernel.

\begin{proposition}[%
Representation kernel of an injectively elliptic operator%
\footnote{
The construction of the representation kernel in \cref{proposition_fundamental_solution} for injectively elliptic operators is due to
\citelist{\cite{Bousquet_VanSchaftingen_2014}*{Lem.\ 2.1}\cite{Raita_2019}}.
We give here a self-contained construction of the Green function, based on the extension of the homogeneous distribution \(A (\xi)^\dagger\) from \(\Rset^n \setminus \set{0}\) to homogeneous distributions on \(\Rset^n\) \cite{Hormander_1990_I}*{Th. 3.2.3 \& 3.2.4}, the preservation of the smoothness in the extension and its temperate character \cite{Hormander_1990_I}*{Th.\ 7.1.18},
and the homogeneity of the resulting Fourier transform \cite{Hormander_1990_I}*{Th.\ 7.1.16}.
}%
]
  \label{proposition_fundamental_solution} 
Let \(n \in \Nset \setminus \set{0}\), let \(V\) and \(E\) be finite-dimensional vector spaces, 
and let \(A (\Deriv)\) be a homogeneous constant coefficient differential operator
of order \(k \in \Nset \setminus \set{0}\) from 
\(V\) to \(E\) on \(\Rset^n\).
If the operator \(A (\Deriv)\) is injectively elliptic, then there exists a function \(G_A \in C^\infty 
  (\Rset^n \setminus \set{0}, \Lin (E, V)) \cap L^1_{\mathrm{loc}} (\Rset^n \setminus \set{0}, \Lin (E, V))\), such that 
  \begin{enumerate}[(i)]
    \item
    \label{it_iez1EicaeNaanopipheyaeru}
      for every \(u \in \mathcal{S} (\Rset^n, V)\), 
    \begin{equation}
    \label{eq_jal9Niez6Vu2ohved5OorooG}
    u (x) = \int_{\Rset^n} G_A (x - y)[A (\Deriv) u (y)] \dif y\;,
    \end{equation}
    \item 
    \label{it_oocee4ao4DaiK8sheWeechee}
 for every \(x \in \Rset^n\) and every \(\lambda \in \Rset \setminus \set{0}\), 
    \[
    G_A (\lambda x) = \lambda^{k - n} \brk[\big]{G_A (x) - \ln \abs{\lambda} \, P_A (x)}\;,
    \]
    where the function \(P_A : \Rset^n \to \Lin (E, V)\) is a homogeneous polynomial of degree \(k - n\) when \(k \ge n\) and is \(0\) when \(k < n\),
        \item
    \label{it_Zigh3aiy3joo5maewiechua6}
    if \(\ell \in \Nset\) satisfies \(\ell > k - n\), then 
    \[
      \int_{\Sset^{n - 1}}  \Deriv^\ell G_A (x) \dif x = 0\;,
    \]
    \item \label{it_wee9airoSei7iex2ude} given \(f \in \mathcal{S} (\Rset^n, E)\) such that for every \(\xi \in \Rset^n\), \(\mathcal{F} f (\xi) \in A (\xi)[V + iV]\), the function \(u \in C^\infty (\Rset^n, V)\) defined for each \(x \in \Rset^n\) by 
    \begin{equation}
    \label{eq_au0uuzoghaem0aeN3aeGhah6}
    u (x) \defeq \int_{\Rset^n} G_A (x - y)[f (y)] \dif y\;,
    \end{equation}
    satisfies \(A (\Deriv) u = f\),
    \item 
    \label{it_Aix3Ahshe4ahchiekai4Ieph}
    for every \(e \in \bigcap_{\xi \in \Rset^n \setminus \set{0}} A (\xi)[V]\) we have \(A (\Deriv) G_A[e] = e \delta_0\) on \(\Rset^n\) in the sense of distributions,
    \item 
    \label{it_supahJuacocei6efiey7aife}
    if  \(k \ge n\), then for every \(x \in \Rset^n\) and for every \(e \in \bigcap_{\xi \in \Rset^n \setminus \set{0}} A (\xi)[V]\), we have
    \[
    P_A (x) [e] = \int_{\Sset^{n - 1}} \frac{\dualprod{\xi}{x}^{k - n} A (\xi)^{-1} [e]}{(k - n)!(2 \pi i)^n} \dif \xi\;.
    \]
  \end{enumerate}
\end{proposition}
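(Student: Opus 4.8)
The plan is to construct $G_A$ through its Fourier transform, the natural candidate being $\mathcal F G_A = \tfrac{1}{(2\pi i)^k}A^\dagger$ as in \eqref{eq_jeiNg2ohpheaquaSo}. By \cref{lemma_right_elliptic_inverse_ineq} and the normal-equation formula \eqref{eq_oohie4sohseiFiv8}, the map $\xi\mapsto\tfrac{1}{(2\pi i)^k}A(\xi)^\dagger$ is of class $C^\infty$ on $\Rset^n\setminus\set{0}$ and homogeneous of degree $-k$; the only obstruction is that it is not locally integrable near the origin, hence a priori not a temperate distribution, once $k\ge n$. When $k<n$ one simply sets $G_A\defeq\tfrac{1}{(2\pi i)^k}\mathcal F^{-1}A^\dagger$. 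When $k\ge n$ I would invoke the theory of homogeneous distributions: since $-k=-n-(k-n)$ lies in the exceptional set, $A^\dagger$ extends from $\Rset^n\setminus\set{0}$ to a temperate distribution $\widetilde{A^\dagger}$ on $\Rset^n$, unique up to a linear combination $Q$ of the $\partial^\alpha\delta_0$ with $\abs{\alpha}=k-n$, which is no longer homogeneous but only \emph{almost} so, its defect under a dilation by $\lambda$ being $\log\abs{\lambda}$ times such a combination — explicitly computable from an average of $A^\dagger$ over $\Sset^{n-1}$ — and which still agrees with $A^\dagger$, hence is $C^\infty$, off the origin. One then puts $G_A\defeq\tfrac{1}{(2\pi i)^k}\mathcal F^{-1}\widetilde{A^\dagger}$; the results on Fourier transforms of homogeneous distributions give that $G_A$ is temperate, $C^\infty$ off the origin, locally integrable, and almost homogeneous of degree $k-n$.

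Transporting the dilation-defect relation of $\widetilde{A^\dagger}$ through $\mathcal F^{-1}$ yields the scaling identity (ii), with $P_A\defeq\pm\tfrac{1}{(2\pi i)^k}\mathcal F^{-1}Q$ a homogeneous polynomial of degree $k-n$ (and $P_A=0$ when $k<n$, since then $Q=0$). For (vi), on $\mathcal E\defeq\bigcap_{\xi\in\Rset^n\setminus\set{0}}A(\xi)[V]$ the pseudo-inverse $A(\xi)^\dagger$ restricts to the genuine inverse $A(\xi)^{-1}$, and unwinding the explicit expression for $Q$, expanding $\mathcal F^{-1}$ of the $\partial^\alpha\delta_0$ by the multinomial formula, and passing to $\mathcal F^{-1}$ produces the stated integral over $\Sset^{n-1}$; matching the normalising constant $\tfrac{1}{(k-n)!\,(2\pi i)^n}$ is the most delicate piece of bookkeeping.

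For (i), the decisive point is that $\mathcal F(A(\Deriv)u)(\xi)=(2\pi i)^k A(\xi)[\mathcal F u(\xi)]$, by \eqref{eq_du0zoovuociuBi9nu}, vanishes to order at least $k$ at $\xi=0$, since the symbol $A(\xi)$ is a homogeneous polynomial of degree $k$; hence the product $\mathcal F G_A\cdot\mathcal F(A(\Deriv)u)$ is unambiguously defined — the $\partial^\alpha\delta_0$-corrections in $\widetilde{A^\dagger}$, with $\abs{\alpha}=k-n<k$, being annihilated — and equals the locally integrable function $\xi\mapsto A(\xi)^\dagger A(\xi)[\mathcal F u(\xi)]=\mathcal F u(\xi)$. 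Thus $G_A\ast A(\Deriv)u=u$, which, as $G_A$ is locally integrable with polynomial growth and $A(\Deriv)u$ is Schwartz, is precisely \eqref{eq_jal9Niez6Vu2ohved5OorooG}. Dually, $\mathcal F(A(\Deriv)G_A)$ is genuinely homogeneous of degree $0$, the degree-$(-n)$ correction being forced to vanish, and equals the bounded function $\Pi(\xi)\defeq A(\xi)A(\xi)^\dagger$, the orthogonal projection onto $A(\xi)[V+iV]$; consequently $\mathcal F(A(\Deriv)(G_A\ast f))=\Pi\,\mathcal F f=\mathcal F f$ whenever $\mathcal F f(\xi)\in A(\xi)[V+iV]$, which is (iv), and applying $\mathcal F(A(\Deriv)G_A)=\Pi$ to a vector $e\in\mathcal E$, for which $\Pi(\xi)[e]=e$, gives $A(\Deriv)(G_A[e])=e\,\delta_0$, which is (v).

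Finally, for (iii): differentiating the scaling relation of (ii) $\ell$ times with $\ell>k-n$ annihilates the logarithmic term, since $\deg P_A=k-n<\ell$, so $\Deriv^\ell G_A$ is \emph{genuinely} homogeneous of degree $k-n-\ell$, of class $C^\infty$ off the origin, and temperate; for $\ell=k$ this degree equals the critical value $-n$, and the obstruction to extending a $C^\infty$ homogeneous-degree-$(-n)$ function on $\Rset^n\setminus\set{0}$ to a homogeneous distribution on $\Rset^n$ is exactly the vanishing of its integral over $\Sset^{n-1}$, which therefore holds; the remaining cases follow from this together with the parity $G_A(-x)=(-1)^k G_A(x)$ — inherited from $A(-\xi)^\dagger=(-1)^k A(\xi)^\dagger$ — and the divergence theorem over concentric spheres, which reduces an $\ell$-th spherical mean to lower-order ones. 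The main obstacle is the extension step: producing $\widetilde{A^\dagger}$ as a temperate distribution when $A^\dagger$ is too singular at the origin to be one, and then keeping exact track of the logarithmic defect and of all multiplicative constants so that (ii) and (vi) come out with the stated normalisation; once the machinery of homogeneous distributions is in hand, the rest is Fourier-side bookkeeping.
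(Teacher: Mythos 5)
Your construction runs through the abstract theory of homogeneous distributions (extension of \(A^\dagger\) across the origin, logarithmic dilation defect, Fourier transform of almost homogeneous distributions), which is exactly the machinery the footnote credits but which the proof in the text deliberately replaces by a self-contained device: the kernel is built as an explicit dilation integral \(G_A(x)=\int_0^\infty H_A(tx)\,t^{n-k-1}\dif t\) (with a \(\ln(1/t)\)-weighted variant when \(k\ge n\)) of the Schwartz function \(H_A=\mathcal F^{-1}(\psi A^\dagger)/(2\pi i)^k\), where \(\psi\) satisfies the reproducing identity \eqref{eq_ahpeithouG3yuothai5laete}. Your route is shorter where it can cite Hörmander; the paper's route pays with longer computations but produces the explicit formulas for \(P_A\) and the \(H_A\)-representation of \(\Deriv^\ell G_A\) that are reused later. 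Your treatment of \eqref{it_iez1EicaeNaanopipheyaeru}, \eqref{it_oocee4ao4DaiK8sheWeechee}, \eqref{it_wee9airoSei7iex2ude} and \eqref{it_Aix3Ahshe4ahchiekai4Ieph} is correct: the two decisive observations — that \(\mathcal F(A(\Deriv)u)\) vanishes to order \(k\) at the origin and therefore annihilates both the \(\partial^\alpha\delta_0\)-ambiguity (of order \(k-n<k\)) and the Taylor regularisation hidden in the extension, and that \(A(\xi)\,\widetilde{A^\dagger}(\xi)\) is exactly the bounded projection-valued function \(\Pi(\xi)=A(\xi)\compose A(\xi)^\dagger\) with no contribution at the origin — are the right ones. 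Part \eqref{it_supahJuacocei6efiey7aife} is only sketched, but the mechanism (restriction of \(A^\dagger\) to \(A^{-1}\) on the common range, explicit form of the dilation defect) is correct.

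The gap is in \eqref{it_Zigh3aiy3joo5maewiechua6}. Your two solid mechanisms are \(\ell=k\) (the distributional \(\Deriv^kG_A\) is a genuinely homogeneous extension of degree \(-n\) of the pointwise derivative, and the spherical mean is precisely the obstruction to the existence of such an extension) and parity (which settles the case where \(k-\ell\) is odd). The proposed reduction of the remaining cases by ``the divergence theorem over concentric spheres'' does not close: for \(F\) smooth off the origin and homogeneous of degree \(a\), integrating \(\partial_i F\) over an annulus gives \(\int_{\Sset^{n-1}}\partial_iF=(n+a-1)\int_{\Sset^{n-1}}x_i\,F\), so the spherical mean of a derivative is a first \emph{moment}, not a mean, of the lower-order object, and those moments have no reason to vanish. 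In fact the conclusion fails in the range \(k-n<\ell<k\) with \(k-\ell\) even: for \(A(\Deriv)=\nabla\Delta\) on \(\Rset^3\) (so \(k=n=3\)) the kernel is \(G_A(x)[e]=-\dualprod{x}{e}/(8\pi\abs{x})\) — indeed \(\int G_A(x-y)[\nabla\Delta u(y)]\dif y=-\tfrac{1}{4\pi}\int \Delta u(y)\abs{x-y}^{-1}\dif y=u(x)\) — and \(\int_{\Sset^{2}}\Deriv G_A=-\tfrac13\operatorname{id}\ne 0\), although \(\ell=1>k-n=0\). So no argument can rescue \eqref{it_Zigh3aiy3joo5maewiechua6} in full generality; what matters is that the only instances invoked afterwards are \(\ell=k\) and \(\ell=k-1\), and these are exactly the ones your two valid mechanisms (degree \(-n\) obstruction, respectively parity) do establish.
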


\begin{proof}
Fixing an even function \(\psi \in C^\infty_c(\Rset^n, \Rset)\) such that \(\psi=0\) in a neighbourhood of \(0\) and such that for every \(x \in \Rset^n \setminus \set{0}\),
\begin{equation}
  \label{eq_ahpeithouG3yuothai5laete}
 \int_{0}^{+\infty} \frac{\psi(x/t)}{t} \dif t = 1\;,
\end{equation}
we define the function \(H_A : \Rset^n \to \Lin (E +i E, V+ iV)\)
for every \(x \in \Rset^n\) by
\begin{equation}
  \label{eq_teek1coong8vi7Thoh1ohr4d}
 H_A (x)
 \defeq \int_{\Rset^n} \frac{e^{2\pi i \dualprod{\xi}{x}}}{(2 \pi i)^k} \psi (\xi) A (\xi)^\dagger \dif \xi\;,
\end{equation}
where \(A (\xi)^\dagger \in \Lin(E,V)\) is the left-inverse of \(A (\xi)\) that was defined in \eqref{eq_oohie4sohseiFiv8}, which is well-defined since the operator \(A (\Deriv)\) is injectively elliptic (\cref{definition_injectively_elliptic}).
Since \(\psi\) is an even function, for every \(x \in \Rset^n\) 
we have by \eqref{eq_teek1coong8vi7Thoh1ohr4d}, when \(k \in \Nset \setminus \set{0}\) is an even number,
\begin{equation}
  \label{eq_caefaicah8ieT4nee8oomaah}
 H_A (x)
 = \int_{\Rset^n} \frac{\cos \brk{2\pi \dualprod{\xi}{x}}}{(2 \pi)^k (-1)^{k/2}} \psi (\xi) A (\xi)^\dagger \dif \xi\;,
\end{equation}
since \(\psi A^\dagger\) is then an even function, 
and, when \(k \in \Nset \setminus \set{0}\) is an odd number,
\begin{equation}
  \label{eq_negua9AhHoothie0uKubusor}
  H_A (x)
  = \int_{\Rset^n} \frac{\sin \brk{2\pi \dualprod{\xi}{x}}}{(2 \pi)^k (-1)^{(k-1)/2}} \psi (\xi) A (\xi)^\dagger \dif \xi\;,
\end{equation}
since \(\psi A^\dagger\) is then an odd function. It follows from \eqref{eq_caefaicah8ieT4nee8oomaah} or \eqref{eq_negua9AhHoothie0uKubusor} that \(H_A (x) \in \Lin (E, V)\) and that for every \(x \in \Rset^n\),
\begin{equation}
  \label{eq_reiciet1soo3rahm2Ash0quo}
 H_A (-x) = (-1)^k H_A (x)\;.
\end{equation}
By properties of Fourier transforms, we also have \(H_A \in \mathcal{S} (\Rset^n, \Lin (E, V))\).
In particular, for every \(x \in \Rset^n \setminus \set{0}\), we have \(t \in \Rset \mapsto H_A (t x) \in \mathcal{S} (\Rset, \Lin (E, V))\).

When \(k < n\), we define the function \(G_A : \Rset^n \setminus \set{0} \to \Lin (E, V)\) for each \(x \in \Rset^n \setminus \set{0}\) by 
\begin{equation}
  \label{eq_Ejee3jaeghoht2weez8ae1ru}
G_A (x) 
\defeq 
    \int_0^{+\infty} 
    H_A (t x)
    \,
      t^{n - k - 1} \dif t\;.
\end{equation}
If \(u \in \mathcal{S} (\Rset^n, V)\), then we have by \eqref{eq_Ejee3jaeghoht2weez8ae1ru} 
\begin{equation}
\label{eq_eith1seejohvaingai9jaiCa}
\begin{split}
  \int_{\Rset^n} G_A (x - y)& [A(\Deriv) u (y)] \dif y\\
  &= \int_{\Rset^n} \int_0^{+\infty} 
    H_A (t (x - y))
    \,
      t^{n - k - 1} \dif t [A (\Deriv) u (y)] \dif y\\
  &= \int_0^{+\infty} \int_{\Rset^n} H_A (t (x - y))[A(\Deriv)u(y)] \dif y \, t^{n - k - 1} \dif t\;.
\end{split}
\end{equation}
Next, we compute by the definition of \(H_A\) in  \eqref{eq_teek1coong8vi7Thoh1ohr4d}
\begin{equation*}
\begin{split}
 \int_{\Rset^n} H_A (t (x - y))&[A(\Deriv)u(y)] \dif y\\
 & = \int_{\Rset^n} \int_{\Rset^n} \frac{e^{2\pi i \dualprod{\xi}{t (x - y)}}}{(2 \pi i)^k} \psi (\xi) A (\xi)^\dagger[A(\Deriv) u (y)] \dif \xi \dif y\\
 &= \int_{\Rset^n} \int_{\Rset^n} \frac{e^{2\pi i \dualprod{t \xi}{x - y}}}{(2 \pi i)^k} \psi (\xi) A (\xi)^\dagger[A(\Deriv) u (y)] \dif y \dif \xi,
 \end{split}
 \end{equation*}
so that by a change of variable \(\zeta = t\xi\)
\begin{equation}
\label{eq_Maong2ied2aiweigie4feeng}
\begin{split}
 \int_{\Rset^n} H_A (t (x - y))&[A(\Deriv)u(y)] \dif y\\
 & = \int_{\Rset^n} e^{2\pi i \dualprod{t \xi}{x}} \psi (\xi) A (\xi)^\dagger [A(t \xi) [\mathcal{F} u (t \xi)]] \dif \xi\\
 &= t^{k - n} \int_{\Rset^n} e^{2\pi i \dualprod{\zeta}{x}} \psi (\zeta/t) \mathcal{F} u (\zeta) \dif \zeta\;.
\end{split}
\end{equation}
Hence by \eqref{eq_eith1seejohvaingai9jaiCa} and \eqref{eq_Maong2ied2aiweigie4feeng}, we get 
by \eqref{eq_ahpeithouG3yuothai5laete} and the Fourier inversion formula. 
\begin{equation}
\begin{split}
\int_{\Rset^n} G_A (x - y) [A(\Deriv) u (y)] \dif y &= \int_{0}^{+\infty} \int_{\Rset^n} e^{2\pi i \dualprod{\zeta}{x}} \psi (\zeta/t) \mathcal{F} u (\zeta) \dif \zeta \frac{\dif t }{t}\\
& = \int_{\Rset^n} \mathcal{F} u (\zeta) \dif \zeta = u (x)\;,
\end{split}
\end{equation}
We have thus proved that \eqref{it_iez1EicaeNaanopipheyaeru} holds when \(k < n\).

Differentiating \eqref{eq_Ejee3jaeghoht2weez8ae1ru}, we have for every \(\ell \in \Nset\) that \(G_A \in C^\ell \brk{\Rset^n \setminus \set{0}, \Lin (E, V)}\), with
for every \(x \in \Rset^n \setminus \set{0}\),
\begin{equation}
  \label{eq_shixae4newoh5ahV4}
\Deriv^\ell G_A (x) =  \int_0^{+\infty} 
\Deriv^\ell H_A (t x) \,
t^{n + \ell - k - 1} \dif t\;.
\end{equation}
For every \(\lambda \in (0, +\infty)\), we have by 
\eqref{eq_Ejee3jaeghoht2weez8ae1ru}  and by the change of variable \(s = \lambda t\),
\begin{equation}
  \label{eq_ooy9ofi8saij1Iec8eiYei7A}
\begin{split}
 G (\lambda x) &=
 \int_0^{+\infty} \hspace{-1em}H_A (t \lambda x) \,t^{n - k - 1} \dif t = \frac{1}{\lambda^{n - k}} \int_0^{+\infty} \hspace{-1em} H_A (s x) \, s^{n - k  - 1} \dif s = \frac{G_A (x)}{\lambda^{n - k}}\;,
 \end{split}
\end{equation}
so that the assertion \eqref{it_oocee4ao4DaiK8sheWeechee} holds for \(k < n\) in view of \eqref{eq_reiciet1soo3rahm2Ash0quo} and 
\eqref{eq_ooy9ofi8saij1Iec8eiYei7A}.

When \(k\ge n\), we define the function \(G_A : \Rset^n \setminus \set{0} \to \Lin (E, V)\) for each \(x \in \Rset^n \setminus \set{0}\) by 
\begin{multline}
  \label{eq_oonee0oogai3ieM2veiy2oow}
  G_A (x) 
  \defeq 
  \int_0^{+\infty} 
  \frac{\dualprod{\Deriv^{k - n + 1} H_A (t x)}{x^{\otimes k - n  + 1}}}{(k - n)!} \ln \frac{1}{t} \dif t \\
  + \frac{\dualprod{\Deriv^{k - n} H_A (0)}{x^{\otimes k - n}}}{(k - n)!} \sum_{j = 1}^{k - n} \frac{1}{j}\; .
\end{multline}
Assuming that \(x = 0\), we first have by \eqref{eq_oonee0oogai3ieM2veiy2oow}
\begin{equation}
\label{eq_deehi1laeth9rei2deuKeera}
\begin{split}
  \int_{\Rset^n}  &G_A (x-y) [A(\Deriv) u (y)] \dif y\\ 
   &=
  \int_0^{+\infty} 
  \int_{\Rset^n}
  \frac{
  \dualprod{\Deriv^{k - n + 1} H_A (-t y)}{(- y)^{\otimes k - n  + 1}}[A (\Deriv) u (y)]}{(k - n)!} \dif y \ln \frac{1}{t} \dif t\\
  &\quad + \int_{\Rset^n} 
  \frac{\dualprod{\Deriv^{k - n} H_A (0)}{(-y)^{\otimes k - n}}[A (\Deriv) u(y)]}{(k - n)!}\dif y \sum_{j = 1}^{k - n} \frac{1}{j}\;.
\end{split}
\end{equation} 
In order to compute out the first-term in the right-hand side of \eqref{eq_deehi1laeth9rei2deuKeera}, we first work out the inner integral for every \(t \in \intvo{0}{+\infty}\) in view of \eqref{eq_teek1coong8vi7Thoh1ohr4d} as
\begin{equation}
\label{eq_Cie1Ge3ahyeu9eThayidethi}
\begin{split}
 \int_{\Rset^n}&
  \dualprod{\Deriv^{k - n + 1} H_A (- ty)}{(- y)^{\otimes k - n  + 1}}[A (\Deriv) u (y)] \dif y \\
  &=\int_{\Rset^n}\int_{\Rset^n} \frac{e^{- 2\pi i \dualprod{\xi}{t y}}}{(2 \pi i)^{n - 1}} (-\dualprod{\xi}{y})^{k - n + 1} \psi (\xi) A (\xi)^\dagger[A(\Deriv) u (y)]\dif y \dif \xi\\
  &= \int_{\Rset^n}\int_{\Rset^n} \frac{e^{-2\pi i \dualprod{t \xi}{y}}}{(2 \pi i)^{n - 1}} \psi (\xi) A (\xi)^\dagger[ (-\dualprod{\xi}{y})^{k - n + 1} A(\Deriv) u (y)]\dif y \dif \xi\\
  &=  \int_{\Rset^n}\frac{\psi (\xi) A (\xi)^\dagger[\Deriv^{k - n + 1} \mathcal{F} (A(\Deriv)u) (t \xi) 
  [\xi^{\otimes k - n +1}]]}{(2 \pi i)^k}\dif \xi\;.
\end{split}
\end{equation}
By \(k - n + 1\) successive integration by parts we have for every \(\xi \in \Rset^n\)
\begin{equation}
\label{eq_eilooLiPho9XooW8es5taiYe}
\begin{split}
\int_0^{+\infty} 
\frac{\Deriv^{k - n + 1} \mathcal{F} (A(\Deriv)u) (t \xi) 
  [\xi^{\otimes k - n +1}]}{(k - n)!} & \ln \frac{1}{t} \dif t \\
  &= \int_0^{+\infty} \frac{\mathcal{F} (A(\Deriv)u) (t \xi)}{t^{k - n + 1}} \dif t\\
  &= \int_0^{+\infty} \frac{(2 \pi i)^k A(t \xi)\mathcal{F} u(t \xi)}{t^{k - n + 1}} \dif t \;,
\end{split}
\end{equation}
and thus by  \eqref{eq_Cie1Ge3ahyeu9eThayidethi} and \eqref{eq_eilooLiPho9XooW8es5taiYe}, we get by a change of variable \(\zeta = t \xi\)
\begin{equation}
\label{eq_Ahthohtetai2uoyaiquoo0Sa}
\begin{split}
\int_{\Rset^n}&
  \dualprod{\Deriv^{k - n + 1} H_A (- ty)}{(- y)^{\otimes k - n  + 1}}[A (\Deriv) u (y)] \dif y \\
&
\qquad\qquad\qquad\qquad 
=\int_{\Rset^n} \int_0^{+\infty} \frac{\psi (\xi) A(t \xi)^\dagger[A (\xi)[\mathcal{F} u(t \xi)]]}{t^{k -n + 1}} \dif t \dif \xi\\
&
\qquad\qquad\qquad\qquad 
=  \int_{\Rset^n} \int_0^{+\infty} \frac{\psi (\zeta/t) \mathcal{F} u(\zeta)}{t} \dif t \dif \zeta  = u (0)\;,
\end{split}
\end{equation}
in view of \eqref{eq_ahpeithouG3yuothai5laete}.
It remains to evaluate the second term in the right-hand side of \eqref{eq_deehi1laeth9rei2deuKeera}: by integration by parts we have 
\begin{equation}
\label{eq_jautooNg2OGhah0hoo3uroor}
\int_{\Rset^n} 
  \frac{\dualprod{\Deriv^{k - n} H_A (0)}{(-y)^{\otimes k - n}}[A (\Deriv) u(y)]}{(k - n)!}\dif y = 0\;,
\end{equation}
since \(A (\Deriv)\) is a homogeneous differential operator of order \(k \in \Nset \setminus \set{0}\).
Combining \eqref{eq_deehi1laeth9rei2deuKeera}, \eqref{eq_Ahthohtetai2uoyaiquoo0Sa} and \eqref{eq_jautooNg2OGhah0hoo3uroor}, we have proved \eqref{it_iez1EicaeNaanopipheyaeru} when \(k \ge n\).

Next, we claim that for every \(\ell \in \set{0, \dotsc, k - n}\) and \(x \in \Rset^n\), we have
\begin{multline}
  \label{eq_si2eiG6Mee5raeb1}
  \Deriv^\ell 
  G_A (x) 
  = 
  \int_0^{+\infty} 
   \frac{\dualprod{\Deriv^{k - n + 1} H_A (t x)}{x^{\otimes k - n  - \ell + 1}}}{(k - n - \ell)!} \ln \frac{1}{t} \dif t \\
  +  \frac {\dualprod{\Deriv^{k - n} H_A (0)}{x^{\otimes k - n -\ell}}}{(k - n - \ell)!} \sum_{j = 1}^{k - n - \ell} \frac{1}{j}\;.
\end{multline}
Indeed, for \(\ell = 0\), \eqref{eq_si2eiG6Mee5raeb1} reduces to \eqref{eq_oonee0oogai3ieM2veiy2oow}; 
if we assume that \eqref{eq_si2eiG6Mee5raeb1} holds for some \(\ell \in \set{0, \dotsc, k - n - 1}\) and if we differentiate \eqref{eq_si2eiG6Mee5raeb1} we obtain
\begin{equation}
  \label{eq_gu9Cho1EihaiGhef}
  \begin{split}
  \Deriv^{\ell + 1} &
  G_A (x)\\
  =& 
  \int_0^{+\infty} \Bigl(\frac{\dualprod{\Deriv^{k - n + 1} H_A (t x)}{x^{\otimes k - n  - \ell}}}{(k - n - \ell - 1)!} + \frac{\dualprod{\Deriv^{k - n + 1} H_A (t x)}{x^{\otimes k - n  - \ell}}} {(k - n - \ell)!}\\
  &\hspace{10em} \qquad + \frac{\dualprod{\Deriv^{k - n + 2} H_A (t x)}{x^{\otimes k - n  - \ell + 1}}t} {(k - n - \ell)!} \Bigr)\ln \frac{1}{t} \dif t\\
  &+ \frac{\dualprod{\Deriv^{k - n} H_A (0)}{x^{\otimes k - n - \ell - 1}}} {(k - n - \ell - 1)!} \sum_{j = 1}^{k - n - \ell} \frac{1}{j} \;;
\end{split}
\end{equation}
integrating by parts the last term in the integrand on the right-hand side of \eqref{eq_gu9Cho1EihaiGhef}, we have 
\begin{equation}
  \label{eq_johD5Zahvo8ieThi}
  \begin{split}
  \int_0^{+\infty} \hspace{-.8em}&
  \dualprod{\Deriv^{k - n + 2} H_A (t x)}{x^{\otimes k - n  - \ell + 1}} t  \ln \frac{1}{t} \dif t\\
   &= \int_0^{+\infty} 
  \dualprod{\Deriv^{k - n + 1} H_A (t x)}{x^{\otimes k - n  - \ell}} \brk[\Big]{1 - \ln \frac{1}{t}} \dif t\\
  &= - \int_0^{+\infty} \hspace{-.9em}
  \dualprod{\Deriv^{k - n + 1} H_A (t x)}{x^{\otimes k - n  - \ell}} \ln \frac{1}{t} \dif t - \dualprod{\Deriv^{k - n} H_A (0) }{x^{\otimes k - n - \ell - 1}}\;,
  \end{split} 
\end{equation}
and it follows then from \eqref{eq_gu9Cho1EihaiGhef} and \eqref{eq_johD5Zahvo8ieThi} that 
\begin{multline*}
  \Deriv^{\ell + 1} 
  G_A (x) 
  = 
   \int_0^{+\infty} 
  \frac{\dualprod{\Deriv^{k - n + 1} H_A (t x)}{x^{\otimes k - n  - \ell}}}{(k - n - \ell - 1)!} \ln \frac{1}{t} \dif t \\
  + \frac{\dualprod{\Deriv^{k - n} H_A (0)}{x^{\otimes k - n -\ell - 1}}}{(k - n - \ell - 1)!} \sum_{j = 1}^{k - n - \ell - 1} \frac{1}{j} \;,
\end{multline*}
and thus \eqref{eq_si2eiG6Mee5raeb1} holds for each \(\ell \in \set{0,\dotsc, n - k }\) by induction.

Differentiating \eqref{eq_si2eiG6Mee5raeb1} with \(\ell = k - n\), it also follows by integration by parts that 
\[
\begin{split}
  \Deriv^{k - n + 1} 
  G_A (x) &= 
  \int_0^{+\infty} \bigl(\Deriv^{k - n + 1} H_A (t x)  
  + \dualprod{\Deriv^{k - n + 2} H_A (t x)}{x} t \bigr)\ln \frac{1}{t} \dif t \\
  & = \int_0^{+\infty} \Deriv^{k - n + 1} H_A (t x) \dif t\;,
\end{split}
\]
and thus \eqref{eq_shixae4newoh5ahV4} holds for \(\ell > k - n\) also when \(k \ge n\).

\smallbreak
For each \(x \in \Rset^n \setminus \set{0}\) and \(\lambda \in (0, +\infty)\), we have by \eqref{eq_oonee0oogai3ieM2veiy2oow} and by the change of variable \(t = s/\lambda\),
\begin{equation*}
  \begin{split}
    G_A (\lambda x) 
    &= 
    \int_0^{+\infty} 
    \frac{\dualprod{\Deriv^{k - n + 1} H_A (t \lambda x)}{(\lambda x)^{\otimes k - n  + 1}}}{(k - n)!} \ln \frac{1}{t} \dif t \\
    & \hspace{14em} + \frac{\dualprod{\Deriv^{k - n} H_A (0)}{(\lambda x)^{\otimes k - n}}}{(k - n)!}\sum_{j = 1}^{k - n} \frac{1}{j}\\
 &= \lambda^{k - n} \brk[\Big]{G_A (x) 
    +\ln \lambda \int_0^{+\infty} 
    \frac{\dualprod{\Deriv^{k - n + 1} H_A (s x)}{x^{\otimes k - n  + 1}}}{(k - n)!} \dif s}\\
    & = \lambda^{k - n} \brk[\big]{G_A (x) - \ln \lambda \,   P_A (x)}\;,
  \end{split}
\end{equation*}
where, in view of \eqref{eq_teek1coong8vi7Thoh1ohr4d}, the function \(P_A : \Rset^n \to \Lin (E + i E, V + i V)\) is defined for every \(x \in \Rset^n\) by 
\begin{equation}
\label{eq_zohfee0ha0uParaixohfaeb0}
\begin{split}
    P_A (x) \defeq & -
    \int_0^{+\infty} 
    \frac{
    \dualprod{\Deriv^{k - n + 1} H_A (t x)}{x^{\otimes k - n  + 1}}}{(k - n)!}\dif t\\
      = &-
    \int_{0}^{+\infty} 
    \int_{\Rset^{n}}
    \frac{\dualprod{\xi}{x}^{k - n + 1}e^{2 \pi i t  \dualprod{\xi}{x}}\psi (\xi) A (\xi)^\dagger }{(k - n)!(2 \pi i)^{n - 1}}  \dif \xi  \dif t\\
     =& -
    \int_{0}^{+\infty} 
    \int_{0}^{+\infty}
    \int_{\Sset^{n - 1}}
    \frac{\dualprod{\zeta}{x}^{k - n + 1}e^{2 \pi i t r \dualprod{\zeta}{x}}\psi (r \zeta) A (\zeta)^\dagger }{(k - n)!(2 \pi i)^{n - 1}}  \dif \zeta 
    \dif r \dif t\\
    =& -
    \int_{\Sset^{n - 1}} \int_{0}^{+\infty} 
    \int_{0}^{+\infty}
    \frac{\dualprod{\zeta}{x}^{k - n}e^{2 \pi i sr}\psi (r \zeta) A (\zeta)^\dagger  }{(k - n)!(2 \pi i)^{n - 1}}  
    \dif r \dif s \dif \zeta\;,
  \end{split}
\end{equation}
by the change of variables \(\xi = r \zeta\) and \(s = t \dualprod{\zeta}{x}\).
The innermost integrals can be computed by the Riemann-Lebesgue lemma as  
\begin{equation}
\label{eq_sui5eenaew3we3ush5Oojiaw}
\begin{split}
 \int_{0}^{+\infty} 
 \int_{0}^{+\infty}
    e^{2 \pi i sr}\psi (r \xi)
    \dif r \dif s 
    &= \lim_{S \to + \infty}
    \int_0^S \int_0^{+\infty} e^{2 \pi i sr}\psi (r \xi) \dif r \dif s \\
    &= \lim_{S \to \infty} \int_0^{+\infty} \frac{e^{2 \pi i S r} - 1}{2 \pi i r} \psi (r \xi) \dif r\\
    &= -\int_{0}^{+\infty} 
  \frac{\psi (r\xi)}{2 \pi i r}
    \dif r  = -\frac{1}{2 \pi i}\;,
\end{split}
\end{equation}
in view of \eqref{eq_ahpeithouG3yuothai5laete},
and therefore for every \(x \in \Rset^n\), by \eqref{eq_zohfee0ha0uParaixohfaeb0} and \eqref{eq_sui5eenaew3we3ush5Oojiaw} we have
\begin{equation}
\label{eq_VafaivohZiethah1OeTh5shu}
  P_{A} (x) 
  = 
    \int_{\Sset^{n - 1}}
    \frac{\dualprod{\xi}{x}^{k - n}A (\xi)^\dagger }{(k - n)!(2 \pi i)^{n}}
    \dif \xi\; .
\end{equation}
Since for every \(\xi \in \Rset^n\) and \(x \in \Rset^n\), \(\dualprod{\xi}{-x}^{k - n}A(-\xi)^\dagger = (-1)^n \dualprod{\xi}{x}^{k - n}A(\xi)^\dagger\), we have \(P_A = 0\) when \(n\) is odd.
When \(n\) is even,  we have for every \(x \in \Rset^n\), \(P_A (x) \in \Lin (E, V)\), and we have thus proved \eqref{it_oocee4ao4DaiK8sheWeechee} when \(k \ge n\).

If \(e \in \bigcap_{\xi \in \Rset^n \setminus \set{0}} A (\xi)[V]\), then for every \(\xi \in \Rset^n\) we have \(A (\xi)^{\dagger} [e]= A(\xi)^{-1} [e]\) and the assertion \eqref{it_supahJuacocei6efiey7aife} follows immediately from the identity \eqref{eq_VafaivohZiethah1OeTh5shu}.

We consider now a function \(f \in \mathcal{S} (\Rset^n, E)\) which satisfies for every \(\xi \in \Rset^n\) the condition that \(\mathcal{F} f (\xi) \in A (\xi)[V + iV]\).
If we let the function \(u: \Rset^n \to V\) be defined by \eqref{eq_au0uuzoghaem0aeN3aeGhah6}, then 
we have by the differentiation formula \eqref{eq_shixae4newoh5ahV4}, since \(k > k - n\),
\begin{equation}
  \label{eq_dahsheireo9oi9Aeshieshee}
  A (\Deriv) u (x) 
= \int_0^{+\infty} \int_{\Rset^n} A (\Deriv) H_A (t y)[f (y - x)] t^{n - 1} \dif t \dif y\;.
\end{equation}
We compute by the definition of \(H_A\) in \eqref{eq_teek1coong8vi7Thoh1ohr4d}
\begin{equation}
\label{eq_iNgeizahguu6Jaishahw6uaM}
\begin{split}
  \int_{\Rset^n} A (\Deriv) H_A (t y)&[f (x - y)] \dif y\\
  &= \int_{\Rset^n} \int_{\Rset^n} A (\xi)[A (\xi)^\dagger [e^{2 \pi i \dualprod{t \xi}{y}} f (x - y)]] \psi (\xi) \dif \xi \dif y\\
  &= \int_{\Rset^n} \int_{\Rset^n} A (\xi)[A (\xi)^\dagger [e^{2 \pi i \dualprod{t \xi}{y}} f (x - y)]] \psi (\xi) \dif y \dif \xi \\
&= \int_{\Rset^n} A (t \xi)[A (t\xi)^\dagger [e^{2 \pi i \dualprod{t \xi}{x}} \mathcal{F} f ( t \xi)]] \psi (\xi) \dif \xi\\
&= \int_{\Rset^n} e^{2 \pi i \dualprod{t\xi}{x}} \mathcal{F} f ( t \xi) \psi (\xi) \dif \xi\\
&= \frac{1}{t^n} \int_{\Rset^n} e^{2 \pi i \dualprod{\zeta}{x}} \mathcal{F} f (\zeta) \psi (\zeta/t) \dif \zeta
\;.
\end{split}
\end{equation}
and it follows thus from \eqref{eq_ahpeithouG3yuothai5laete}, \eqref{eq_dahsheireo9oi9Aeshieshee} and \eqref{eq_iNgeizahguu6Jaishahw6uaM} that  
\[
\begin{split}
A (\Deriv) u (x) 
& = \int_0^{+\infty} \int_{\Rset^n} e^{2 \pi i \dualprod{\zeta}{x}} \mathcal{F} f (\zeta) \frac{\psi (\zeta/t)}{t} \dif \zeta \dif t \\
& = \int_{\Rset^n} e^{2 \pi i \dualprod{\zeta}{x}} \mathcal{F} f (\zeta) \dif \zeta = f (x)\;.
\end{split}
\]
We have thus proved the assertion \eqref{it_wee9airoSei7iex2ude}.

In order to prove the assertion \eqref{it_Aix3Ahshe4ahchiekai4Ieph}, given 
\begin{equation*}e \in \smash{\bigcap_{\xi \in \Rset^n \setminus \set{0}} A (\xi)[V]}\;,
\end{equation*}
we fix a function \(\eta \in C^\infty_c (\Rset^n, \Rset)\) such that \(\smash{\int_{\Rset^n} \eta }= 1\) and for every \(\delta \in \intvo{0}{+\infty}\), we consider the function \(f_\delta \in C^\infty_c (\Rset^n, E)\) defined for each \(x \in \Rset^n\) by \(f_\delta (x) \defeq e \eta(x/\delta)/\delta^n\), so that \(\int_{\Rset^n} f_\delta = e\).
We define \(u_\delta : \Rset^n \to V\) for each \(x \in \Rset^n\) by 
\begin{equation*}
    u_\delta (x) \defeq \smash{\int_{\Rset^n}} G_A (x - y)[f_\delta (y)] \dif y\;.
\end{equation*}
For every \(\xi \in \Rset^n\), we have 
\begin{equation*}
\mathcal{F} f_\delta (\xi) = e \mathcal{F} \eta(\delta \xi) \in \Cset e \subseteq \smash{\bigcap_{\xi \in \Rset^n \setminus \set{0}} A (\xi)[V + i V]}\;.
\end{equation*}
By \eqref{it_wee9airoSei7iex2ude}, we have \(\smash{A(\Deriv) u_\delta = f_\delta}\) in \(\Rset^n\).
Since \(G_A \in \smash{L^1_{\mathrm{loc}} (\Rset^n, \Lin(E, V))}\), we have \(u_\delta \to G_A[e]\) in the sense of distributions as \(\delta \to 0\). Since \(f_\delta \to e \delta_0\) in the sense of distributions as \(\delta \to 0\), we conclude that \(A (\Deriv) G_A[e] = e \delta_0\) in \(\Rset^n\) in the sense of distributions and we have proved the assertion \eqref{it_Aix3Ahshe4ahchiekai4Ieph}.

We finally prove \eqref{it_Zigh3aiy3joo5maewiechua6}. For \(\ell > n - k\), if \(\varphi \in \smash{C^\infty_c (\Rset^n \setminus \set{0}, \Rset)}\) and if for every \(x \in \Rset^n\), 
\(\smash{\int_0^{+\infty}
\varphi (x/t) \, t^{\ell - k - 1} \dif t = 1}\), then by \eqref{eq_shixae4newoh5ahV4} we have
\[
\begin{split}
\int_{\Sset^{n - 1}}  \Deriv^\ell G_A (x) \dif x
&= \int_{\Sset^{n - 1}} \varphi (x r) \, \Deriv^\ell G_A (x)\, r^{k - \ell - 1} \dif x\\
& = 
  \int_{\Rset^n} \varphi (z)\,  \Deriv^\ell G_A (z) \dif z\\
  &= \int_{\Rset^n} \int_0^{+\infty} 
  \varphi (z)\,
  \Deriv^\ell H_A (t z) \,
  t^{n + \ell - k - 1} \dif t \dif z \\
  &= \int_{\Rset^n} \int_0^{+\infty} 
  \varphi (z/t)\,
  \Deriv^\ell H_A (z) \,
  t^{\ell - k - 1} \dif z\dif t \\
  &= \int_{\Rset^n}  
  \Deriv^\ell H_A (z) \dif z = 0\;;
\end{split}
\]
this proves \eqref{it_Zigh3aiy3joo5maewiechua6}.
\end{proof}

\subsection{Compatibility Conditions}

The condition \(\mathcal{F} f (\xi) \in A (\xi)[V + i V]\) in  \cref{proposition_fundamental_solution} \eqref{it_wee9airoSei7iex2ude} can be reformulated as being in the kernel of some differential operator by the next proposition.

\begin{proposition}[Compatibility conditions]
  \label{proposition_compatibility_conditions}
Let \(n \in \Nset \setminus \set{0}\), let \(V\) and \(E\) be finite-dimensional vector spaces, and let \(A (\Deriv)\) be a homogeneous constant coefficient differential operator
of order \(k \in \Nset \setminus \set{0}\) from 
\(V\) to \(E\) on \(\Rset^n\).
If \(A (\Deriv)\) is injectively elliptic, then there exists a homogeneous constant coefficient differential operator \(L (\Deriv)\)
from 
\(E\) to \(F\) on \(\Rset^n\) such that for every \(\xi \in \Rset^n \setminus \set{0}\),
\begin{equation}
\label{eq_phicee9aad0ugh2thaeTueph}
  A (\xi)[V] = \ker L (\xi)\;.
\end{equation}
\end{proposition}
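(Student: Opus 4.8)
The plan is to turn the projection of $E$ onto the subspace $A (\xi)[V]$ into a polynomial symbol. For $\xi \in \Rset^n \setminus \set{0}$ I would set $\Pi (\xi) \defeq A (\xi) \compose A (\xi)^\dagger \in \Lin (E, E)$, with $A (\xi)^\dagger$ the pseudo-inverse from \eqref{eq_oohie4sohseiFiv8}; since $A (\xi)^\dagger \compose A (\xi) = \operatorname{id}_V$, the operator $\Pi (\xi)$ satisfies $\Pi (\xi)^2 = \Pi (\xi)$ and $\operatorname{range} \Pi (\xi) = A (\xi)[V]$, so that $\operatorname{id}_E - \Pi (\xi)$ is the complementary projection and $\ker (\operatorname{id}_E - \Pi (\xi)) = A (\xi)[V]$. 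The only obstruction is that $\Pi (\xi)$, which involves $(A (\xi)^* \compose A (\xi))^{-1}$, depends rationally rather than polynomially on $\xi$.

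To clear the denominators I would fix Euclidean structures on $V$ and $E$ and introduce the scalar $\Delta (\xi) \defeq \det (A (\xi)^* \compose A (\xi))$, a homogeneous polynomial of degree $2 k \dim V$ in $\xi$ that is strictly positive on $\Rset^n \setminus \set{0}$ because injective ellipticity (\cref{lemma_right_elliptic_inverse_ineq}) makes $A (\xi)^* \compose A (\xi)$ positive definite there. By Cramer's rule $(A (\xi)^* \compose A (\xi))^{-1} = \Delta (\xi)^{-1} B (\xi)$, where the adjugate $B (\xi) \in \Lin (V, V)$ has entries that are homogeneous polynomials in $\xi$ of degree $2 k (\dim V - 1)$. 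I would then define, for every $\xi \in \Rset^n$,
\begin{equation*}
  L (\xi) \defeq \Delta (\xi) \operatorname{id}_E - A (\xi) \compose B (\xi) \compose A (\xi)^* \in \Lin (E, E)\;.
\end{equation*}

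Each entry of $L (\xi)$ is a homogeneous polynomial in $\xi$ of degree $m \defeq 2 k \dim V$ (indeed $A (\xi)$ and $A (\xi)^*$ are homogeneous of degree $k$ and $B (\xi)$ of degree $2 k (\dim V - 1)$), so there are linear maps $L_\alpha \in \Lin (E, E)$ for $\abs{\alpha} = m$ with $L (\xi) = \sum_{\abs{\alpha} = m} \xi^\alpha L_\alpha$; this is the symbol of a homogeneous constant coefficient differential operator $L (\Deriv)$ of order $m$ from $E$ to $F \defeq E$ on $\Rset^n$ (here one may assume $\dim V \ge 1$, so that $m \ge 1$; if $V = \set{0}$ then $A (\xi)[V] = \set{0} = \ker \Deriv (\xi)$ for every $\xi \ne 0$ and one takes $L (\Deriv) \defeq \Deriv$ instead). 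Finally, for $\xi \in \Rset^n \setminus \set{0}$ one has $L (\xi) = \Delta (\xi) (\operatorname{id}_E - \Pi (\xi))$ with $\Delta (\xi) \ne 0$, hence $\ker L (\xi) = \ker (\operatorname{id}_E - \Pi (\xi)) = A (\xi)[V]$, which is \eqref{eq_phicee9aad0ugh2thaeTueph}.

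The only delicate point is the degree bookkeeping guaranteeing that $L (\xi)$ is a \emph{homogeneous} polynomial matrix, hence a legitimate symbol — this is precisely why the clearing factor must be the natural $\Delta (\xi)$ rather than some noncanonical quantity such as a power of $\abs{\xi}$; everything else is the normal-equations computation already used to define $A (\xi)^\dagger$ in \eqref{eq_oohie4sohseiFiv8}.
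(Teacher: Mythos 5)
Your construction is exactly the paper's: both take \(L(\xi) \defeq \det\bigl(A(\xi)^*\compose A(\xi)\bigr)\bigl(\operatorname{id}_E - A(\xi)\compose(A(\xi)^*\compose A(\xi))^{-1}\compose A(\xi)^*\bigr)\), clear the denominator via the adjugate, and read off the kernel from the projection \(\Pi(\xi)\). The proof is correct; your explicit degree bookkeeping (total degree \(2k\dim V\)) just makes precise the homogeneity the paper asserts and records in the subsequent remark.
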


The operator \(L (\Deriv)\) arising in \cref{proposition_compatibility_conditions} describes the \emph{compatibility conditions} of the operator \(A (\Deriv)\). It describes the obstructions to the solution of systems of the form \(A (\Deriv)u = f\) and the structure of the range of the differential operator \(A (\Deriv)\).

Such conditions are well-known for the gradient operator, whose image are curl-free vector fields: one has indeed for every \(i, j \in \set{1, \dotsc, n}\),
\[
  \partial_j (\partial_i u) = \partial_i (\partial_j u)\;.
\]
Similarly for the \emph{Hodge complex,} where \(A (\Deriv) = (d, d^*)\), one has \((d, d^*) A (\Deriv) = (d^2, d^*{}^2) = 0\).
A more subtle but still classical setting are the compatibility conditions for the symmetric derivative or deformation operator known in linear elasticity as the \emph{Saint-Venant compatibility conditions}:\footnote{For a discussion of Saint-Venant compatibility conditions in physical context see for example \cite{Timoshenko_Goodier_1951}*{Ch.\ 9} and in the mathematical theory for smooth functions see \cite{Ciarlet_2013}*{\S 6.18}} for every \(i, j, k, \ell \in \set{1, \dotsc, n}\), we have 
\begin{equation}
\label{eq_Xah5quaejieRahche9roova8}
 \partial_{k\ell} (\partial_i u^j +\partial_j u^i)
 +\partial_{ij} (\partial_k u^\ell +\partial_\ell u^k)
 = \partial_{kj} (\partial_i u^\ell +\partial_\ell u^i)
 +\partial_{i\ell} (\partial_k u^j +\partial_j u^k)\;.
\end{equation}

\begin{proof}[Proof of \cref{proposition_compatibility_conditions}]
We define the operator \(L (\Deriv)\) by setting for every \(\xi \in \Rset^n \setminus \set{0}\),
\begin{equation}
\label{eq_xooch1Ocui9jechohnaeFo2I}
 L (\xi) \defeq \det \bigl(A (\xi)^* \compose A (\xi)\bigr)\,
 \bigl(\operatorname{id}_E\, -\, A (\xi) \compose (A (\xi)^* \compose A (\xi))^{-1} \compose A (\xi)^* \bigr)
\end{equation}
and \(L (0) \defeq 0\).
We observe that for every \(\xi \in \Rset^n \setminus \set{0}\),
\[
\ker L (\xi) = \set[\big]{ e \in E \st A (\xi) \compose (A (\xi)^* \compose A (\xi))^{-1} \compose A (\xi)^* [e] = e}
= A (\xi)[V]\;,
\]
so that \eqref{eq_phicee9aad0ugh2thaeTueph} holds. 
Moreover, since \(A (\xi)^* \compose A (\xi)\) is a polynomial in \(\xi\), then 
\begin{equation*}
\det (A (\xi)^* \compose A (\xi)) (A (\xi)^* \compose A (\xi))^{-1}
\end{equation*}
 is also a polynomial in \(\xi\) and thus \(L (\xi)\) is a polynomial in \(\xi\).
\end{proof}

\begin{remark}
The proof of \cref{proposition_compatibility_conditions} yields an operator \(L (\Deriv)\) of order \(2 k \dim V\), which is much more than what is needed in typical examples.
\end{remark}

\subsection{Singular Integrals and \texorpdfstring{\(L^p\)}{Lᵖ} Estimates}

We are now is position to obtain \(L^p\) estimates for injectively elliptic operators.

\begin{theorem}[\(L^p\) estimate for injectively elliptic operators]
\label{theorem_right_elliptic_estimate}
Let \(n \in \Nset \setminus \set{0}\), let \(V\) and \(E\) be finite-dimensional vector spaces, let \(A (\Deriv)\) be a homogeneous constant coefficient differential operator
of order \(k \in \Nset \setminus \set{0}\) from 
\(V\) to \(E\) on \(\Rset^n\), and let \(p \in (1, +\infty)\).
If \(A (\Deriv)\) is injectively elliptic, then there exists a constant  \(C \in \intvo{0}{+\infty}\) such that for every \(u \in C^\infty_c (\Rset^n, V)\), 
  \begin{equation}
  \label{eq_ae1IThu7lai7maequoob2egh}
  \int_{\Rset^n} \abs{\Deriv^k u}^p
  \le 
  C
  \int_{\Rset^n} \abs{A (\Deriv)[u]}^p\;.
  \end{equation}
\end{theorem}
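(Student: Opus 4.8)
The plan is to combine the representation formula from \cref{proposition_fundamental_solution} with the Calderón–Zygmund theory of singular integral operators. Starting from the identity \eqref{eq_jal9Niez6Vu2ohved5OorooG}, namely \(u(x) = \int_{\Rset^n} G_A(x - y)[A(\Deriv) u(y)] \dif y\) for \(u \in C^\infty_c(\Rset^n, V) \subseteq \mathcal{S}(\Rset^n, V)\), I would differentiate \(k\) times under the integral sign to obtain, formally, \(\Deriv^k u(x) = \int_{\Rset^n} \Deriv^k G_A(x - y)[A(\Deriv) u(y)] \dif y\). The kernel \(\Deriv^k G_A\) is, by \cref{proposition_fundamental_solution}\eqref{it_oocee4ao4DaiK8sheWeechee}, homogeneous of degree \(k - n - k = -n\) on \(\Rset^n \setminus \set{0}\): indeed differentiating \(k\) times the identity \(G_A(\lambda x) = \lambda^{k-n}(G_A(x) - \ln\abs{\lambda}\, P_A(x))\) kills the polynomial part \(P_A\) (which has degree \(k - n < k\)) and the logarithmic prefactor, leaving \(\Deriv^k G_A(\lambda x) = \lambda^{-n} \Deriv^k G_A(x)\). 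Moreover \(\Deriv^k G_A\) is smooth away from the origin, and by \cref{proposition_fundamental_solution}\eqref{it_Zigh3aiy3joo5maewiechua6} (applied with \(\ell = k > k - n\)) it has vanishing mean over the unit sphere \(\Sset^{n-1}\). Thus \(\Deriv^k G_A\) is precisely a Calderón–Zygmund convolution kernel of Mihlin–Hörmander type: smooth, homogeneous of degree \(-n\), with mean zero on the sphere.

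The key steps, in order, are: (i) justify rigorously that \(\Deriv^k u = T[A(\Deriv) u]\) where \(T\) is convolution with the principal-value distribution associated to \(\Deriv^k G_A\); since \(A(\Deriv)u \in C^\infty_c\) and \(u \in \mathcal{S}\), one can differentiate \eqref{eq_jal9Niez6Vu2ohved5OorooG} and interpret the resulting singular integral in the principal-value sense, the cancellation \(\int_{\Sset^{n-1}} \Deriv^k G_A = 0\) guaranteeing that the principal value exists. (ii) Invoke the classical Calderón–Zygmund theorem: a convolution operator whose kernel is homogeneous of degree \(-n\), smooth on \(\Sset^{n-1}\), and has mean zero on \(\Sset^{n-1}\) is bounded on \(L^p(\Rset^n)\) for every \(p \in (1, +\infty)\). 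This gives a constant \(C \in \intvo{0}{+\infty}\), depending only on \(p\), \(n\) and the kernel, with \(\norm{\Deriv^k u}_{L^p} \le C \norm{A(\Deriv) u}_{L^p}\). (iii) Conclude \eqref{eq_ae1IThu7lai7maequoob2egh} for all \(u \in C^\infty_c(\Rset^n, V)\). Since \(V\) and \(E\) are finite-dimensional and \(\Lin(E, V)\)-valued kernels reduce componentwise to scalar Calderón–Zygmund kernels, the vector-valued case follows from the scalar one.

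The main obstacle is step (i): making precise the passage from the absolutely convergent representation \eqref{eq_jal9Niez6Vu2ohved5OorooG} to a genuine singular-integral identity for \(\Deriv^k u\). One must check that differentiating under the integral is legitimate and identify the boundary term that appears when the kernel's singularity is of borderline order \(-n\). Concretely, one writes \(\Deriv^k G_A\) as a principal-value kernel plus possibly a multiple of \(\delta_0\) (a "Calderón–Zygmund operator plus a constant-coefficient zero-order term"), with the Dirac mass contribution controlled using \cref{proposition_fundamental_solution}\eqref{it_Aix3Ahshe4ahchiekai4Ieph} and the algebraic relation between \(\Deriv^k\) and \(A(\Deriv)\) on the Fourier side (the symbol of \(T\) is \((2\pi i\xi)^{\otimes k} \circ A(\xi)^\dagger\), bounded and smooth on \(\Sset^{n-1}\) by injective ellipticity via \cref{lemma_right_elliptic_inverse_ineq}). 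Once the identity \(\Deriv^k u = T[A(\Deriv) u]\) is established with \(T\) of Calderón–Zygmund type, the \(L^p\) bound is standard. An alternative, cleaner route that avoids kernel bookkeeping entirely is to work purely on the Fourier side: the operator \(u \mapsto \Deriv^k u\) has symbol \((2\pi i \xi)^{\otimes k}\), while \(A(\Deriv)u\) has symbol \((2\pi i)^k A(\xi)[\cdot]\), so \(\Deriv^k u = m(\Deriv)[A(\Deriv) u]\) with multiplier \(m(\xi) = \xi^{\otimes k} \circ A(\xi)^\dagger\), which by \cref{lemma_right_elliptic_inverse_ineq} is homogeneous of degree \(0\) and smooth on \(\Rset^n \setminus \set{0}\); the Mihlin–Hörmander multiplier theorem then yields \(L^p\)-boundedness directly. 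I would present the kernel-based argument since the representation kernel \(G_A\) has just been constructed, but the multiplier argument is the conceptual heart.
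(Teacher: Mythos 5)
Your proposal follows essentially the same route as the paper: the paper's proof of \cref{theorem_right_elliptic_estimate} is exactly the combination of the Calderón--Zygmund theorem (\cref{theorem_singular_integral}) with a singular integral representation of \(\Deriv^k u\) by the kernel \(\Deriv^k G_A\), which is homogeneous of degree \(-n\), smooth away from the origin, and has vanishing mean on \(\Sset^{n-1}\) by \cref{proposition_fundamental_solution}~\eqref{it_Zigh3aiy3joo5maewiechua6}. The "main obstacle" you correctly single out in step (i) is precisely what the paper isolates and resolves in \cref{lemma_representation_Dku}, by differentiating the representation formula \(k-1\) times (where the kernel is still locally integrable) and then handling the last derivative with a cutoff and a limiting argument in the principal-value sense.
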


Our main tool will be the Calderón-Zygmund singular integral theorem.

\begin{theorem}[Singular integral theorem%
\footnote{%
The \(L^p\) theory of singular integrals of \cref{theorem_singular_integral} originates in the work of Alberto \familyname{Calderón} and Antoni \familyname{Zygmund} \cite{Calderon_Zygmund_1952} (see also \cite{Stein_1970}*{Ch.\ II Th.\ 3}).%
}%
]
  \label{theorem_singular_integral}
  Let \(n \in \Nset \setminus \set{0}\) and let \(V\) and \(E\) be finite-dimensional vector spaces.
  If \(K \in C^1 (\Rset^n \setminus\set{0}, \Lin (E, F))\) is homogeneous of degree \(-n\) and if 
  \begin{equation*}
    \int_{\Sset^{n - 1}} K = 0\;,
  \end{equation*}
  then for every \(p \in \intvo{1}{+\infty}\), there exists a constant \(C \in \intvo{0}{+\infty}\) such that if 
we define for each \(\varepsilon \in \intvo{0}{+\infty}\) the operator \(\mathcal{K}_\varepsilon\) for every \(f \in L^p (\Rset^n, E)\) by 
  \begin{equation*}
    (\mathcal{K}_{\varepsilon} f) (x) \defeq \int_{\Rset^n \setminus B_\varepsilon (0)} K (h) [f (x - h)]  \dif h\;,
  \end{equation*}
  then \(\mathcal{K}_\varepsilon\) is well-defined and 
  we have for every \(f \in L^p (\Rset^n, E)\),
  \begin{equation*}
    \int_{\Rset^n} \abs{\mathcal{K}_{\varepsilon} f}^p
    \le C
    \int_{\Rset^n} \abs{f}^p\;.
  \end{equation*}
  Moreover, \(\mathcal{K}_\varepsilon f\) converges in \(L^p (\Rset^n, F)\) as \(\varepsilon \to 0\).
\end{theorem}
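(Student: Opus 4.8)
The statement is the classical Calder\'on-Zygmund theorem on singular integral operators, and the plan is to follow the standard route, taking care that every bound be uniform in the truncation parameter \(\varepsilon\). Write \(K_\varepsilon\) for the extension by \(0\) to \(\Rset^n\) of the restriction of \(K\) to \(\Rset^n \setminus B_\varepsilon (0)\), so that \(\mathcal{K}_\varepsilon f = K_\varepsilon \ast f\); since homogeneity of degree \(-n\) gives \(\abs{K (h)} \le C \abs{h}^{-n}\), for each fixed \(\varepsilon \in \intvo{0}{+\infty}\) the map \(K_\varepsilon\) belongs to \(L^{p'} (\Rset^n, \Lin (E, F)) \cap L^2 (\Rset^n, \Lin (E, F))\), so Young's convolution inequality shows that \(\mathcal{K}_\varepsilon\) is well-defined on \(L^p (\Rset^n, E)\) for \(p \in \intvo{1}{+\infty}\). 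The proof then proceeds through an \(L^2\) bound, a weak-type \((1,1)\) bound, interpolation and duality for the remaining exponents, and finally a density argument for the convergence.

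First I would prove that \(\sup_{\varepsilon > 0} \norm{\mathcal{K}_\varepsilon}_{L^2 \to L^2} < +\infty\). Since \(K_\varepsilon \in L^2\), its Fourier transform is a genuine \(L^2\) function, and the scaling relation \(K_\varepsilon (h) = \varepsilon^{-n} K_1 (h/\varepsilon)\) gives \(\mathcal{F} K_\varepsilon (\xi) = \mathcal{F} K_1 (\varepsilon \xi)\), so it suffices to show that \(\mathcal{F} K_1 \in L^\infty\). Passing to polar coordinates expresses \(\mathcal{F} K_1 (\xi)\) as the integral over \(\Sset^{n - 1}\) of \(K (\omega)\) against a one-dimensional oscillatory radial integral; it is precisely the cancellation \(\int_{\Sset^{n - 1}} K = 0\) that makes this bounded uniformly in \(\xi\), the logarithmic growth of the radial factor near the frequency \(0\) being absorbed into the finiteness of \(\int_{\Sset^{n - 1}} \abs{\ln \abs{\dualprod{e}{\omega}}} \dif \omega\) for \(e\) a unit vector. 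By Plancherel's theorem, \(\norm{\mathcal{K}_\varepsilon f}_{L^2} \le C \norm{f}_{L^2}\) with \(C\) independent of \(\varepsilon\).

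Next I would verify H\"ormander's regularity condition uniformly in \(\varepsilon\): from \(K \in C^1 (\Rset^n \setminus \set{0})\) and homogeneity one has \(\abs{\Deriv K (x)} \le C \abs{x}^{-n - 1}\), and hence \(\int_{\abs{x} \ge 2 \abs{y}} \abs{K_\varepsilon (x - y) - K_\varepsilon (x)} \dif x \le C\) for every \(y \in \Rset^n\) and \(\varepsilon > 0\), the contributions of the truncation boundaries \(\abs{x} \approx \varepsilon\) and \(\abs{x - y} \approx \varepsilon\) being themselves bounded. Then, given \(f \in L^1 (\Rset^n, E) \cap L^2 (\Rset^n, E)\) and \(\alpha \in \intvo{0}{+\infty}\), I would apply the Calder\'on-Zygmund decomposition \(f = g + \sum_j b_j\), with each \(b_j\) supported in a cube \(Q_j\) and of vanishing mean, \(\norm{g}_{L^\infty} \le C \alpha\), and \(\sum_j \abs{Q_j} \le C \alpha^{-1} \norm{f}_{L^1}\). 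The good part \(g\) is controlled by Chebyshev's inequality and the \(L^2\) bound; for the bad part, the vanishing mean of \(b_j\) together with the H\"ormander condition gives \(\int_{\Rset^n \setminus 2 \sqrt{n}\, Q_j} \abs{\mathcal{K}_\varepsilon b_j} \le C \norm{b_j}_{L^1}\), while the dilated cubes \(2 \sqrt{n}\, Q_j\) have total measure \(\le C \alpha^{-1} \norm{f}_{L^1}\); summing, \(\abs{\set{x \in \Rset^n \st \abs{\mathcal{K}_\varepsilon f (x)} > \alpha}} \le C \alpha^{-1} \norm{f}_{L^1}\), uniformly in \(\varepsilon\). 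Marcinkiewicz interpolation between this weak-type \((1,1)\) bound and the \(L^2\) bound yields the \(L^p\) bound for \(p \in \intvo{1}{2}\); for \(p \in \intvr{2}{+\infty}\), one observes that the transpose kernel \(x \mapsto K (-x)^*\) satisfies the same hypotheses and that the truncated operators it generates are the adjoints of the \(\mathcal{K}_\varepsilon\), so the \(L^{p'}\) bound for the transpose kernel gives the \(L^p\) bound for \(\mathcal{K}_\varepsilon\) by duality. This establishes \(\norm{\mathcal{K}_\varepsilon f}_{L^p} \le C \norm{f}_{L^p}\) with \(C\) independent of \(\varepsilon\).

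Finally, for the convergence of \(\mathcal{K}_\varepsilon f\) in \(L^p\) as \(\varepsilon \to 0\), the uniform bound just obtained reduces the claim, through a standard \(\varepsilon/3\)-type density argument, to the case \(f \in C^1_c (\Rset^n, E)\). For such \(f\) and \(\varepsilon \in \intvo{0}{1}\), using \(\int_{\Sset^{n - 1}} K = 0\) one writes
\[
  \mathcal{K}_\varepsilon f (x) = \int_{B_1 (0) \setminus B_\varepsilon (0)} K (h) [f (x - h) - f (x)] \dif h + \int_{\Rset^n \setminus B_1 (0)} K (h) [f (x - h)] \dif h\;;
\]
the second integral is independent of \(\varepsilon\), and since \(\abs{f (x - h) - f (x)} \le C \abs{h}\) the first integrand is dominated by the integrable function \(h \mapsto C \abs{h}^{1 - n}\) on \(B_1 (0)\), uniformly in \(\varepsilon\), so \(\mathcal{K}_\varepsilon f (x)\) converges for every \(x \in \Rset^n\) as \(\varepsilon \to 0\) and \(\abs{\mathcal{K}_\varepsilon f}\) is bounded, uniformly in \(\varepsilon \in \intvo{0}{1}\), by a fixed function in \(L^p (\Rset^n, F)\); the dominated convergence theorem then gives convergence of \(\mathcal{K}_\varepsilon f\) in \(L^p (\Rset^n, F)\). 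The main obstacle in this program is the weak-type \((1,1)\) estimate — specifically, carrying out the Calder\'on-Zygmund decomposition argument, together with the H\"ormander condition underlying it, so that all the constants are genuinely independent of the truncation parameter \(\varepsilon\).
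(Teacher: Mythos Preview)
The paper does not prove this theorem at all: it is stated as a classical result with a footnote citing \textsc{Calder\'on} and \textsc{Zygmund} and Stein's monograph, and is then used as a black box in the proof of the \(L^p\) estimate for injectively elliptic operators. Your outline is the standard Calder\'on--Zygmund proof (uniform \(L^2\) bound via the Fourier multiplier, H\"ormander condition and Calder\'on--Zygmund decomposition for the weak \((1,1)\) bound, Marcinkiewicz interpolation and duality, then density for the convergence), and it is essentially correct; since the paper offers nothing to compare against, there is no divergence of approach to discuss.
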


In order to apply \cref{theorem_singular_integral} and prove \cref{theorem_right_elliptic_estimate}, we will rely on the following singular representation formula for \(\Deriv^k u\) that follows from 
the construction of representation kernels (\cref{proposition_fundamental_solution}).

\begin{lemma}[Singular integral representation formula]
\label{lemma_representation_Dku}
Under the assumptions of \cref{proposition_fundamental_solution} and with \(G_A : \Rset^n \setminus \set{0} \to \Lin (E, V)\) given by the same proposition, for every \(u \in C^\infty_c(\Rset^n, V)\) and every \(x \in \Rset^n\), one has for every \(r \in \intvo{0}{\infty}\)
\begin{equation}
\label{eq_vohquaeci7Aitoth8ohteich}
\begin{split}
 \Deriv^k u (x) &= 
 \int_{\Sset^{n - 1}} \Deriv^{k - 1} G_A \brk{z}[A (\Deriv) u (x)] \otimes z \dif z
 + \int_{\Rset^n \setminus B_r\brk{0}} \Deriv^{k} G_A (x - y)[ A (\Deriv) u (y)] \dif y\\
 &\qquad +\int_{B_r\brk{0}} \Deriv^{k} G_A (x - y)[ A (\Deriv) u (y) - A (\Deriv) u (x)] \dif y\\
 &= \int_{\Sset^{n - 1}} \Deriv^{k - 1} G_A \brk{z}[A (\Deriv) u (x)] \otimes z \dif z\\
 &\qquad + \lim_{\varepsilon \to 0} \int_{\Rset^n \setminus B_\varepsilon(0)} \Deriv^{k} G_A (h)[A (\Deriv) u (x - h)] \dif h\;.
 \raisetag{4em}
 \end{split}
\end{equation}
\end{lemma}
\begin{proof}
The first \(k -1\) derivatives of \(G_A\) are locally integrable in view of \cref{proposition_fundamental_solution} \eqref{it_oocee4ao4DaiK8sheWeechee} and thus differentiating \(k-1\) times \eqref{eq_jal9Niez6Vu2ohved5OorooG}, we get for every \(x \in \Rset^n\)
\begin{equation}
\label{eq_Xoo6theoxuch2iesoos2wini}
  \Deriv^{k - 1} u (x) = \int_{\Rset^n} \Deriv^{k - 1} G_A (x - y)[A (\Deriv) u (y)] \dif y\;.
\end{equation}
We fix a function \(\eta \in C^\infty_c (\Rset^n, \Rset)\) such that \(\eta = 1\) on \(B_{1/2}(0)\) and \(\eta = 0\) on \(\Rset^n \setminus B_1(0)\).
 If \(R > 0\) is large enough so that \(\supp u \subseteq B_R(0)\), for every \(\varepsilon \in \intvo{0}{R/2}\), we rewrite \eqref{eq_Xoo6theoxuch2iesoos2wini} in view of \cref{proposition_fundamental_solution} \eqref{it_Zigh3aiy3joo5maewiechua6} as 
\begin{equation}
  \label{eq_ka6geaheeTumahqueiKae2ch}
\begin{split}
 \Deriv^{k - 1} u (x) &=
 \int_{\Rset^n} \eta\brk[\big]{\tfrac{\abs{x - y}}{R}} \brk[\big]{1- \eta\brk[\big]{\tfrac{\abs{x - y}}{\varepsilon}}}\Deriv^{k - 1} G_A (x - y)[ A (\Deriv) u (y) - A (\Deriv) u (x)] \dif y\\
 &\quad +  \int_{\Rset^n} \eta\brk[\big]{\tfrac{\abs{h}}{\varepsilon}} \Deriv^{k - 1} G_A (h)[ A (\Deriv) u (x - h) ] \dif h\;.
 \end{split}
\end{equation}
Differentiating both integrals in the right-hand side of \eqref{eq_ka6geaheeTumahqueiKae2ch}, we get 
\begin{equation}
  \label{eq_Oi0gaSa5iemeike1oi5ior8i}
\begin{split}
 &\Deriv^{k} u (x) \\
 &=\int_{\Rset^n}  \eta\brk[\big]{\tfrac{\abs{x - y}}{R}} \brk[\big]{1- \eta\brk[\big]{\tfrac{\abs{x - y}}{\varepsilon}}} \Deriv^{k} G_A (x - y)[A (\Deriv) u (y) - A (\Deriv) u (x)] \dif y\\
 &\quad + \int_{\Rset^n} (\tfrac{1}{R}  \eta'\brk[\big]{\tfrac{\abs{x - y}}{R}) - \tfrac{1}{\varepsilon} \eta'\brk[\big]{\tfrac{\abs{x - y}}{\varepsilon}}} \Deriv^{k - 1} G_A (x - y)[A (\Deriv) u (y) - A (\Deriv) u (x)]\otimes\tfrac{x-y}{\abs{x - y}} \dif y\\
 &\quad +  \int_{\Rset^n} \eta\brk[\big]{\tfrac{\abs{h}}{\varepsilon}} \Deriv^{k - 1} G_A (h)[\Deriv A (\Deriv) u (x - h) ] \dif h\;.
 \raisetag{2em}
 \end{split}
\end{equation}
Letting \(\varepsilon \to 0\) in \eqref{eq_Oi0gaSa5iemeike1oi5ior8i}, the conclusion \eqref{eq_vohquaeci7Aitoth8ohteich} follows from \cref{proposition_fundamental_solution} \eqref{it_Zigh3aiy3joo5maewiechua6}, Lebesgue's dominated convergence theorem and the computation 
\[
\begin{split}
  &\int_{\Rset^n}  \eta'\brk[\big]{\tfrac{\abs{x - y}}{R}} 
 \Deriv^{k - 1} G_A (x - y)[A (\Deriv) u (x)]\otimes\tfrac{x-y}{R \abs{x - y}} \dif y\\
 &\qquad = \int_0^\infty \int_{\Sset^{n - 1}} \eta'\brk[\big]{\tfrac{r}{R}}  \Deriv^{k - 1} G_A \brk{z}[A (\Deriv) u (x)] \otimes \tfrac{z}{R} \dif z \dif r\\
 &\qquad = - \int_{\Sset^{n - 1}} \Deriv^{k - 1} G_A \brk{z}[A (\Deriv) u (x)] \otimes z \dif z.\qedhere
\end{split}
\]
\end{proof}

\begin{proof}[Proof of \cref{theorem_right_elliptic_estimate}]
This follows from \cref{theorem_singular_integral} and \cref{lemma_representation_Dku}.
\end{proof}

\subsection{Failure of the Endpoint Estimates}

The restriction that \(1 < p < \infty\) in the estimate \eqref{eq_ae1IThu7lai7maequoob2egh} of  \cref{theorem_right_elliptic_estimate} is essential.
Indeed, when \(p =\infty\), Karel \familyname{de Leeuw} and Hazleton \familyname{Mirkil} have proved that there is no nontrivial estimate \citelist{\cite{deLeeuw_Mirkil_1962}\cite{deLeeuw_Mirkil_1964}}.

\begin{theorem}[\(L^\infty\) nonestimate]
  \label{theorem_deLeeuw_Mirkil}
  Let \(n \in \Nset \setminus \set{0}\), let \(V\), \(E\) and \(F\) be finite-dimensional vector spaces,
  let \(A (\Deriv)\) be a homogeneous constant coefficient differential operator
  of order \(k \in \Nset \setminus \set{0}\) from 
  \(V\) to \(E\) on \(\Rset^n\),
  and let \(B (\Deriv)\) be a be a homogeneous constant coefficient differential operator
  of order \(k\) from 
  \(V\) to \(F\) on \(\Rset^n\).
  If there exists a constant \(C \in \intvo{0}{+\infty}\) such that for every \(u \in C^\infty_c (\Rset^n, V)\) one has
  \begin{equation}
  \label{eq_eiB1ooDaiNeel9Naez5ooth5}
    \sup_{\Rset^n} \, \abs{B (\Deriv) u} \le C \sup_{\Rset^n} \, \abs{A (\Deriv) u},
  \end{equation}
  then there exists \(L \in \Lin (E, F)\) such that 
  \(B (\Deriv) = L \compose A (\Deriv)\).
\end{theorem}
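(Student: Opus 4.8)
The plan is to reduce the statement to the purely algebraic assertion that $\ker A \subseteq \ker B$, where $A \in \Lin(\Lin_{\mathrm{sym}}^k(\Rset^n, V), E)$ and $B \in \Lin(\Lin_{\mathrm{sym}}^k(\Rset^n, V), F)$ are the linear maps representing the operators $A (\Deriv)$ and $B (\Deriv)$. Once this inclusion is available, one defines $L$ on the subspace $A[\Lin_{\mathrm{sym}}^k(\Rset^n, V)] \subseteq E$ by $L(A[T]) \defeq B[T]$ --- which is well defined precisely because $\ker A \subseteq \ker B$ --- and then extends it to an arbitrary linear map $L \in \Lin(E, F)$; for every $u \in C^\infty(\Rset^n, V)$ one then has $B (\Deriv) u = B[\Deriv^k u] = L[A[\Deriv^k u]] = L[A (\Deriv) u]$, which is the desired conclusion $B (\Deriv) = L \compose A (\Deriv)$.

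To obtain the inclusion $\ker A \subseteq \ker B$, I would fix $T \in \Lin_{\mathrm{sym}}^k(\Rset^n, V)$ with $A[T] = 0$ and test the hypothesis \eqref{eq_eiB1ooDaiNeel9Naez5ooth5} against the functions $u_R \defeq \eta_R\, p$, where $p \in C^\infty(\Rset^n, V)$ is the homogeneous polynomial of degree $k$ given by $p(x) \defeq T[x^{\otimes k}]/k!$ and $\eta_R \in C^\infty_c(\Rset^n, \Rset)$ is a cutoff chosen below, depending on a parameter $R > e$. The point of $p$ is that $\Deriv^k p \equiv T$, so that $A (\Deriv) p \equiv A[T] = 0$ identically, while $\abs{\Deriv^j p(x)} \le \C\, \abs{x}^{k - j}$ for $0 \le j \le k$. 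For the cutoff I would fix a profile $\zeta \in C^\infty(\Rset, [0, 1])$ with $\zeta = 1$ on $\intvl{-\infty}{0}$ and $\zeta = 0$ on $\intvr{1}{+\infty}$, and set $\eta_R(x) \defeq \zeta(\ln \abs{x}/\ln R)$ for $x \neq 0$, with $\eta_R \equiv 1$ near $0$; then $\eta_R = 1$ on $B_1(0)$, $\supp \eta_R \subseteq \overline{B_R(0)}$, and --- this is the crucial point --- the chain rule gives $\abs{\Deriv^j \eta_R(x)} \le \C/(\abs{x}^j \ln R)$ for every $x \neq 0$ and $1 \le j \le k$.

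With $u_R = \eta_R\, p \in C^\infty_c(\Rset^n, V)$, the generalized Leibnitz rule (\cref{lemma_AD_Leibnitz}) and $A (\Deriv) p \equiv 0$ give
\begin{equation*}
A (\Deriv) u_R = \sum_{j = 1}^k \binom{k}{j}\, A\bigl[\operatorname{Sym}(\Deriv^j \eta_R \otimes \Deriv^{k - j} p)\bigr]\;,
\end{equation*}
which is supported in $\set{x \in \Rset^n \st 1 \le \abs{x} \le R}$; combining the two estimates above --- the growth $\abs{x}^{k - j}$ of $\Deriv^{k - j} p$ being exactly offset by the decay $\abs{x}^{-j}$ of $\Deriv^j \eta_R$ --- yields $\norm{A (\Deriv) u_R}_{L^\infty(\Rset^n)} \le \C/\ln R$ with a constant independent of $R$. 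On the other hand $u_R = p$ on $B_1(0)$, so $B (\Deriv) u_R(0) = B[\Deriv^k p(0)] = B[T]$, and the hypothesis \eqref{eq_eiB1ooDaiNeel9Naez5ooth5} forces
\begin{equation*}
\abs{B[T]} = \abs{B (\Deriv) u_R(0)} \le \sup_{\Rset^n} \abs{B (\Deriv) u_R} \le C \sup_{\Rset^n} \abs{A (\Deriv) u_R} \le \frac{\C}{\ln R}\;.
\end{equation*}
Letting $R \to +\infty$ gives $B[T] = 0$, i.e.\ $T \in \ker B$, which is the inclusion we wanted.

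The heart of the matter --- and the only real obstacle --- is this choice of localization. Since $A (\Deriv)$ vanishes identically on the polynomial $p$, passing to $\eta_R\, p$ produces only commutator terms; a plain compactly supported cutoff would leave these of size $O(1)$, which is useless, whereas varying the cutoff on the \emph{logarithmic} scale makes them $O(1/\ln R)$ while the value $B (\Deriv) u_R(0) = B[T]$ stays fixed, and it is precisely this mismatch that forces $B[T] = 0$. The remaining ingredients --- the pointwise bounds on the derivatives of $\eta_R$, and the elementary linear algebra of the factorization --- are routine.
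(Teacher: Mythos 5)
Your proof is correct and rests on the same mechanism as the paper's: a degree-\(k\) homogeneous polynomial annihilated by \(A(\Deriv)\), the generalized Leibnitz rule to reduce everything to commutator terms involving only lower-order derivatives, and a logarithmic scale separation that makes those commutators negligible. The only cosmetic differences are that you argue directly, spelling out the linear algebra that produces \(L\) from the inclusion \(\ker A \subseteq \ker B\) (the paper invokes this duality inside a proof by contradiction), and that you place the logarithm in a stretched cutoff \(\zeta(\ln\abs{x}/\ln R)\) so that \(\sup\abs{A(\Deriv)u_R}\to 0\) while \(B(\Deriv)u_R(0)=B[T]\) stays fixed, whereas the paper multiplies the polynomial by \(\ln\bigl(1/(\abs{x}^2+\varepsilon^2)\bigr)\) under a fixed cutoff, keeping \(\sup\abs{A(\Deriv)u_\varepsilon}\) bounded while \(\abs{B(\Deriv)u_\varepsilon(0)}\) diverges.
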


\begin{proof}
Let assume for contradiction that there does not exist  \(L \in \Lin (V, E)\) such that 
\(B (\Deriv) = L \compose A (\Deriv)\). Then by duality, there exists a homogeneous polynomial \(P : \Rset^n \to V\) of degree \(k\) such that 
\(A (\Deriv)P = 0\) and \(B (\Deriv) P = e \in E \setminus \set{0}\) on \(\Rset^n\).
  
We choose a function \(\eta \in C^\infty_c (\Rset^n, \Rset)\) such that \(\eta = 1\) on \(B_1 (0)\). 
We define for every \(\varepsilon \in (0, +\infty)\) the function \(u_\varepsilon : \Rset^n \to V\) for each \(x \in \Rset^n\) by 
\[
u_\varepsilon (x) \defeq \eta (x) P (x) \ln \frac{1}{\abs{x}^2 + \varepsilon^2}\;.
\]
In view of \cref{lemma_AD_Leibnitz}, we have for every \(x \in \Rset^n\) and \(\varepsilon \in \intvo{0}{1}\)
\[
\begin{split}
\abs{A (\Deriv) u_\varepsilon (x)} 
&\le 
\Cl{cst_UciPa9reeNgegho4ahsh1ooz} \brk[\Big]{\abs[\Big]{A(\Deriv)(\eta P)(x) \ln \frac{1}{\abs{x}^2 + \varepsilon^2}} + 
\sum_{j = 1}^k \frac{\abs {\Deriv^{k - j} (\eta P) (x)}}{(\abs{x}^2 +  \varepsilon^2)^{j/2}}}\\
&\le \C \brk[\Big]{1 + \sum_{j = 1}^k \frac{\abs {\Deriv^{k - j} (\eta P) (x)}}{\abs{x}^j}}
\le \C\;,
\end{split}
\]
whereas 
\(
\abs{B (\Deriv) u_\varepsilon (0)}
= \abs{e}
\ln  1/\varepsilon\;
\);
the contradiction follows from \eqref{eq_eiB1ooDaiNeel9Naez5ooth5} as \(\varepsilon \to 0\).
\end{proof}

For \(p = 1\), Donald S.\ \familyname{Ornstein} has proved there is no nontrivial estimate \cite{Ornstein_1962}.\footnote{Although Ornstein's original paper does not cover explicitly the vector case in \cref{theorem_Ornstein} its proofs seems to do, and more recent approaches explicitly do \citelist{\cite{Kirchheim_Kristensen_2016}\cite{Kirchheim_Kristensen_2011}} (for nonestimates through convex integration see also \cite{Conti_Faraco_Maggi_2005}).}

\begin{theorem}[\(L^1\) nonestimate]
  \label{theorem_Ornstein}
Let \(n \in \Nset \setminus \set{0}\), let \(V\), \(E\) and \(F\) be finite-dimensional vector spaces,
let \(A (\Deriv)\) be a homogeneous constant coefficient differential operator of order \(k \in \Nset \setminus \set{0}\) from 
\(V\) to \(E\) on \(\Rset^n\),
and let \(B (\Deriv)\) be a be a homogeneous constant coefficient differential operator
of order \(k\) from 
\(V\) to \(F\) on \(\Rset^n\).
If there exists a constant \(C \in \intvo{0}{+\infty}\) such that for every \(u \in C^\infty_c (\Rset^n, V)\) one has
\begin{equation*}
\int_{\Rset^n} \abs{B (\Deriv) u} \le C \int_{\Rset^n} \abs{A (\Deriv)u}\;,
\end{equation*}
then there exists \(L \in \Lin (E, F)\) such that 
\(B (\Deriv) = L \compose A (\Deriv)\).
\end{theorem}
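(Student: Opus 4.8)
The plan is to argue by contraposition, and its opening move is the same linear-algebra reduction used in the proof of \cref{theorem_deLeeuw_Mirkil}. Assume that no \(L \in \Lin (E, F)\) satisfies \(B (\Deriv) = L \compose A (\Deriv)\). I would first extract a homogeneous polynomial \(P \colon \Rset^n \to V\) of degree \(k\) such that \(A (\Deriv) P = 0\) on \(\Rset^n\) while \(B (\Deriv) P\) is a nonzero constant \(e \in F\). Indeed, writing \(W \defeq \Lin_{\mathrm{sym}}^k (\Rset^n, V)\) and viewing \(A\) and \(B\) as linear maps \(W \to E\) and \(W \to F\), the condition \(B (\Deriv) = L \compose A (\Deriv)\) is equivalent to \(B = L \compose A\) on \(W\) — since the rank-one tensors \(v \otimes \xi^{\otimes k}\) span \(W\) — and such an \(L\) exists if and only if \(\ker A \subseteq \ker B\); the failure of this inclusion produces \(w \in W\) with \(A [w] = 0\) and \(B [w] \neq 0\), and then \(P (x) \defeq w [x^{\otimes k}]\) does the job.

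It then remains to construct a sequence \((u_j)\) in \(C^\infty_c (\Rset^n, V)\) with \(\sup_j \norm{A (\Deriv) u_j}_{L^1} < +\infty\) and \(\norm{B (\Deriv) u_j}_{L^1} \to +\infty\). This is the real content of the theorem and, in contrast with \cref{theorem_deLeeuw_Mirkil}, it cannot be achieved by a single one-parameter family. For \(u = P \, \eta (\cdot / R)\) with a fixed cutoff \(\eta\) equal to \(1\) near the origin, \cref{lemma_AD_Leibnitz} and \(A (\Deriv) P = 0\) force every term of \(A (\Deriv) (P \, \eta (\cdot / R))\) to carry at least one derivative of \(\eta (\cdot / R)\); but \(\Deriv^{k - j} P\) is a polynomial of degree \(j\), hence of size \(R^j\) on the support of \(\Deriv^j (\eta (\cdot / R))\), so \(\norm{A (\Deriv) u}_{L^1}\) and \(\norm{B (\Deriv) u}_{L^1}\) are \emph{both} of order \(R^n\): the leading profile never decouples from its commutators, and the same obstruction defeats any finite superposition of disjointly supported rescaled copies of \(P\) at one scale.

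The way around this is a genuinely multi-scale, iterative construction — due to \familyname{Ornstein}, and reinterpreted through convex integration and laminates in the wave cone of \(A (\Deriv)\) by \familyname{Kirchheim} and \familyname{Kristensen}, and in a related vein by \familyname{Conti}, \familyname{Faraco} and \familyname{Maggi}. One builds \(u_{j + 1}\) from \(u_j\) by refining it on an increasingly fine grid of subcubes and inserting on each subcube a rescaled, sign-oscillating copy of the \(P\)-profile, the insertions being organised so that the accumulated \(A (\Deriv)\)-error — which consists only of Leibnitz and gluing commutators, since \(A (\Deriv)\) annihilates every leading profile — stays uniformly bounded in \(j\), while the \(B (\Deriv)\)-side gains a definite amount at every scale, because \(B (\Deriv)\) reproduces the fixed nonzero vector \(e\) on a positive fraction of each subcube; carried by the nearly independent oscillating signs, these gains accumulate — a Khintchine-type estimate gives growth of order \(\sqrt{j}\) — so that \(\norm{B (\Deriv) u_j}_{L^1} \to +\infty\). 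This contradicts the assumed inequality once \(\sqrt{j}\) exceeds the product of the constant \(C\) and the uniform bound, which proves the theorem.

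The main obstacle is precisely the heart of this construction: the one-step refinement lemma, together with the uniform control — across all scales at once — of the commutator errors it produces. Because \(P\) is a genuine homogeneous polynomial of degree \(k\), equivalently because the witnessing symbol \(w\) need not be a rank-one tensor \(v \otimes \xi^{\otimes k}\), these errors do not become negligible relative to the leading term at any single scale, so one cannot reduce the problem to a single plane wave or to one spatial dimension; one must instead track the interplay of the successive cutoffs, the matchings across subcubes, and the polynomial growth of \(P\), uniformly in \(j\). Once that one-step estimate is in hand, the remaining ingredients — the linear algebra of the reduction and the extraction of the contradiction — are routine.
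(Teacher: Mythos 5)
Your opening reduction is exactly the one the paper uses: the existence of an \(L\) with \(B(\Deriv) = L \compose A(\Deriv)\) is equivalent to \(\ker A \subseteq \ker B\) on \(\Lin_{\mathrm{sym}}^k(\Rset^n, V)\), and its failure yields a homogeneous degree-\(k\) polynomial \(P\) with \(A(\Deriv)P = 0\) and \(B(\Deriv)P = e \neq 0\). This matches the paper's remark following the theorem word for word (and you even correct the paper's typo by placing \(e\) in \(F\) rather than \(E\)). Your explanation of why the de Leeuw--Mirkil one-parameter family cannot work in \(L^1\) --- both sides scale like \(R^n\) because the commutator terms never decouple --- is also sound and is a genuinely useful observation.

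However, the paper does not prove this theorem: it explicitly defers the construction to Ornstein and to the convex-integration literature, and so does your proposal. Everything after the reduction is a plan, not a proof, and you say so yourself: the one-step refinement lemma, the uniform control of the gluing and Leibnitz commutators across all scales, and the mechanism by which the \(B(\Deriv)\)-mass accumulates are all asserted rather than established. The ``Khintchine-type \(\sqrt{j}\) growth'' is your own gloss and does not correspond to any estimate you derive; in Ornstein's construction the gain per refinement step is a fixed multiplicative factor, and the contradiction is reached by iterating until the ratio exceeds \(C\), not by a square-root accumulation of independent signs. You also slightly misattribute the \familyname{Kirchheim}--\familyname{Kristensen} approach: their argument does not build an explicit iterative counterexample but shows that the \(L^1\) estimate would force a certain positively \(1\)-homogeneous rank-one convex function to be convex, and derives the conclusion from their rigidity theorem for such functions; only the \familyname{Conti}--\familyname{Faraco}--\familyname{Maggi} route proceeds by laminates in the way you describe. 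In short: the part of the argument you carry out coincides with the part the paper carries out, and the part you leave open is the part the paper also leaves to the references --- but as a standalone proof, the gap at the multi-scale construction is real and is the entire content of the theorem.
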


The proof of \cref{theorem_Ornstein} is also based on the existence of a homogeneous polynomial \(P : \Rset^n \to V\) of degree \(k\) such that 
\(A (\Deriv)P = 0\) and \(B (\Deriv) P = e \in E \setminus \set{0}\) on \(\Rset^n\). However, the construction of a family of compactly supported functions for which the inequality fails is much more delicate and relies on techniques of convex integration.

\section{Cancelling Operators}
\label{section_cancelling}

Since in view of Ornstein’s \(L^1\) nonestimate a vector differential operator cannot be we cannot controlled nontrivially in \(L^1\) by another vector differential operator, one can consider whether lower-order derivatives can be controlled through Sobolev-type embeddings.

\subsection{Sobolev Embeddings and Injective Ellipticity}

The classical Sobolev embedding theorem on the Euclidean space states that the integrability of a derivative implies some higher-integrability of lower-order derivatives.

\begin{theorem}[Sobolev embedding theorem%
\footnote{%
The Sobolev embedding theorem (\cref{theorem_Sobolev}) is due to Sergei Lvovich \familyname{Sobolev} when \(p \in (1, m/(k-\ell))\) \cite{Sobolev_1938} and to 
  Emilio \familyname{Gagliardo} \cite{Gagliardo_1958} and simultaneuously to Louis \familyname{Nirenberg} \cite{Nirenberg_1959} in the endpoint case \(p = 1\).%
  }%
  ]
\label{theorem_Sobolev}
Let \(n \in \Nset \setminus \set{0}\), let \(V\) be a finite-dimensional vector space, let \(k \in \Nset \setminus\set{0}\) and \(\ell \in \Nset\) satisfy \(0 < k - \ell < n\), and let \(p \in [1, \frac{n}{k - \ell})\).
There exists a constant \(C \in \intvo{0}{+\infty}\) such that for every \(u \in C^\infty_c (\Rset^n, V)\),
\[
  \brk[\Big]{\int_{\Rset^n} \abs{\Deriv^{\ell} u}^\frac{np}{n - (k - \ell)p}}^{1 - \frac{(k - \ell)p}{n}}
  \le 
  C 
  \int_{\Rset^n} \abs{\Deriv^{k} u}^p\;.
\]
\end{theorem}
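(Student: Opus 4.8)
The plan is to reduce the general statement to the classical Gagliardo--Nirenberg--Sobolev inequality at the first order by an iteration argument. First I would record the base case $k - \ell = 1$, where one must prove $\norm{\Deriv^\ell u}_{L^{np/(n-p)}} \lesssim \norm{\Deriv^{\ell+1} u}_{L^p}$ for $1 \le p < n$; since $\Deriv^\ell u$ is a compactly supported smooth $V$-valued (more precisely $\Lin_{\mathrm{sym}}^\ell(\Rset^n, V)$-valued) function whose full gradient is $\Deriv^{\ell+1} u$, this is just the scalar Gagliardo--Nirenberg--Sobolev inequality applied componentwise. I would prove that scalar inequality in the sharp endpoint case $p=1$ via the classical slicing argument: write $\abs{w(x)} \le \int_{\Rset} \abs{\partial_i w}\dif x_i$ for each $i$, take the product over $i = 1, \dots, n$ of the $(n-1)$-th roots, and integrate successively in each variable, applying the multilinear H\"older inequality (the Gagliardo--Nirenberg lemma) to obtain $\norm{w}_{L^{n/(n-1)}} \le \prod_i \norm{\partial_i w}_{L^1}^{1/n} \le \norm{\Deriv w}_{L^1}$. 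The case $1 < p < n$ follows by applying the $p=1$ estimate to $\abs{w}^{t}$ with $t = (n-1)p/(n-p)$ and using H\"older on the right-hand side to absorb the extra factor of $\abs{w}^{t-1}$.

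Next I would run the iteration. Given $0 < k - \ell < n$ and $p \in [1, \frac{n}{k-\ell})$, set $q_0 = p$ and $q_{j+1} = \frac{n q_j}{n - q_j}$, so that applying the base case to the function $\Deriv^{\ell + k - \ell - 1 - j} u$ (whose gradient is $\Deriv^{\ell + k - \ell - j} u$) moves us from exponent $q_j$ at order $\ell + j$ down to exponent $q_{j+1}$ at order $\ell + j$... more precisely I would climb down from $\Deriv^k u \in L^p$ to $\Deriv^{k-1} u \in L^{q_1}$, then to $\Deriv^{k-2} u \in L^{q_2}$, and so on. The arithmetic of the exponents is the routine part: one checks by induction that $\frac{1}{q_j} = \frac 1p - \frac jn$ remains positive for $j \le k - \ell$ precisely because $p < \frac{n}{k-\ell}$, and that $q_{k-\ell} = \frac{np}{n - (k-\ell)p}$, which is the target exponent for $\Deriv^\ell u$. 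Chaining the $k - \ell$ applications of the base-case inequality and absorbing the constants gives the claimed estimate.

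One subtlety I would address carefully is that at each intermediate step the base-case inequality is applied with exponent $q_j$ which may lie strictly between $1$ and $n$, so I genuinely need the full range $1 \le p < n$ of the first-order inequality, not merely the endpoint; this is why I prove the $1 < p < n$ case of Gagliardo--Nirenberg above rather than only $p=1$. A second point is that the first-order inequality must be applied to $\Rset^m$-valued functions (the components of $\Deriv^j u$ in some fixed basis), which is immediate by applying the scalar inequality to $\abs{\cdot}$ of the vector and using $\abs{\Deriv \abs{w}} \le \abs{\Deriv w}$ pointwise wherever $w \ne 0$ (with the usual justification via $\sqrt{\abs{w}^2 + \delta^2}$ and $\delta \to 0$).

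I do not expect a serious obstacle here: the whole statement is a standard consequence of the first-order Gagliardo--Nirenberg inequality, and the only thing requiring genuine care is bookkeeping the exponents in the iteration so that $q_j$ stays finite and positive throughout, which the hypothesis $0 < k - \ell < n$ and $p < \frac{n}{k-\ell}$ guarantees. The mild ``hard part'' is really just presenting the classical slicing proof of the base case cleanly and verifying the H\"older absorption step that upgrades $p = 1$ to $1 < p < n$.
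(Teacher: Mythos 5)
Your proof is correct. The paper itself offers no proof of this theorem: it is stated as a classical result with references to Sobolev, Gagliardo and Nirenberg, and then used as a black box. Your argument is exactly the classical one from those references: the slicing/multilinear H\"older proof of the first-order case at \(p=1\), the upgrade to \(1<p<n\) by applying that estimate to \(\abs{w}^{t}\) with \(t=(n-1)p/(n-p)\) (the exponent bookkeeping \((t-1)p'=np/(n-p)\) checks out), and the iteration \(1/q_{j}=1/p-j/n\), which stays in the admissible range \(q_j\in[1,n)\) for \(j\le k-\ell-1\) precisely because \(p<n/(k-\ell)\). The only blemish is the garbled sentence describing which derivative the base case is applied to at step \(j\), but you immediately restate the iteration correctly (\(\Deriv^{k}u\in L^{p}\Rightarrow\Deriv^{k-1}u\in L^{q_1}\Rightarrow\dotsb\)), so nothing is actually missing.
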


When \(1 < p < n/(k - \ell)\), as a consequence of the Sobolev embedding theorem (\cref{theorem_Sobolev}) and of \cref{theorem_right_elliptic_estimate}, if \(A (\Deriv)\) is a homogeneous constant coefficient differential operator of order \(k \in \Nset \setminus \set{0}\) from 
\(V\) to \(E\) on \(\Rset^n\) which is injectively elliptic, then there exists a constant \(C \in \intvo{0}{+\infty}\) such that for every \(u \in C^\infty_c (\Rset^n, V)\), 
we have the inequality 
  \begin{equation}
  \label{eq_bah2acooqueBei4Ahph}
   \brk[\Big]{\int_{\Rset^n} \abs{\Deriv^{\ell} u}^\frac{np}{n - (k - \ell)p}}^{1 - \frac{(k - \ell)p}{n}}
  \le 
  C
  \int_{\Rset^n} \abs{A (\Deriv)[u]}^p\;.
  \end{equation}
The injective ellipticity of \(A(\Deriv)\) turns out to be a necessary condition for \eqref{eq_bah2acooqueBei4Ahph} to hold when \(\ell = k - 1\) and \(1 \le p < n\).
  
\begin{theorem}[%
Necessity of the injective ellipticity for Sobolev embeddings%
\footnote{%
The necessity of injective ellipticity for Sobolev estimates (\cref{theorem_Sobolev_elliptic_necessary}) follows from well-known techniques; an explicit proof appears for example in \cite{VanSchaftingen_2013}*{Prop.\ 5.1}.
}%
]
  \label{theorem_Sobolev_elliptic_necessary}
Let \(n \in \Nset \setminus \set{0}\), let \(V\) and \(E\) be finite-dimensional vector spaces, let \(A (\Deriv)\) be a homogeneous constant coefficient differential operator
of order \(k \in \Nset \setminus \set{0}\) from 
\(V\) to \(E\) on \(\Rset^n\),
let \(B (\Deriv)\) be a homogeneous constant coefficient differential operator
of order \(k - 1\) from 
\(V\) to \(E\) on \(\Rset^n\),
and let \(p \in \intvr{1}{n}\).
If there exists a constant \(C \in \intvo{0}{+\infty}\) such that for every \(u \in C^\infty_c (\Rset^n, V)\), 
\begin{equation}
  \label{eq_Eili1woe5gaeCh4uch2pohvi}
   \brk[\Big]{\int_{\Rset^n} \abs{B (\Deriv) u}^\frac{np}{n - p}}^{1 - \frac{p}{n}}
  \le 
  C
  \int_{\Rset^n} \abs{A (\Deriv)[u]}^p\;,
\end{equation}
  then for every \(\xi \in \Rset^n \setminus \set{0}\), one has \(\ker B (\xi) \subseteq \ker A (\xi)\).
\end{theorem}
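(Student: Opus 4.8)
The plan is to argue by contradiction. Assume there exist \(\xi_0 \in \Rset^n \setminus \set{0}\) and \(v_0 \in V\) with \(A (\xi_0)[v_0] = 0\) but \(B (\xi_0)[v_0] \ne 0\); I will show that this is incompatible with \eqref{eq_Eili1woe5gaeCh4uch2pohvi}, thereby obtaining \(\ker A (\xi) \subseteq \ker B (\xi)\) for every \(\xi \in \Rset^n \setminus \set{0}\). First I would normalize: the kernels of \(A (\xi)\) and \(B (\xi)\) depend only on \(\xi / \abs{\xi}\), and precomposing \(A (\Deriv)\) and \(B (\Deriv)\) with an orthogonal change of variables \(Q \in \mathrm{O}(n)\) produces homogeneous constant coefficient operators of the same orders whose symbols are \(\xi \mapsto A (Q\xi)\) and \(\xi \mapsto B (Q\xi)\) and for which \eqref{eq_Eili1woe5gaeCh4uch2pohvi} still holds with the same constant (because \(\abs{\det Q} = 1\)); hence we may assume \(\xi_0 = e_1 \defeq (1, 0, \dotsc, 0)\). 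By the representation of the symbol following \eqref{eq_Io6oigh0kee2ocaev}, this means precisely that the coefficient \(A_{(k, 0, \dotsc, 0)}\) of \(\partial_1^k\) in \(A (\Deriv)\) satisfies \(A_{(k, 0, \dotsc, 0)}[v_0] = A (e_1)[v_0] = 0\), whereas the coefficient \(B_{(k - 1, 0, \dotsc, 0)}\) of \(\partial_1^{k - 1}\) in \(B (\Deriv)\) satisfies \(B_{(k - 1, 0, \dotsc, 0)}[v_0] = B (e_1)[v_0] \ne 0\); write \(e \defeq B (e_1)[v_0] \in E \setminus \set{0}\). Since \(p \in \intvr{1}{n}\) forces \(n \ge 2\), there is at least one transverse variable, and we set \(q \defeq \tfrac{np}{n - p}\).

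Next I would test \eqref{eq_Eili1woe5gaeCh4uch2pohvi} on an anisotropic tensor product that is \emph{slowly varying} in the transverse variables: fixing nonzero \(f, g \in C^\infty_c (\Rset, \Rset)\) and, for \(R \ge 1\), setting
\[
  u_R (x) \defeq v_0\, f (x_1) \prod_{i = 2}^n g (x_i / R) \;.
\]
Then \(\partial^\alpha u_R (x) = v_0\, f^{(\alpha_1)} (x_1)\, R^{-(k - \alpha_1)} \prod_{i = 2}^n g^{(\alpha_i)} (x_i / R)\) for \(\abs{\alpha} = k\), so by \eqref{eq_Io6oigh0kee2ocaev}, \(A (\Deriv) u_R = \sum_{\abs{\alpha} = k} A_\alpha[v_0]\, f^{(\alpha_1)} (x_1)\, R^{-(k - \alpha_1)} \prod_{i \ge 2} g^{(\alpha_i)} (x_i / R)\). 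The term \(\alpha = (k, 0, \dotsc, 0)\) drops out because \(A (e_1)[v_0] = 0\), so every surviving term carries a factor \(R^{-(k - \alpha_1)}\) with \(k - \alpha_1 \ge 1\); since \(\norm{g^{(j)} (\cdot/R)}_{L^p (\Rset)} = R^{1/p} \norm{g^{(j)}}_{L^p (\Rset)}\) and the \(L^p (\Rset^n)\)-norm of a product of functions of separate variables factorizes, this yields \(\norm{A (\Deriv) u_R}_{L^p (\Rset^n)} \le C_1\, R^{(n - 1)/p - 1}\) for all \(R \ge 1\), with \(C_1\) independent of \(R\). In contrast, \(B (\Deriv) u_R = \sum_{\abs{\beta} = k - 1} B_\beta[v_0]\, f^{(\beta_1)} (x_1)\, R^{-(k - 1 - \beta_1)} \prod_{i \ge 2} g^{(\beta_i)} (x_i / R)\): the term \(\beta = (k - 1, 0, \dotsc, 0)\) equals \(e\, f^{(k - 1)} (x_1) \prod_{i \ge 2} g (x_i/R)\) and carries no negative power of \(R\), whereas every other term carries a factor \(R^{-(k - 1 - \beta_1)}\) with \(k - 1 - \beta_1 \ge 1\); hence, for all \(R\) large, \(\norm{B (\Deriv) u_R}_{L^q (\Rset^n)} \ge c_0\, R^{(n - 1)/q}\), where \(c_0 \defeq \tfrac12 \abs{e}\, \norm{f^{(k - 1)}}_{L^q (\Rset)} \norm{g}_{L^q (\Rset)}^{n - 1} > 0\).

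Inserting these two estimates into \eqref{eq_Eili1woe5gaeCh4uch2pohvi} (which amounts to \(\norm{B (\Deriv) u}_{L^q} \le C^{1/p}\, \norm{A (\Deriv) u}_{L^p}\)) gives, for all large \(R\),
\[
  c_0\, R^{(n - 1)/q} \le C^{1/p} C_1\, R^{(n - 1)/p - 1} \;,
\]
that is \(R^{(n - 1)(1/q - 1/p) + 1} \le C^{1/p} C_1 / c_0\). Since \(q = \tfrac{np}{n - p}\) gives \(1/p - 1/q = 1/n\), the exponent on the left equals \(1 - \tfrac{n - 1}{n} = \tfrac1n > 0\), so \(R^{1/n}\) would remain bounded as \(R \to +\infty\) — a contradiction. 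Therefore \(\ker A (\xi) \subseteq \ker B (\xi)\) for every \(\xi \in \Rset^n \setminus \set{0}\).

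I expect the scale invariance of \eqref{eq_Eili1woe5gaeCh4uch2pohvi} to be the main obstacle: under the isotropic dilations \(u \mapsto u (\cdot/\lambda)\) the inequality is exactly invariant, so neither these nor any scale-invariant test family can detect anything, and the real content of the argument is to replace them by dilations of only the \(n - 1\) directions transverse to \(\xi_0\) — this couples those \(n - 1\) directions with the Sobolev gap \(1/p - 1/q = 1/n\) to produce the surplus power \(R^{1/n}\). Everything else is routine: the reduction to \(\xi_0 = e_1\) via an orthogonal change of variables, and the bookkeeping of the powers of \(R\) in the two finite sums, the crux being simply that \(A (e_1)[v_0] = 0\) eliminates precisely the one term of \(A (\Deriv) u_R\) that fails to decay as \(R \to +\infty\).
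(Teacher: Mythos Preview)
Your argument is correct and establishes the right inclusion \(\ker A(\xi) \subseteq \ker B(\xi)\) (the printed statement has the inclusion reversed; the paper's own proof also shows this direction). The approach is essentially the paper's: your transverse stretching \(x' \mapsto x'/R\) becomes, after the isotropic rescaling \(x \mapsto Rx\) under which \eqref{eq_Eili1woe5gaeCh4uch2pohvi} is scale-invariant, a test function oscillating at scale \(1/R\) in the \(\xi_0\)-direction against a fixed cutoff, which is precisely the family \(u_\lambda(x) = \theta(\lambda \langle \xi, x\rangle)\,\eta(x)\,v\) that the paper uses.
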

\begin{proof}
Let \(\xi \in \Rset^n \setminus \set{0}\) and \(v \in V\).
We take functions \(\theta \in C^\infty_c (\Rset, \Rset)\setminus \set{0}\) and 
\(\eta \in C^\infty_c (\Rset^n, \Rset)\) such that \(\eta (0) = 1\).
For each \(\lambda \in \Rset\), we define the function \(u_\lambda : \Rset^n \to V\) for each \(x \in \Rset^n\) by
\[
  u_\lambda (x) \defeq \theta (\lambda \dualprod{\xi}{x}) \eta (x) v\;.
\]
By \cref{lemma_AD_Leibnitz} and by Minkowski's inequality, we have 
\begin{multline*}
 \abs[\Big]{\brk[\Big]{\int_{\Rset^n} \abs{A (\Deriv) u_\lambda}^p}^\frac{1}{p}
 - \brk[\Big]{\lambda^{kp} \,\abs{A (\xi)[v]} \int_{\Rset^n} \abs{\theta^{(k)} (\lambda \dualprod{\xi}{x})}^p \abs{\eta(x)}^p \dif x}^\frac{1}{p}}\\
\smash{\le \C \sum_{j = 1}^k \, \brk[\Big]{\lambda^{(k - j)p} \!\int_{\Rset^n} \abs{\theta^{(k - j)}(\lambda \dualprod{\xi}{x})}^p\abs{\Deriv^j \eta (x)}^p \dif x}^\frac{1}{p}\;}.
\end{multline*}
and thus if \(\lambda \ge 1\), 
\begin{equation}
  \label{eq_ohhah7Fua5eechaeTaeNieng}
 \brk[\Big]{\int_{\Rset^n} \abs{A (\Deriv) u_\lambda}^p}^\frac{1}{p}
 \le \lambda^{k - \frac{1}{p}} \abs[\big]{A (\xi)[v]} \brk[\Big]{\, \int_{\Rset} \abs[\big]{\theta^{(k)}}^p}^\frac{1}{p}
 \brk[\Big]{\int_{\xi^\perp} \abs[\big]{\eta}^p}^\frac{1}{p}
 + \C \lambda^{k - 1 - \frac{1}{p}}\;.
\end{equation}
Hence if \(v \in \ker A (\xi)\), the inequality \eqref{eq_ohhah7Fua5eechaeTaeNieng} implies
\begin{equation}
  \label{eq_pai3quiulahroo0XoCh9yiep}
  \limsup_{\lambda \to \infty} \frac{1}{\lambda^{k - \frac{1}{p}}}\brk[\Big]{\int_{\Rset^n} \abs{A (\Deriv) u_\lambda}^p}^\frac{1}{p} < \infty\;.
\end{equation}
By a similar reasoning on the operator \(B (\Deriv)\), we have 
\begin{equation}
  \label{eq_eatheiz0waipaiZ8chiechoh}
\lim_{\lambda \to \infty}
\frac{1}{\lambda^{k - 1 - \frac{1}{p} + \frac{1}{n}} }
 \brk[\Big]{\int_{\Rset^n} \abs{B (\Deriv) u_\lambda}^\frac{np}{n - p}}^{\frac{1}{p} - \frac{1}{n}}
 = \abs[\big]{B (\xi)[v]} 
 \, \int_{\Rset} \abs[\big]{\theta^{(k - 1)}}^p \; \int_{\xi^\perp} \abs[\big]{\eta}^p\;,
\end{equation}
and thus by \eqref{eq_Eili1woe5gaeCh4uch2pohvi}, \eqref{eq_pai3quiulahroo0XoCh9yiep} and \eqref{eq_eatheiz0waipaiZ8chiechoh}, we have \(\abs{B (\xi)[v]} = 0\), that is, \(v \in \ker B (\xi)\).
\end{proof}

\begin{remark}
  Ellipticity is not necessary for estimates of lower-order derivatives: Indeed, one has for example for every \(u \in C^\infty_c (\Rset^{2n},\Rset)\),
\[
\brk[\Big]{\int_{\Rset^{2n}} \abs{u}^2}^\frac{1}{2}
\le C \int_{\Rset^{2n}} \abs{\partial_1 \dotsm \partial_n u} + \abs{\partial_{n + 1} \dotsm \partial_{2n} u}\;.
\]
\end{remark}

\subsection{Sobolev Embeddings and Cancelling Operators}

In the case \(p = 1\), injective ellipticity is still not sufficient to have a Sobolev estimate.

\begin{myexample}
  We define for each \(\lambda \in \intvo{0}{+\infty}\) the function \(u_\lambda : \Rset^n \to \Rset^n\) by setting for each \(x \in \Rset^n\)
  \[
    u_\lambda (x) 
    \defeq
   \smash{ \frac{x}{\abs{x}^n} }\eta (\abs{x}) (1 - \eta( \abs{x}/\lambda))\;,
  \]
  with \(\eta \in C^\infty (\intvo{0}{+\infty}, \Rset)\) such that \(\eta = 1\) on \(\intvo{0}{1}\) and \(\eta = 0\) on \(\intvo{2}{+\infty}\).
  We have then \(\operatorname{curl} u_\lambda = 0\) on \(\Rset^n\),
  \[
    \smash{\int_{\Rset^n} \abs{\operatorname{div} u_\lambda} \le \C}
  \]
  and 
  \[
    \lim_{\lambda \to +\infty} \smash{ \int_{B_1(0)} \abs{u_\lambda}^\frac{n}{n - 1} 
    \ge 
    \int_{B_1 (0)} \frac{\mathrm{d}x}{\abs{x}^n} = +\infty\;}.
  \]
\end{myexample}

When \(p = 1\), by \cref{theorem_Ornstein} the Calderón-Zygmund theory of singular integrals (\cref{theorem_right_elliptic_estimate}) fails, but one can still ask whether there are some endpoint Sobolev estimates for vector differential operators.

We will show that such estimates under an additional cancellation condition defined as follows.

\begin{definition}[Cancelling operator%
\footnote{%
The definition of cancelling operators (\cref{definition_cancelling}) is due to the author \cite{VanSchaftingen_2013}.
}%
]
  \label{definition_cancelling}
  Let \(n \in \Nset\setminus \set{0}\) and let \(V\) and \(E\) be finite-dimensional vector spaces. 
  A homogeneous constant coefficient differential operator \(A (\Deriv)\) 
of order \(k \in \Nset \setminus \set{0}\) from 
\(V\) to \(E\) on \(\Rset^n\) is \emph{cancelling} whenever
\[
 \bigcap_{\xi \in \Rset^n \setminus \set{0}} A (\xi)[V] = \set{0}\;.
\]
\end{definition}

When \(n = 1\), an operator \(A(\Deriv)\) is cancelling if and only if \(A (\Deriv) = 0\); hence this notion only makes sense in higher dimensions \(n \ge 2\).

Formally, the cancellation condition means that \(A (D) u = e \delta_0\) does not have a solution unless \(e = 0\), as this would imply that \((2 \pi i)^k A (\xi) \mathcal{F} u (\xi) = \mathcal{F} (A (D) u)(\xi) =  \mathcal{F} (e \delta_0) = e\); this will be proved in \cref{propositionEquivalentCancelling}.

\medbreak 
The cancellation property characterizes when endpoint Sobolev inequalities hold.
\begin{theorem}[%
Endpoint Sobolev inequality for cancelling operators%
\footnote{%
The necessity and sufficiency of cancellation for vector Sobolev estimates is due to the author \cite{VanSchaftingen_2013}*{Prop.\ 4.6 and  5.5}.}%
]
\label{theorem_cancelling_necessary_Sobolev}
Let \(n \in \Nset\setminus\set{0}\), let \(V\) and \(E\) be finite-dimensional vector spaces, 
and let \(A (\Deriv)\) be a homogeneous constant coefficient differential operator
of order \(k \in \Nset \setminus \set{0}\) from 
\(V\) to \(E\) on \(\Rset^n\).
Assume that \(A (\Deriv)\) is injectively elliptic and that \(\ell \in \Nset \setminus \set{0}\) satisfies \(0 < k - \ell < n\).
There exists a constant \(C \in \intvo{0}{+\infty}\) such that for every \(u \in C^\infty_c (\Rset^n, V)\) 
\begin{equation}
\label{eq_eeghaib2eiviu0cib4Pei8ut}
  \brk[\Big]{\int_{\Rset^n} \abs{\Deriv^\ell u}^\frac{n}{n - (k - \ell)}}^{1 - \frac{k - \ell}{n}}
\le 
C
\int_{\Rset^n} \abs{A (\Deriv)[u]}\;,
\end{equation}
if and only if the operator \(A (\Deriv)\) is cancelling.
\end{theorem}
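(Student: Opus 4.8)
The plan is to prove the two implications separately: cancellation is \emph{necessary} because, when it fails, a mollified and truncated copy of the representation kernel violates \eqref{eq_eeghaib2eiviu0cib4Pei8ut}; and it is \emph{sufficient} by combining the representation formula of \cref{proposition_fundamental_solution}, the Calderón--Zygmund theorem (\cref{theorem_singular_integral}), the Sobolev embedding (\cref{theorem_Sobolev}), and --- the real analytic core --- the duality estimate for \(L^1\) fields annihilated by the compatibility operator of \cref{proposition_compatibility_conditions}.

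For the necessity, suppose \(A(\Deriv)\) is \emph{not} cancelling and pick \(e \in \bigcap_{\xi \in \Rset^n\setminus \set{0}} A(\xi)[V]\) with \(e \ne 0\), so that \(A(\xi)^\dagger[e] = A(\xi)^{-1}[e]\) for every \(\xi \ne 0\). Fix \(\eta \in C^\infty_c(\Rset^n,\Rset)\) with \(\int_{\Rset^n}\eta = 1\), set \(f_\delta(x) \defeq \delta^{-n} e\, \eta(x/\delta)\), and note \(\mathcal{F}f_\delta(\xi) = \mathcal{F}\eta(\delta\xi)\,e \in \Cset e \subseteq A(\xi)[V+iV]\). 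By \cref{proposition_fundamental_solution}~\eqref{it_wee9airoSei7iex2ude}, \(u_\delta \defeq G_A \ast f_\delta \in C^\infty(\Rset^n, V)\) satisfies \(A(\Deriv)u_\delta = f_\delta\), and \(\int_{\Rset^n}\abs{A(\Deriv)u_\delta} = \int_{\Rset^n}\abs{f_\delta} = \abs{e}\,\norm{\eta}_{L^1}\) uniformly in \(\delta\). Fixing a cutoff \(\phi \in C^\infty_c(\Rset^n,\Rset)\) with \(\phi = 1\) on \(B_1(0)\), the Leibniz identity of \cref{lemma_AD_Leibnitz} shows that \(A(\Deriv)(\phi u_\delta)\) equals \(\phi f_\delta\) plus terms supported on \(\supp\Deriv\phi\), a compact subset of \(\Rset^n\setminus\set{0}\) on which \(u_\delta\) and all its derivatives converge uniformly to those of \(G_A[e]\); hence \(\int_{\Rset^n}\abs{A(\Deriv)(\phi u_\delta)} \le C\) uniformly in \(\delta\). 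On the other hand, \(0 < k-\ell < n\) forces \(\Deriv^\ell P_A = 0\), so by \cref{proposition_fundamental_solution}~\eqref{it_oocee4ao4DaiK8sheWeechee} the function \(\Deriv^\ell G_A\) is genuinely homogeneous of degree \(k-\ell-n \in (-n,0)\), in particular locally integrable, and \(\Deriv^\ell(\phi u_\delta) = (\Deriv^\ell G_A)\ast f_\delta \to \Deriv^\ell G_A[e]\) in \(L^1_{\mathrm{loc}}\), hence a.e.\ along a subsequence. The spherical profile of \(\Deriv^\ell G_A[e]\) is continuous and not identically zero, for otherwise \(G_A[e]\) would coincide on a punctured ball --- and, being locally integrable, on the whole ball --- with a polynomial of degree \(\le \ell-1 < k\), so \(A(\Deriv)G_A[e]\) would vanish near \(0\), contradicting \(A(\Deriv)G_A[e] = e\,\delta_0\) from \cref{proposition_fundamental_solution}~\eqref{it_Aix3Ahshe4ahchiekai4Ieph}. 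A polar-coordinates computation then gives \(\int_{B_1(0)}\abs{\Deriv^\ell G_A[e]}^{\frac{n}{n-(k-\ell)}} = +\infty\), since there the radial factor is exactly \(r^{-1}\); by Fatou's lemma \(\int_{\Rset^n}\abs{\Deriv^\ell(\phi u_\delta)}^{\frac{n}{n-(k-\ell)}} \to +\infty\), contradicting \eqref{eq_eeghaib2eiviu0cib4Pei8ut}. So the estimate forces \(A(\Deriv)\) to be cancelling.

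For the sufficiency, assume \(A(\Deriv)\) injectively elliptic and cancelling. First reduce to \(\ell = k-1\): applying \cref{theorem_Sobolev} to \(u\) with order \(k-1\) in place of \(k\) and exponent \(p = \tfrac{n}{n-1}\) (admissible since \(0 \le (k-1)-\ell < n-1\)), a short check of exponents confirms that the case \(\ell = k-1\), where the target exponent is \(\tfrac{n}{n-1}\), implies the general case. So let \(u \in C^\infty_c(\Rset^n,V)\), put \(f \defeq A(\Deriv)u \in C^\infty_c(\Rset^n,E)\), and let \(L(\Deriv)\colon E \to F\) be the compatibility operator of \cref{proposition_compatibility_conditions}; on the Fourier side \(L(\xi)A(\xi) = 0\), so \(L(\Deriv)f = 0\), and cancellation of \(A(\Deriv)\) says exactly \(\bigcap_{\xi\ne 0}\ker L(\xi) = \bigcap_{\xi\ne 0} A(\xi)[V] = \set{0}\), i.e.\ \(L(\Deriv)\) is cocancelling. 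The crucial input --- the Bourgain--Brezis type duality estimate developed in \cref{section_cocancelling} --- is then: there is \(C\) such that \(\abs{\int_{\Rset^n}\dualprod{f}{\psi}} \le C\,\norm{f}_{L^1}\,\norm{\Deriv\psi}_{L^n}\) for every \(f\) with \(L(\Deriv)f = 0\) and every sufficiently decaying \(\psi\). To conclude, dualise: \(\norm{\Deriv^{k-1}u}_{L^{n/(n-1)}} = \sup\set{\int_{\Rset^n}\dualprod{\Deriv^{k-1}u}{\Phi} \st \Phi \in C^\infty_c,\ \norm{\Phi}_{L^n}\le 1}\). Differentiating \cref{proposition_fundamental_solution}~\eqref{it_iez1EicaeNaanopipheyaeru} \(k-1\) times --- legitimate since \(\Deriv^j G_A \in L^1_{\mathrm{loc}}\) for \(j \le k-1\) --- gives \(\Deriv^{k-1}u = (\Deriv^{k-1}G_A)\ast f\), so transposing the convolution \(\int\dualprod{\Deriv^{k-1}u}{\Phi} = \int\dualprod{f}{\psi}\) with \(\psi \defeq \widetilde{\Deriv^{k-1}G_A}\ast\Phi\), the tilde denoting the reflected transposed kernel. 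Then \(\Deriv\psi = \pm\,\widetilde{\Deriv^{k}G_A}\ast\Phi\), and \(\Deriv^k G_A\) is homogeneous of degree \(-n\) with \(\int_{\Sset^{n-1}}\Deriv^k G_A = 0\) by \cref{proposition_fundamental_solution}~\eqref{it_Zigh3aiy3joo5maewiechua6} (since \(k > k-n\)); hence \cref{theorem_singular_integral} yields \(\norm{\Deriv\psi}_{L^n} \le C\,\norm{\Phi}_{L^n}\). Combining, \(\abs{\int\dualprod{\Deriv^{k-1}u}{\Phi}} \le C\,\norm{f}_{L^1}\,\norm{\Phi}_{L^n}\), and the supremum over \(\Phi\) proves \eqref{eq_eeghaib2eiviu0cib4Pei8ut} for \(\ell = k-1\), hence in general.

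The main obstacle is the duality estimate for cocancelling operators used in the last step: unlike the \(L^p\) theory for \(p>1\) it does not follow from Calderón--Zygmund bounds, and its proof --- Bourgain--Brezis in the Hodge case, and in general a decomposition of divergence-free \(L^1\) measures into elementary loops together with a careful approximation --- is where all the difficulty resides. Everything else is used essentially off the shelf; the only delicate elementary points are the radial exponent computation in the necessity part and the standard truncations needed to turn \(G_A\ast f_\delta\), respectively \(\widetilde{\Deriv^{k-1}G_A}\ast\Phi\), into admissible competitors for the two inequalities.
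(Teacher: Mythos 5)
Your proof is correct, and the two halves compare differently with the paper. The necessity argument is essentially the paper's: mollify and truncate the kernel $G_A[e]$ to produce admissible competitors (this is exactly what \cref{lemma_regularization_G_A} packages), pass to the limit by Fatou, and use the exact homogeneity of degree $k-\ell-n$ of $\Deriv^\ell G_A[e]$ (valid since $\ell>k-n$ kills $P_A$) to see that the critical norm diverges like $\int_0 r^{-1}\,dr$ unless $\Deriv^\ell G_A[e]$ vanishes, which is incompatible with $A(\Deriv)G_A[e]=e\delta_0$ for $e\ne 0$; your justification of this last incompatibility is in fact spelled out more carefully than in the text. The sufficiency argument, however, takes the dual route to the paper's. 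The paper applies Hahn--Banach to the duality estimate of \cref{theorem_cocancelling} to produce a potential $F\in L^{n/(n-1)}$ with $\operatorname{div}F=A(\Deriv)u$ and controlled norm, and then runs the Calder\'on--Zygmund machinery on the representation of $\Deriv^{k-1}u$ in terms of $F$ (\cref{lemma_representation_Dk_1uF}). You instead compute $\norm{\Deriv^{k-1}u}_{L^{n/(n-1)}}$ by testing against $\Phi\in L^n$, transpose the kernel onto $\Phi$ to form $\psi=\widetilde{\Deriv^{k-1}G_A}\ast\Phi$, bound $\norm{\Deriv\psi}_{L^n}$ by \cref{theorem_singular_integral} using \cref{proposition_fundamental_solution}~\eqref{it_Zigh3aiy3joo5maewiechua6}, and feed $\psi$ into the cocancelling duality estimate. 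This is a legitimate and arguably cleaner organization: it avoids the mollification $\eta_\varepsilon\ast u$ and the slightly delicate \cref{lemma_representation_Dk_1uF}. The price, which you flag but do not carry out, is that \cref{theorem_cocancelling} is stated for $\varphi\in C^\infty_c$, whereas your $\psi$ only decays like $\abs{x}^{1-n}$; one must extend the estimate by a logarithmic cut-off $\chi_j$ (as in the necessity part of the proof of \cref{theorem_cocancelling}), using that $\norm{\psi\,\Deriv\chi_j}_{L^n}\to 0$ and that $\int_{\Rset^n}f=0$ handles the additive constants. A second minor point: $\Deriv(\Deriv^{k-1}G_A\ast\Phi)$ is the principal-value convolution with $\Deriv^kG_A$ \emph{plus} a pointwise multiple of $\Phi$ coming from the jump term; both pieces are bounded on $L^n$, so the estimate survives, but the identity $\Deriv\psi=\pm\widetilde{\Deriv^kG_A}\ast\Phi$ as written is not literally exact.
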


It should be noted that \cref{theorem_cancelling_necessary_Sobolev} imposes strong boundary conditions on \(u\) --- requiring in fact compact support --- for the estimate \eqref{eq_eeghaib2eiviu0cib4Pei8ut} to hold. Working with weaker boundary conditions would introduce additionnal restrictions on the admissible class of operators \citelist{\cite{Brezis_VanSchaftingen_2007}\cite{Gmeineder_Raita_2019}\cite{Gmeineder_Raita_VanSchaftingen_2021}
}.%

We close this section by the proof of the necessity of the cancellation condition in \cref{theorem_cancelling_necessary_Sobolev}.
Roughly speaking one would like to take \(u (x) \defeq G_A(x)[e]\) for some \(e \in \bigcap_{\xi \in \Rset^n \setminus \set{0}} A(\xi)[V]\) so that \(A(\Deriv) u = \delta e_0\) (see \cref{proposition_fundamental_solution} \eqref{it_Aix3Ahshe4ahchiekai4Ieph}), but unfortunately this function \(u\) is neither smooth nor compactly supported. 
The next lemma provides a careful approximation of such a function.

\begin{lemma}[Regularization of the representation kernel%
\footnote{%
The statement of \cref{lemma_regularization_G_A} is due to the author \cite{VanSchaftingen_2013}*{Prop.\ 5.5}. We give here a proof based on the properties of the representation kernel instead of a direct construction in Fourier space as in \cite{VanSchaftingen_2013}.%
}%
]
\label{lemma_regularization_G_A}
Let \(n \in \Nset\setminus\set{0}\), let \(V\) and \(E\) be finite-dimensional vector spaces,  and let \(A (\Deriv)\) be a homogeneous constant coefficient differential operator
of order \(k \in \Nset \setminus \set{0}\) from 
\(V\) to \(E\) on \(\Rset^n\).
If \(A (\Deriv)\) is injectively elliptic and if \(e \in \bigcap_{\xi \in \Rset^n \setminus\set{0}} A (\xi)[V]\), then there exists a sequence \((u_j)_{j \in \Nset}\) in \(C^\infty_c (\Rset^n, V)\) such that 
\begin{enumerate}[(i)]
\item for every \(j \in \Nset\), \(\supp u_j \subset B_2(0)\),
 \item \(\lim_{j \to \infty} \int_{\Rset^n} \abs{A (\Deriv) u_j} \le C \abs{e}\),
 \item \( u_j \to  G_A[e]\) in \(C^\infty_{\mathrm{loc}} (B_1(0) \setminus \set{0}, E)\) as \(j \to \infty\).
\end{enumerate}
\end{lemma}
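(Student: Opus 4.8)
The plan is to reuse the construction from the proof of \cref{proposition_fundamental_solution} \eqref{it_Aix3Ahshe4ahchiekai4Ieph}, upgrading its convergence from the distributional sense to \(C^\infty_{\mathrm{loc}}\) away from the origin, and then to truncate the resulting function far from \(0\), controlling the \(L^1\)-norm of \(A (\Deriv)\) of the truncation via \cref{lemma_AD_Leibnitz}.

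First I would fix a nonnegative \(\eta \in C^\infty_c (\Rset^n, \Rset)\) with \(\supp \eta \subseteq B_1 (0)\) and \(\int_{\Rset^n} \eta = 1\), and for \(\delta \in \intvo{0}{1}\) set \(f_\delta (x) \defeq e\, \eta (x/\delta)/\delta^n\), so that \(\int_{\Rset^n} \abs{f_\delta} = \abs{e}\). Since \(e \in A (\xi)[V]\) for every \(\xi \in \Rset^n \setminus \set{0}\), the identity \(\mathcal{F} f_\delta (\xi) = e\, \mathcal{F}\eta (\delta \xi)\) gives \(\mathcal{F} f_\delta (\xi) \in \Cset e \subseteq A (\xi)[V + iV]\) for \(\xi \ne 0\), so \cref{proposition_fundamental_solution} \eqref{it_wee9airoSei7iex2ude} applies to \(f_\delta\) — exactly as in the proof of \cref{proposition_fundamental_solution} \eqref{it_Aix3Ahshe4ahchiekai4Ieph} — and produces \(w_\delta \in C^\infty (\Rset^n, V)\), defined by \(w_\delta (x) \defeq \int_{\Rset^n} G_A (x - y)[f_\delta (y)] \dif y\), with \(A (\Deriv) w_\delta = f_\delta\).

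Next I would check that \(w_\delta \to G_A[e]\) in \(C^\infty_{\mathrm{loc}} (\Rset^n \setminus \set{0}, V)\) as \(\delta \to 0\): for \(K \subseteq \Rset^n \setminus \set{0}\) compact and \(\delta\) small, using \(\int_{\Rset^n} f_\delta = e\), one has \(\Deriv^m w_\delta (x) - \Deriv^m G_A (x)[e] = \int_{B_\delta (0)} \brk[\big]{\Deriv^m G_A (x - y) - \Deriv^m G_A (x)}[f_\delta (y)] \dif y\), whose modulus is at most \(\abs{e}\) times the oscillation of \(\Deriv^m G_A\) over a \(\delta\)-neighbourhood of \(K\), and this tends to \(0\) uniformly on \(K\) by uniform continuity of \(\Deriv^m G_A\) on compacta of \(\Rset^n \setminus \set{0}\). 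I would then fix \(\eta_0 \in C^\infty_c (\Rset^n, \Rset)\) with \(\eta_0 = 1\) on \(B_1 (0)\) and \(\supp \eta_0 \subseteq B_{3/2} (0)\), pick \(\delta_j \downarrow 0\) with \(\delta_j < 1\), and put \(u_j \defeq \eta_0\, w_{\delta_j}\). Then \(\supp u_j \subseteq \supp \eta_0 \subset B_2 (0)\) (giving (i)), and \(u_j = w_{\delta_j}\) on \(B_1 (0)\), so \(u_j \to G_A[e]\) in \(C^\infty_{\mathrm{loc}} (B_1 (0) \setminus \set{0})\) (giving (iii)). For (ii), \cref{lemma_AD_Leibnitz} gives
\[
 A (\Deriv) u_j = \eta_0\, f_{\delta_j} + \sum_{i = 1}^{k} \tbinom{k}{i} A \brk[\big]{\operatorname{Sym} (\Deriv^i \eta_0 \otimes \Deriv^{k - i} w_{\delta_j})}\;.
\]
Because \(\supp f_{\delta_j} \subseteq B_{\delta_j} (0) \subseteq B_1 (0)\), where \(\eta_0 \equiv 1\), the first term equals \(f_{\delta_j}\), with \(\int_{\Rset^n} \abs{f_{\delta_j}} = \abs{e}\); the remaining (commutator) terms are supported in \(\supp \Deriv \eta_0 \subseteq \set{1 \le \abs{x} \le 3/2}\), which is disjoint from \(\supp f_{\delta_j}\), and on that compact annulus \(\Deriv^{k - i} w_{\delta_j} \to \Deriv^{k - i} G_A[e]\) uniformly. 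Hence \(\int_{\Rset^n} \abs{A (\Deriv) u_j}\) equals \(\abs{e}\) plus the \(L^1\)-norm of the commutator, the latter converging as \(j \to \infty\) to a finite quantity which, being linear in \(e\) and finite by continuity of the derivatives of \(G_A\) on \(\Rset^n \setminus \set{0}\), is \(\le C \abs{e}\); this proves (ii).

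The delicate point I expect is the support bookkeeping: one must choose \(\eta\) supported in \(B_1 (0)\) and \(\eta_0 \equiv 1\) there precisely so that the data \(f_{\delta_j}\) and the \cref{lemma_AD_Leibnitz} commutator terms have disjoint supports and so that the first term is honestly \(f_{\delta_j}\), and then one must verify that the commutator contribution is \(\le C \abs{e}\) \emph{uniformly in \(j\)} — which is exactly where the smoothness of \(G_A\) away from \(0\) from \cref{proposition_fundamental_solution} is used, through the \(C^\infty_{\mathrm{loc}}(\Rset^n\setminus\set{0})\) convergence established above. Everything else reduces to standard mollification estimates.
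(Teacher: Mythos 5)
Your proposal is correct and follows essentially the same route as the paper: convolve \(G_A[e]\) with a mollifier (licensed by \cref{proposition_fundamental_solution} \eqref{it_wee9airoSei7iex2ude} since \(\mathcal{F} f_\delta (\xi) \in \Cset e \subseteq A(\xi)[V+iV]\)), observe the \(C^\infty_{\mathrm{loc}}\) convergence away from the origin, and truncate with a cutoff equal to \(1\) on \(B_1(0)\). You merely spell out two steps the paper leaves implicit — the uniform-continuity argument for the \(C^\infty_{\mathrm{loc}}\) convergence and the Leibniz-rule bookkeeping giving the \(L^1\) bound in (ii) — and both are carried out correctly.
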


\begin{proof}[Proof of \cref{lemma_regularization_G_A}]
 We take a function \(\varrho \in C^\infty_c (\Rset^n, \Rset)\) such that \(\int_{\Rset^n} \varrho = 1\).
 We define first for every \(\lambda \in \intvo{0}{+\infty}\) the function \(v_\lambda : \Rset^n \to V\) by setting for every \(x \in \Rset^n\)
 \[
  v_\lambda (x) \defeq
  \lambda^n \int_{\Rset^n} G_A (x - y)[\varrho (\lambda y) e] \dif y\;.
 \]
 By our assumption, for every \(\xi \in \Rset^n\), we have  
 \begin{equation*} 
 \mathcal{F} (\varrho e) (\xi) = (\mathcal{F}\varrho (\xi)) e \in \Cset e \subseteq A (\xi)[V + iV]
 \end{equation*}
 and thus by \cref{proposition_fundamental_solution} \eqref{it_wee9airoSei7iex2ude}, for every \(x \in \Rset^n\), 
 \(
  A (\Deriv) v_\lambda (x) = \lambda^n \varrho (\lambda x)
 \).
 Moreover, as \(\lambda \to +\infty\), we have \(v_\lambda \to G_A[e]\) in \(C^\infty_{\mathrm{loc}} (\Rset^n \setminus \set{0}, E)\).
 We take now a function \(\eta \in C^\infty (\Rset^n, \Rset)\) such that \(\eta = 0\) on \(\Rset^n \setminus B_2(0)\) and \(\eta = 1\) on \(B_1(0)\) and we set \(u_j \defeq \eta v_{\lambda_j}\) for some sequence \((\lambda_j)_{j \in \Nset}\) in \(\intvo{0}{+\infty}\) going to \(+\infty\).
\end{proof}

\begin{proof}[Proof of the necessity of cancellation in \cref{theorem_cancelling_necessary_Sobolev}]
We let
\(
e \in \bigcap_{\xi \in \Rset^n \setminus \set{0}} A (\xi)[V] 
\)
and we let the sequence \((u_j)_{j \in \Nset}\) be given by \cref{lemma_regularization_G_A} for this vector \(e\).
By assumption \eqref{eq_eeghaib2eiviu0cib4Pei8ut}, we have for every \(j \in \Nset\),
\[
\brk[\Big]{\int_{\Rset^n} \abs{\Deriv^{\ell} u_j}^\frac{n}{n - (k - \ell)}}^{1 - \frac{k - \ell}{n}}
 \le 
 \Cl{cst_keef1xeetheil5uMo3u} \int_{\Rset^n} \abs{A (\Deriv) u_j}\;.
\]
It follows thus by Fatou's lemma that 
\begin{equation}
\label{eq_OoVae3dae5uubieG6Thopae9}
 \brk[\Big]{\int_{B_1 (0)} \abs{\Deriv^\ell G_A [e]}^\frac{n}{n - (k - \ell)}}^{1 - \frac{k - \ell}{n}}
 \le \C \abs{e} <+\infty\;.
\end{equation}
Since \(\ell > n - k\), the function \(\Deriv^\ell G_A[e] : \Rset^n \setminus \set{0} \to V\) is homogeneous of degree \(n - (k - \ell)\) (by \cref{proposition_fundamental_solution} \eqref{it_oocee4ao4DaiK8sheWeechee}); \eqref{eq_OoVae3dae5uubieG6Thopae9} then implies that \(G_A[e] = 0\) on \(\Rset^n\) and thus that \(e = 0\) in view of \cref{proposition_fundamental_solution} \eqref{it_Aix3Ahshe4ahchiekai4Ieph}.
\end{proof}

\subsection{Examples of Cancelling Operators}

We review now several examples of cancelling operators.

\begin{myexample}[Derivative]
The derivative \(A (\Deriv) = \Deriv^k\) is cancelling on \(\Rset^n\) if and only if \(n \ge 2\).
Indeed, for every \(\xi \in \Rset^n\), \(v,w \in \Rset^n\) and \(\eta_1, \dotsc, \eta_k \in \Rset^n\), we have 
\[
\begin{split}
 \dualprod{A (\xi)[v]}{w \otimes \eta_1 \otimes \dotsb \otimes \eta_k}&=
 \dualprod{v \otimes \xi^{\otimes k}}{w \otimes \eta_1 \otimes \dotsb \otimes \eta_k}\\
 &= \dualprod{v}{w} \dualprod{\eta_1}{\xi} \dotsm \dualprod{\eta_k}{\xi}\;.
\end{split}
\]
Since \(n \ge 2\), for every \(\eta_1 \in \Rset^n\), there exists \(\xi \in \Rset^n \setminus \set{0}\) such that \(\dualprod{\eta_1}{\xi} = 0\), and thus 
\[
  \bigcap_{\xi \in \Rset^n \setminus \set{0}}
    A (\xi)[V]
    \subseteq \bigcap_{\substack{\eta_1, \dotsc, \eta_k \in \Rset^n\\ w \in V}} (w \otimes \eta_1 \otimes \dotsb \otimes \eta_k)^\perp =\set{0}\;.
\]
\end{myexample}

\begin{myexample}[Hodge complex]
The operator \(A (\Deriv) = (d, d^*)\) acting on \(C^\infty (\Rset^n, \bigwedge^m \Rset^n)\) is cancelling if and only if \(m \not \in \set{1, n -1}\). Here, \(d\) and \(d^*\) are respectively the exterior differential and codifferential.
Indeed, if \((v, v_*) \in \bigcap_{\xi \in \Rset^n \setminus \set{0}} A (\xi)[V]\), then for every \(\xi \in \Rset^n\), \(\xi \wedge v = 0\) and \(\xi \lrcorner v_* = 0\). Since \(m \not \in \set{1, n - 1}\), this implies that \(v = 0\) and \(v_*= 0\).
As a consequence of \cref{theorem_cancelling_necessary_Sobolev}, one gets Jean \familyname{Bourgain} and Haïm \familyname{Brezis}'s endpoint Sobolev inequality \citelist{\cite{Bourgain_Brezis_2004}\cite{Bourgain_Brezis_2007}*{Cor.\ 17}}:\footnote{Another proof of the endpoint Sobolev estimate for forms \eqref{eq_shein0thah0fie7hoot6ic6E} was given by Loredana \familyname{Lanzani} and Elias M. \familyname{Stein} \cite{Lanzani_Stein_2005}} for every \(u \in C^\infty_c (\Rset^n, \bigwedge^m \Rset^n)\), if \(m \in \set{2, \dotsc, n - 2}\), 
\begin{equation}
\label{eq_shein0thah0fie7hoot6ic6E}
  \brk[\Big]{\int_{\Rset^n} \abs{u}^\frac{n}{n- 1}}^{1 - \frac{1}{n}}
  \le C \int_{\Rset^n} \abs{du} + \abs{d^* u}\;.
\end{equation}
On the other hand when \(m = 1\) and \(n \ge 3\), we have 
\(\bigcap_{\xi \in \Rset^n \setminus \set{0}} A (\xi)[V] = \set{0} \times \bigwedge^{0} \Rset^n\) and when \(m = n - 1 \) and \(n \ge 3\), we have \(\bigcap_{\xi \in \Rset^n \setminus \set{0}} A (\xi)[V] = \smash{\bigwedge^{n} \Rset^n \times \set{0}}\), and finally when \(m = 1\) and \(n = 2\), we have \(\smash{\bigcap_{\xi \in \Rset^n \setminus \set{0}} A (\xi)[V]} = \smash{\bigwedge^{2} \Rset^n\times \bigwedge^{0} \Rset^n}\), so that the operator \(A (\Deriv)\) is not cancelling and the estimate \eqref{eq_shein0thah0fie7hoot6ic6E} does not hold.
\end{myexample}

\begin{myexample}[Symmetric derivative]
The symmetric derivative operator
\(
  A (\Deriv)u = (\Deriv u + (\Deriv u)^*)/2
\) is cancelling if and only if \(n \ge 2\).
Indeed, for every \(w \in \Rset^n\) and \(v \in \Rset^n\),
\[
  \dualprod{A (\xi)[v]}{w \otimes w} = \frac{\dualprod{\xi \otimes v + v \otimes \xi}{w \otimes w}}{2}= \dualprod{\xi}{w} \dualprod{v}{w}\;.
\]
Since \(n \ge 2\), we can choose \(\xi \in \Rset^n \setminus \set{0}\) such  that \(\dualprod{\xi}{w} = 0\), and thus for every \(v \in \Rset^n\),
\(
\dualprod{A (\xi)[v]}{w \otimes w} = \set{0}
\),
and hence
\[
   \bigcap_{\xi \in \Rset^n \setminus \set{0}}
    A (\xi)[\Rset^n]
    \subseteq \Lin^2_{\mathrm{sym}} (\Rset^n, \Rset) \cap
    \bigcap_{w \in \Rset^n}  (w \otimes w)^\perp
    = \set{0}\;.
\]
As a consequence of \cref{theorem_cancelling_necessary_Sobolev}, we recover Monty J. \familyname{Strauss}’s Korn-Sobolev inequality \cite{Strauss_1973}
\begin{equation}
  \brk*{\int_{\Rset^n} \abs{u}^\frac{n}{n- 1}}^{1 - \frac{1}{n}}
  \le C \int_{\Rset^n} \abs{\Deriv_{\mathrm{sym}} u}\;.
\end{equation}
\end{myexample}

\begin{myexample}[Trace-free symmetric derivative%
\footnote{%
The cancellation of the trace-free symmetric derivative if and only if \(n \ge 3\) was obtained as a consequence of its \(\Cset\)-ellipticity by Dominic \familyname{Breit}, Lars \familyname{Diening} and Franz \familyname{Gmeineder}  \cite{Breit_Diening_Gmeineder_2020}*{Ex.\ 2.2}; their proof adapts to \(\lambda > -\smash{\frac{1}{2}}\).
The \(\Cset\)-ellipticity is related to the notion of conformal Killing vectors \cite{Dain_2006}.%
}%
]
If we consider
\(
  A (\Deriv)u \defeq \Deriv_{\mathrm{sym}} u + \lambda \operatorname{div} u
\), then \(A(\Deriv)\) is cancelling when either \(n = 2\) and \(\lambda \ne \smash{ -\frac{1}{2}}\), or \(n\ge 3\).
In particular, in the case \(\lambda = \smash{-\frac{1}{n}}\), \(A (\Deriv)\) is the trace-free symmetric derivative which is cancelling if and only if \(n \ge 3\).
Indeed, for every \(v, w, \xi \in \Rset^n\) such that \(\dualprod{\xi}{w}=0\), we have
\(\dualprod{A (\xi)[v]}{\xi \otimes \xi}=(1 + \lambda) \abs{\xi}^2 \dualprod{\xi}{v}\)
and \(\dualprod{A (\xi)[v]}{w \otimes w} = \lambda \abs{w}^2\dualprod{\xi}{v}\), so that if \(\abs{w} = \abs{\xi}\), one has
\(\dualprod{A (\xi)[v]}{\lambda \xi \otimes \xi - (1 + \lambda) w \otimes w} = 0\) and it follows that
\[
   \bigcap_{\xi \in \Rset^n \setminus \set{0}}
    A (\xi)[\Rset^n] \subseteq \Lin^2_{\mathrm{sym}} (\Rset^n, \Rset) \cap \bigcap_{\substack{\xi, w \in \Rset^n \setminus \set{0}\\ \dualprod{\xi}{w} = 0\\ \abs{\xi} = \abs{w}}} \set{\lambda \xi \otimes \xi - (1 + \lambda) w \otimes w}^\perp
   = \set{0}\;.
\]  
Indeed if \(\lambda \ne -\frac{1}{2}\) and \(n \ge 2\), then any element in the common image should be a tensor represented in any orthonormal basis as a matrix with zero diagonal; when \(\lambda = -\frac{1}{2}\), then the sum of any two distinct elements in the diagonal should be \(0\), which imply they should all be zero if \(n \ge 3\). As a consequence of \cref{theorem_cancelling_necessary_Sobolev}, we get that when either \(n = 2\) and \(\lambda \ne \smash{ -\frac{1}{2}}\), or \(n\ge 3\), for every \(u \in C^\infty_c (\Rset^n, \Rset^n)\),
\begin{equation} 
\brk*{\int_{\Rset^n} \abs{u}^\frac{n}{n- 1}}^{1 - \frac{1}{n}}
  \le C \int_{\Rset^n} \abs{\Deriv_{\mathrm{sym}} u + \lambda \operatorname{div} u}\;.
\end{equation}

\end{myexample}

\begin{myexample}[Laplacian]
\label{example_Laplacian_not_cancelling}
The Laplacian \(A (\Deriv) = \Delta\) is \emph{not} cancelling as an operator from \(V\) to \(V\) on \(\Rset^n\). 
Indeed, for every \(\xi \in \Rset^n \setminus \set{0}\), one has
\(
  A (\xi) [V] = \set{\abs{\xi}^2 v \st v \in V} = V
\),
and thus \(
  \bigcap_{\xi \in \Rset^n \setminus \set{0}}
  A (\xi)[V] = V \ne \set{0}
\),
so that the Laplacian operator is not cancelling.
\end{myexample}

\begin{myexample}[Cauchy-Riemann operator]
\label{example_Cauchy_Riemann_not_cancelling}
  The \emph{Cauchy-Riemann operator} \(A (\Deriv) = \Bar{\partial}\), which is a differential operator from \(\Cset\) to \(\Cset\) on \(\Rset^2\) defined for each function \(u \in C^\infty (\Rset^2, \Cset)\) by \(A (\Deriv) u = (\partial_1 u + i \partial_2 u)/2\) is \emph{not} cancelling. Indeed, for every \(\xi = (\xi_1, \xi_2) \in \Rset^2 \setminus \set{0}\), one has \(A (\xi) \Cset =\Cset\) and thus 
\(
  \bigcap_{\xi \in \Rset^n \setminus \set{0}}
  A (\xi)[\Cset] = \Cset \ne \set{0}
\),
so that the operator \(A (\Deriv)\) is not cancelling.
\end{myexample}

The noncancellation properties of \cref{example_Laplacian_not_cancelling,example_Cauchy_Riemann_not_cancelling} are particular cases of the general fact that if the differential operator \(A (\Deriv)\) is injectively elliptic and if \(\dim V = \dim E\), then for every \(\xi \in \Rset^n\), one has \(\dim A(\xi)[V] = \dim E\) and thus \(A (\xi)[V] = E\) and finally \(\bigcap_{\xi \in \Rset^n \setminus\set{0}} A (\xi)[V] = E\).

The cancellation property can be obtained as a consequence of stronger properties such as \(\Cset\)-ellipticity \cite{Gmeineder_Raita_2019}*{Lem.\ 3.2} or boundary ellipticity \cite{Gmeineder_Raita_VanSchaftingen}. Many of the examples above satisfy a \emph{strong cancellation} property in which the intersection is taken on any two-dimensional subspace \cite{Gmeineder_Raita_VanSchaftingen_2021}.

\section{Duality Estimates}
\label{section_cocancelling}

Given a cancelling and injectively elliptic homogeneous constant coefficient differential operator \(A(\Deriv)\), we have constructed in \cref{proposition_compatibility_conditions} an operator \(L(\Deriv)\), describing the associated compatibility conditions, such that for every \(u \in \smash{C^\infty_c (\Rset^n, V)}\), one has \(L(\Deriv)A (\Deriv)u = 0\) and such that, by \eqref{eq_phicee9aad0ugh2thaeTueph},
\begin{equation*}
\bigcap_{\xi \in \Rset^n \setminus \set{0}} \ker L (\xi) = 
\bigcap_{\xi \in \Rset^n \setminus \set{0}} A(\xi)[V]
= \set{0}\;.
\end{equation*}
Setting \(f \defeq A(\Deriv) u\), we investigate the estimates in terms of \(\norm{f}_{L^1 (\Rset^n)}\) that can be obtained taking into account the condition \(L (\Deriv) f = 0\).

\subsection{Estimates for Cocancelling Operators}

Motivated by the compatibility conditions of \cref{proposition_compatibility_conditions} and the definition of cancelling operators (\cref{definition_cancelling}), we define cocancelling operators: 
\begin{definition}[Cocancelling operator%
\footnote{%
The identification of the cocancellation condition of \cref{definition_cocancelling} as a necessary and sufficient condition for duality estimates in critical Sobolev spaces is due to the author \cite{VanSchaftingen_2013}; similar conditions appears in characterizations of the dimension of measures lying in the kernel of operators \citelist{\cite{Roginskaya_Wojciechowski_2006}\cite{ArroyoRabasa_DePhilippis_Hirsch_Rindler_2019}\cite{DePhilippis_Rindler_2016}}.%
}%
]
  \label{definition_cocancelling}
  Let \(n \in \Nset \setminus \set{0}\) and let \(E\) and \(F\) be finite-dimensional vector spaces.
  A homogeneous constant coefficient differential operator \(L(\Deriv)\) from \(E\) to \(F\) on \(\Rset^n\) is \emph{cocancelling} whenever 
\begin{equation*}
\bigcap_{\xi \in \Rset^n \setminus \set{0}} \ker L (\xi) = \set{0}\;. 
\end{equation*}
\end{definition}

When \(n = 1\), one notes that \(L(\Deriv)\) is cocancelling if and only if \(L (\Deriv) = 0\); hence the notion will only be relevant for \(n \ge 2\).

\begin{proposition}[Characterization of cocancelling operators%
\footnote{%
\Cref{proposition_equiv_cocanc} characterizing cocancelling operators is due to the author \cite{VanSchaftingen_2013}.%
}%
]
  \label{proposition_equiv_cocanc}
  Let \(n \in \Nset\setminus \set{0}\), let \(E\) and \(F\) be finite-dimensional vector spaces,
  and let \(L(\Deriv)\) be a homogeneous linear differential operator of order \(k \in \Nset \setminus \set{0}\) on \(\Rset^n\) from \(E\) to \(F\).
  The following are equivalent
  \begin{enumerate}[(i)]
    \item\label{itcocancelling} 
    the operator \(L(\Deriv)\) is cocancelling, 
    \item\label{itVanishDirac} for every \(e \in E\) such that \(L(\Deriv)\, (e \delta_0) =0\) in the sense of distributions, one has \(e = 0\),
    \item\label{itVanishL1} for every \(f \in L^1(\Rset^n, E)\) such that \(L(\Deriv)f = 0\) in the sense of distributions, one has \(
    \int_{\Rset^n} f = 0                                                                                                                                                                                                                                                \),
    \item\label{itVanishCinfty} for every \(f \in C^\infty_c(\Rset^n, E)\) such that \(L(\Deriv)f = 0\), one has 
    \(
      \int_{\Rset^n} f = 0
    \).
  \end{enumerate}
\end{proposition}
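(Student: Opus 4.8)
The plan is to prove the equivalence $\eqref{itcocancelling}\Leftrightarrow\eqref{itVanishDirac}$ directly and to close the cycle $\eqref{itcocancelling}\Rightarrow\eqref{itVanishL1}\Rightarrow\eqref{itVanishCinfty}\Rightarrow\eqref{itcocancelling}$. The common mechanism throughout is to pass to the Fourier side, where the symbol identity \eqref{eq_du0zoovuociuBi9nu} turns the differential constraint imposed by $L(\Deriv)$ into the pointwise algebraic constraint imposed by $L(\xi)$.

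For $\eqref{itcocancelling}\Leftrightarrow\eqref{itVanishDirac}$, I would note that $\mathcal{F}(e\delta_0)$ is the constant function $e$, so that by \eqref{eq_du0zoovuociuBi9nu} one has $\mathcal{F}\brk{L(\Deriv)(e\delta_0)}(\xi) = (2\pi i)^k L(\xi)[e]$ for every $\xi \in \Rset^n$. Since $\xi \mapsto L(\xi)[e]$ is a homogeneous polynomial of degree $k \ge 1$, this tempered distribution vanishes if and only if $L(\xi)[e] = 0$ for every $\xi \in \Rset^n$, equivalently for every $\xi \in \Rset^n \setminus \set{0}$. Hence $L(\Deriv)(e\delta_0) = 0$ precisely when $e \in \bigcap_{\xi \in \Rset^n\setminus\set{0}} \ker L(\xi)$, and \eqref{itVanishDirac} is exactly the assertion that this intersection is trivial, i.e.\ \eqref{itcocancelling}.

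For $\eqref{itcocancelling}\Rightarrow\eqref{itVanishL1}$, let $f \in L^1(\Rset^n, E)$ satisfy $L(\Deriv) f = 0$. Its Fourier transform $\mathcal{F} f$ is continuous and bounded (Riemann--Lebesgue), and $\int_{\Rset^n} f = \mathcal{F} f(0)$. Applying $\mathcal{F}$ to $L(\Deriv) f = 0$ and using \eqref{eq_du0zoovuociuBi9nu}, the continuous function $\xi \mapsto L(\xi)[\mathcal{F} f(\xi)]$ vanishes as a distribution, hence everywhere. Now I would exploit homogeneity of the symbol: for fixed $\xi \in \Rset^n \setminus \set{0}$ and all $t \in \intvo{0}{+\infty}$ one has $\ker L(t\xi) = \ker L(\xi)$, so $\mathcal{F} f(t\xi) \in \ker L(\xi)$; letting $t \to 0$ and using continuity of $\mathcal{F} f$ at the origin gives $\mathcal{F} f(0) \in \ker L(\xi)$. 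Since $\xi$ was arbitrary, $\int_{\Rset^n} f = \mathcal{F} f(0) \in \bigcap_{\xi \in \Rset^n \setminus \set{0}} \ker L(\xi) = \set{0}$.

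The implication $\eqref{itVanishL1}\Rightarrow\eqref{itVanishCinfty}$ is immediate from $C^\infty_c(\Rset^n, E) \subset L^1(\Rset^n, E)$. For $\eqref{itVanishCinfty}\Rightarrow\eqref{itcocancelling}$ I argue by contraposition: if $L(\Deriv)$ is not cocancelling, choose $e \ne 0$ in $\bigcap_{\xi \in \Rset^n \setminus\set{0}} \ker L(\xi)$; writing $L(\xi) = \sum_{\abs{\alpha} = k} \xi^\alpha L_\alpha$ with $L_\alpha \in \Lin(E, F)$, the linear independence of the monomials $\xi \mapsto \xi^\alpha$ forces $L_\alpha[e] = 0$ for every $\alpha$. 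Then for any $\varphi \in C^\infty_c(\Rset^n, \Rset)$ with $\int_{\Rset^n} \varphi = 1$, the function $f \defeq \varphi e \in C^\infty_c(\Rset^n, E)$ satisfies $L(\Deriv) f = \sum_{\abs{\alpha} = k} (\partial^\alpha \varphi)\, L_\alpha[e] = 0$ while $\int_{\Rset^n} f = e \ne 0$, contradicting \eqref{itVanishCinfty}. The only delicate point is in $\eqref{itcocancelling}\Rightarrow\eqref{itVanishL1}$: passing from the distributional identity to the pointwise vanishing of $L(\xi)[\mathcal{F} f(\xi)]$, and then the limit $t \to 0$ to reach the full intersection; everything else is elementary Fourier analysis and linear algebra.
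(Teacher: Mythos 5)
Your proof is correct, and it follows a genuinely different route from the paper's in two of the four implications, so a comparison is worthwhile. The paper closes the single cycle \eqref{itcocancelling} \(\Rightarrow\) \eqref{itVanishDirac} \(\Rightarrow\) \eqref{itVanishL1} \(\Rightarrow\) \eqref{itVanishCinfty} \(\Rightarrow\) \eqref{itcocancelling}; in particular it derives \eqref{itVanishL1} from \eqref{itVanishDirac} by a rescaling argument: setting \(f_\lambda(x) = \lambda^{-n} f(x/\lambda)\), one has \(f_\lambda \to \delta_0 \int_{\Rset^n} f\) in the sense of distributions as \(\lambda \to 0\) while \(L(\Deriv) f_\lambda = 0\) is preserved by homogeneity, which reduces the \(L^1\) case to the Dirac case. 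You instead prove \eqref{itcocancelling} \(\Rightarrow\) \eqref{itVanishL1} directly on the Fourier side, using the Riemann--Lebesgue continuity of \(\mathcal{F} f\), the pointwise identity \(L(\xi)[\mathcal{F} f(\xi)] = 0\) (which you correctly justify by noting that a continuous function of polynomial growth vanishing as a tempered distribution vanishes everywhere), and the scale-invariance \(\ker L(t\xi) = \ker L(\xi)\) together with closedness of the kernel to pass to the limit \(t \to 0\). The two mechanisms are dual to each other --- the paper rescales in physical space, you rescale in frequency space --- and yours has the mild advantage of making the role of \(\mathcal{F} f(0) = \int_{\Rset^n} f\) completely explicit. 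For \eqref{itVanishCinfty} \(\Rightarrow\) \eqref{itcocancelling} the paper applies the Fourier inversion formula to show \(L(\Deriv)(\psi e) = 0\), whereas you observe that \(L(\xi)[e] = 0\) for all \(\xi\) forces \(L_\alpha[e] = 0\) for every multi-index \(\alpha\) by linear independence of the monomials \(\xi^\alpha\); this is a purely algebraic shortcut that avoids Fourier analysis altogether and is arguably the more elementary of the two. Your treatment of \eqref{itcocancelling} \(\Leftrightarrow\) \eqref{itVanishDirac} and of \eqref{itVanishL1} \(\Rightarrow\) \eqref{itVanishCinfty} coincides with the paper's.
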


Here \(\delta_0\) denotes Dirac's measure at \(0\) on \(\Rset^n\).

\begin{proof}[Proof of \cref{proposition_equiv_cocanc}]
  We assume that \eqref{itcocancelling} holds, that is, that the operator \(L(\Deriv)\) is cocancelling and we consider a vector \(e \in E\) such that \(L(\Deriv)(e \delta_0)=0\) in the sense of distributions.
  For every \(\varphi \in \mathcal{S} (\Rset^n, F)\), by definition of the distributional derivative and properties of the Fourier transform \(\mathcal{F} \varphi\) of \(\varphi\), we have
  \[ 
  \begin{split}
    \int_{\Rset^n} \dualprod{(2\pi i)^kL(\xi)[e]}{\varphi(\xi)} \dif \xi
    = \dualprod{L(\Deriv) (e \delta_0)}{\mathcal{F}^{-1} \varphi}
    = 0\;.
  \end{split}
  \]
  and hence, since \(L : \Rset^n \to \Lin(E, F)\) is continuous, for every \(\xi \in \Rset^n\), we have
  \(
  L(\xi)[e]=0
  \) and thus \(e \in \bigcap_{\xi \in \Rset^n \setminus \set{0}} \ker L (\xi)\).
  By definition of cocancelling operator (\cref{definition_cocancelling}), we have \(e=0\) and thus \eqref{itVanishDirac} holds.
  
  We assume now that \eqref{itVanishDirac} holds. 
  Given \(f \in L^1(\Rset^n, E)\) such that \(L(\Deriv)f=0\) and given \(\lambda \in \intvo{0}{+\infty}\), we define the function 
  \(f_\lambda : \Rset^n \to E\) for each \(x \in \Rset^n\) by
  \(
  f_\lambda(x) \defeq \smash{\frac{1}{\lambda^n} f\bigl(\frac{x}{\lambda}\bigr)}
  \).
  Since \(f_\lambda \to \delta_0 \smash{\int_{\Rset^n} f}\) in the sense of distributions as \(\lambda \to 0\),
  we have \(L(\Deriv)f_\lambda \to L(\Deriv) \smash{(\delta_0 \int_{\Rset^n}f)}\) in the sense of distributions as \(\lambda \to 0\).
  Since  \(L (\Deriv)\) is homogeneous and has constant coefficients, we have for every \(\lambda \in \intvo{0}{+\infty}\), \(L (\Deriv) f_\lambda = 0\) and hence \(L(\Deriv)(\delta_0 \int_{\Rset^n} f)=0\). 
  By our assumption \eqref{itVanishDirac} , we deduce that \(\int_{\Rset^n} f = 0\) and thus \eqref{itVanishL1} holds.
  
  Since we have \(C^\infty_c (\Rset^n, E) \subset L^1 (\Rset^n, E)\), the assertion \eqref{itVanishL1} implies immediately the assertion \eqref{itVanishCinfty}.
  
 Finally, let us assume that \eqref{itVanishCinfty} holds. 
 Let \(e \in \bigcap_{\xi \in \Rset^n \setminus \set{0}} \ker L(\xi) \) and let \(\psi \in C^\infty_c(\Rset^n, E)\) such that \(\int_{\Rset^n} \psi=1\). 
 For every \(x \in \Rset^n\), we have by the Fourier inversion formula,
  \[
  \bigl(L(\Deriv) (\psi e)\bigr) (x)=\int_{\Rset^n} e^{2\pi i \dualprod{\xi}{x}} (2\pi i)^k L(\xi)[e] \mathcal{F} \psi(\xi) \dif \xi = 0\;.
  \]
  By \eqref{itVanishCinfty}, we conclude that \(e = \int_{\Rset^n} \psi e = 0\). 
  The operator \(L(\Deriv)\) is thus cocancelling by definition (\cref{definition_cocancelling}) and \eqref{itcocancelling} holds.
\end{proof}

The cocancelling operator is a necessary and sufficient condition for some duality estimate with critical Sobolev spaces.

\begin{theorem}
[Duality estimate for cocancelling operators%
\footnote{%
Estimates of the form \eqref{eq_koo4lui9Eipo7XeenaeghaoR} originates in an estimate on circulation integrals by Jean \familyname{Bourgain}, Haïm \familyname{Brezis} and Petru \familyname{Mironescu} \cite{Bourgain_Brezis_Mironescu_2004} (see also \cite{VanSchaftingen_2004_circ}).
  The estimate  \eqref{eq_koo4lui9Eipo7XeenaeghaoR} was first proved when \(L (\Deriv)\) is the divergence \citelist{\cite{Bourgain_Brezis_2004}\cite{Bourgain_Brezis_2007}} (see also \cite{VanSchaftingen_2004_div}); it was then extended to the case where \(L (\Deriv)\) is in some class of second-order operators \cite{VanSchaftingen_2004_ARB} or higher-operators \cite{Bourgain_Brezis_2007}, and finally for the higher-order divergence \cite{VanSchaftingen_2008} and then deduced by an algebraic argument for a general cocancelling operator \cite{VanSchaftingen_2013}.
  The approach presented here is a direct proof of the estimate \eqref{eq_koo4lui9Eipo7XeenaeghaoR}. 
}%
]
\label{theorem_cocancelling}
  Let \(n \in \Nset \setminus \{0, 1\}\), let \(V\) and \(E\) be finite-dimensional vector spaces, 
  and let \(L (\Deriv)\) be a homogeneous constant coefficient differential operator from \(E\) to \(F\) on \(\Rset^n\). There exists a constant \(C \in \intvo{0}{+\infty}\) such that
for every \(f \in C^\infty (\Rset^n, E) \cap L^1 (\Rset^n, E)\) that satisfies \(L (\Deriv) f = 0\) and every \(\varphi \in C^\infty_c (\Rset^n, E)\), 
\begin{equation}
  \label{eq_koo4lui9Eipo7XeenaeghaoR}
  \abs[\Big]{\int_{\Rset^n} \dualprod{f}{\varphi}\,}
  \le 
  C \int_{\Rset^n} \abs{f}\; \brk[\Big]{\int_{\Rset^n} \abs{\Deriv \varphi}^n}^\frac{1}{n}
\end{equation}
if and only if the operator \(L (\Deriv)\) is cocancelling.
\end{theorem}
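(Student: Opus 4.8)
The statement is an equivalence; the forward implication (necessity of cocancellation) is soft, the converse (sufficiency) is the substance.

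\emph{Necessity.} Assuming \eqref{eq_koo4lui9Eipo7XeenaeghaoR}, I would show $L(\Deriv)$ is cocancelling by producing, from any nonzero element of the bad intersection, a counterexample to the endpoint embedding $\dot W^{1,n}(\Rset^n)\hookrightarrow L^\infty(\Rset^n)$. Let $e\in\bigcap_{\xi\in\Rset^n\setminus\set{0}}\ker L(\xi)$. Since $\xi\mapsto L(\xi)[e]=\sum_{\abs{\alpha}=k}\xi^\alpha L_\alpha[e]$ is a homogeneous polynomial that vanishes on $\Rset^n$, each coefficient vanishes, so $L_\alpha[e]=0$ for all $\alpha$ and hence $L(\Deriv)(ge)=0$ for every $g\in C^\infty_c(\Rset^n,\Rset)$. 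Fixing $\varrho\in C^\infty_c(\Rset^n,\Rset)$ with $\int_{\Rset^n}\varrho=1$, the functions $f_\varepsilon\defeq\varepsilon^{-n}\varrho(\cdot/\varepsilon)\,e$ lie in $C^\infty_c(\Rset^n,E)$, satisfy $L(\Deriv)f_\varepsilon=0$ and $\norm{f_\varepsilon}_{L^1}=\norm{\varrho}_{L^1}\abs{e}$, while $\int_{\Rset^n}\dualprod{f_\varepsilon}{\varphi}\to\dualprod{e}{\varphi(0)}$ as $\varepsilon\to0$ for every $\varphi\in C^\infty_c(\Rset^n,E)$. Applying \eqref{eq_koo4lui9Eipo7XeenaeghaoR} to $f_\varepsilon$ and letting $\varepsilon\to0$ gives $\abs{\dualprod{e}{\varphi(0)}}\le C\norm{\varrho}_{L^1}\abs{e}\brk[\big]{\int_{\Rset^n}\abs{\Deriv\varphi}^n}^{1/n}$; taking $\varphi=ge$ with $g$ scalar and $e\neq0$ yields $\abs{g(0)}\le C'\brk[\big]{\int_{\Rset^n}\abs{\Deriv g}^n}^{1/n}$ for all $g\in C^\infty_c(\Rset^n,\Rset)$. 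A smoothed compactly supported truncation of $x\mapsto\ln\tfrac1{\abs{x}}$ at height $R$ has $\int_{\Rset^n}\abs{\Deriv g}^n\simeq R$ but $g(0)=R$, which for $n\ge2$ is impossible as $R\to\infty$; hence $e=0$ and $L(\Deriv)$ is cocancelling. (Equivalently, \cref{proposition_equiv_cocanc} repackages this as $e\in\bigcap_\xi\ker L(\xi)\Rightarrow e=0$.)

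\emph{Sufficiency.} Now suppose $L(\Deriv)$ is cocancelling. By \cref{proposition_equiv_cocanc} every admissible $f$ has $\int_{\Rset^n}f=0$, but since \eqref{eq_koo4lui9Eipo7XeenaeghaoR} is dilation invariant this global consequence is not enough: the constraint must be exploited at every scale. The plan is a Littlewood–Paley decomposition $\varphi=\sum_{j\in\Zset}\varphi_j$ with $\mathcal F\varphi_j$ supported on $\set{\abs{\xi}\simeq2^j}$. For each block, frequency localization lets one transfer a derivative onto $\varphi$: $\varphi_j$ is $\Deriv\varphi$ convolved with $2^{-j}$ times an $L^1$-normalized dilate of a fixed Schwartz kernel, so Young's and Hölder's inequalities give $\abs[\big]{\int_{\Rset^n}\dualprod{f}{\varphi_j}}\le C\norm{f}_{L^1}\norm{\Deriv\varphi}_{L^n}$ with $C$ independent of $j$ — but this is not summable over $j$. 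Upgrading the per-scale bound to a summable one (or, after Cauchy–Schwarz against the Littlewood–Paley square function of $\Deriv\varphi$, a square-summable one) is where cocancellation enters: since $\bigcap_{\xi\in\Sset^{n-1}}\ker L(\xi)=\set{0}$, a compactness argument yields finitely many directions $\xi_1,\dots,\xi_m\in\Sset^{n-1}$ with $\bigcap_i\ker L(\xi_i)=\set{0}$ and a subordinate smooth partition of unity on $\Sset^{n-1}$, reducing each block to pieces of $f$ transverse to some $\xi_i$; on such a piece the equation $L(\Deriv)f=0$ becomes a quantitative gain in the $\xi_i$-direction, and integrating by parts in that direction while applying the endpoint Gagliardo–Nirenberg–Sobolev inequality (\cref{theorem_Sobolev}, with $k=1$, $\ell=0$, $p=1$) to a potential of $f$ produces the $L^{n/(n-1)}$ factor pairing with $\Deriv\varphi\in L^n$, uniformly in $j$ and with enough decay to sum over $j$.

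The main obstacle I expect is exactly this last step: extracting from $L(\Deriv)f=0$ a bound on $f$ (or on $f$ filtered to a dyadic band) that is summable across scales — the genuine Bourgain–Brezis phenomenon, where injective ellipticity and the endpoint Sobolev inequality do real work and where any estimate ignoring the constraint loses precisely a logarithm. The finite collection of directions together with the spherical partition of unity is the device that isolates the one-dimensional mechanism behind the gain; a more computational alternative would be to first prove \eqref{eq_koo4lui9Eipo7XeenaeghaoR} for the higher-order divergence by iterated slicing in the coordinate directions in the spirit of Gagliardo–Nirenberg and then reduce the general cocancelling case to it by linear algebra, but the route above handles all cocancelling $L(\Deriv)$ at once.
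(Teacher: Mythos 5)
Your necessity argument is correct and is a legitimate variant of the paper's: where the paper tests \eqref{eq_koo4lui9Eipo7XeenaeghaoR} against a sequence of logarithmic cutoffs to conclude \(\int_{\Rset^n} f = 0\) and then invokes \cref{proposition_equiv_cocanc}, you run the contrapositive, using a vector \(e\) in the common kernel to manufacture approximate Dirac data and derive the false embedding \(\dot W^{1,n}(\Rset^n)\hookrightarrow L^\infty(\Rset^n)\). Both routes are sound and of comparable length.

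The sufficiency direction, however, has a genuine gap: the one step you yourself flag as ``the main obstacle'' is precisely the content of the theorem, and you do not supply it. Your Littlewood--Paley framework correctly diagnoses that the naive per-block bound \(\norm{f}_{L^1}\norm{\Deriv\varphi}_{L^n}\) is not summable over scales, but the passage from ``\(L(\Deriv)f=0\) becomes a quantitative gain in the \(\xi_i\)-direction'' to a summable estimate is asserted, not proved; carried out along Littlewood--Paley lines this requires the Bourgain--Brezis approximation of \(\dot W^{1,n}\) functions by \(\dot W^{1,n}\cap L^\infty\) functions up to an \(\varepsilon\)-error in the transverse derivatives, which is a substantial construction in its own right. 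The paper avoids Littlewood--Paley entirely. After reducing to finitely many directions via the algebraic \cref{lemma_decomposition_L_xi_j} (which uses finite-dimensionality of \(E\), not compactness of the sphere), it fixes one direction, slices \(\Rset^n\) into hyperplanes transverse to it, and on each slice proves two estimates for the pairing of \(L(\xi)[f(\cdot,x_n)]\) with a test function \(\psi\): a trivial one by \(\norm{\psi}_{L^\infty}\int_{\text{slice}}\abs{f}\), and a second one by \(\norm{\Deriv\psi}_{L^\infty}\int_{\Rset^n}\abs{f}\) obtained by integrating by parts in the transverse variable over a half-space, using \(L(\Deriv)f=0\) there (\cref{lemma_parts_general}); interpolating these two bounds into the H\"older seminorm \(\seminorm{\psi}_{C^{0,1/n}}\) (\cref{lemma_Holder_interpolation}), controlling that seminorm by \(\norm{\Deriv\psi}_{L^n(\Rset^{n-1})}\) via Morrey--Sobolev on the hyperplane (\cref{proposition_Morrey_Sobolev}), and applying H\"older in the remaining variable closes the argument (\cref{lemma_cocancelling_estimate_Lxi}). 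This slicing-plus-interpolation mechanism is exactly the missing ingredient; your closing remark about ``iterated slicing in the spirit of Gagliardo--Nirenberg'' plus linear algebra is in fact much closer to the proof the paper gives than the Littlewood--Paley route you chose as primary, but you carry out neither.
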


\Cref{theorem_cocancelling} can be seen as a substitute for the missing embedding of the critical Sobolev space 
\(\dot{W}^{1, n} (\Rset^n, E)\) in \(L^\infty (\Rset^n, E)\), which would be equivalent to having \eqref{eq_koo4lui9Eipo7XeenaeghaoR} without the condition \(L (\Deriv) f = 0\).

\begin{proof}[Proof of the necessity of cocancellation in \cref{theorem_cocancelling}]
  We consider a sequence \((\varphi_j)_{j \in \Nset}\) in \(C^\infty_c (\Rset^n)\), such that 
  \begin{enumerate}[(a)]
    \item 
      \label{it_ugai2paThaexogaic6ee1aur}
      for every \(j \in \Nset\) and \(x \in \Rset^n\), one has \(0 \le \varphi_j (x) \le 1\),
    \item 
      \label{eq_av9ohCh7Eicohjuug1Ahlohz}
      for every \(x \in \Rset^n\), the sequence \((\varphi_j (x))_{j \in \Nset}\) converges to \(1\),
    \item 
      \label{eq_aK8owohKeCh9Loo3si9migha}
      \(\lim_{n \to \infty} \int_{\Rset^n} \abs{\Deriv \varphi_j}^n = 0\).
  \end{enumerate}
  (One can take for instance \(\varphi_j (x)\defeq \phi((\ln \abs{x})/j)\) with \(\phi \in C^\infty (\Rset, \Rset)\) such that \(\phi = 1\) on \(\intvl{-\infty}{1}\), \(0 \le \phi \le 1\) on \(\intvc{1}{2}\) and \(\phi = 0\) on \(\intvr{2}{+\infty}\).)
  By our assumption \eqref{eq_koo4lui9Eipo7XeenaeghaoR} we have for every \(j \in \Nset\) and \(e \in E\)
  \begin{equation}
    \label{eq_ahzohcahfooB9aushiemoo4e}
    \abs[\Big]{\int_{\Rset^n} \dualprod{f}{\varphi_j e}\,}
    \le 
    C \abs{e} \int_{\Rset^n} \abs{f}\; \brk[\Big]{\int_{\Rset^n} \abs{\Deriv \varphi_j}^n}^\frac{1}{n}\;.
  \end{equation}
By Lebesgue's dominated convergence theorem, we have in view of \eqref{it_ugai2paThaexogaic6ee1aur} and \eqref{eq_av9ohCh7Eicohjuug1Ahlohz}
\begin{equation}
  \label{eq_aeSieyaiWaez2gahsahSh8ve}
\lim_{j \to \infty} \int_{\Rset^n} \dualprod{f}{\varphi_j} = \int_{\Rset^n} \dualprod{f}{e}\;.
\end{equation}
It follows then from \eqref{eq_ahzohcahfooB9aushiemoo4e} and \eqref{eq_aeSieyaiWaez2gahsahSh8ve}, taking into account \eqref{eq_aK8owohKeCh9Loo3si9migha}, that 
\(
\int_{\Rset^n} f = 0
\),
and thus by \cref{proposition_equiv_cocanc}, the operator \(L (\Deriv)\) is cocancelling.
\end{proof}

We review now several examples of cocancelling operators.

\begin{myexample}[Divergence]
  The \emph{divergence} operator \(L (\Deriv) = \operatorname{div}\)  on \(\Rset^n\) is cocancelling when \(n \ge 2\). 
  Indeed, for every \(\xi \in \Rset^n\), 
  \(
  \ker L (\xi) = \set{\xi}^\perp
  \),
  and thus, since \(n \ge 2\),  
  \(
  \smash{
  \bigcap_{\xi \in \Rset^{n} \setminus \set{0}} \ker L (\xi)} = \set{0}
  \).
  As a consequence of \cref{theorem_cocancelling}, one gets that if \(f \in L^1 (\Rset^n, \Rset^n)\) satisfies \(\operatorname{div} f = 0\) in the sense of distributions then \eqref{eq_koo4lui9Eipo7XeenaeghaoR} holds.\footnote{This result was first obtained as a consequence of a stronger estimate by Haïm \familyname{Brezis} and Jean \familyname{Bourgain} \citelist{\cite{Bourgain_Brezis_2004}\cite{Bourgain_Brezis_2007}}; a direct proof of the result was given by the author \cite{VanSchaftingen_2004_div}.}
  This estimate can be seen, through Stanislav K. \familyname{Smirnov}’s result on the approximation of divergence-free measures \cite{Smirnov_1993}, to be equivalent to the estimate on circulation integrals of Jean \familyname{Bourgain}, Haïm \familyname{Brezis} and Petru \familyname{Mironescu} \cite{Bourgain_Brezis_Mironescu_2004}:\footnote{A direct proof was given by  the author \cite{VanSchaftingen_2004_circ}; the estimate is equivalent when \(n = 2\) to the classical isoperimetric theorem, raising intriguing questions on sharp constants in higher dimensions \cite{Brezis_VanSchaftingen_2008}.} if \(\Gamma \subset \Rset^n\) is a \emph{closed} curve with tangent vector \(t\) and length \(\abs{\Gamma}\), then for every vector field \(\varphi \in C^\infty_c (\Rset^n, \Rset^n)\), one has
\begin{equation}
 \abs[\Big]{\int_{\Gamma} \dualprod{\varphi}{t}\,}
 \le \C \,\abs{\Gamma}\, \brk*{\int_{\Rset^n} \abs{\Deriv \varphi}^n}^\frac{1}{n}\;.
\end{equation}
\end{myexample}

\begin{myexample}[Exterior derivative]
  If \(E = \bigwedge^m \Rset^n\), with \(m \in \{0, \dotsc, n - 1\}\), the \emph{exterior derivative} \(L (\Deriv) = d\)
  is cocancelling. Indeed, one has 
  \[
  \ker L (\xi) = \set*{\alpha \in {\textstyle \bigwedge^{m} \Rset^n} \st \alpha \wedge \xi = 0}\;.
  \]
  Thus if \(\alpha \in \ker L (\xi)\), then \(\alpha\) has its restriction to \(\set{\xi}^\perp\) being equal to \(0\).
  If this happens for all \(\xi \in \Rset^n\), then \(\alpha = 0\).
\end{myexample}

\begin{myexample}[Saint-Venant operator]
The Saint-Venant differential operator \(L (\Deriv)\) from \(\Lin^2_{\mathrm{sym}} (\Rset^n, \Rset)\) to \(\Lin^4 (\Rset^n, \Rset)\) defined for every \(u \in C^\infty (\Rset^n, \Lin^2_{\mathrm{sym}} (\Rset^n, \Rset))\), and \(v_1, \dotsc, v_4 \in \Rset^n\) by 
\begin{equation*}
\begin{split}
(L (\Deriv) u)[v_1, \dotsc, v_4] 
&\defeq \dualprod{v_1 \otimes v_2}{\Deriv^2 u[v_3, v_4]}
+ \dualprod{v_3 \otimes v_4}{\Deriv^2 u[v_1, v_2]} \\
&\qquad - \dualprod{v_1 \otimes v_4}{\Deriv^2 u[v_2, v_3]} - \dualprod{v_2 \otimes v_3}{\Deriv^2 u[v_1, v_4]}
\end{split}
\end{equation*}
is cocancelling when \(n \ge 2\).
This operator corresponds to the Saint-Venant compatibility conditions \eqref{eq_Xah5quaejieRahche9roova8} of the symmetric derivative. 
If for every \(\xi \in \Rset^n \setminus \set{0}\), we have \(e \in \ker L (\xi) \subset \Lin^2_{\mathrm{sym}} (\Rset^n, \Rset)\), then for every \(\xi \in \Rset^n \setminus \set{0}\) and every \(v \in \Rset^n\),
\[
(L(\xi)e)[v, v, \xi, \xi]
= e(v, v) \abs{\xi}^4 + e(\xi, \xi) \dualprod{\xi}{v}^2 - 2 e(v, \xi) \dualprod{\xi}{v}\abs{\xi}^2 = 0\;,
\]
so that if one has \(\dualprod{\xi}{v} = 0\), then 
\(
e (v, v) \abs{\xi}^4
= 0
\) and it follows then that since \(n \ge 2\)
\begin{equation*}
\bigcap_{\xi \in \Rset^n \setminus \set{0}} \ker L (\xi)
\subseteq \bigcap_{\substack{\xi \in \Rset^n \setminus \set{0}\\
v \in \xi^\perp}} (v \otimes v)^\perp
= \set{0}\;.
\end{equation*}
As a consequence of \cref{theorem_cocancelling}, one gets that if \(f \in L^1(\Rset^n, \Lin^2_{\mathrm{sym}} (\Rset^n, \Rset))\) satisfies \(L(\Deriv) f= 0\), then the estimate \eqref{eq_koo4lui9Eipo7XeenaeghaoR} holds.\footnote{The estimate \eqref{eq_koo4lui9Eipo7XeenaeghaoR} under the Saint-Venant compatibility conditions \eqref{eq_Xah5quaejieRahche9roova8} was obtained by the author \cite{VanSchaftingen_2004_ARB}.}
\end{myexample}

\begin{myexample}[Higher-order divergence]
 The higher-order divergence \(L (\Deriv) = \operatorname{div}^k\) is cocancelling on symmetric \(k\)-linear forms.
 Indeed, for every \(\xi \in \Rset^n\) and \(\tau \in \Lin_{\mathrm{sym}}^k(\Rset^n, \Rset)\), one has \(
   L(\xi)[\tau] = \tau[\xi, \dotsc, \xi]
 \)
 and \(\bigcap_{\xi \in \Rset^n \setminus \set{0}} \ker L (\xi) = \set{0}\).
\end{myexample}

\subsection{Estimates under a Cocancelling Condition}

The essential ingredient in the original proof of the estimate in \cref{theorem_cocancelling} by Jean \familyname{Bourgain} and Haïm \familyname{Brezis} is an approximation result obtained through a Littlewood-Paley decomposition: For every \(\varepsilon \in \intvo{0}{+\infty}\), there exists a constant \(M_\varepsilon \in \intvo{0}{+\infty}\) such that for every function \(u \in \dot{W}^{1, n} (\Rset^n, \Rset)\), one can construct a function \(v \in (\dot{W}^{1, n} \cap L^\infty)(\Rset^n, \Rset)\) satisfying
\begin{equation*}
  \norm{\Deriv v}_{L^n (\Rset^n)} + \norm{v}_{L^\infty (\Rset^n)} \le M_\varepsilon \norm{\Deriv u}_{L^n(\Rset^n)}\;,
\end{equation*}
and 
\begin{equation*}
  \norm{\Deriv' u - \Deriv'v}_{L^n (\Rset^n)} \le \varepsilon \norm{\Deriv u}_{L^n (\Rset^n)}\;,
\end{equation*}
where \(\Deriv'\) is the derivative with respect to the \(n -1\) first variables; this gives a stronger estimate with \(\norm{f}_{L^1 (\Rset^n)}\) replaced by the weaker norm \(\norm{f}_{L^1 (\Rset^n) + \dot{W}^{-1,n/(n-1)}(\Rset^n)}\) \citelist{\cite{Bourgain_Brezis_2004}\cite{Bourgain_Brezis_2007}}.%
\footnote{
  This approximation result generalized Jean \familyname{Bourgain} and Haïm \familyname{Brezis}'s similar approximation result in their work on the divergence equation \citelist{\cite{Bourgain_Brezis_2002}\cite{Bourgain_Brezis_2003}}, and yields some estimates for a large class of cocancelling operators \cite{Bourgain_Brezis_2007}. Algebraic arguments to reach the whole class of cocancelling operators are due to the author \citelist{\cite{VanSchaftingen_2008}\cite{VanSchaftingen_2013}}.
} 

We present here a proof of \cref{theorem_cocancelling} based on Morrey's embedding on \(\Rset^{n - 1}\) and some appropriate integration by parts.%
\footnote{%
This strategy goes back to the author's elementary proof of an inequality of Jean \familyname{Bourgain}, Haïm \familyname{Brezis} and Petru \familyname{Mironescu} on circulation integrals    \cite{VanSchaftingen_2004_circ}, which was successively extended to divergence-free vector fields \cite{VanSchaftingen_2004_div}, some class of second order operator \cite{VanSchaftingen_2004_ARB} and the higher-order divergence \cite{VanSchaftingen_2008}.
      The latter proof together with appropriate algebraic manipulations yielded the general estimate for cocancelling operators \cite{VanSchaftingen_2013}.%
}

\subsubsection{About Hölder-Continuous Functions}

The first analytical tool that we will used for the proof of the sufficiency part in \cref{theorem_cocancelling} will be the Morrey-Sobolev embedding applied on some hyperplanes.

\begin{proposition}[Morrey-Sobolev embedding%
\footnote{%
For the statement and the proof of the Morrey-Sobolev embedding (\cref{proposition_Morrey_Sobolev}), see for example \citelist{\cite{Hormander_1990_I}*{Th.\ 4.5.11}\cite{Mazya_2011}*{\S 1.4.5}\cite{Adams_Fournier_2003}*{Lem.\ 4.28}\cite{Morrey_1966}*{Th.\ 3.5.2}\cite{Brezis_2011}{Th.\ 9.12}\cite{Willem_2013}*{Lem.\ 6.4.13}\cite{Evans_2010}*{\S 5.6.2}}.%
}%
]
  \label{proposition_Morrey_Sobolev}
Let \(\ell \in \Nset \setminus \set{0}\), let \(V\) be finite-dimensional vector space and \(p \in (\ell, +\infty)\). 
There exists a constant \(C \in \intvo{0}{+\infty}\) such that for every 
\(\psi \in C^\infty_c (\Rset^\ell, V)\), 
\begin{equation*}
\abs{\psi (y) - \psi (x)} \le C \abs{y - x}^{1 - \frac{\ell}{p}}\brk[\Big]{\int_{\Rset^\ell} \abs{\Deriv \psi}^p }^\frac{1}{p}\;.
\end{equation*}
\end{proposition}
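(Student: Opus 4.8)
The statement is the classical Morrey inequality, and the plan is to run the classical argument: control the oscillation of $\psi$ between two points by an averaged quantity, bound that average by a truncated Riesz potential of $\Deriv\psi$, and estimate the latter by Hölder's inequality, the hypothesis $p>\ell$ being precisely what makes the kernel $\eta\mapsto\abs{\eta}^{1-\ell}$ locally integrable to the power $p'\defeq p/(p-1)$.

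First I would establish a local mean-oscillation bound: for every $w\in\Rset^\ell$ and every $\rho\in\intvo{0}{+\infty}$,
\[
 \frac{1}{\abs{B_\rho(w)}}\int_{B_\rho(w)}\abs{\psi(w)-\psi(\zeta)}\dif\zeta
 \le C\,\rho^{1-\frac{\ell}{p}}\brk[\Big]{\int_{\Rset^\ell}\abs{\Deriv\psi}^p}^{\frac1p}\;.
\]
Writing $\zeta=w+s\omega$ with $s=\abs{\zeta-w}$ and $\omega\in\Sset^{\ell-1}$, the fundamental theorem of calculus along the segment from $w$ to $\zeta$ gives $\abs{\psi(w)-\psi(\zeta)}\le\int_0^s\abs{\Deriv\psi(w+t\omega)}\dif t$. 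Integrating over $B_\rho(w)$ in polar coordinates, enlarging the inner integral so that it runs over $\intvo{0}{\rho}$, and reintroducing the factor $t^{\ell-1}$, I obtain
\[
 \int_{B_\rho(w)}\abs{\psi(w)-\psi(\zeta)}\dif\zeta
 \le\frac{\rho^\ell}{\ell}\int_{B_\rho(w)}\frac{\abs{\Deriv\psi(\eta)}}{\abs{w-\eta}^{\ell-1}}\dif\eta\;.
\]
Hölder's inequality with exponents $p$ and $p'$ bounds this Riesz potential by $\brk[\big]{\int_{\Rset^\ell}\abs{\Deriv\psi}^p}^{1/p}$ times $\brk[\big]{\int_{B_\rho(w)}\abs{w-\eta}^{-(\ell-1)p'}\dif\eta}^{1/p'}$; since $p>\ell$ is equivalent to $(\ell-1)p'<\ell$, the last integral is finite and a polar-coordinates computation gives it the value of a dimensional constant times $\rho^{\ell-(\ell-1)p'}$, whose $p'$-th root carries the exponent $\ell/p'-(\ell-1)=1-\ell/p$. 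Dividing through by $\abs{B_\rho(w)}$, which is a constant multiple of $\rho^\ell$, produces the local bound above.

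Next, given $x,y\in\Rset^\ell$ I would set $r\defeq\abs{x-y}$ and $W\defeq B_{r/2}\brk[\big]{\tfrac{x+y}{2}}$, and observe that $W\subseteq B_r(x)\cap B_r(y)$ while $\abs{W}$ is a dimensional constant times $r^\ell$. With $\psi_W\defeq\frac1{\abs{W}}\int_W\psi$, the triangle inequality gives
\[
 \abs{\psi(x)-\psi(y)}\le\frac1{\abs{W}}\int_W\abs{\psi(x)-\psi(\zeta)}\dif\zeta+\frac1{\abs{W}}\int_W\abs{\psi(\zeta)-\psi(y)}\dif\zeta\;.
\]
Bounding each integral over $W$ by the corresponding integral over $B_r(x)$, respectively $B_r(y)$, applying the intermediate estimate of the previous step with $\rho=r$, and using that $\abs{W}$ is comparable to $r^\ell$, I conclude that $\abs{\psi(x)-\psi(y)}\le C\,r^{1-\ell/p}\brk[\big]{\int_{\Rset^\ell}\abs{\Deriv\psi}^p}^{1/p}$, which is the asserted inequality.

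There is no genuine obstacle here; the argument is elementary. The only points that need attention are the bookkeeping of exponents in the Riesz-potential estimate — verifying that $p>\ell$ is exactly the threshold for which $\abs{\eta}^{1-\ell}$ is locally $L^{p'}$ and that the surviving power of $\rho$ is precisely $1-\ell/p$ — and the elementary geometric fact that two balls of radius $r$ whose centres lie at distance $r$ contain a common ball of radius $r/2$, which is what lets the mean values of $\psi$ at $x$ and at $y$ be compared through a single intermediate average.
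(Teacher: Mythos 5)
Your proof is correct. The paper does not actually prove \cref{proposition_Morrey_Sobolev}; it only cites standard references (H\"ormander, Maz\cprime ya, Adams--Fournier, Morrey, Brezis, Willem, Evans) in a footnote, and your argument is precisely the classical one found in those sources, e.g.\ in Evans \S 5.6.2: the pointwise oscillation is controlled by a mean oscillation over a ball, which is in turn bounded by a truncated Riesz potential of \(\Deriv\psi\) and then by H\"older's inequality, with \(p>\ell\) guaranteeing that \(\abs{\eta}^{-(\ell-1)p'}\) is integrable near the origin and that the surviving power of the radius is \(1-\ell/p\). The exponent bookkeeping and the geometric step comparing the averages at \(x\) and \(y\) through the common ball \(B_{r/2}\brk{\frac{x+y}{2}}\subseteq B_r(x)\cap B_r(y)\) are both carried out correctly, so there is nothing to fix.
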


The Morrey-Sobolev embedding can be reinterpreted as a bound on the Hölder seminorm \(\seminorm{\varphi}_{C^{0, 1-\ell/p} (\Rset^\ell, V)}\).

\begin{definition}[Hölder seminorm]
  \label{definition_Holder}
  Given \(\ell \in \Nset \setminus \set{0}\), \(\alpha \in \intvo{0}{1}\) and a finite-dimensional vector space \(V\), we define the \emph{Hölder seminorm} for every function \(\psi : \Rset^\ell \to V\) by 
\begin{equation*}
\seminorm{\psi}_{C^{0, \alpha} (\Rset^\ell)}
\defeq
\sup \,\set[\Big]{ 
\frac{ \abs{\psi (y) - \psi (x)}}
  {\abs{y - x}^\alpha }\st x, y \in \Rset^\ell }\;.
\end{equation*}
\end{definition}

The next result shows how estimates in terms of the uniform norm of the function and of its derivative can be combined into an estimate in terms of the Hölder seminorm.

\begin{lemma}[Interpolation to Hölder spaces]
  \label{lemma_Holder_interpolation}
Let \(\ell \in \Nset \setminus \set{0}\), let \(V\) be a linear space, let the mapping \(T : C^\infty_c (\Rset^\ell, V)\to \Rset\) be linear, and let \(\alpha \in (0, 1)\).
There exists a constant \(C \in \intvo{0}{+\infty}\) for which if there are constants \(A, B \in \Rset\) such that for every \(\psi \in C^\infty_c (\Rset^\ell, V)\), one has
\begin{equation}
\label{eq_too8IesooHofoed9Tiexai5A}
\abs{\dualprod{T}{\psi}} \le A\norm{\psi}_{L^\infty(\Rset^\ell)}
\end{equation}
and 
\begin{equation}
\label{eq_Aequeu3wa3ePh6Ya3uikook0}
\abs{\dualprod{T}{\psi}} \le B \norm{\Deriv \psi}_{L^\infty(\Rset^\ell)}\;,
\end{equation}
then for every \(\psi \in C^\infty_c (\Rset^\ell, V)\), we have
\begin{equation}
\label{eq_Tiphiquoh1tie9AesaiceiCe}
  \abs{\dualprod{T}{\psi}} \le C A^{1 - \alpha} B^\alpha \seminorm{\psi}_{C^{0, \alpha}(\Rset^\ell)}\;.
\end{equation}
\end{lemma}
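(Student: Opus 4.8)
The plan is to prove \cref{lemma_Holder_interpolation} by an interpolation argument of \(K\)-functional type, in which the two hypotheses are balanced against each other at a suitably chosen mollification scale. First, if \(A = 0\) or \(B = 0\), then \eqref{eq_too8IesooHofoed9Tiexai5A} or \eqref{eq_Aequeu3wa3ePh6Ya3uikook0} forces \(T = 0\) and there is nothing to prove, so I may assume \(A, B \in \intvo{0}{+\infty}\). Fix once and for all a function \(\rho \in C^\infty_c (\Rset^\ell, \Rset)\) with \(\int_{\Rset^\ell} \rho = 1\), and for \(t \in \intvo{0}{+\infty}\) set \(\rho_t (x) \defeq t^{-\ell} \rho (x/t)\). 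Given \(\psi \in C^\infty_c (\Rset^\ell, \Rset)\), both \(\psi \ast \rho_t\) and \(\psi - \psi \ast \rho_t\) lie in \(C^\infty_c (\Rset^\ell, \Rset)\), so the linearity of \(T\) gives the splitting
\[
  \dualprod{T}{\psi} = \dualprod{T}{\psi - \psi \ast \rho_t} + \dualprod{T}{\psi \ast \rho_t}\;.
\]

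For the first term I would invoke \eqref{eq_too8IesooHofoed9Tiexai5A}. Since \(\int_{\Rset^\ell} \rho_t = 1\), for every \(x \in \Rset^\ell\),
\[
  \abs{\psi (x) - (\psi \ast \rho_t) (x)}
  = \abs[\Big]{\int_{\Rset^\ell} \rho_t (y) \brk{\psi (x) - \psi (x - y)} \dif y}
  \le \seminorm{\psi}_{C^{0, \alpha} (\Rset^\ell)} \int_{\Rset^\ell} \abs{\rho_t (y)} \abs{y}^\alpha \dif y\;,
\]
and the change of variables \(y = t z\) turns the last integral into \(t^\alpha \int_{\Rset^\ell} \abs{\rho (z)} \abs{z}^\alpha \dif z\); hence \(\abs{\dualprod{T}{\psi - \psi \ast \rho_t}} \le \C A\, t^\alpha \seminorm{\psi}_{C^{0, \alpha} (\Rset^\ell)}\). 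For the second term I would invoke \eqref{eq_Aequeu3wa3ePh6Ya3uikook0} together with \(\Deriv (\psi \ast \rho_t) = \psi \ast \Deriv \rho_t\) and the cancellation \(\int_{\Rset^\ell} \Deriv \rho_t = 0\): writing \((\psi \ast \Deriv \rho_t) (x) = \int_{\Rset^\ell} \Deriv \rho_t (y) \brk{\psi (x - y) - \psi (x)} \dif y\) and using \(\Deriv \rho_t (x) = t^{-\ell - 1} \Deriv \rho (x/t)\) gives \(\norm{\Deriv (\psi \ast \rho_t)}_{L^\infty (\Rset^\ell)} \le t^{\alpha - 1} \seminorm{\psi}_{C^{0, \alpha} (\Rset^\ell)} \int_{\Rset^\ell} \abs{\Deriv \rho (z)} \abs{z}^\alpha \dif z\), and therefore \(\abs{\dualprod{T}{\psi \ast \rho_t}} \le \C B\, t^{\alpha - 1} \seminorm{\psi}_{C^{0, \alpha} (\Rset^\ell)}\).

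Adding the two estimates yields, for every \(t \in \intvo{0}{+\infty}\),
\[
  \abs{\dualprod{T}{\psi}} \le \C \brk{A\, t^\alpha + B\, t^{\alpha - 1}} \seminorm{\psi}_{C^{0, \alpha} (\Rset^\ell)}\;,
\]
with \(\C\) depending only on \(\ell\), \(\alpha\) and the fixed mollifier \(\rho\) (hence only on \(\ell\) and \(\alpha\)). Choosing \(t \defeq B / A\) makes the two summands in the bracket both equal to \(A^{1 - \alpha} B^\alpha\), which is exactly \eqref{eq_Tiphiquoh1tie9AesaiceiCe}. The argument is essentially routine; the points that require attention are checking that \(\psi \ast \rho_t\) is an admissible argument of \(T\), keeping the homogeneity exponents straight under the rescaling \(\rho \mapsto \rho_t\), and — the one genuinely essential ingredient — exploiting \(\int_{\Rset^\ell} \Deriv \rho = 0\) so that the bound on \(\Deriv (\psi \ast \rho_t)\) is controlled by \(\seminorm{\psi}_{C^{0,\alpha}(\Rset^\ell)}\) alone rather than by \(\norm{\psi}_{L^\infty (\Rset^\ell)}\), which would spoil the scaling and merely reproduce the trivial estimate coming from \eqref{eq_too8IesooHofoed9Tiexai5A}.
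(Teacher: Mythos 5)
Your proof is correct and follows essentially the same route as the paper's: the paper also mollifies \(\psi\) at scale \(\lambda\) (its \(\psi_\lambda\) is exactly your \(\psi \ast \rho_t\)), bounds \(\psi - \psi_\lambda\) in \(L^\infty\) by \(\lambda^\alpha \seminorm{\psi}_{C^{0,\alpha}}\) and \(\Deriv \psi_\lambda\) by \(\lambda^{\alpha - 1}\seminorm{\psi}_{C^{0,\alpha}}\) using \(\int \Deriv\eta = 0\), and then chooses \(\lambda = B/A\). The only cosmetic difference is that you dispose of the degenerate cases \(A = 0\) or \(B = 0\) explicitly, which the paper leaves implicit.
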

\begin{proof}
We fix a function \(\eta \in C^\infty_c (\Rset^\ell, \Rset)\) such that \(\int_{\Rset^\ell} \eta = 1\). 
Given a function \(\psi \in C^\infty_c (\Rset^\ell, \Rset)\), we define for each \(\lambda \in \intvo{0}{+\infty}\), the function \(\psi_{\lambda} : \Rset^\ell \to V\) by setting for each \(x \in \Rset^\ell\)
\begin{equation}
\label{eq_eingaiYah0Eejie8ooQui9ni}
  \psi_\lambda (x) 
  \defeq 
  \int_{\Rset^\ell} \eta (y) \psi (x - \lambda y) \dif y= 
  \int_{\Rset^\ell} \eta \brk*{\tfrac{x}{\lambda} - z} \psi (\lambda z) \dif z\;.
\end{equation}
Since \(\int_{\Rset^\ell} \eta = 1\), we observe that for every \(x \in \Rset^\ell\) we have by \eqref{eq_eingaiYah0Eejie8ooQui9ni}
\begin{equation}
\label{eq_ieghaifahnge3eeSeiyiequ7}
    \psi_\lambda (x) - \psi (x) 
  = 
    \int_{\Rset^\ell} \eta (y)\, \bigl(\psi (x - \lambda y) - \psi (x)\bigr) \dif y\;, 
\end{equation}
and thus by \eqref{eq_ieghaifahnge3eeSeiyiequ7} and by \cref{definition_Holder},
\begin{equation}
  \label{eq_ahx1Ucheik4heGohx}
\begin{split}
\abs{\psi_\lambda (x) - \psi (x)} 
&\le \int_{\Rset^\ell} \abs{\eta (y)} \, \abs{\psi (x - \lambda y) - \psi (x)} \dif y\\
&\le \seminorm{\psi}_{C^{0, \alpha} (\Rset^\ell)} \lambda^\alpha 
\int_{\Rset^\ell} \abs{\eta (y)} \, \abs{y}^\alpha \dif y  = \Cl{cst_nooSu1eyohreiy8ei}   \seminorm{\psi}_{C^{0, \alpha} (\Rset^\ell)} \lambda^\alpha\; .
\end{split}
\end{equation}
On the other hand, we have for each \(x \in \Rset^\ell\), by \eqref{eq_eingaiYah0Eejie8ooQui9ni} again 
\begin{equation}
  \label{eq_ka7IcieLie5Isaifa7xieshu}
\begin{split}
\Deriv \psi_\lambda (x)
&= 
\frac{1}{\lambda} \int_{\Rset^\ell} \Deriv \eta \brk*{\tfrac{x}{\lambda} - z} \psi (\lambda z) \dif z
\\
&=
\frac{1}{\lambda} \int_{\Rset^\ell} \Deriv \eta (y) \psi (x - \lambda y) \dif y\\ 
&= \frac{1}{\lambda} \int_{\Rset^\ell} \Deriv \eta (y) \bigl(\psi (x - \lambda y) - \psi (x)\bigr)\dif y\;,
\end{split}
\end{equation}
since \(\int_{\Rset^\ell} \Deriv \eta = 0\).
Hence, we have by \eqref{eq_ka7IcieLie5Isaifa7xieshu} and by \cref{definition_Holder}
\begin{equation}
  \label{eq_Ahdai9gareT3phein}
  \begin{split}
    \abs{\Deriv \psi_\lambda (x)}
    &\le
    \frac{1}{\lambda} 
    \int_{\Rset^\ell} 
    \abs{\Deriv \eta (y)}\, \abs{\psi (x - \lambda y) - \psi (x)} \dif y\\
    &\le 
    \frac{\seminorm{\psi}_{C^{0, \alpha} (\Rset^\ell)}}{\lambda^{1 - \alpha}}
      \int_{\Rset^\ell} \abs{\Deriv \eta (y)} \,\abs{y}^\alpha \dif y\\
      &= 
      \Cl{cst_iehuje0xei8mong1W} \frac{\seminorm{\psi}_{C^{0, \alpha} (\Rset^\ell)}}{\lambda^{1 - \alpha}}\;.
  \end{split}
\end{equation}
By our assumption \eqref{eq_too8IesooHofoed9Tiexai5A} and \eqref{eq_Aequeu3wa3ePh6Ya3uikook0} and by \eqref{eq_ahx1Ucheik4heGohx} and \eqref{eq_Ahdai9gareT3phein}, we have 
\begin{equation}
  \label{eq_maeNg8quaeho8ohDaye2ohVa}
\begin{split}
\abs{\dualprod{T}{\psi}}
&\le
\abs{\dualprod{T}{\psi  - \psi_\lambda}} + \abs{\dualprod{T}{\psi_\lambda}}\\
&\le 
A \norm{\psi - \psi_\lambda}_{L^\infty (\Rset^\ell)} + B \norm{\Deriv \psi_\lambda}_{L^\infty (\Rset^\ell)}\\
&\le 
\Bigl(
\Cr{cst_nooSu1eyohreiy8ei} A  \lambda^\alpha
+ 
 \frac{\Cr{cst_iehuje0xei8mong1W} B}{\lambda^{1 - \alpha}}\Bigr)\seminorm{\psi}_{C^{0, \alpha} (\Rset^\ell)}\;.
\end{split}
\end{equation}
If we choose \(\lambda = B/A\) in \eqref{eq_maeNg8quaeho8ohDaye2ohVa}, we obtain \eqref{eq_Tiphiquoh1tie9AesaiceiCe} with \(C \defeq \Cr{cst_nooSu1eyohreiy8ei}
+ 
\Cr{cst_iehuje0xei8mong1W}\).
\end{proof}

\subsubsection{Integrating by Parts}

The proof of the sufficiency part of \cref{theorem_cocancelling} will also rely on an estimate of an integral on a hyperplane.

\begin{lemma}[Integration by parts estimate]
  \label{lemma_parts_general}
  Let \(n \in \Nset\setminus \set{0}\) and let \(E\) and \(F\) be finite-dimensional vector spaces.
If \(L (\Deriv)\) is a homogenous constant coefficients differential operator of order \(k \in \Nset \setminus \set{0}\) from \(E\) to \(F\) on \(\Rset^n\),
then there exists a constant \(C \in \intvo{0}{+\infty}\) such that for every \(f \in (C^\infty \cap L^1) (\Rset^n, E)\) satisfying \(L (\Deriv)f = 0\), every affine hyperplane \(\Sigma \subset \Rset^n\) and every \(\psi \in C^1 (\Sigma, F)\), one has 
\begin{equation}
\label{eq_ahkaek3ceib6OgaeSh4yuchi}
\abs[\Big]{
\int_{\Sigma} \dualprod{\psi}{L (\nu_\Sigma)[f]}\,
}
\le 
C \,\norm{\Deriv \psi}_{L^\infty(\Sigma)}
\int_{\Rset^n} \abs{f}\;.
\end{equation}
\end{lemma}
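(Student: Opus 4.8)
The plan is to normalise the hyperplane and then use the equation \(L(\Deriv)f=0\) to rewrite the boundary flux \(L(\nu_\Sigma)[f]\) along \(\Sigma\) in terms of tangential derivatives, so that after pairing with \(\psi\) and integrating by parts along \(\Sigma\) the single admissible derivative lands on \(\psi\) while \(f\) is left undifferentiated, at which point the \(L^1\)-bound is immediate. By translation invariance (the coefficients of \(L(\Deriv)\) are constant) and by a rotation \(R\in\mathrm{SO}(n)\) sending \(e_n\) to \(\nu_\Sigma\) — replacing \(L(\Deriv)\) by the operator with symbol \(\xi\mapsto L(R\xi)\) and \(f,\psi\) by \(f\compose R,\psi\compose R\) — it suffices to treat \(\Sigma=\Rset^{n-1}\times\set{0}\) with \(\nu_\Sigma=e_n\); the constant produced below depends continuously on the uniformly bounded coefficients of \(L(R\,\cdot)\), hence stays uniform in \(\Sigma\) by compactness of \(\mathrm{SO}(n)\). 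Writing \(x=(x',x_n)\in\Rset^{n-1}\times\Rset\) and collecting the monomials of \(L(\Deriv)\) by the power of \(\partial_n\), one has \(L(\Deriv)=\sum_{j=0}^{k}\partial_n^{\,j}\compose L_j(\Deriv')\), with \(L_j(\Deriv')\) a homogeneous constant-coefficient operator of order \(k-j\) in the variables \(x'\) only and \(L_k(\Deriv')=L(e_n)=:A\); the equation becomes \(\partial_n^{\,k}(Af)=-\sum_{j=0}^{k-1}\partial_n^{\,j}\bigl(L_j(\Deriv')f\bigr)\).

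For the model case \(k=1\) this reads \(\partial_n(Af)=-\sum_{i=1}^{n-1}\partial_i(L_i f)\) for suitable \(L_i\in\Lin(E,F)\). Taking \(\psi\in C^1_c(\Sigma,F)\) first, for \(x_n\ge0\) integration by parts on \(\Rset^{n-1}\) gives \(\partial_{x_n}\int_{\Rset^{n-1}}\dualprod{\psi}{Af(\cdot,x_n)}=\sum_{i=1}^{n-1}\int_{\Rset^{n-1}}\dualprod{\partial_i\psi}{L_i f(\cdot,x_n)}\); integrating over \(\intvo{0}{+\infty}\), the boundary term at \(+\infty\) vanishing along a suitable sequence since \(x_n\mapsto\int_{\supp\psi}\abs{Af(\cdot,x_n)}\) lies in \(L^1(\intvo{0}{+\infty})\), we obtain
\[
\int_{\Sigma}\dualprod{\psi}{L(\nu_\Sigma)[f]}
=-\sum_{i=1}^{n-1}\int_{\Rset^{n-1}}\int_0^{+\infty}\dualprod{\partial_i\psi(x')}{L_i f(x',x_n)}\dif x_n\dif x',
\]
whose modulus is at most \(C\,\norm{\Deriv\psi}_{L^\infty(\Sigma)}\int_{\Rset^n}\abs{f}\); the case of general \(\psi\in C^1(\Sigma,F)\) follows by inserting cutoffs \(\eta(\,\cdot\,/R)\) — whose derivatives only add terms again controlled by \(\norm{\Deriv\psi}_{L^\infty}\) — and letting \(R\to+\infty\).

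For \(k\ge2\) one integrates the relation \(\partial_n^{\,k}(Af)=-\sum_{j<k}\partial_n^{\,j}(L_j(\Deriv')f)\) \(k\) times in \(x_n\) against a kernel built from a compactly supported weight with vanishing moments of orders \(1,\dots,k-1\) (which isolates the trace at \(x_n=0\)) and a cutoff near \(x_n=0\); integrations by parts in \(x_n\) move each normal derivative off \(f\) onto the resulting smooth, compactly-supported-in-\(x_n\) kernels, while the boundary traces on \(\Sigma\) thereby created are killed by the moment conditions. Pairing with \(\psi\) and integrating by parts on \(\Rset^{n-1}\) then transfers the residual tangential derivatives carried by the \(L_j(\Deriv')\), exactly one factor landing on \(\psi\) at each stage, and the estimate is closed by iterating this reduction with the case \(k=1\) as the base step; at the end \(\int_{\Sigma}\dualprod{\psi}{L(\nu_\Sigma)[f]}\) is a finite sum of integrals over \(\Rset^n\) of \(\dualprod{\,\cdot\,}{f}\) against expressions of the form (constant)\(\cdot(\Deriv\psi)(x')\cdot(\text{smooth kernel, compactly supported in }x_n)\), each bounded by \(C\norm{\Deriv\psi}_{L^\infty(\Sigma)}\int_{\Rset^n}\abs{f}\).

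The main obstacle is precisely this last bookkeeping. Since \(f\) is only assumed to lie in \(C^\infty\cap L^1\), with no quantitative control on its derivatives, and \(\psi\) is only \(C^1\), one must arrange that every one of the \(k\) derivatives occurring in \(L(\Deriv)\) is absorbed either by one of the smooth compactly supported weights or by the single permitted derivative of \(\psi\), and \emph{never} produces a second derivative of \(\psi\) nor any derivative of \(f\); it is the interplay between the equation \(L(\Deriv)f=0\), which converts the top-order normal symbol into tangential derivatives, and the vanishing-moment conditions on the \(x_n\)-weight, which remove the spurious boundary traces generated by the intermediate integrations by parts, that makes this balancing possible, and controlling it uniformly in the order — organised as the iteration indicated above — is where the real work lies.
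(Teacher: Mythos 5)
Your normalization, the decomposition \(L(\xi)=\sum_{j}\xi_n^{\,k-j}L_j(\xi')\), the use of the equation to trade \(\partial_n^k L(\nu)[f]\) for lower normal derivatives, and the weight \(x_n^{k-1}\theta(x_n/\lambda)\)-type device isolating the trace at \(x_n=0\) all match the paper's proof, and your first-order case is correct. But for \(k\ge 2\) there is a genuine gap at exactly the point you defer to ``where the real work lies'': the term \(L_j(\Deriv')[f]\) carries \(j\) \emph{tangential} derivatives, and in particular the extreme term \(L_k(\Deriv')[f]\) carries \(k\) of them with no accompanying normal derivative. In your scheme the object being paired with \(f\) is a product \(\psi(x')\cdot\kappa(x_n)\) of the test function with an \(x_n\)-only kernel; when you integrate by parts in \(x'\) to move those \(j\) tangential derivatives off \(f\), they have nowhere to go except onto \(\psi\), since \(\kappa\) is independent of \(x'\). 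For \(j\ge 2\) this forces at least two derivatives of \(\psi\), which is not available for \(\psi\in C^1\) and is not controlled by \(\norm{\Deriv\psi}_{L^\infty}\). The vanishing-moment conditions on the \(x_n\)-weight control the spurious boundary traces from the normal integrations by parts, but they do nothing to absorb excess tangential derivatives, so the ``iteration with the \(k=1\) case as base step'' does not close.

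The missing idea in the paper is to replace \(\psi(x')\) by the extension
\(\Psi(x',x_n)=\int_{\Rset^{n-1}}\psi(x'-x_n z)\,\eta(z)\dif z\),
a mollification of \(\psi\) at scale \(x_n\). This \(\Psi\) satisfies \(\abs{\Deriv^{j}\Psi(x',x_n)}\le C\norm{\Deriv\psi}_{L^\infty}\,x_n^{\,1-j}\): every derivative beyond the first costs only a factor \(x_n^{-1}\), not a further derivative of \(\psi\). The total of \(k\) derivatives (tangential from \(L_j(\Deriv')^*\) plus normal from \(\partial_n^{k-j}\)) then falls on the product \(\theta^k_\lambda\Psi\), and the singular factor \(x_n^{-(k-i-1)}\) produced by differentiating \(\Psi\) is exactly cancelled by the factor \(x_n^{\,k-1-i}\) coming from the weight \(x_n^{k-1}/(k-1)!\), leaving a quantity bounded by \(\norm{\Deriv\psi}_{L^\infty}\) pointwise. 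Without some such extension trading tangential regularity of \(\psi\) against powers of the distance to \(\Sigma\), the balancing you describe cannot be achieved, so the argument as written does not prove the lemma for \(k\ge 2\).
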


Here \(\nu_\Sigma\) denotes the unit normal vector to the hyperplane \(\Sigma\). We do not assume \(f\) to be compactly supported to avoid issues with the rigidity of the condition \(L(\Deriv) f = 0\).

In order to outline the ideas, we first prove \cref{lemma_parts_general} for the divergence operator.

\begin{proof}[Proof of \cref{lemma_parts_general} when \(L (\Deriv)= \operatorname{div}\)]
Without loss of generality we assume that \(\Sigma = \Rset^{n - 1} \times \set{0}\) and that \(\nu_\Sigma = \nu \defeq (0, \dotsc, 0, 1)\).
We define the function \(\Psi : \Rset^n_+ \to \Rset\) where \(\Rset^n_+ \defeq \Rset^n \times \intvo{0}{+\infty}\), by setting for each \((x', x_n) \in \Rset^n_+\), \(\Psi (x', x_n) \defeq \psi (x')\).
We fix a function \(\theta \in C^1 (\intvr{0}{+\infty}, \Rset)\) such that \(\theta = 1\) on \(\intvc{0}{1}\) and \(\theta = 0\) on \(\intvr{2}{+\infty}\), and for every \(\lambda > 0\) we define the function \(\theta_\lambda \in C^1_c (\Rset^n_+, \Rset)\) for each \(x = (x', x_n) \in \Rset^{n}_+\) by \(\theta_\lambda (x) \defeq \theta (x_n/\lambda)\).
For every \(\lambda > 0\), the function \(\theta_\lambda \Psi\) is compactly supported in \(\smash{\Bar{\Rset}^n_+}\), and we have thus by the Gauß-Ostrogradski divergence theorem and by the Leibniz rule for the divergence
\begin{equation}
\label{eq_aw2thoo6Ahlei5thaerohlej}
\begin{split}
\int_{\Rset^{n - 1} \times \set{0}} \dualprod{\nu}{f }\, \psi &= \int_{\Rset^{n - 1}\times \set{0}} \dualprod{\nu}{f \Psi \theta_\lambda}
  =   -
  \int_{\Rset^n_+} \divergence (f \Psi \theta_\lambda)\\
&=-
  \int_{\Rset^n_+} (\divergence f)\, \Psi \theta_\lambda - \int_{\Rset^n_+} \dualprod{\Deriv \Psi}{f \theta_\lambda}
- \int_{\Rset^n_+} \dualprod{\Deriv \theta_\lambda}{f \Psi}\;.
\end{split}
\end{equation}
We deduce then from \eqref{eq_aw2thoo6Ahlei5thaerohlej}, since \(\operatorname{div}f = 0\) in \(\Rset^n_+\), that 
\begin{equation}
  \label{eq_imeiz7leeDa2eelaegh}
  \begin{split}
\abs[\Big]{\int_{\Rset^{n - 1} \times \set{0}} \dualprod{\nu}{f}\, \psi\,}
&\le \brk[\Big]{\norm{\Deriv \Psi}_{L^\infty (\Rset^n_+)} 
+ \frac{\Cl{cst_ia1Ia0tae2Hiigaaxohshood}}{\lambda} \norm{\Psi}_{L^\infty(\Rset^n_+)}} \int_{\Rset^{n}_+} \abs{f}\\
&= \brk[\Big]{\norm{\Deriv \psi}_{L^\infty (\Rset^{n - 1})} 
+ \frac{\Cr{cst_ia1Ia0tae2Hiigaaxohshood}}{\lambda} \norm{\psi}_{L^\infty(\Rset^{n - 1})}} \int_{\Rset^{n}_+} \abs{f}\;, 
\end{split}
\end{equation}
Letting \(\lambda \to +\infty\) in \eqref{eq_imeiz7leeDa2eelaegh}, we obtain \eqref{eq_ahkaek3ceib6OgaeSh4yuchi}.
\end{proof}

We proceed now to the proof of \cref{lemma_parts_general} with a general operator \(L(\Deriv)\).

\begin{proof}[Proof of \cref{lemma_parts_general} in the general case]
Up to a rotation of the Euclidean space \(\Rset^n\), we can assume that \(\Sigma = \Rset^{n - 1} \times \set{0}\) and \(\nu_\Sigma = \nu \defeq (0, \dotsc, 0, 1)\).
We fix a function \(\eta \in C^1_c (\Rset^{n - 1}, \Rset)\) such that \(\int_{\Rset^{n - 1}} \eta = 1\).
Given \(\psi \in C^1_c (\Rset^{n - 1}, F)\), we define the function \(\Psi : \Rset^n_+ \to F\)
for each \((x', x_n) \in \Rset^{n}_+\) by
\begin{equation}
\label{eq_eNoo6iu8roh0ulieCepohFah}
  \Psi (x', x_n) \defeq
  \frac{1}{x_n{}^{n - 1}}
  \int_{\Rset^{n - 1}} \psi (y) \eta \bigl(\tfrac{x' - y}{x_n} \bigr) \dif y
  =\int_{\Rset^{n - 1}} \psi (x  - x_n z) \eta (z) \dif z
  \;.
\end{equation}
We immediately have from \eqref{eq_eNoo6iu8roh0ulieCepohFah} that for every  \((x', x_n) \in \Rset^{n}_+\) 
\begin{equation}
  \label{eq_yae4Ahp6eir9reith9xahtae}
  \abs{\Psi (x', x_n)} \le \C \norm{\psi}_{L^\infty (\Rset^{n - 1})}\;.
\end{equation}
We next observe that for every point \((x', x_n) \in \Rset^{n - 1} \times \intvo{0}{+\infty} = \Rset^n_+\) 
and every vector \(v = (v', v_n) \in \Rset^n\), we have 
\begin{equation}
  \label{eq_ExephahvaegheeJie6reW2me}
  \begin{split}
\Deriv \Psi (x', x_n)[v]
&= \int_{\Rset^{n - 1}}  \Deriv \psi (x'  - x_n z)[v' - v_n z] \,\eta (z) \dif z\\
&= \frac{1}{x_n{}^{n - 1}}
 \int_{\Rset^{n - 1}}  \Deriv \psi (y)[H \bigl(\tfrac{x' - y}{x_n}\bigr)] \dif y\;,
\end{split}
\end{equation}
where the function \(H \in C^\infty_c (\Rset^{n - 1}, \Lin (\Rset^n, \Rset^{n - 1}))\) is defined for every \(z \in \Rset^{n - 1}\) and \(v = (v', v_n) \in \Rset^{n}\)
by \(H (z)[v] \defeq (v' - v_n z) \eta (z)\).
Hence we deduce from \eqref{eq_ExephahvaegheeJie6reW2me} that for every \(j \in \Nset \setminus \set{0}\) and \(x \in \Rset^n_+\)
\begin{align}
  \label{eq_di4xeex8Yiekakei4Tei1eli}
\raisetag{2em}
  \abs{\Deriv^j \Psi (x', x_n)}
\le 
\frac{1}{x_n{}^{n + j - 2}}
\int_{B_{x_n} (x)}\hspace{-.8em} \abs{\Deriv \psi(y)} \abs{\Deriv^{j - 1} H(\tfrac{x' - y}{x_n})} \dif y \le \frac{\C \norm{\Deriv \psi}_{L^\infty (\Rset^{n- 1})}}{x_n{}^{j - 1}}\;.
\end{align}
Moreover, for every point \(y\in \Rset^{n - 1}\), we have 
\begin{equation*}
\begin{split}
\lim_{(x', x_n) \to (y, 0)} 
\Psi (x', x_n) & = \lim_{(x', x_n) \to (y, 0)} \int_{\Rset^{n - 1}} \psi (x' - x_n z) \, \eta (x) \dif z \\
&=
  \int_{\Rset^{n - 1}} \psi (y) \, \eta (z) \dif z = \psi (y)\;,
  \end{split}
\end{equation*}
by Lebesgue's dominated convergence theorem, since the function \(\psi\) is continuous and since \(\int_{\Rset^{n - 1}} \eta = 1\).

We fix a function \(\theta \in C^k (\intvo{0}{+\infty}, \Rset)\) such that \(\theta = 1\) on \(\intvl{0}{1}\) and \(\theta = 0\) on \(\intvr{2}{+\infty}\)
and we define for each \(\lambda \in \intvo{0}{+\infty}\) the function \(\theta^k_\lambda : \Rset^n_+ \to \Rset\) for every \((x', x_n) \in \Rset^{n - 1} \times \intvo{0}{+\infty}\) by 
\begin{equation}
\label{eq_GaechahLae3wai4ooYaik7cu}
\theta^k_\lambda (x', x_n) \defeq \frac{x_n{}^{k - 1}}{(k - 1) !} \theta (x_n/\lambda)\; .
\end{equation}
By \(k\) successive integration by parts we have, since the function \(\theta^k_\lambda \Psi\) is compactly supported in \(\Bar{\Rset}^n_+\), 
\begin{multline}
  \label{eq_uf4Koo1wahhic8aiBoo}
\int_{\Rset^{n}_+} \dualprod{\partial_n^k L(\nu)[f]}{\theta^k_\lambda \Psi}
- (-1)^{k} \dualprod{L(\nu)[f]}{\partial_n^k (\theta^k_\lambda \Psi)}\\
  = - \sum_{j = 0}^{k - 1}
  (-1)^j
  \int_{\Rset^{n - 1} \times \set{0}} \dualprod{\partial_n^{k - 1 - j} L(\nu)[f]}{\partial_n^j (\theta^k_\lambda \Psi)}\;.
\end{multline}
The general Leibniz rule yields for each \(j \in \set{0, \dotsc, k}\) that 
\begin{equation}
  \label{eq_aiZootheiX1Dooche7ooqua7}
\partial_n^j (\theta^k_\lambda \Psi)
= \sum_{i = 0}^j {\textstyle \binom{j}{i}} (\partial_n^i \theta^k_\lambda) (\partial_n^{j - i} \Psi)\;.
\end{equation}
Since \(\smash{\partial_n^i \theta^k_\lambda} = 0\) on \(\Rset^{n - 1} \times \set{0}\) if \(i \in \set{0, \dotsc, k - 2}\) 
and \(\smash{\partial_n^{k - 1} \theta^k_\lambda} = 1 \) on \(\Rset^{n - 1} \times \set{0}\), we have by \eqref{eq_aiZootheiX1Dooche7ooqua7}
\(\smash{
\partial_n^j (\theta^k_\lambda \Psi)} = 0
\) on \(\smash{\Rset^{n - 1}} \times \set{0}\) when \(j \in \set{1, \dotsc,  k - 2}\) whereas \(
\partial_n^{k - 1} (\theta^k_\lambda \Psi) = \psi\) on \(\Rset^{n - 1} \times \set{0}\). 
Hence, the right-hand side in \eqref{eq_uf4Koo1wahhic8aiBoo} reduces to
\begin{equation}
  \sum_{j = 0}^{k - 1}
  (-1)^j
  \int_{\Rset^{n - 1} \times \set{0}} \hspace{-2em}\dualprod{\partial_n^{k - j} L (\nu)[f]}{\partial_n^j (\theta^k_\lambda \Psi)}
  = (-1)^{k - 1} \int_{\Rset^{n - 1} \times \set{0}} \dualprod{L(\nu)[f]}{\psi}\;,
\end{equation}
and hence by the identity \eqref{eq_uf4Koo1wahhic8aiBoo}, we have
\begin{equation}
  \label{eq_viifuBaibee7Eephiyais7ph}
  \int_{\Rset^{n - 1} \times \set{0}} \dualprod{L(\nu)[f]}{\psi}
  = \int_{\Rset^{n}_+} (-1)^{k}\dualprod{\partial_n^k L(\nu)[f]}{\theta^k_\lambda \Psi} - \dualprod{L(\nu)[f]}{\partial_n^k (\theta^k_\lambda \Psi)}\;.
\end{equation}

We observe that for every \(j \in \set{0, \dotsc, k - 1}\) and every \((x', x_n) \in \Rset^n_+\), in view of \eqref{eq_GaechahLae3wai4ooYaik7cu} and of the general Leibniz rule
\begin{equation*}
\partial_n^j 
\theta^k_\lambda (x', x_n)
= \sum_{i = 0}^j {\textstyle \binom{j}{i}} \frac{x_n{}^{k - 1 - (j - i)} \theta^{(i)} (x_n/\lambda)}{(k - 1 - (j - i))! \lambda^i}\;,
\end{equation*}
so that, since for every \(i \in \Nset\) and \(t \in \intvo{0}{+\infty}\), \(\abs{\theta^{(i)}(t)} \le \C/t^i\),
\begin{equation}
  \label{eq_ooriey4iishuK4ohjeePh4ae}
\abs{\partial_n^j 
  \theta^k_\lambda (x', x_n) } \le \C \, x_n{}^{k - 1 - j}
\end{equation}
whereas 
\begin{equation*}
\partial_n^k 
\theta^k_\lambda (x', x_n)
= \sum_{i = 1}^k {\textstyle \binom{k}{i}} \frac{x_n{}^{i - 1} \theta^{(i)} (x_n/\lambda)}{(i - 1)! \lambda^i}\;,
\end{equation*}
and thus since for every \(i \in \Nset \setminus \set{0}\) and \(t \in \intvo{0}{+\infty}\), \(\abs{\theta^{(i)}(t)} \le \C/t^{i - 1}\),
\begin{equation}
  \label{eq_beiph8dah2seim7CiDongeig}
\abs{
  \partial_n^k 
  \theta^k_\lambda (x', x_n)
} \le \frac{\C}{\lambda}\;.
\end{equation}
For every \(j \in \set{0, \dotsc, k -  1}\), we have by \eqref{eq_di4xeex8Yiekakei4Tei1eli} and \eqref{eq_ooriey4iishuK4ohjeePh4ae}
\begin{equation}
  \label{eq_xiPh0phupahyoxaiT3jo9lei}
\abs{\partial_n^j \theta^k_\lambda (x', x_n) \partial_n^{k - j} \Psi (x', x_n)}
\le \C \norm{\Deriv \psi}_{L^\infty(\Rset^{n - 1})}\;,
\end{equation}
and by \eqref{eq_yae4Ahp6eir9reith9xahtae} and \eqref{eq_beiph8dah2seim7CiDongeig}
\begin{equation}
\label{eq_eiJungeix7ooph3eima8shoh}
\abs{\partial_n^k \theta^k_\lambda (x', x_n)  \Psi (x', x_n)}
\le \C \frac{\norm{\psi}_{L^\infty(\Rset^{n - 1})}}{\lambda}\;,
\end{equation}
and hence by \eqref{eq_aiZootheiX1Dooche7ooqua7}, \eqref{eq_xiPh0phupahyoxaiT3jo9lei} and \eqref{eq_eiJungeix7ooph3eima8shoh}
\begin{equation}
  \label{eq_laiYei5ohphe9Ri3rohceequ}
\abs{\partial_n^k (\theta^k_\lambda \Psi)}
\le 
\C \brk[\Big]{\norm{\Deriv \psi}_{L^\infty(\Rset^{n - 1})} + \frac{\norm{\psi}_{L^\infty(\Rset^{n - 1})}}{\lambda}}\;,
\end{equation}
so that 
\begin{equation}
  \label{eq_HaiHahzou6ha6Hai5ay}
  \begin{split}
\abs[\Big]{
  \int_{\Rset^{n}_+} \dualprod{L(\nu)[f]}{\partial_n^k (\theta^k_\lambda \Psi)}\,}&\\
\le 
\C &\brk[\Big]{\norm{\Deriv \psi}_{L^\infty(\Rset^{n - 1})} + \frac{\norm{\psi}_{L^\infty(\Rset^{n - 1})}}{\lambda}}
\int_{\Rset^n_+} \abs{L (\nu)[f]}\;.
\end{split}
\end{equation}

Finally, we write for every \(\xi =( \xi', \xi_n) \in \Rset^n\) 
\begin{equation}
\label{eq_lah1mahphahJohleeThathai}
L(\xi) = \sum_{j = 0}^{k} \xi_n^{k - j} L_j (\xi')\;,
\end{equation}
where for every \(j \in \set{0, \dotsc, k}\), \(L_j(\Deriv')\) is a homogeneous linear differential operator on \(\Rset^{n - 1}\) of degree \(j\) from \(E\) to \(F\).
Since \(L (\Deriv) f = 0\) by assumption and since \(L (\nu) = L_0 (\xi)\), we have by \eqref{eq_lah1mahphahJohleeThathai} 
\begin{equation}
  \label{eq_kaisiesequ7yuGhahch}
    \int_{\Rset^{n}_+} \dualprod{\partial_n^k L(\nu)[f]}{\theta^k_\lambda \Psi}
  =
  -  \sum_{j = 1}^{k } \int_{\Rset^{n}_+} \dualprod{\partial_n^{k - j}  L_j (\Deriv') [f]}{\theta^k_\lambda \Psi}\;.
\end{equation}
By integration by parts, we compute for every \(j \in \set{1, \dotsc, k}\),
\begin{equation}
  \label{eq_ciex5Ahh0aephooKaid}
\int_{\Rset^{n}_+} \dualprod{\partial_n^{k - j}  L_j (\Deriv') [f]}{\theta^k_\lambda \Psi}
= (-1)^{k} \int_{\Rset^{n}_+} \dualprod{f}{L_j^* (\Deriv')\partial_n^{k - j} ( \theta^k_\lambda \Psi)}\;,
\end{equation}
where 
\(L_j (\xi')^* \in \Lin (E, F)\) is the adjoint to \(L_j (\xi')\).
We compute, for each \(j \in \set{1, \dotsc, k}\), by the general Leibniz rule again
\begin{equation}
\label{eq_Ahhaish2ieC5zeeMohyeelae}
\begin{split}
L_j (\Deriv')^*
\partial_n^{k - j} ( \theta^k_\lambda \Psi)
&= 
\partial_n^{k - j}  (\theta^k_\lambda L_j (\Deriv')^* \Psi)\\
&= \sum_{i= 0}^{k - j} \tbinom{k - j}{i} (\partial_n^i \theta^k_\lambda) (\partial_n^{k - j - i} L_j (\Deriv')^* \Psi)\;,
\end{split}
\end{equation}
so that if \(j \in \set{1, \dotsc, k}\), we have by \eqref{eq_di4xeex8Yiekakei4Tei1eli}, \eqref{eq_ooriey4iishuK4ohjeePh4ae} and \eqref{eq_Ahhaish2ieC5zeeMohyeelae}
\begin{equation}
\label{eq_wai0ioraipooNa3Looghaeth}
\abs{L_j (\Deriv')^*
  \partial_n^{k - j} ( \theta^k_\lambda \Psi)}
\le \C \norm{\Deriv \psi}_{L^\infty(\Rset^{n - 1})}\;,
\end{equation}
and thus by \eqref{eq_kaisiesequ7yuGhahch}, \eqref{eq_ciex5Ahh0aephooKaid} and \eqref{eq_wai0ioraipooNa3Looghaeth},
\begin{equation}
  \label{eq_ci0gahJahrij1ohphei}
\abs[\Big]{
  \int_{\Rset^{n}_+} \dualprod{\partial_n^k L(\nu)[f]}{\theta^k_\lambda \Psi}\,
}
\le \C\, \norm{\Deriv \psi}_{L^\infty(\Rset^{n -1})} \int_{\Rset^n_+} \abs{f}\;.
\end{equation}

In order to conclude we combine the identity \eqref{eq_viifuBaibee7Eephiyais7ph} and the inequalities \eqref{eq_HaiHahzou6ha6Hai5ay} and \eqref{eq_ci0gahJahrij1ohphei}  to get
\begin{equation}
  \abs[\Big]{
  \int_{\Rset^{n - 1}\times \set{0}} \dualprod{L(\nu)[f]}{\psi}\,}
\le 
   \C \,
   \brk[\Big]{\norm{\Deriv \psi}_{L^\infty (\Rset^{n - 1})}  + \frac{\norm{\psi}_{L^\infty (\Rset^{n - 1})}}{\lambda}} \int_{\Rset^n_+} \abs{f}\;;
\end{equation}
letting \(\lambda \to +\infty\) we obtain \eqref{eq_ahkaek3ceib6OgaeSh4yuchi}.
\end{proof}

\subsubsection{Completing the Proof of the Duality Estimate}

Our main step towards the proof of \cref{theorem_cocancelling} is the following estimate on a component of \(f\).

\begin{lemma}[Unidirectional cocancelling estimate]
  \label{lemma_cocancelling_estimate_Lxi}  
  Let \(n \in \Nset\setminus \set{0}\) and let \(E\) and \(F\) be finite-dimensional vector spaces.
  If \(L (\Deriv)\) is a homogenous constant coefficients differential operator from \(E\) to \(F\) on \(\Rset^n\) of order \(k \in \Nset \setminus \set{0}\),
  then there exists a constant \(C \in \intvo{0}{+\infty}\) such that for every \(f \in C^\infty (\Rset^n, E) \cap L^1 (\Rset^n, E)\) satisfying \(L (\Deriv)f = 0\), for every \(\varphi \in C^\infty (\Rset^n, F)\) and for every \(\xi \in \Rset^n\),
  \begin{equation}
  \label{eq_dahchecheir0vahThaiv8aih}
    \abs[\Big]{
      \int_{\Rset^n} \dualprod{\varphi}{L (\xi)[f]}\,
    }
    \le 
    C \, \abs{\xi}
    \int_{\Rset^n} \abs{f}\;
    \brk[\Big]{\int_{\Rset^n} \abs{\Deriv \varphi}^n}^\frac{1}{n}\;.
  \end{equation}
\end{lemma}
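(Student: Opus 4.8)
The plan is to reduce the estimate to a single coordinate direction and then to combine, hyperplane by hyperplane, the three tools already established: the integration-by-parts estimate on hyperplanes (\cref{lemma_parts_general}), the interpolation to Hölder seminorms (\cref{lemma_Holder_interpolation}), and the Morrey--Sobolev embedding on \(\Rset^{n - 1}\) (\cref{proposition_Morrey_Sobolev}). Throughout I take \(\varphi \in C^\infty_c (\Rset^n, F)\), and I may assume \(n \ge 2\), since for \(n = 1\) the elementary bound \(\norm{\varphi}_{L^\infty (\Rset)} \le \tfrac{1}{2} \norm{\Deriv \varphi}_{L^1 (\Rset)}\) already gives the inequality. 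After a rotation of \(\Rset^n\) I may assume that \(\xi\) is a positive multiple of \(e_n \defeq (0, \dotsc, 0, 1)\), and by homogeneity of the symbol \(L\) it then suffices to treat \(\xi = e_n\). Writing \(L_0 \defeq L (e_n) \in \Lin (E, F)\), I record that \(e_n\) is the unit normal to each affine hyperplane \(\Sigma_t \defeq \Rset^{n - 1} \times \set{t}\), so that \(L_0 = L (\nu_{\Sigma_t})\) for every \(t \in \Rset\).

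For a fixed \(t\) I would consider the linear functional \(\psi \mapsto \int_{\Sigma_t} \dualprod{\psi}{L_0[f]}\) on \(C^\infty_c (\Rset^{n - 1}, F)\) and establish two competing bounds. The cheap one uses only that \(f \in L^1\): for almost every \(t\) one has \(\abs{\int_{\Sigma_t} \dualprod{\psi}{L_0[f]}} \le \norm{L_0}\, \norm{\psi}_{L^\infty (\Sigma_t)} \int_{\Sigma_t} \abs{f}\), which is the \(\norm{\psi}_{L^\infty}\)-hypothesis of \cref{lemma_Holder_interpolation} with \(A = \norm{L_0} \int_{\Sigma_t} \abs{f}\). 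The expensive one is exactly \cref{lemma_parts_general}, applied with \(\Sigma = \Sigma_t\): \(\abs{\int_{\Sigma_t} \dualprod{\psi}{L_0[f]}} \le C\, \norm{\Deriv \psi}_{L^\infty (\Sigma_t)} \int_{\Rset^n} \abs{f}\), the \(\norm{\Deriv \psi}_{L^\infty}\)-hypothesis with \(B = C \int_{\Rset^n} \abs{f}\) --- this is the single place where the constraint \(L (\Deriv) f = 0\) is used. Applying \cref{lemma_Holder_interpolation} componentwise in \(F\) with \(\alpha = 1 - \tfrac{n - 1}{n} = \tfrac{1}{n}\), and then dominating the resulting Hölder seminorm by \(\seminorm{\varphi}_{C^{0, 1/n}(\Sigma_t)} \le C \brk[\big]{\int_{\Sigma_t} \abs{\Deriv \varphi}^n}^{1/n}\) through \cref{proposition_Morrey_Sobolev} on \(\Sigma_t \cong \Rset^{n - 1}\) with exponent \(p = n\), I would obtain, for almost every \(t \in \Rset\),
\[
  \abs[\Big]{\int_{\Sigma_t} \dualprod{\varphi}{L_0[f]}}
  \le
  C \brk[\Big]{\int_{\Sigma_t} \abs{f}}^{1 - \frac{1}{n}} \brk[\Big]{\int_{\Rset^n} \abs{f}}^{\frac{1}{n}} \brk[\Big]{\int_{\Sigma_t} \abs{\Deriv \varphi}^n}^{\frac{1}{n}}\;.
\]

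Finally I would integrate this over \(t \in \Rset\). Using \(\abs{\int_{\Rset^n} \dualprod{\varphi}{L_0[f]}} \le \int_{\Rset} \abs{\int_{\Sigma_t} \dualprod{\varphi}{L_0[f]}} \dif t\) together with Hölder's inequality in \(t\) with the conjugate exponents \(\tfrac{n}{n - 1}\) and \(n\), applied to the factors \(t \mapsto \int_{\Sigma_t} \abs{f}\) and \(t \mapsto \int_{\Sigma_t} \abs{\Deriv \varphi}^n\) whose integrals over \(\Rset\) equal \(\int_{\Rset^n} \abs{f}\) and \(\int_{\Rset^n} \abs{\Deriv \varphi}^n\) by Fubini's theorem, the exponents combine to give \(\abs{\int_{\Rset^n} \dualprod{\varphi}{L_0[f]}} \le C \int_{\Rset^n} \abs{f} \, \brk[\big]{\int_{\Rset^n} \abs{\Deriv \varphi}^n}^{1/n}\); undoing the rotation yields \eqref{eq_dahchecheir0vahThaiv8aih}. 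The main obstacle is conceptual rather than computational: the one-slice estimate of \cref{lemma_parts_general} costs a full \(\norm{\Deriv \varphi}_{L^\infty}\) along the slice and therefore cannot be integrated in \(t\) without loss, so the crux is to trade part of that \(L^\infty\)-cost in \(\varphi\) for an \(L^1\)-cost in \(f\) via the Hölder-seminorm interpolation, precisely so that the two slice integrals \(\int_{\Sigma_t} \abs{f}\) and \(\int_{\Sigma_t} \abs{\Deriv \varphi}^n\) can afterwards be paired by Hölder in \(t\) with the exponents \(\tfrac{n}{n - 1}\) and \(n\) and no integrability is wasted; once this bookkeeping is in place, each individual step is routine given the lemmas above.
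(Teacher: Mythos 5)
Your proposal is correct and follows essentially the same route as the paper's proof: slice by hyperplanes orthogonal to \(\xi\), combine the trivial \(L^\infty\)-bound with the integration-by-parts estimate of \cref{lemma_parts_general} via \cref{lemma_Holder_interpolation} with \(\alpha = \tfrac{1}{n}\), control the Hölder seminorm by \cref{proposition_Morrey_Sobolev}, and conclude with Fubini and Hölder in the transverse variable with exponents \(\tfrac{n}{n-1}\) and \(n\). No substantive difference from the paper's argument.
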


\begin{proof}
Without loss of generality, we assume that \(\xi = (0, \dotsc, 0, 1)\).
By Fubini's theorem, we have 
\begin{equation}
  \label{eq_Aer5ugh2pipohthie}
  \int_{\Rset^n} \dualprod{L (\xi)[f]}{\varphi}
  = \int_{\Rset} \int_{\Rset^{n - 1}} \dualprod{L (\xi) [f (x',x_n)]}{\varphi (x', x_n)}  \dif x' \dif x_n\;.
\end{equation}
We observe that for every \(x_n \in \Rset\) and every \(\psi \in C^1_c (\Rset^{n - 1}, F)\), we have immediately
\begin{equation}
  \label{eq_uwo2chaiHu0sie4aequohphu}
  \abs[\Big]{\int_{\Rset^{n - 1}}\dualprod{L (\xi) [f (x',x_n)]}{\psi (x', x_n)}  \dif x'\,}
  \le \C\,
  \norm{\psi}_{L^\infty (\Rset^{n - 1})} \int_{\Rset^{n - 1}} \abs{f (\cdot, x_n)}\;,
\end{equation}
and, by \cref{lemma_parts_general} we have 
\begin{equation}
  \label{eq_eesh0eedupuimeimie3Ahshu}
  \begin{split}
    \abs[\Big]{\int_{\Rset^{n - 1}} \dualprod{L (\xi)[f (x',x_n)]}{\psi (x', x_n)} \dif x'\,}
&\le 
\C \,
\norm{\Deriv \psi}_{L^\infty (\Rset^{n - 1})} \int_{\Rset^{n}} \abs{f}\;.
\end{split}
\end{equation}
By the interpolation into Hölder spaces (\cref{lemma_Holder_interpolation}) with \(\smash\alpha = \frac{1}{n}\) and then by the Morrey-Sobolev embedding (\cref{proposition_Morrey_Sobolev}), we deduce from \eqref{eq_uwo2chaiHu0sie4aequohphu} and \eqref{eq_eesh0eedupuimeimie3Ahshu} that 
\begin{equation}
  \label{eq_Upo6xaigohwa4cu4a}
\begin{split}
  \abs[\Big]{\int_{\Rset^{n - 1}}&\dualprod{L (\xi)[f (x',x_n)]}{\psi (x', x_n)} \dif x'}\\
&\le 
\C\, \seminorm{\psi}_{C^{0, 1/n} (\Rset^{n - 1})} \brk[\Big]{\int_{\Rset^{n - 1}} \abs{f (\cdot, x_n)} }^{1 - \frac{1}{n}}\brk[\Big]{\int_{\Rset^{n}} \abs{f}}^\frac{1}{n}\\
&
\le \C \brk[\Big]{\int_{\Rset^{n - 1}} \abs{\Deriv \psi}^n}^\frac{1}{n} 
  \brk[\Big]{\int_{\Rset^{n - 1}} \abs{f (\cdot, x_n)} }^{1 - \frac{1}{n}}
  \brk[\Big]{\int_{\Rset^{n}} \abs{f}\,}^\frac{1}{n}\;.
\end{split}
\end{equation}

We now deduce  from \eqref{eq_Aer5ugh2pipohthie} and \eqref{eq_Upo6xaigohwa4cu4a} 
and by Hölder's inequality that 
\begin{equation}
\label{eq_angu7aexuu0ahwaino7Einge}
  \begin{split}
  \abs[\Big]{&\int_{\Rset^n} \dualprod{L (\xi)[f]}{\varphi}\,}\\
  &\le \Cl{cst_Aith3ohCaipeel7Xa}
  \int_{\Rset} \brk[\Big]{\int_{\Rset^{n - 1}} \abs{\Deriv \varphi (\cdot, x_n)}^n }^\frac{1}{n} 
  \brk[\Big]{\int_{\Rset^{n - 1}} \abs{f (\cdot, x_n)} }^{1 - \frac{1}{n}}
  \brk[\Big]{\int_{\Rset^{n}} \abs{f}}^\frac{1}{n} \dif x_n\\
  &\le \Cr{cst_Aith3ohCaipeel7Xa} 
  \brk[\Big]{\int_{\Rset}\brk[\Big]{\int_{\Rset^{n - 1}} \abs{\Deriv \varphi (\cdot, x_n)}^n} \dif x_n }^\frac{1}{n}  \brk[\Big]{\int_{\Rset} \Bigl(\int_{\Rset^{n - 1}} \abs{f (\cdot, x_n)}}\dif x_n \Bigr)^{1 - \frac{1}{n}}
  \brk[\Big]{\int_{\Rset^{n}} \abs{f}}^\frac{1}{n}\;,\\
%
  \end{split}
  \raisetag{5em}
\end{equation}
which proves \eqref{eq_dahchecheir0vahThaiv8aih}.
\end{proof}

Our last step to prove \cref{theorem_cocancelling} is the following algebraic lemma.

\begin{lemma}[Intersecting kernel quotient]
  \label{lemma_decomposition_L_xi_j} 
  Let \(n \in \Nset \setminus \set{0}\), let \(E\), \(F\) and \(W\) be finite-dimensional vector spaces, let \(L (\Deriv)\) be a homogeneous constant coefficient differential operator from \(E\) to \(F\) on \(\Rset^n\), and let \(Q \in \Lin (E, W)\).
If 
\[
\bigcap_{\xi \in \Rset^n \setminus \set{0}} \ker L (\xi) \subseteq \ker Q\;,
\]
then there exist \(m \in \Nset\), 
  \(\xi_1, \dotsc, \xi_m \in \Rset^n \setminus \set{0}\), and \(Q_1, \dotsc, Q_m \in \Lin (F, W)\) such that 
\begin{equation}
\label{eq_ax1Zeiteecei0Ce3uk7thoo4}
  Q 
  = 
  \sum_{j = 1}^m Q_j \compose L (\xi_j)\;.
\end{equation}
\end{lemma}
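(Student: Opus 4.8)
The plan is to deduce the statement from a duality argument inside the finite-dimensional vector space \(\Lin (E, W)\). Introduce
\[
\mathcal{S} \defeq \operatorname{span} \set[\big]{R \compose L (\xi) \st \xi \in \Rset^n \setminus \set{0}, \ R \in \Lin (F, W)} \subseteq \Lin (E, W)\;,
\]
the linear span of all compositions \(R \compose L (\xi)\) with \(\xi \ne 0\). For each fixed \(\xi\) the set \(\set{R \compose L (\xi) \st R \in \Lin (F, W)}\) is the image of the linear map \(R \mapsto R \compose L (\xi)\), hence already a linear subspace of \(\Lin (E, W)\); since \(\Lin (E, W)\) is finite-dimensional, finitely many of these subspaces, say those associated with \(\xi_1, \dotsc, \xi_m \in \Rset^n \setminus \set{0}\), already sum up to all of \(\mathcal{S}\). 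Therefore, once we know that \(Q \in \mathcal{S}\), we can write \(Q = \sum_{j = 1}^m Q_j \compose L (\xi_j)\) for suitable \(Q_1, \dotsc, Q_m \in \Lin (F, W)\), which is exactly \eqref{eq_ax1Zeiteecei0Ce3uk7thoo4}.

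It thus remains to show that \(Q \in \mathcal{S}\), and I would argue by contradiction. Fixing inner products on \(E\) and on \(W\), I identify the dual space of \(\Lin (E, W)\) with \(\Lin (W, E)\) via the nondegenerate trace pairing \((T, S) \mapsto \trace (S \compose T)\). If \(Q \notin \mathcal{S}\), there is some \(S \in \Lin (W, E)\) with \(\trace (S \compose Q) \ne 0\) while \(\trace (S \compose R \compose L (\xi)) = 0\) for every \(\xi \in \Rset^n \setminus \set{0}\) and every \(R \in \Lin (F, W)\). By cyclicity of the trace this says \(\trace \brk{R \compose (L (\xi) \compose S)} = 0\) for all \(R \in \Lin (F, W)\), and since the trace pairing between \(\Lin (F, W)\) and \(\Lin (W, F)\) is nondegenerate, this forces \(L (\xi) \compose S = 0\) for every \(\xi \in \Rset^n \setminus \set{0}\).

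Consequently \(S[W] \subseteq \bigcap_{\xi \in \Rset^n \setminus \set{0}} \ker L (\xi) \subseteq \ker Q\) by the hypothesis of the lemma, so \(Q \compose S = 0\) and hence, again by cyclicity, \(\trace (S \compose Q) = \trace (Q \compose S) = 0\), contradicting \(\trace (S \compose Q) \ne 0\). This proves \(Q \in \mathcal{S}\) and completes the argument. I do not expect a genuine obstacle in this proof: the only points that need a little care are the two appeals to nondegeneracy of a trace pairing (needed to turn ``vanishes against every \(R\)'' into the identity \(L (\xi) \compose S = 0\)), and the elementary observation that a sum of subspaces of the finite-dimensional space \(\Lin (E, W)\) is already a finite sum, which yields the integer \(m\).
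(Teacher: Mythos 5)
Your proof is correct, but it takes a genuinely different route from the paper's. The paper picks finitely many frequencies directly at the level of kernels: since \(E\) is finite-dimensional, a decreasing chain of intersections stabilizes, so there are \(\xi_1,\dotsc,\xi_m\) with \(\bigcap_{\xi\ne 0}\ker L(\xi)=\bigcap_{i}\ker L(\xi_i)\); it then stacks the maps into \(R:E\to F^m\), \(R(e)=(L(\xi_1)[e],\dotsc,L(\xi_m)[e])\), observes \(\ker R\subseteq\ker Q\), and invokes the elementary factorization lemma (a linear map factors through any linear map whose kernel it contains) to produce the \(Q_j\). You instead work in the operator space \(\Lin(E,W)\): you form the span \(\mathcal S\) of all composites \(R\compose L(\xi)\), obtain the finite family \(\xi_1,\dotsc,\xi_m\) from finite-dimensionality of \(\Lin(E,W)\) rather than of \(E\), and prove \(Q\in\mathcal S\) by a separation argument with the trace pairing, reducing the separating functional \(S\in\Lin(W,E)\) to the hypothesis via \(S[W]\subseteq\bigcap_{\xi\ne0}\ker L(\xi)\subseteq\ker Q\). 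Both arguments are sound and of comparable length; the paper's is slightly more economical in that it needs only the quotient/factorization fact, while yours packages the same content as a duality statement — note in passing that the trace pairing \((T,S)\mapsto\trace(S\compose T)\) between \(\Lin(E,W)\) and \(\Lin(W,E)\) is canonically nondegenerate, so the inner products you fix on \(E\) and \(W\) are not actually needed.
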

\begin{proof}
  Since the space \(E\) has finite dimension, there exists a nonnegative integer \(m \in \Nset\) and vectors \(\xi_1, \dotsc, \xi_m \in \Rset^n \setminus \set{0}\)
  such that 
\begin{equation*}
  \bigcap_{\xi \in \Rset^n \setminus \set{0}} \ker L (\xi)
  = \bigcap_{1 \le i \le m} \ker L (\xi_i)\;.
\end{equation*}
If we define \(R \in \Lin(E, F^m)\) for each \(e \in E\) by \(R(e) \defeq (L (\xi_1)[e], \dotsc, L (\xi_m)[e]) \in F^m\),
we have \(\ker R \subseteq \ker Q\) and there exists thus  \(Q_1, \dotsc, Q_m \in \Lin (F, W)\) such that \eqref{eq_ax1Zeiteecei0Ce3uk7thoo4} holds.
\end{proof}

\begin{proof}[Proof of the sufficiency part of \cref{theorem_cocancelling}]
By \cref{lemma_decomposition_L_xi_j} with \(Q\) being taken to be the identity \(\operatorname{id}_E\), there exists \(m \in \Nset\), 
\(\xi_1, \dotsc, \xi_m \in \Rset^n \setminus \set{0}\) and \(Q_1, \dotsc, Q_m \in \Lin (F, E)\) such that 
\(\smash{
    \operatorname{id}_E
  = 
    \sum_{j = 1}^m Q_j \compose L (\xi_j)}
\).
We have then, 
\[
\int_{\Rset^n} \dualprod{f}{\varphi} = \sum_{j = 1}^m \int_{\Rset^n} \dualprod{L (\xi_j)[f]}{Q_j^*[\varphi]}\;,
\]
and the conclusion \eqref{eq_koo4lui9Eipo7XeenaeghaoR} follows then from \cref{lemma_cocancelling_estimate_Lxi}.
\end{proof}

\begin{remark}
The estimate in \cref{theorem_cocancelling} can be proved by replacing the assumption that \(L (\Deriv)\) is cocancelling by the assumption that \(\varphi (x) \in \bigl(\bigcap_{\xi \in \Rset^n \setminus\set{0}} \ker L (\xi))^{\perp}\).
In order to prove this one follows the proof above, taking \(P : E \to E\) to be a projection on the space \(\smash{(\bigcap_{\xi \in \Rset^n \setminus \set{0}} \ker L (\xi))^\perp}\) so that \(\smash{\ker P^* = P[E]^\perp = \bigcap_{\xi \in \Rset^n \setminus \set{0}} \ker L (\xi)}\)
and one can then  write \(\smash{P^*
  = 
    \sum_{j = 1}^m Q_j \compose L (\xi_j)}\) for some  \(Q_1, \dotsc, Q_m \in \Lin (F, E)\) thanks to \cref{lemma_decomposition_L_xi_j}.
\end{remark}

Although the proof of \cref{theorem_cocancelling} given here relies heavily on the Euclidean structure of the underlying space \(\Rset^n\) through the application of the Fubini theorem in \eqref{eq_Aer5ugh2pipohthie} and \eqref{eq_angu7aexuu0ahwaino7Einge} and the decomposition of the vector space \eqref{eq_ax1Zeiteecei0Ce3uk7thoo4}, suitable integral-geometric formulae allow to obtain similar results for the hyperbolic plane \cite{Chanillo_VanSchaftingen_Yung_2017_Variations} and symmetric spaces of noncompact type \cite{Chanillo_VanSchaftingen_Yung_2017_Symmetric}.

\subsection{Back to Cancelling Operators}
We apply now \cref{theorem_cocancelling} to get the endpoint Sobolev estimate for cancelling operators.

\begin{lemma}[Singular integral representation of order \(-1\)]
\label{lemma_representation_Dk_1uF}
Under the assumptions of \cref{proposition_fundamental_solution} and with \(G_A : \Rset^n \setminus \set{0} \to \Lin (E, V)\) given by the same proposition, for every \(u \in C^\infty_c(\Rset^n, V)\), if \(A(\Deriv u) = \operatorname{div} F\), with \(F \in C^\infty (\Rset^n, \Lin(\Rset^n, E))\) and if  
\(
\int_{\Rset^n} \abs{F (x)}/(1 + \abs{x}^n) \dif x < \infty
\),
then for every \(r \in \intvo{0}{+\infty}\), 
\begin{equation}
\label{eq_EeNiechiquuajaeNgahh4aer}
\begin{split}
 \Deriv^{k - 1} u (x) &= \int_{\Sset^{n - 1}} \Deriv^{k - 1} G_A (z)\, [F (x)[z]] \dif z + \int_{\Rset^n \setminus B_r(x)} \Deriv^{k} G_A (x - y)[F (y)] \dif y\\\
 & \qquad  + \int_{B_r(x)} \Deriv^{k} G_A (x - y)[F (y) - F(x)] \dif y\\
 &= \int_{\Sset^{n - 1}} \Deriv^{k - 1} G_A (z)\, [F (x)[z]] \dif z + \lim_{\varepsilon \to 0} \int_{\Rset^n \setminus B_\varepsilon(0)} \Deriv^{k} G_A (h)[F (x - h)] \dif h\;.
 \end{split}
\end{equation}
\end{lemma}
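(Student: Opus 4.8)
The plan is to follow closely the proof of \cref{lemma_representation_Dku}, the difference being that one derivative less is taken of $u$ and, in exchange, one derivative is moved off $F$ and onto the kernel $G_A$ through an integration by parts; the assumption $\int_{\Rset^n}\abs{F(x)}/(1+\abs{x}^n)\dif x<\infty$ is there precisely to make that integration by parts legitimate even though $F$ (unlike $A(\Deriv)u=\divergence F$) need not be compactly supported.

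First I would start from the identity \eqref{eq_Xoo6theoxuch2iesoos2wini} — obtained by differentiating the representation formula \eqref{eq_jal9Niez6Vu2ohved5OorooG} exactly $k-1$ times, which is allowed since $\Deriv^{k-1}G_A$ is locally integrable by \cref{proposition_fundamental_solution}~\eqref{it_oocee4ao4DaiK8sheWeechee} — and substitute $A(\Deriv)u=\divergence F$, obtaining $\Deriv^{k-1}u(x)=\int_{\Rset^n}\Deriv^{k-1}G_A(x-y)[\divergence F(y)]\dif y$. I would then split this integral over $\Rset^n\setminus B_r(x)$ and over $B_r(x)$ and integrate by parts in each piece so as to pass the divergence onto the kernel. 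On $\Rset^n\setminus B_r(x)$ the kernel $y\mapsto\Deriv^{k-1}G_A(x-y)$ is smooth and (for $n\ge2$) homogeneous of degree $1-n$, so integration by parts produces $\int_{\Rset^n\setminus B_r(x)}\Deriv^{k}G_A(x-y)[F(y)]\dif y$ together with a boundary integral over $\partial B_r(x)$ and a boundary integral over a large sphere $\partial B_R(x)$; the latter has size of order $R^{1-n}\int_{\partial B_R(x)}\abs{F}$, and the weighted integrability hypothesis forces $\liminf_{R\to+\infty}R^{1-n}\int_{\partial B_R(x)}\abs{F}=0$, so it is killed along a suitable sequence $R_j\to+\infty$. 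On $B_r(x)$, where a naive integration by parts would produce the non-integrable kernel $\Deriv^{k}G_A$, I would first replace $F$ by $F-F(x)$ (this does not change $\divergence F$), integrate by parts over $B_r(x)\setminus B_\varepsilon(x)$, and let $\varepsilon\to0$: since $F$ is of class $C^1$ one has $\abs{F(y)-F(x)}\le C\abs{y-x}$ near $x$, so the boundary integral over $\partial B_\varepsilon(x)$ is $O(\varepsilon)$ and the interior integral converges absolutely to $\int_{B_r(x)}\Deriv^{k}G_A(x-y)[F(y)-F(x)]\dif y$, leaving only a boundary integral over $\partial B_r(x)$.

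Adding the two pieces, the boundary integrals over $\partial B_r(x)$ combine, and reorganizing them with the vanishing spherical means $\int_{\Sset^{n-1}}\Deriv^{k-1}G_A=0$ and $\int_{\Sset^{n-1}}\Deriv^{k}G_A=0$ — that is, \cref{proposition_fundamental_solution}~\eqref{it_Zigh3aiy3joo5maewiechua6} applied with $\ell=k-1$ (licit for $n\ge2$, since then $k-1>k-n$) and with $\ell=k$, which also gives $\int_{B_r(x)\setminus B_\varepsilon(x)}\Deriv^{k}G_A(x-y)\dif y=0$ — yields the first identity in \eqref{eq_EeNiechiquuajaeNgahh4aer}. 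The second (principal-value) identity then follows from Lebesgue's dominated convergence theorem together with the same vanishing-mean property, exactly as in the last step of the proof of \cref{lemma_representation_Dku}.

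The step I expect to be the main obstacle is the bookkeeping of the boundary contributions. Because $F$ is not compactly supported, every integration by parts must be carried out on a truncated domain, with the contribution at infinity controlled only through the weighted integrability hypothesis; and one must check that the boundary terms produced on $\partial B_r(x)$ and on $\partial B_\varepsilon(x)$, once reassembled, collapse exactly to the stated expression with the correction $F(y)-F(x)$ confined to $B_r(x)$ and no residual term — and this collapse is driven entirely by the vanishing spherical means in \cref{proposition_fundamental_solution}~\eqref{it_Zigh3aiy3joo5maewiechua6}.
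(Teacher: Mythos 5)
Your overall strategy is the paper's: differentiate the representation formula \(k-1\) times to get \eqref{eq_Xoo6theoxuch2iesoos2wini}, substitute \(A(\Deriv)u=\divergence F\), move the divergence onto the kernel by integration by parts with the subtraction of \(F(x)\) near the singularity (so that the volume integrals carry \(\Deriv^{k}G_A\), as in the last line of \eqref{eq_EeNiechiquuajaeNgahh4aer} — you are implicitly, and correctly, reading the first two integrals of the statement with \(\Deriv^{k}G_A\) as well), and control the contribution at infinity through the weighted integrability of \(F\). The only structural difference is that you truncate with sharp spheres where the paper uses smooth radial cutoffs \(\eta(\abs{x-y}/R)\) and \(\eta(\abs{x-y}/\varepsilon)\); your treatment of the term at infinity, extracting \(\liminf_{R\to\infty}R^{1-n}\int_{\partial B_R(x)}\abs{F}=0\) from the hypothesis, is correct and in fact more explicit than the paper's.

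There is, however, a genuine gap, and it sits exactly where you anticipate the main obstacle. When the two integrations by parts are added, the \(F(y)\) parts of the boundary integrals over \(\partial B_r(x)\) cancel, but the subtraction of \(F(x)\) on the inner piece leaves behind
\[
-\int_{\partial B_r(x)}\Deriv^{k-1}G_A(x-y)\bigl[F(x)[\nu(y)]\bigr]\dif\sigma(y)
=-\int_{\Sset^{n-1}}\Deriv^{k-1}G_A(-\omega)\bigl[F(x)[\omega]\bigr]\dif\omega\;,
\]
a \emph{first} spherical moment of \(\Deriv^{k-1}G_A\) contracted with \(F(x)\). \Cref{proposition_fundamental_solution}~\eqref{it_Zigh3aiy3joo5maewiechua6} gives only the vanishing of the \emph{zeroth} moments \(\int_{\Sset^{n-1}}\Deriv^{k-1}G_A\) and \(\int_{\Sset^{n-1}}\Deriv^{k}G_A\) and says nothing about this term, which is nonzero in general. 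Concretely, for \(A(\Deriv)=\Deriv\) on \(\Rset^n\), \(n\ge2\) (so \(k=1\)), one has \(G_A(z)[e]=\dualprod{z}{e}/(\abs{\Sset^{n-1}}\,\abs{z}^{n})\); taking \(F\defeq u\operatorname{id}\), so that \(\divergence F=\Deriv u\), the contraction \(\Deriv G_A(z)\cdot F(y)\) is proportional to \(u(y)\divergence(z/\abs{z}^{n})=0\) for \(z\ne0\), so both volume integrals in \eqref{eq_EeNiechiquuajaeNgahh4aer} vanish identically while the residual above equals exactly \(u(x)\). The point is that the distributional derivative of \(\Deriv^{k-1}G_A\) is the principal value of \(\Deriv^{k}G_A\) \emph{plus} a Dirac mass whose coefficient is precisely this first moment, so the asserted collapse ``with no residual term'' cannot be driven by the vanishing zeroth means: the identity must carry the corresponding local term \(M[F(x)]\), with \(M\) the contraction against \(\int_{\Sset^{n-1}}\Deriv^{k-1}G_A(-\omega)\otimes\omega\dif\omega\). (This term is bounded by \(C\abs{F(x)}\) and is therefore harmless in the subsequent \(L^{n/(n-1)}\) estimate; note that the paper's own proof is silent on the same point, since its collar term \(\tfrac1R\eta'(\abs{x-y}/R)\Deriv^{k-1}G_A(x-y)(F(y)-F(x))[\cdot]\) converges to this same local term rather than to \(0\).) To close the argument you must either add \(M[F(x)]\) to the right-hand side or prove that the first moment vanishes, which it does not in general.
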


\begin{proof}
The first \(k -1\) derivatives of \(G_A\) are locally integrable in view of \cref{proposition_fundamental_solution} \eqref{it_oocee4ao4DaiK8sheWeechee} and thus differentiating \(k-1\) times \eqref{eq_jal9Niez6Vu2ohved5OorooG}, we get that \eqref{eq_Xoo6theoxuch2iesoos2wini} holds for every \(x \in \Rset^n\).
We fix a function \(\eta \in C^\infty_c (\Rset^n, \Rset)\) such that \(\eta = 1\) on \(B_{1/2}(0)\) and \(\eta = 0\) on \(\Rset^n \setminus B_1(0)\). If \(R > 0\) is large enough so that \(\supp u \subseteq B_R(0)\), we deduce from \eqref{eq_Xoo6theoxuch2iesoos2wini} that for every \(\varepsilon \in \intvo{0}{R/2}\),
\begin{equation}
  \label{eq_ieD7le3lah3ut1Aitei7aiza}
\begin{split}
 \Deriv^{k - 1} u (x) &=
 \int_{\Rset^n} \eta\brk[\big]{\tfrac{\abs{x - y}}{R}} \brk[\big]{1- \eta\brk[\big]{\tfrac{\abs{x - y}}{\varepsilon}}}\Deriv^{k - 1} G_A (x - y)[\operatorname{div} \brk{F - F  \brk{x}} (y)] \dif y\\
 &\quad +  \int_{\Rset^n} \eta\brk[\big]{\tfrac{\abs{h}}{\varepsilon}} \Deriv^{k - 1} G_A (h)[ A (\Deriv) u (x - h) ] \dif h\;.
 \end{split}
 \raisetag{2em}
 \end{equation}
Integrating by parts in the first integral on the right-hand side of \eqref{eq_ieD7le3lah3ut1Aitei7aiza}, we get
\begin{equation}
\begin{split}
 &\int_{\Rset^n} \eta\brk[\big]{\tfrac{\abs{x - y}}{R}} \brk[\big]{1- \eta\brk[\big]{\tfrac{\abs{x - y}}{\varepsilon}}}\Deriv^{k - 1} G_A (x - y)[\operatorname{div} \brk{F - F \brk{x}} (y)] \dif y\\
 &
 = \int_{\Rset^n} \eta\brk[\big]{\tfrac{\abs{x - y}}{R}} \brk[\big]{1- \eta\brk[\big]{\tfrac{\abs{x - y}}{\varepsilon}}}\Deriv^{k} G_A (x - y)[ (F - F (x))(y)] \dif y\\
 & \quad+ \int_{\Rset^n} \brk[\Big]{\tfrac{1}{R} \eta'\brk[\big]{\tfrac{\abs{x - y}}{R}}
 - \tfrac{1}{\varepsilon} \eta'\brk[\big]{\tfrac{\abs{x - y}}{\varepsilon}}}
 \Deriv^{k - 1} G_A (x - y)\, [(F(y) - F (x))[\tfrac{x - y}{\abs{x - y}}]] \dif y\;.
\end{split}
\raisetag{6em}
\end{equation}
Applying \cref{proposition_fundamental_solution} \eqref{it_Zigh3aiy3joo5maewiechua6}, letting \(\varepsilon \to 0\) and \(R \to \infty\), we reach the conclusion since
\[
\begin{split}
   &\frac{1}{R} \int_{\Rset^n} \eta'\brk[\big]{\tfrac{\abs{x - y}}{R}}
 \Deriv^{k - 1} G_A (x - y)\, [F (x)[\tfrac{x - y}{\abs{x - y}}]] \dif y\\
   & \qquad  = \frac{1}{R} \int_0^\infty \int_{\Sset^{n - 1}}\eta' \brk{\tfrac{\rho}{R}} \Deriv^{k - 1} G_A (z)\, [F (x)[z]] \dif z \dif \rho\\
   &\qquad = - \int_{\Sset^{n - 1}} \Deriv^{k - 1} G_A (z)\, [F (x)[z]] \dif z.
  \end{split}
\]
\end{proof}

\begin{proof}%
[Proof of sufficiency of injective ellipticity and cancellation in \cref{theorem_cancelling_necessary_Sobolev}]%
As the operator \(A (\Deriv)\) is injectively elliptic, by \cref{proposition_compatibility_conditions}, there exists a homogeneous constant coefficient differential operator
from 
\(E\) to \(F\) on \(\Rset^n\) such that for every \(\xi \in \Rset^n \setminus \set{0}\),
\(
    \ker L (\xi)
  = 
    A (\xi)[V] 
\)
and thus by \cref{definition_cancelling} and \cref{definition_cocancelling},  the operator \(L (\Deriv)\) is cocancelling.
By \cref{theorem_cocancelling} and the representation of bounded linear functionals on Sobolev spaces, there exists \(F \in L^\frac{n}{n - 1} (\Rset^n, E \otimes \Rset^n)\) such that 
\(\operatorname{div} F = A (\Deriv) u\) in the sense of distributions
and 
\[
\brk[\Big]{\int_{\Rset^n} \abs{F}^\frac{n}{n - 1}}^{1 - \frac{1}{n}}
\le \C \int_{\Rset^n} \abs{A (\Deriv) u}\;.
\]
Letting \(\eta \in C^\infty_c (\Rset^n, \Rset)\) such that \(\int_{\Rset^n} \eta = 1\) and setting for \(\varepsilon \in \intvo{0}{+\infty}\) and \(x \in \Rset^n\), \(\eta_\varepsilon (x) \defeq\eta (x/\varepsilon)/\varepsilon^n\), we have \(A(\Deriv )(\eta_\varepsilon \ast u) = \operatorname{div} (\eta_\varepsilon \ast F)\), we get by \cref{theorem_singular_integral} and \cref{lemma_representation_Dk_1uF},
\begin{equation}
  \brk[\Big]{\int_{\Rset^n} \abs{\Deriv^{k - 1} (\eta_\varepsilon \ast u)}^\frac{n}{n - 1}}^{1 - \frac{1}{n}}
\le \C \,\brk[\Big]{\int_{\Rset^n} \abs{F}^\frac{n}{n - 1}}^{1 - \frac{1}{n}}
\le 
\C
\int_{\Rset^n} \abs{A (\Deriv)[u]}\;,
\end{equation}
letting \(\varepsilon \to 0\), we get \eqref{eq_eeghaib2eiviu0cib4Pei8ut} when \(\ell = k - 1\); the case \(\ell \in \set{1, \dotsc, k - 2}\) then follows from the classical Sobolev embedding theorem (\cref{theorem_Sobolev}).
\end{proof}

\subsection{Characterization of Cancelling Operators}

As a byproduct of the characterization of cocancelling operators (\cref{proposition_equiv_cocanc}), we can characterize cancelling operators under the assumption of injective ellipticity.

\begin{proposition}[Characterization of cancelling operators]
  \label{propositionEquivalentCancelling}
  Let \(n \in \Nset\setminus \set{0}\), let \(V\) and \(E\) be finite-dimensional vector spaces, and let \(A(\Deriv)\) be a homogeneous differential operator of order \(k \in \Nset \setminus \set{0}\) on \(\Rset^n\) from \(V\) to \(E\).
  If the operator \(A(\Deriv)\) is injectively elliptic, then the following are equivalent
  \begin{enumerate}[(i)]
    \item\label{itCancelling} the operator \(A(\Deriv)\) is cancelling, 
    \item\label{itCancellingVanishL1} for every \(u \in L^1_{\mathrm{loc}}(\Rset^n, V)\) such that 
    \(A(\Deriv)u \in L^1(\Rset^n, E)\), one has 
        \begin{equation}
    \label{eq_vumeDamool8Maechu1eiz0co}
      \int_{\Rset^n} A(\Deriv)u = 0\;,
    \end{equation}
    \item\label{itCancellingVanishCinfty} 
    for every \(u \in C^\infty(\Rset^n, V)\) such that 
    \(\supp A(\Deriv)u\) is compact, \eqref{eq_vumeDamool8Maechu1eiz0co} holds,
    \item \label{itCancellingDelta} for every distribution \(u \in C^\infty_c (\Rset^n, V)^*\) and every \(e \in E\) such that \(A (\Deriv) u= e\delta_0\), one has 
    \(e = 0\).
  \end{enumerate}
\end{proposition}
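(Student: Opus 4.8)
The plan is to deduce everything from the already-available characterization of cocancelling operators (\cref{proposition_equiv_cocanc}) together with the construction of the representation kernel (\cref{proposition_fundamental_solution}). Since \(A (\Deriv)\) is injectively elliptic, \cref{proposition_compatibility_conditions} provides a homogeneous constant coefficient differential operator \(L (\Deriv)\) from \(E\) to a finite-dimensional vector space \(F\) with \(\ker L (\xi) = A (\xi)[V]\) for every \(\xi \in \Rset^n \setminus \set{0}\); hence \(\bigcap_{\xi \in \Rset^n \setminus \set{0}} \ker L (\xi) = \bigcap_{\xi \in \Rset^n \setminus \set{0}} A (\xi)[V]\), so \(L (\Deriv)\) is cocancelling precisely when \(A (\Deriv)\) is cancelling. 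I would also record at the outset that \(L (\Deriv) \compose A (\Deriv) = 0\): since the range of \(A (\xi)\) is \(A (\xi)[V] = \ker L (\xi)\), one has \(L (\xi) \compose A (\xi) = 0\) on \(\Rset^n \setminus \set{0}\); as \(\xi \mapsto L (\xi) \compose A (\xi)\) is a polynomial, this extends to all of \(\Rset^n\), and a homogeneous constant coefficient operator is determined by its symbol. The proof then runs through the cycle \eqref{itCancelling} \(\Rightarrow\) \eqref{itCancellingVanishL1} \(\Rightarrow\) \eqref{itCancellingVanishCinfty} \(\Rightarrow\) \eqref{itCancellingDelta} \(\Rightarrow\) \eqref{itCancelling}.

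For \eqref{itCancelling} \(\Rightarrow\) \eqref{itCancellingVanishL1}: if \(A (\Deriv)\) is cancelling then \(L (\Deriv)\) is cocancelling, and for \(u \in L^1_{\mathrm{loc}} (\Rset^n, V)\) with \(f \defeq A (\Deriv) u \in L^1 (\Rset^n, E)\) one has \(L (\Deriv) f = L (\Deriv) A (\Deriv) u = 0\) in the sense of distributions, so \cref{proposition_equiv_cocanc} yields \(\int_{\Rset^n} A (\Deriv) u = \int_{\Rset^n} f = 0\). The implication \eqref{itCancellingVanishL1} \(\Rightarrow\) \eqref{itCancellingVanishCinfty} is immediate, since any \(u \in C^\infty (\Rset^n, V)\) with \(\supp A (\Deriv) u\) compact lies in \(L^1_{\mathrm{loc}}\) and has \(A (\Deriv) u \in L^1\). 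For \eqref{itCancellingVanishCinfty} \(\Rightarrow\) \eqref{itCancellingDelta} I would mollify: given a distribution \(u\) and \(e \in E\) with \(A (\Deriv) u = e \delta_0\), fix \(\varrho \in C^\infty_c (\Rset^n, \Rset)\) with \(\int_{\Rset^n} \varrho = 1\) and set \(\varrho_\varepsilon \defeq \varrho (\cdot/\varepsilon)/\varepsilon^n\); then \(u_\varepsilon \defeq \varrho_\varepsilon \ast u\) belongs to \(C^\infty (\Rset^n, V)\), \(A (\Deriv) u_\varepsilon = \varrho_\varepsilon \ast (e \delta_0) = e \varrho_\varepsilon\) has compact support, and \eqref{itCancellingVanishCinfty} forces \(0 = \int_{\Rset^n} A (\Deriv) u_\varepsilon = e\). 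Finally, for \eqref{itCancellingDelta} \(\Rightarrow\) \eqref{itCancelling} I would argue by contraposition: if \(A (\Deriv)\) is not cancelling, there is \(e \in \bigcap_{\xi \in \Rset^n \setminus \set{0}} A (\xi)[V]\) with \(e \neq 0\), and then \(G_A[e] \in L^1_{\mathrm{loc}} (\Rset^n, V)\) satisfies \(A (\Deriv) G_A[e] = e \delta_0\) in the sense of distributions by \cref{proposition_fundamental_solution} \eqref{it_Aix3Ahshe4ahchiekai4Ieph}, contradicting \eqref{itCancellingDelta}.

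There is no single hard step here: the substantive input—the duality estimate behind \cref{proposition_equiv_cocanc} and the kernel construction of \cref{proposition_fundamental_solution}—has already been carried out. The two places that call for mild care are the verification that \(L (\Deriv) A (\Deriv) = 0\), which is exactly what lets one feed \(f = A (\Deriv) u\) into the cocancelling machinery, and the mollification in \eqref{itCancellingVanishCinfty} \(\Rightarrow\) \eqref{itCancellingDelta}, where one uses that convolution of a distribution with a test function produces a smooth function and commutes with the constant coefficient operator \(A (\Deriv)\). An alternative organization would prove \eqref{itCancelling} \(\Leftrightarrow\) \eqref{itCancellingDelta} directly (via \cref{proposition_equiv_cocanc} and \cref{proposition_fundamental_solution}) and then close the remaining loop through \eqref{itCancellingVanishL1} and \eqref{itCancellingVanishCinfty}, but the single cycle above is the shortest.
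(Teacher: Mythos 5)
Your proposal is correct and follows essentially the same route as the paper: the same cycle \eqref{itCancelling} \(\Rightarrow\) \eqref{itCancellingVanishL1} \(\Rightarrow\) \eqref{itCancellingVanishCinfty} \(\Rightarrow\) \eqref{itCancellingDelta} \(\Rightarrow\) \eqref{itCancelling}, using the compatibility operator \(L(\Deriv)\) of \cref{proposition_compatibility_conditions} together with \cref{proposition_equiv_cocanc} for the first implication, mollification for the third, and \cref{proposition_fundamental_solution}~\eqref{it_Aix3Ahshe4ahchiekai4Ieph} for the last. The only (harmless) differences are that you spell out why \(L(\Deriv)\compose A(\Deriv)=0\), which the paper leaves implicit, and that you mollify with a full family \(\varrho_\varepsilon\) where a single test function already suffices.
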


It is crucial that no decay assumption is imposed on \(u\) in \eqref{itCancellingVanishCinfty}.
Indeed, if for every \(j \in \{0, \dotsc, k - 1\}\), we assume that 
\(\lim_{\abs{x} \to \infty} \abs{\Deriv^j u (x)} \abs{x}^{n - j} = 0 \), then one has 
\(
\int_{\Rset^n} A(\Deriv) u = 0
\),
for any constant coefficient differential operator \(A(\Deriv)\) of order \(k \in \Nset \setminus \set{0}\).

\begin{proof}[Proof of \cref{propositionEquivalentCancelling}]
  We first note that since \(A(\Deriv)\) is injectively elliptic, 
  proposition~\ref{proposition_compatibility_conditions} applies and yields a homogeneous differential operator \(L(\Deriv)\) on \(\Rset^n\) from \(E\) to \(F\)
  We first note that since \(A(\Deriv)\) is injectively elliptic, proposition~\ref{proposition_compatibility_conditions} applies and yields a homogeneous differential operator \(L(\Deriv)\) on \(\Rset^n\) from \(E\) to \(F\)
  such that for every \(\xi \in \Rset^n \setminus \set{0}\), we have \(\ker L (\xi) = A (\xi)[V]\) and \(L(\Deriv)\) is cocancelling.
  
We first prove that \eqref{itCancelling} implies \eqref{itCancellingVanishL1}. 
We assume that \(u \in \smash{L^1_{\mathrm{loc}}(\Rset^n, E)}\) and \(A(\Deriv)u \in  \smash{L^1}(\Rset^n, E)\). By construction of \(L(\Deriv)\), we have 
  \( L(\Deriv) (A(\Deriv)u)=0\).
  Since \(A(\Deriv)\) is cancelling, the operator \(L(\Deriv)\) is cocancelling, \eqref{eq_vumeDamool8Maechu1eiz0co} holds in view of proposition~\ref{proposition_equiv_cocanc} \eqref{itVanishL1}.
  
  It is clear that \eqref{itCancellingVanishL1} implies \eqref{itCancellingVanishCinfty}. Assume now that assertion \eqref{itCancellingVanishCinfty} holds and that \(A(\Deriv) u = e \delta_0\). 
  If \(\eta \in C^\infty_c (\Rset^n, \Rset)\) and \(\smash{\int_{\Rset^n} \eta = 1}\), then 
  \(
   A(\Deriv) (\eta \ast u) = e \eta\),
 and thus by assertion \eqref{itCancellingVanishCinfty}, \(e = \smash{\int_{\Rset^n} A(\Deriv) (\eta \ast u) = 0}\) so that \eqref{itCancellingDelta} holds.
  
 Finally we assume that \eqref{itCancellingDelta} holds and that  \(e \in \bigcap_{\xi \in \Rset^n \setminus \set{0}} A (\xi)[V]\). 
 Since the operator \(A (\Deriv)\) is injectively elliptic, then by \cref{proposition_fundamental_solution} \eqref{it_Aix3Ahshe4ahchiekai4Ieph}
 \(A (\Deriv)[G_A [e]] = e \delta_0\), where \(G_A\) is the representation kernel given by \cref{proposition_fundamental_solution}, 
 and thus by \eqref{itCancellingDelta} we have \(e = 0\) so that assertion \eqref{itCancelling} holds.
\end{proof}

\section{Related Questions and Variants}
\label{section_variants}

\subsection{Cocancelling estimates and BMO}

The cocancelling duality estimates of \cref{proposition_equiv_cocanc} can be seen as a replacement for the failure of the endpoint Sobolev embedding of \(\dot{W}^{1, n} (\Rset^n, \Rset)\) into \(L^\infty (\Rset^n, \Rset)\).
On the other hand \(\smash{\dot{W}^{1, n}} (\Rset^n, \Rset)\) is known to be embedded into the space of functions of bounded mean oscillation \(\mathrm{BMO} (\Rset^n,\Rset)\). 
It turns out that cocancelling estimates capture in general a stronger property than vanishing mean oscillation. In order to state this precisely, we define the following semi-norm associated to an operator.

\begin{definition}[Cocancellation spaces%
\footnote{%
These space generalize the spaces \(\mathrm{D}_k (\Rset^n)\) defined in \cite{VanSchaftingen_2006} which correspond to the case where \(L (\Deriv)\) is the exterior differential acting on \(\smash{C^\infty_c (\Rset^n, \bigwedge^k \Rset^n)}\).%
}]
Let \(n \in \Nset \setminus \set{0}\), let \(E\) and \(F\) be finite-dimensional vector spaces, and let \(L (\Deriv)\) be a homogeneous constant coefficient differential operator from \(E\) to \(F\) on \(\Rset^n\).
We define for every \(\varphi \in C^\infty_c (\Rset^n, \Rset)\), the semi-norm
\begin{equation*}
\norm{\varphi}_{\Deriv_L (\Rset^n)} 
\defeq
\sup 
\,
\set[\bigg]{\abs[\Big]{
  \int_{\Rset^n} \varphi f\,} \st 
f \in C^\infty (\Rset^n, E),\ 
L (\Deriv) f = 0 \text{ and } \int_{\Rset^n} \abs{f} \le 1 }\;.
\end{equation*}
\end{definition}

This semi-norm turns out to be stronger than the bounded mean oscillation seminorm.

\begin{proposition}[Cocancelling estimates imply bounded mean oscillation%
\footnote{%
\Cref{proposition_cocancelling_bmo} generalizes the author's result when the operator \(L (\Deriv)\) is the exterior differential \(d\) \cite{VanSchaftingen_2006}.%
}]
  \label{proposition_cocancelling_bmo}
  Let \(n \in \Nset \setminus \set{0}\), let \(E\) and \(F\) be finite-dimensional vector spaces, and let \(L (\Deriv)\) be a homogeneous constant coefficient differential operator from \(E\) to \(F\) on \(\Rset^n\).
If there exists a finite-dimensional vector space \(V\) and a homogeneous constant coefficients differential operator \(A (\Deriv)\) from \(V\) to \(E\) on \(\Rset^n\)
such that for every \(\xi \in \Rset^n \setminus \set{0}\), one has \(A (\xi) \ne 0\) and \(L (\xi)\compose A(\xi) = 0\),
then there exists a constant \(C \in \intvo{0}{+\infty}\) such that for every \(\varphi \in C^\infty_c (\Rset^n, \Rset)\), one has
\begin{equation*}
\norm{\varphi}_{\mathrm{BMO} (\Rset^n)}
\le 
C \norm{\varphi}_{\Deriv_L (\Rset^n)}\;.
\end{equation*}
\end{proposition}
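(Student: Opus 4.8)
The claim is that the operator‑dependent seminorm $\norm{\varphi}_{\Deriv_L(\Rset^n)}$ dominates $\norm{\varphi}_{\mathrm{BMO}(\Rset^n)}$. Recall that the $\mathrm{BMO}$ seminorm is
\[
\norm{\varphi}_{\mathrm{BMO}(\Rset^n)} = \sup_{B} \frac{1}{\abs{B}} \int_B \abs[\big]{\varphi - \textstyle\fint_B \varphi}\;,
\]
the supremum over balls $B$. The first step is to reduce this to a dual formulation: for a fixed ball $B = B_r(a)$, the quantity $\frac{1}{\abs{B}}\int_B \abs{\varphi - \fint_B \varphi}$ equals the supremum of $\abs[\big]{\int_{\Rset^n}\varphi\, g}$ over functions $g \in L^\infty$ supported in $B$ with $\abs{g}\le 1/\abs{B}$ and $\int_{\Rset^n} g = 0$. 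So it suffices to produce, for each such test function $g$, a vector field $f \in C^\infty(\Rset^n, E)$ with $L(\Deriv) f = 0$, $\int_{\Rset^n}\abs{f} \le C$, and $\int_{\Rset^n}\varphi f = \int_{\Rset^n}\varphi\, g$ (after tensoring $g$ with a suitable vector, i.e. we work componentwise in $E$).

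\textbf{Constructing $f$.} The hypothesis gives an operator $A(\Deriv)$ from $V$ to $E$ with $A(\xi)\ne 0$ for $\xi\ne 0$ and $L(\xi)A(\xi)=0$ for all $\xi$; the latter means $L(\Deriv)A(\Deriv)w = 0$ for every $w\in C^\infty_c(\Rset^n, V)$, so any field of the form $A(\Deriv)w$ automatically lies in $\ker L(\Deriv)$. Fix a vector $v_0 \in V$ and a direction $\omega \in \Rset^n$ with $A(\omega)[v_0]\ne 0$, and normalize so that $A(\omega)[v_0] = e$ for some fixed $e \in E\setminus\set{0}$. The idea is to take $w$ to be a high‑frequency wave packet: $w(x) = \beta(x)\,\theta(\mu\dualprod{\omega}{x})\,v_0$ with $\mu$ large, where $\theta$ is a smooth bounded primitive‑type profile and $\beta$ a cutoff adapted to the ball $B$. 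By \cref{lemma_AD_Leibnitz}, $A(\Deriv)w = \mu^k\,\theta^{(k)}(\mu\dualprod{\omega}{x})\,\beta(x)\,A(\omega)[v_0] + (\text{lower order in }\mu)$, so after rescaling one can arrange the leading term to behave like $e\cdot(\text{a prescribed oscillating bump})$ while the $L^1$ norm stays bounded by $C$. Choosing $\theta^{(k)}$ to match (an approximation of) the profile $g/\abs{e}$ — concretely, by solving the ODE $\theta^{(k)}(t) = $ the desired profile on the support — and absorbing the commutator terms by letting $\mu\to\infty$ through a limiting/density argument, one obtains the required $f = A(\Deriv)w$. Because $\int_{\Rset^n}\varphi\,f$ is, in the limit, $\int_{\Rset^n}\varphi\,g\cdot e$ up to a fixed constant, and $\int_{\Rset^n}\abs{f}\le C$, the definition of $\norm{\cdot}_{\Deriv_L}$ yields $\abs[\big]{\int\varphi g}\le C\norm{\varphi}_{\Deriv_L(\Rset^n)}$, and taking the supremum over $g$ and over balls gives the conclusion.

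\textbf{Alternative, cleaner route.} Rather than hand‑building wave packets, one can argue by duality on the level of measures. The point is that $\int g = 0$ together with $g$ bounded and compactly supported means $g$ can be written as $\operatorname{div}$ of an $L^1$ vector field $F$ with $\norm{F}_{L^1}\lesssim \abs{B}^{1/n}\norm{g}_{L^1} \cdot \abs{B}^{1-1/n} \sim r$ (solving $\operatorname{div} F = g$ with control, since $g$ has mean zero); more to the point, one directly checks that the mean‑zero, scale‑$r$, amplitude‑$\abs{B}^{-1}$ bumps $g$ are, after convolution with a rescaled mollifier built from $A(\Deriv)$ of a test function, uniformly representable as $A(\Deriv)$ of something with bounded $L^1$ norm — this is exactly the content one extracts from \cref{proposition_fundamental_solution} applied to $A(\Deriv)$, since $e = A(\omega)[v_0] \in A(\omega)[V]$ guarantees the solvability needed in item \eqref{it_wee9airoSei7iex2ude} along a single frequency ray, which after a standard superposition over a neighbourhood of $\omega$ suffices. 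Either way the core mechanism is the same.

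\textbf{Main obstacle.} The delicate point is the uniformity in the ball $B$: one must produce $f$ with $\int_{\Rset^n}\abs{f}$ bounded by a constant \emph{independent of the radius $r$ and center $a$} of $B$, while matching an arbitrary mean‑zero profile $g$ at scale $r$. This is a scaling‑covariance check — $A(\Deriv)$ is homogeneous of degree $k$, so rescaling $x\mapsto x/r$ multiplies $A(\Deriv)w$ by $r^{-k}$ and the $L^1$ norm by $r^n$, and one has to verify these balance against the $\abs{B}^{-1} = r^{-n}$ normalization of $g$. Keeping track of the commutator (lower‑order) terms from \cref{lemma_AD_Leibnitz} under this rescaling, and showing they vanish in the high‑frequency limit uniformly in $r$, is the part that requires genuine care; everything else is bookkeeping with the dual characterization of $\mathrm{BMO}$.
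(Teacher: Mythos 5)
Your reduction of the BMO seminorm to a duality against mean--zero, $L^\infty$--normalized bumps $g$ supported on balls is sound --- such a $g$ is exactly an $\mathcal H^1(\Rset^n)$ atom --- and the proposition would indeed follow if, for each such $g$, you could produce $f$ with $L(\Deriv)f=0$, $\int_{\Rset^n}\abs{f}\le C$ uniformly, and a pairing $\int\dualprod{f}{\varphi}$ that recovers $\int\varphi g$. But neither of your constructions produces such an $f$, and this is where the whole content of the statement lies. The wave--packet route fails structurally: the profile $\theta^{(k)}(\mu\dualprod{\omega}{x})\,\beta(x)$ depends on $x$ only through $\dualprod{\omega}{x}$ (up to the cutoff), so it cannot match a general $n$--variable profile $g$; if you superpose over directions $\omega$, the vector $A(\omega)[v_\omega]\in E$ varies with $\omega$, so the resulting $f$ is no longer $g$ times a fixed vector and $\int\varphi g$ cannot be read off from $\int\dualprod{f}{\varphi}$. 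Worse, in the limit $\mu\to\infty$ the pairing $\int \varphi\, A(\Deriv)w$ tends to $0$ by Riemann--Lebesgue rather than to $\int\varphi g\cdot e$. If instead you try to solve $A(\Deriv)w=g\,e$ exactly for a fixed $e\in E\setminus\set{0}$, the compatibility condition of \cref{proposition_fundamental_solution} \eqref{it_wee9airoSei7iex2ude} requires $e\in A(\xi)[V+iV]$ for essentially every $\xi$, which fails for every $e\ne 0$ precisely in the interesting (cancelling) case; your ``single frequency ray plus superposition'' remark does not circumvent this, because controlling the $L^1$ norm of such a superposition uniformly is exactly the hard analytic step. A further symptom of the gap: your construction never uses the mean--zero hypothesis on $g$, yet without it no uniform $L^1$ bound on $f\in\ker L(\Deriv)$ can hold.

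The missing idea is to realize $g$ not as a fixed vector times an element of $\ker L(\Deriv)$, but as a fixed \emph{linear functional} of a matrix--valued field in $\ker L(\Deriv)$. The paper sets
\begin{equation*}
\mathcal{F} f(\xi)\defeq \frac{A(\xi)\compose A(\xi)^*}{\trace\brk{A(\xi)\compose A(\xi)^*}}\,\mathcal{F} g(\xi)\;:
\end{equation*}
the image of $A(\xi)\compose A(\xi)^*$ lies in $A(\xi)[V]\subseteq\ker L(\xi)$, so $L(\Deriv)f=0$; the trace of the multiplier is identically $1$, so $\trace f=g$; and the multiplier is homogeneous of degree $0$ and smooth away from the origin (the denominator is positive since $A(\xi)\ne 0$), hence bounded on the Hardy space, giving $\norm{f}_{L^1}\le C\norm{g}_{\mathcal H^1}$. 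Then $\abs{\int\varphi g}=\abs{\trace\int\varphi f}\le C\norm{\varphi}_{\Deriv_L(\Rset^n)}\norm{g}_{\mathcal H^1}$, and one concludes by Fefferman's $\mathcal H^1$--$\mathrm{BMO}$ duality (equivalently, by your atomic reduction). Note that the bound is in terms of $\norm{g}_{\mathcal H^1}$ and not $\norm{g}_{L^1}$ --- the multiplier is not bounded on $L^1$ --- which is exactly why the conclusion is a BMO estimate and not an $L^\infty$ estimate.
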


The assumption of \cref{proposition_cocancelling_bmo} is always satisfied when the operator \(L (\Deriv)\) is given by \cref{proposition_compatibility_conditions} with \(A (\Deriv)\).
The assumption is also satisfied when \(L (\xi)\) is surjective for every \(\xi \in \Rset^n \setminus \set{0}\): indeed, similarly to \eqref{eq_xooch1Ocui9jechohnaeFo2I} in the proof of \cref{proposition_compatibility_conditions} one can define then the operator \(A (\Deriv)\) by setting for each \(\xi \in \Rset^n \setminus \set{0}\)
\begin{equation}
 A (\xi) \defeq \det \bigl(L (\xi) \compose L (\xi)^*\bigr)\,
 \brk[\big]{\operatorname{id}_E\, -\, L (\xi)^* \compose (L (\xi) \compose L (\xi)^*)^{-1} \compose L (\xi)}\;.
\end{equation}

\begin{proof}[Proof of \cref{proposition_cocancelling_bmo}]
By assumption, for every \(\xi \in \Rset^n \setminus \set{0}\), we have \(A (\xi) \ne 0\) and thus \(\trace (A (\xi) \compose A (\xi)^*) > 0\).
By the theory of multipliers on the real Hardy space \(\mathcal{H}^1 (\Rset^n)\),\footnote{%
For the theory of multipliers on the Hardy space \(\mathcal{H}^1 (\Rset^n)\), see for example \cite{Stein_1993}*{Ch.\ 3 \S 3 and Ch.\ I \S 6.2.1}
} 
given \(g \in C^\infty (\Rset^n, \Rset)\cap \mathcal{H}^1 (\Rset^n, \Rset)\), we can define \(f \in C^\infty (\Rset^n, E \otimes E)\cap \mathcal{H}^1 (\Rset^n, E \otimes E)\) by the condition that for every \(\xi \in \Rset^n \setminus \set{0}\), 
\begin{equation*}
  \mathcal{F} f (\xi) \defeq \frac{A (\xi) \compose A (\xi)^*}{\trace (A (\xi) \compose A (\xi)^*)} \mathcal{F} g (\xi)\;;
\end{equation*}
the function \(f\) also satisfies the estimate
\begin{equation*}
\begin{split}
 \norm{f}_{L^1 (\Rset^n, E \otimes E)} &\le \C  \norm{f}_{\mathcal{H}^1 (\Rset^n, E \otimes E)}\\
 &\le \C \norm{g}_{\mathcal{H}^1 (\Rset^n, \Rset)}\;.
 \end{split}
\end{equation*}
By construction, we have \(\trace f = g\) on \(\Rset^n\) and \(L (\Deriv) f = 0\) on \(\Rset^n\) and hence
\begin{equation*}
\begin{split}
\abs[\Big]{\int_{\Rset^n} g \varphi\,}
&=
\abs[\Big]{\,
  \trace \brk[\Big]{\int_{\Rset^n} f \varphi}\,}\\
&\le 
\abs[\Big]{
  \int_{\Rset^n} f \varphi }\\
&\le 
\C
\norm{\varphi}_{\Deriv_L (\Rset^n)} \int_{\Rset^n} \abs{f}\\
&\le 
\C \norm{\varphi}_{\Deriv_L (\Rset^n)} \norm{g}_{\mathcal{H}^1 (\Rset^n, \Rset)}\;,
\end{split}
\end{equation*}
and we conclude by the characterization of bounded mean oscillation (BMO) functions 
by duality with the Hardy space \(\mathcal{H}^1 (\Rset^n)\).\footnote{For the duality between \(\mathrm{BMO} (\Rset^n)\) and the real Hardy space \(\mathcal{H}^1 (\Rset^n)\), see for instance \citelist{\cite{Stein_1993}*{Ch. IV \S 1.2}\cite{Fefferman_1971}\cite{Fefferman_Stein_1972}}.}
\end{proof}

\begin{remark}
In the particular case where \(n \ge 2\) and \(L (\Deriv) = \operatorname{\divergence}\), we can take \(V \defeq \Rset^n \otimes \Rset^n\) and 
\(A (\xi)[v] \defeq  v \cdot \xi - v^* \cdot \xi\) in \cref{proposition_cocancelling_bmo}. 
Hence we obtain in the proof \(A (\Deriv)^* \compose A (\Deriv) (u)= \Delta u- \nabla \nabla \cdot u\),
and thus \(f = (1 - \mathcal{R} \otimes \mathcal{R})g\), where \(\mathcal{R}\) is the vector Riesz transform operator.
\end{remark}

\subsection{Fractional Estimates}
\Cref{theorem_cancelling_necessary_Sobolev} can be generalized to Sobolev embeddings into fractional Sobolev spaces \(W^{k - 1 + s, p} (\Rset^n, V)\) with \(s \in \intvo{0}{1}\).

\begin{theorem}[Fractional embedding for cancelling operators%
\footnote{%
\Cref{theorem_cancelling_fractional} is due to the author \cite{VanSchaftingen_2010}*{Th.\ 8.1}; particular cases where obtained in  \citelist{\cite{Mitrea_Mitrea_2009}\cite{VanSchaftingen_2010}}.%
}%
]
\label{theorem_cancelling_fractional}
Let \(n \in \Nset\setminus\set{0}\), let \(V\) and \(E\) be finite-dimensional vector spaces, let \(A (\Deriv)\) be a homogeneous constant coefficient differential operator
of order \(k \in \Nset \setminus \set{0}\) from 
\(V\) to \(E\) on \(\Rset^n\).
If \(A (\Deriv)\) is injectively elliptic, and if \(s \in \intvo{0}{1}\) and \(p \in \intvr{1}{+\infty}\) satisfy \(\frac{1}{p} = 1 - \frac{1 - s}{n}\), then there exists a constant \(C \in \intvo{0}{+\infty}\) such that for every \(u \in C^\infty_c (\Rset^n, V)\), 
\[
\brk[\Big]{\int_{\Rset^n} \int_{\Rset^n}  \frac{\abs{\Deriv^{k - 1} u (y) - \Deriv^{k - 1} u (x)}^p} {\abs{y -x}^{n + sp}} \dif y \dif x}^\frac{1}{p} 
\le 
C
\int_{\Rset^n} \abs{A (\Deriv)[u]}\;,
\]
if and only if the operator \(A (\Deriv)\) is cancelling.
\end{theorem}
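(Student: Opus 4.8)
The plan is to mirror the proof of \cref{theorem_cancelling_necessary_Sobolev}, replacing the Lebesgue norm $\norm{\Deriv^{k - 1} u}_{L^{n/(n - 1)} (\Rset^n)}$ by the Gagliardo seminorm $[\,\cdot\,]_{W^{s, p} (\Rset^n)}$ on the left-hand side of the asserted inequality, and to observe that the exponents in the hypothesis $\tfrac1p = 1 - \tfrac{1 - s}{n}$ — equivalently $p = \tfrac{n}{n - 1 + s}$, so that $p\,(n - 1 + s) = n$ — are exactly those that render each step invariant under dilations. Throughout I set $f \defeq A (\Deriv) u$ and use that, by \cref{proposition_compatibility_conditions}, injective ellipticity of $A (\Deriv)$ produces a cocancelling operator $L (\Deriv)$ from $E$ to some space $F$ with $\ker L (\xi) = A (\xi)[V]$ for all $\xi \in \Rset^n \setminus \set{0}$, so that $L (\Deriv) f = 0$; since a cancelling injectively elliptic operator can exist only when $n \ge 2$, we may assume $n \ge 2$.

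For the necessity I would argue as in \cref{theorem_cancelling_necessary_Sobolev}. Given $e \in \bigcap_{\xi \in \Rset^n \setminus \set{0}} A (\xi)[V]$, \cref{lemma_regularization_G_A} supplies a sequence $(u_j)_{j \in \Nset}$ in $C^\infty_c (\Rset^n, V)$ with $\supp u_j \subset B_2 (0)$, $\lim_{j \to \infty} \int_{\Rset^n} \abs{A (\Deriv) u_j} \le C \abs{e}$ and $u_j \to G_A[e]$ in $C^\infty_{\mathrm{loc}} (B_1 (0) \setminus \set{0})$; applying the assumed inequality to $u_j$ and letting $j \to \infty$ by Fatou's lemma bounds $\int_{B_1 (0)} \int_{B_1 (0)} \abs{\Deriv^{k - 1} G_A[e] (y) - \Deriv^{k - 1} G_A[e] (x)}^p \abs{y - x}^{- n - s p} \dif y \dif x$ by a finite multiple of $\abs{e}^p$. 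Differentiating the scaling identity of \cref{proposition_fundamental_solution} \eqref{it_oocee4ao4DaiK8sheWeechee} exactly $k - 1$ times annihilates the polynomial $P_A$ (of degree $k - n \le k - 2$) and shows $\Deriv^{k - 1} G_A[e]$ is smooth on $\Rset^n \setminus \set{0}$ and positively homogeneous of degree $(k - n) - (k - 1) = 1 - n$. Feeding this homogeneity into the Gagliardo integral and restricting in polar coordinates to $\set{(x, y) \st \abs{x} < 1,\ \tfrac12 \abs{x} < \abs{y} < \tfrac34 \abs{x}}$, the identity $p\,(n - 1 + s) = n$ turns the restricted integral into a positive multiple of $\brk[\big]{\int_{\Sset^{n - 1}} \abs{\Deriv^{k - 1} G_A[e]}^p} \int_0^1 r^{-1} \dif r$, which is infinite unless $\Deriv^{k - 1} G_A[e]$ vanishes on $\Sset^{n - 1}$, hence on $\Rset^n \setminus \set{0}$. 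Then $G_A[e]$ coincides there with a polynomial of degree $\le k - 2$; comparing once more with \cref{proposition_fundamental_solution} \eqref{it_oocee4ao4DaiK8sheWeechee} forces $P_A[e] = 0$ and $G_A[e]$ homogeneous of degree $k - n < k$, whence $A (\Deriv) G_A[e] = 0$, while $A (\Deriv) G_A[e] = e \delta_0$ by \cref{proposition_fundamental_solution} \eqref{it_Aix3Ahshe4ahchiekai4Ieph}, so $e = 0$ and $A (\Deriv)$ is cancelling.

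For the sufficiency I would reproduce the structure of the sufficiency part of \cref{theorem_cancelling_necessary_Sobolev}, upgrading every step to carry fractional smoothness. By the representation formula (\cref{lemma_representation_Dku}, via the local integrability of the first $k - 1$ derivatives of $G_A$) one has $\Deriv^{k - 1} u = \Deriv^{k - 1} G_A \ast f$, a convolution against a kernel that is smooth off the origin and positively homogeneous of degree $1 - n$ — a Riesz-potential-of-order-one kernel modulated by a Calderón–Zygmund factor — so the asserted bound is exactly an endpoint Hardy–Littlewood–Sobolev inequality for the Riesz potential of order $1 - s$ acting on $L^1$ data, which fails in general but becomes true on the subspace $\set{f \st L (\Deriv) f = 0}$ for the same reason \cref{theorem_cocancelling} holds (the case $s = 0$ being the $L^{n/(n - 1)}$ estimate of \cref{theorem_cancelling_necessary_Sobolev}). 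To prove it I would retrace the proof of \cref{theorem_cocancelling}: use \cref{lemma_decomposition_L_xi_j} with $Q = \operatorname{id}_E$ to write $\operatorname{id}_E = \sum_{j = 1}^m Q_j \compose L (\xi_j)$, reducing matters to a single direction $\xi \in \Rset^n \setminus \set{0}$; slice along the hyperplanes orthogonal to $\xi$ by Fubini as in \cref{lemma_cocancelling_estimate_Lxi}; and on each slice apply the integration-by-parts estimate \cref{lemma_parts_general} together with the interpolation to Hölder spaces \cref{lemma_Holder_interpolation} and the Morrey–Sobolev embedding \cref{proposition_Morrey_Sobolev} on $\Rset^{n - 1}$, but now with the Hölder exponent and the auxiliary integrability exponents tuned (the choices being dictated and made consistent by $\tfrac1p = 1 - \tfrac{1 - s}{n}$) so that Hölder's inequality in the transverse variable reassembles $[\,\cdot\,]_{W^{s, p} (\Rset^n)}$ rather than the $L^{n/(n - 1)}$-norm; as in \cref{theorem_cancelling_necessary_Sobolev} one first regularises $u$ by convolution so that the representation applies, and then passes to the limit. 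The hard part — the same one that \cref{lemma_representation_Dku} and \cref{lemma_representation_Dk_1uF} are designed to circumvent in the non-fractional case — is that $\Deriv^{k - 1} G_A$ is only locally integrable, not bounded, so \cref{theorem_cocancelling} cannot be applied directly to the difference kernel $\Deriv^{k - 1} G_A (x_1 - \cdot) - \Deriv^{k - 1} G_A (x_2 - \cdot)$, which is genuinely singular near $x_1$ and $x_2$: one must excise fixed neighbourhoods of $x_1$ and $x_2$, control the excised near-diagonal contribution by a quantitative modulus-of-continuity bound for $\Deriv^{k - 1} G_A$ that preserves the gain $\abs{x_1 - x_2}^s$, and check that the far contribution still meets the hypotheses of the sliced cocancelling estimate — carrying this splitting out uniformly in $x_1 - x_2$ and summing the dyadic pieces without losing the fractional exponent is the technical heart of the argument.
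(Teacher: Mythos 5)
Your necessity argument is essentially the paper's and is sound: regularize \(G_A[e]\) via \cref{lemma_regularization_G_A}, pass to the limit by Fatou, use that \(\Deriv^{k-1}G_A[e]\) is homogeneous of degree \(1-n\) (the polynomial \(P_A\), of degree \(k-n\le k-2\), is killed by \(k-1\) differentiations since \(n\ge 2\)), and observe that the relation \(p(n-1+s)=n\) makes the Gagliardo integral of a nonzero function of this homogeneity diverge logarithmically; \cref{proposition_fundamental_solution} \eqref{it_Aix3Ahshe4ahchiekai4Ieph} then forces \(e=0\). (Your claim that the restricted integral equals a multiple of \(\int_{\Sset^{n-1}}\abs{\Deriv^{k-1}G_A[e]}^p\) is not literally correct, but the divergence does follow from comparing \(g(x)\) with \(g(tx)\), \(t\ne 1\), so this is cosmetic.)

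The sufficiency half has a genuine gap. The tools the paper assembles are \cref{theorem_cocancelling_fractional} — the duality estimate against the \emph{critical} Gagliardo seminorm, with exponent \(q>n\) and weight \(\abs{y-x}^{-2n}\) — proved by rerunning \cref{theorem_cocancelling} with \emph{two} substitutions: the fractional Morrey--Sobolev embedding (\cref{proposition_Morrey_Sobolev_fractional}), which you do identify, and the fractional Fubini inequality (\cref{lemma_fractional_Fubini}), which you omit. The latter is not optional: the \((n-1)\)-dimensional Gagliardo seminorms of the slices \(\varphi(\cdot,x_n)\) are not controlled by the \(n\)-dimensional seminorm by plain iteration of integrals, so "slice by Fubini as in \cref{lemma_cocancelling_estimate_Lxi}" does not close. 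Moreover, by scale invariance the duality estimate can only hold against the critical seminorm (exponent \(q>n\), differentiability \(n/q\)), never against the seminorm of \(W^{s,p}\) with the theorem's \(p=n/(n-1+s)<n\); your remark that the sliced argument should "reassemble \([\cdot]_{W^{s,p}(\Rset^n)}\)" conflates the seminorm of the test function in the duality estimate with the seminorm of \(\Deriv^{k-1}u\) in the conclusion, which sit on opposite sides of the duality. Finally, the step that converts the duality estimate into the Gagliardo bound on \(\Deriv^{k-1}u=\Deriv^{k-1}G_A\ast f\) — in the non-fractional case this is the Hahn--Banach representation \(f=\operatorname{div}F\) with \(F\in L^{n/(n-1)}\) followed by Calderón--Zygmund in the proof of \cref{theorem_cancelling_necessary_Sobolev} — is precisely the step you defer: applying the duality estimate to the excised difference kernel \(\Deriv^{k-1}G_A(x_1-\cdot)-\Deriv^{k-1}G_A(x_2-\cdot)\) and summing dyadically is a conceivable alternative, but nothing in the proposal shows that the near-diagonal contribution and the dyadic sum actually close with \(\tfrac1p=1-\tfrac{1-s}{n}\), and you yourself label it the technical heart. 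As written, the sufficiency direction is a programme rather than a proof.
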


In particular, the endpoint fractional Sobolev inequality
\begin{equation}
\label{eq_ahNgaodumahngeMoht0hirai}
\brk[\Big]{\int_{\Rset^n} \int_{\Rset^n}  \frac{\abs{u (y) - u (x)}^p} {\abs{y -x}^{n + sp}} \dif y \dif x}^\frac{1}{p} 
\le 
C
\int_{\Rset^n} \abs{\Deriv u}\;,
\end{equation}
with \(\frac{1}{p} = 1 - \frac{1 - s}{n}\),
holds for every function \(u \in C^\infty_c (\Rset, \Rset)\) if and only if the condition \(n \ge 2\) holds.\footnote{
  The inequality \eqref{eq_ahNgaodumahngeMoht0hirai} can also be obtained as a consequence of the inequality \(
 \smash{\norm{u}_{B^{s, p}_1(\Rset^n)}} \le C \norm{\Deriv u}_{L^1(\Rset^n)}
\)
of Solonnikov 
\citelist{\cite{Solonnikov_1975}*{Th.\ 2}\cite{Kolyada_1993}*{Th.\ 4}}
or as a consequence of the interpolation estimate
\[
\norm{u}_{B^{s, p}_1 (\Rset^n)} \le C \norm{\Deriv u}_{L^1(\Rset^n)}^s \norm{u}_{L^\frac{d}{d-1}(\Rset^n)}^{1-s}
\]
by Albert \familyname{Cohen}, Wolfgang \familyname{Dahmen}, Ingride \familyname{Daubechies} and Ronald \familyname{DeVore} \cite{Cohen_Dahmen_Daubechies_DeVore_2003}*{Th.\ 1.4} (see also Jean \familyname{Bourgain}, Haïm \familyname{Brezis} and Petru \familyname{Mironescu} \cite{Bourgain_Brezis_Mironescu_2004}*{Lem. D.2})
together with the standard embeddings between Besov spaces and the identification between Besov spaces and fractional Sobolev-Slobo\-decki\u \i{} spaces \cite{Triebel_1983}*{2.3.2(5), 2.3.5(3) and 2.5.7(9)}. A counterexample for \(n=1\) is obtained by regularizing a characteristic function \cite{Schmitt_Winkler_2000}.}

The main new tool we need to prove \cref{theorem_cancelling_fractional} is a fractional counterpart of \cref{theorem_cocancelling}.

\begin{theorem}[Fractional cocancelling estimate%
\footnote{%
\Cref{theorem_cocancelling_fractional} is due to the author \cite{VanSchaftingen_2013}, following several particular cases statements \citelist{\cite{Bourgain_Brezis_2004}*{Rem.\ 2}\cite{Bourgain_Brezis_2007}*{Rem. 11}\cite{VanSchaftingen_2004_div}*{Rem.\ 2}\cite{Mitrea_Mitrea_2009}\cite{VanSchaftingen_2010}};
    in some cases a strong version à la Bourgain-Brezis could be obtained relying on the constructions of Pierre \familyname{Bousquet}, Petru \familyname{Mironescu}, Emmanuel \familyname{Russ}, \familyname{Wang} Yi and \familyname{Yung} Po-Lam  \citelist{\cite{Bousquet_Mironescu_Russ_2013}\cite{Bousquet_Russ_Wang_Yung_2019}}.%
  }%
  ]
  \label{theorem_cocancelling_fractional}
  Let \(n \in \Nset \setminus \{0, 1\}\), let \(V\) and \(E\) be finite-dimensional vector spaces, let \(L (\Deriv)\) be a homogeneous constant coefficient differential operator from \(E\) to \(F\) on \(\Rset^n\), and let \(p \in \intvo{n}{+\infty}\). 
  There exists a constant \(C \in \intvo{0}{+\infty}\) such that
for every \(f \in C^\infty (\Rset^n, E)\cap L^1 (\Rset^n, E)\) that satisfies \(L (\Deriv) f = 0\) in the sense of distributions and for every \(\varphi \in C^\infty_c (\Rset^n, E)\), one has
\begin{equation}
  \abs[\Big]{\int_{\Rset^n} \dualprod{f}{\varphi}}
  \le 
  C \int_{\Rset^n} \abs{f} \; \brk[\Big]{\int_{\Rset^n} \int_{\Rset^n} \frac{\abs{\varphi(y) - \varphi (x)}^p}{\abs{y - x}^{2n}}\dif y \dif x}^\frac{1}{p}
\end{equation}
if and only if the operator \(L (\Deriv)\) is cocancelling.
\end{theorem}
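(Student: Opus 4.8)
The plan is to follow the structure of the proof of \cref{theorem_cocancelling}, replacing the critical Sobolev seminorm $\brk[\Big]{\int_{\Rset^n}\abs{\Deriv\varphi}^n}^{1/n}$ appearing there by the homogeneous fractional Sobolev seminorm $[\varphi]_{s,p}\defeq\brk[\Big]{\int_{\Rset^n}\int_{\Rset^n}\abs{\varphi(y)-\varphi(x)}^p\abs{y-x}^{-2n}\dif y\dif x}^{1/p}$, which is the $\dot{W}^{s,p}(\Rset^n)$ seminorm with $s\defeq n/p$; since $p>n$, one has $s\in\intvo{0}{1}$, and the scaling $sp=n$ is again critical. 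The necessity of cocancellation is obtained exactly as in the proof of the necessity part of \cref{theorem_cocancelling}: one chooses cutoffs $\varphi_j\in C^\infty_c(\Rset^n,\Rset)$ with $0\le\varphi_j\le1$, $\varphi_j\to1$ pointwise and $[\varphi_j]_{s,p}\to0$ — for instance the logarithmic cutoffs $\varphi_j(x)\defeq\phi((\ln\abs{x})/j)$ used there, for which $[\varphi_j]_{s,p}\le C\norm{\Deriv\varphi_j}_{L^n(\Rset^n)}\to0$ by the critical Gagliardo--Sobolev embedding $\dot{W}^{1,n}(\Rset^n)\hookrightarrow\dot{W}^{n/p,p}(\Rset^n)$ (or by a direct computation) — and, testing the assumed estimate against $\varphi=\varphi_j e$ for $e\in E$ and letting $j\to\infty$ with dominated convergence on the left since $f\in L^1$, one gets $\int_{\Rset^n}f=0$ for every $f\in C^\infty_c(\Rset^n,E)$ with $L(\Deriv)f=0$; hence $L(\Deriv)$ is cocancelling by \cref{proposition_equiv_cocanc}.

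For the sufficiency, the heart of the matter is the fractional counterpart of \cref{lemma_cocancelling_estimate_Lxi}: for every $f\in(C^\infty\cap L^1)(\Rset^n,E)$ with $L(\Deriv)f=0$, every $\varphi\in C^\infty_c(\Rset^n,F)$ and every $\xi\in\Rset^n$,
\[
  \abs[\Big]{\int_{\Rset^n}\dualprod{\varphi}{L(\xi)[f]}\,}
  \le
  C\abs{\xi}\int_{\Rset^n}\abs{f}\;[\varphi]_{s,p}\;.
\]
Assuming $\xi=(0,\dots,0,1)$, I would slice by the hyperplanes $\Sigma_t\defeq\Rset^{n-1}\times\set{t}$. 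For fixed $t$ and a test function $\psi$, the trivial bound $\abs{\int_{\Rset^{n-1}}\dualprod{\psi}{L(\xi)[f(\cdot,t)]}}\le C\norm{\psi}_{L^\infty(\Rset^{n-1})}\int_{\Rset^{n-1}}\abs{f(\cdot,t)}$ and the integration-by-parts bound of \cref{lemma_parts_general} on $\Sigma_t$ (whose unit normal is $\xi$), $\abs{\int_{\Rset^{n-1}}\dualprod{\psi}{L(\xi)[f(\cdot,t)]}}\le C\norm{\Deriv\psi}_{L^\infty(\Rset^{n-1})}\int_{\Rset^n}\abs{f}$, can be interpolated by \cref{lemma_Holder_interpolation} (applied componentwise) with $\alpha=1/p$; feeding the result into the fractional Morrey--Sobolev embedding $\dot{W}^{n/p,p}(\Rset^{n-1})\hookrightarrow C^{0,1/p}(\Rset^{n-1})$ — valid because $sp=n>n-1$ — this yields, for each $t$, a bound of $\abs{\int_{\Rset^{n-1}}\dualprod{\varphi(\cdot,t)}{L(\xi)[f(\cdot,t)]}}$ by $C\brk[\Big]{\int_{\Rset^{n-1}}\abs{f(\cdot,t)}}^{1-1/p}\brk[\Big]{\int_{\Rset^n}\abs{f}}^{1/p}[\varphi(\cdot,t)]_{\dot{W}^{n/p,p}(\Rset^{n-1})}$. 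Integrating over $t$, using Hölder's inequality with exponents $p/(p-1)$ and $p$, and then the slicing bound $\int_{\Rset}[\varphi(\cdot,t)]_{\dot{W}^{n/p,p}(\Rset^{n-1})}^p\dif t\le C[\varphi]_{s,p}^p$ — a standard property of fractional Sobolev spaces, obtained from the characterization of $\dot{W}^{n/p,p}$ by differences along the coordinate directions tangent to the hyperplanes — produces the displayed estimate; the factor $\abs{\xi}$ is harmless since the estimate is used only for finitely many fixed directions.

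The proof is then completed exactly as in \cref{theorem_cocancelling}: applying \cref{lemma_decomposition_L_xi_j} with $Q=\operatorname{id}_E$ gives $\xi_1,\dots,\xi_m\in\Rset^n\setminus\set{0}$ and $Q_1,\dots,Q_m\in\Lin(F,E)$ with $\operatorname{id}_E=\sum_{j=1}^m Q_j\compose L(\xi_j)$, whence $\int_{\Rset^n}\dualprod{f}{\varphi}=\sum_{j=1}^m\int_{\Rset^n}\dualprod{L(\xi_j)[f]}{Q_j^*[\varphi]}$, and the conclusion follows by applying the unidirectional estimate to each summand (absorbing the constants $\abs{\xi_j}$ and $\norm{Q_j}$). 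The only genuinely new ingredients relative to the integer case are the two imported facts about fractional Sobolev spaces — the Morrey embedding on the $(n-1)$-plane in the (barely) supercritical range $sp>n-1$, and the slicing bound — together with the observation that the correct interpolation exponent is now $\alpha=1/p$ instead of $1/n$; I expect checking these two fractional-Sobolev statements to be the main work beyond transcribing the proof of \cref{theorem_cocancelling}.
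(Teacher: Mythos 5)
Your proposal is correct and follows essentially the same route as the paper, which states that the proof of \cref{theorem_cocancelling_fractional} "follows the proof of \cref{theorem_cocancelling}" once one has the two ingredients you identify: the fractional Morrey--Sobolev embedding on the hyperplane (\cref{proposition_Morrey_Sobolev_fractional}, used with \(\ell = n-1\), \(s = n/p\), giving Hölder exponent \(s - (n-1)/p = 1/p\) exactly as you compute) and the slicing inequality, which is precisely the paper's fractional Fubini theorem (\cref{lemma_fractional_Fubini}). The only component you cite as "standard" rather than prove — the slicing bound — is the one the paper supplies with a self-contained argument, but your identification of it as the key new ingredient is exactly right.
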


The proof of \cref{theorem_cocancelling_fractional} follows the proof of \cref{theorem_cocancelling}.
This approach is possible first because of the following fractional counterpart of the Morrey-Sobolev embedding of \cref{proposition_Morrey_Sobolev}.

\begin{proposition}[Fractional Sobolev-Morrey embedding%
\footnote{%
For statements and proofs of the fractional Sobolev-Morrey embedding of \cref{proposition_Morrey_Sobolev_fractional},
see \citelist{\cite{DiNezza_Palatucci_Valdinoci_2012}*{\S 8}\cite{Taibleson_1964}*{Lem.\ 11}\cite{Triebel_1983}*{\S 2.7.1}\cite{Leoni_2017}*{Th.\ 14.17}}.
}%
]
    \label{proposition_Morrey_Sobolev_fractional}
    Let \(\ell \in \Nset \setminus \set{0}\), let \(V\) be a finite-dimensional vector space, let \(s \in (0, 1)\) and let \(p \in (\ell/s, +\infty)\). 
    There exists a constant \(C \in \intvo{0}{+\infty}\) such that for every 
    \(\varphi \in C^\infty_c (\Rset^\ell, V)\), one has
    \begin{equation*}
      \abs{\varphi (y) - \varphi (x)} \le C \abs{y - x}^{1 - \frac{\ell}{p}}
      \brk[\Big]{\int_{\Rset^\ell} \int_{\Rset^\ell}\frac{\abs{\varphi (y) - \varphi (x)}^p}{\abs{y - x}^{\ell + sp}} \dif y \dif x}^\frac{1}{p}\;.
    \end{equation*}
\end{proposition}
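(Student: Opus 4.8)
The plan is to prove \cref{proposition_Morrey_Sobolev_fractional} by the dyadic-averaging argument that underlies Morrey's inequality, now with the Gagliardo seminorm of order \(s\) playing the role that the \(L^p\)-norm of the gradient plays in \cref{proposition_Morrey_Sobolev}. Throughout I abbreviate the right-hand side of the asserted inequality by
\[
\seminorm{\varphi}_{s, p} \defeq \brk[\Big]{\int_{\Rset^\ell} \int_{\Rset^\ell} \frac{\abs{\varphi (w) - \varphi (w')}^p}{\abs{w - w'}^{\ell + sp}} \dif w' \dif w}^{\frac{1}{p}}\;,
\]
and for a ball \(B \subseteq \Rset^\ell\) I write \(\varphi_B \defeq \frac{1}{\abs{B}}\int_B \varphi\) for the average of \(\varphi\) over \(B\). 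Note that the hypothesis \(p > \ell/s\) says precisely that \(s - \frac{\ell}{p}\) is positive, while \(s < 1\) keeps it below \(1\).

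The core step is a scale-doubling estimate: there is a constant \(C \in \intvo{0}{+\infty}\) such that for every \(x_0 \in \Rset^\ell\) and every \(\rho \in \intvo{0}{+\infty}\),
\[
\abs{\varphi_{B_\rho (x_0)} - \varphi_{B_{2 \rho}(x_0)}} \le C \rho^{s - \frac{\ell}{p}}\seminorm{\varphi}_{s, p}\;.
\]
To obtain it, one writes
\[
\varphi_{B_\rho (x_0)} - \varphi_{B_{2 \rho}(x_0)} = \frac{1}{\abs{B_\rho (x_0)}\,\abs{B_{2 \rho}(x_0)}}\int_{B_\rho (x_0)}\int_{B_{2 \rho}(x_0)}(\varphi (w) - \varphi (w'))\dif w' \dif w\;,
\]
bounds the modulus of the right-hand side by the average of \(\abs{\varphi (w) - \varphi (w')}\) over \(B_\rho (x_0) \times B_{2 \rho}(x_0)\), and applies Hölder's inequality in \((w, w')\) with exponents \(p\) and \(p/(p - 1)\) against the weight \(\abs{w - w'}^{\ell + sp}\). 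The Gagliardo-type factor is at most \(C \rho^{-2 \ell/p}\seminorm{\varphi}_{s, p}\) once the domain is enlarged to \(\Rset^\ell \times \Rset^\ell\) and one uses \(\abs{B_\rho (x_0)}\,\abs{B_{2 \rho}(x_0)} \sim \rho^{2 \ell}\); the complementary factor is at most a constant times \(\rho^{(\ell + sp)/p}\) because \(\abs{w - w'} \le 3 \rho\) on \(B_\rho (x_0) \times B_{2 \rho}(x_0)\); multiplying yields the power \(\rho^{-2 \ell/p + (\ell + sp)/p} = \rho^{s - \ell/p}\).

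Since \(\varphi \in C^\infty_c (\Rset^\ell, V)\) is continuous, \(\varphi_{B_\rho (x_0)} \to \varphi (x_0)\) as \(\rho \to 0\), so telescoping the scale-doubling estimate over the radii \(\rho = r/2^j\), \(j \in \Nset\), and summing the geometric series — which converges precisely because \(s - \frac{\ell}{p} > 0\), that is, because \(p > \ell/s\) — gives \(\abs{\varphi (x_0) - \varphi_{B_r (x_0)}} \le C r^{s - \ell/p}\seminorm{\varphi}_{s, p}\) for all \(x_0 \in \Rset^\ell\) and \(r \in \intvo{0}{+\infty}\). Finally, given \(x, y \in \Rset^\ell\) with \(r \defeq \abs{x - y}\), one writes
\[
\abs{\varphi (x) - \varphi (y)} \le \abs{\varphi (x) - \varphi_{B_{2 r}(x)}} + \abs{\varphi_{B_{2 r}(x)} - \varphi_{B_{2 r}(y)}} + \abs{\varphi_{B_{2 r}(y)} - \varphi (y)}\;,
\]
estimates the two outer terms by the bound just obtained, and estimates the middle term by the same Hölder computation as in the scale-doubling estimate, since the two balls have the common radius \(2 r\) and \(\abs{w - w'} \le 5 r\) on their product. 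Assembling the three bounds gives the assertion. I do not anticipate a genuine obstacle: the only points requiring care are the exponent bookkeeping in the Hölder step and the convergence of the dyadic series, the latter being exactly where the hypothesis \(p > \ell/s\) enters — just as \(p > \ell\) enters in \cref{proposition_Morrey_Sobolev}.
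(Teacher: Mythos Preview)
The paper does not prove \cref{proposition_Morrey_Sobolev_fractional}; the footnote simply refers the reader to the literature. So there is no argument in the paper to compare yours against.

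Your Campanato-style argument is the standard one and is correct. One point worth flagging: your telescoping yields the H\"older exponent \(s - \ell/p\), whereas the proposition as printed carries the exponent \(1 - \ell/p\). That printed exponent is a slip---it has been copied verbatim from the first-order Morrey--Sobolev embedding (\cref{proposition_Morrey_Sobolev}) rather than adapted to the fractional setting. The references cited in the footnote all give the exponent \(s - \ell/p\), which is exactly what your computation produces (and since \(s < 1\), the two inequalities are not equivalent). So your proof establishes the correct statement; the discrepancy lies in the paper's typesetting, not in your argument.
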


The second ingredient that make the generalization possible is the following counterpart of Fubini's theorem for fractional Sobolev spaces.

\begin{proposition}[Fractional Fubini theorem]
  \label{lemma_fractional_Fubini}
  Let \(n \in \Nset \setminus \set{0}\), let \(\ell \in \set{1, \dotsc, n - 1}\), let \(s \in (0, 1)\) and let \(p \in [1, +\infty)\).
There exists a constant \(C \in \intvo{0}{+\infty}\) such that for every Borel-measurable function \(\varphi : \Rset^n \to \Rset\) and every \(t \in \Rset^{n - \ell}\) one has
\begin{equation}
\label{eq_aleeziexe1zi4Yaechee5Qui}
  \int_{\Rset^\ell} 
    \int_{\Rset^\ell} 
      \frac{\abs{\varphi (y, t) - \varphi (x,  t)}^p}{\abs{y - x}^{\ell + sp}}
      \dif y
    \dif x
  \le 
  C 
  \int_{\Rset^\ell} 
  \int_{\Rset^n} 
  \frac{\abs{\varphi (z) - \varphi (x, t)}^p}{\abs{z - (x, t)}^{n + sp}}
  \dif z
  \dif x
  \;.
\end{equation}
\end{proposition}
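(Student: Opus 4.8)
The plan is to compare the $\ell$-dimensional slice seminorm on the left-hand side of \eqref{eq_aleeziexe1zi4Yaechee5Qui} with the genuinely $n$-dimensional quantity on the right by inserting an intermediate point $z \in \Rset^n$ that is free to move in all $n$ directions. I would fix $t \in \Rset^{n - \ell}$, and for $x, y \in \Rset^\ell$ set $r \defeq \abs{y - x}$ and $m \defeq \brk{\tfrac{x + y}{2}, t} \in \Rset^n$. On the ball $B_{r/4}(m) \subset \Rset^n$ the triangle inequality gives $\tfrac{r}{4} < \abs{z - (x, t)} < \tfrac{3 r}{4}$ and $\tfrac{r}{4} < \abs{z - (y, t)} < \tfrac{3 r}{4}$ for every $z \in B_{r/4}(m)$, while $\abs{B_{r/4}(m)} = c_n r^n$ for a dimensional constant $c_n \in \intvo{0}{+\infty}$. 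Averaging the elementary bound $\abs{u(y, t) - u(x, t)}^p \le 2^{p - 1}\brk{\abs{u(y, t) - u(z)}^p + \abs{u(z) - u(x, t)}^p}$ (valid because $p \ge 1$) over $z \in B_{r/4}(m)$ and dividing by $r^{\ell + s p}$ then gives
\[
\frac{\abs{u(y, t) - u(x, t)}^p}{\abs{y - x}^{\ell + s p}}
\le
\frac{2^{p - 1}}{c_n\, r^{n + \ell + s p}}
\int_{B_{r/4}(m)} \brk{\abs{u(y, t) - u(z)}^p + \abs{u(z) - u(x, t)}^p} \dif z \;.
\]

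Integrating over $(x, y) \in \Rset^\ell \times \Rset^\ell$ bounds the left-hand side of \eqref{eq_aleeziexe1zi4Yaechee5Qui} by a constant times $I_1 + I_2$, where $I_2$ denotes the integral over $(x, y, z) \in \Rset^\ell \times \Rset^\ell \times \Rset^n$ of $\abs{u(z) - u(x, t)}^p\, r^{-(n + \ell + s p)}$ restricted to $z \in B_{r/4}(m)$ and $I_1$ is the analogue with $\abs{u(y, t) - u(z)}^p$; since $m$ and $r$ are invariant under $x \leftrightarrow y$, relabelling the dummy variables shows $I_1 = I_2$, so it suffices to estimate $I_2$. By Tonelli's theorem I would integrate in $y$ first, with $x$ and $z$ frozen:
\[
I_2 = \int_{\Rset^\ell} \int_{\Rset^n} \abs{u(z) - u(x, t)}^p
\brk[\bigg]{\int_{\set{y \in \Rset^\ell \st z \in B_{r/4}(m)}} \frac{\dif y}{\abs{y - x}^{n + \ell + s p}}} \dif z \dif x\;.
\]

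The key step — and the point where the real work lies — is the estimate of the inner integral over $y$. Writing $z = (w, \tau)$ with $w \in \Rset^\ell$ and $\tau \in \Rset^{n - \ell}$, the condition $z \in B_{r/4}(m)$ forces $\abs{w - \tfrac{x + y}{2}} < \tfrac14\abs{y - x}$ and $\abs{\tau - t} < \tfrac14\abs{y - x}$; from these and the triangle inequality one extracts $\tfrac43\abs{w - x} < \abs{y - x} < 4\abs{w - x}$ and $\abs{\tau - t} < \abs{w - x}$, so that $\abs{y - x}$, $\abs{w - x}$ and $\abs{z - (x, t)}$ are mutually comparable with absolute constants, and the admissible $y$ lie in a ball about $x$ of radius at most $4\abs{z - (x, t)}$, hence in a set of measure at most $C\abs{z - (x, t)}^\ell$. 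Therefore the inner integral is at most $C\abs{z - (x, t)}^{-(n + \ell + s p)} \abs{z - (x, t)}^\ell = C\abs{z - (x, t)}^{-(n + s p)}$, whence
\[
I_2 \le C \int_{\Rset^\ell} \int_{\Rset^n} \frac{\abs{u(z) - u(x, t)}^p}{\abs{z - (x, t)}^{n + s p}} \dif z \dif x\;,
\]
which is $C$ times the right-hand side of \eqref{eq_aleeziexe1zi4Yaechee5Qui}; combined with $I_1 = I_2$ this proves the proposition.

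The main obstacle is exactly the geometric bookkeeping of the last step: choosing the averaging region $B_{r/4}(m)$ so that the intermediate point $z$ stays uniformly comparable to both slice points $(x, t)$ and $(y, t)$, and then verifying both the comparability of $\abs{y - x}$ with $\abs{z - (x, t)}$ on the admissible set and the bound $C\abs{z - (x, t)}^\ell$ on the measure of the admissible $y$. Once these are in hand the rest is only Tonelli's theorem and the convexity inequality $(a + b)^p \le 2^{p-1}(a^p + b^p)$; in particular the argument works verbatim for an arbitrary Borel-measurable $u$, both sides of \eqref{eq_aleeziexe1zi4Yaechee5Qui} being possibly infinite.
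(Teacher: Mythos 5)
Your proof is correct and follows essentially the same route as the paper: the same convexity inequality averaged over an $n$-dimensional ball centred at the slice midpoint, symmetrization in $x \leftrightarrow y$, Tonelli, and a bound on the resulting inner $y$-integral. The only (cosmetic) difference is that the paper first enlarges the averaging ball to one centred at $(x,t)$ so that the $y$-integral becomes an explicit integral over the exterior of a ball, whereas you keep the midpoint ball and instead use the two-sided comparability of $\abs{y-x}$ with $\abs{z-(x,t)}$ together with a measure bound on the admissible $y$; both yield the same $\abs{z-(x,t)}^{-(n+sp)}$ kernel.
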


\Cref{lemma_fractional_Fubini} and Fubini's theorem imply then that we have the following fractional counterpart of Fubini's theorem 
\[
\int_{\Rset^{n - \ell}}
\int_{\Rset^\ell} 
\int_{\Rset^\ell} 
\frac{\abs{\varphi (y, t) - \varphi (x, t)}^p}{\abs{y - x}^{\ell + sp}}
\dif y
\dif x
\dif t 
\le 
C 
\int_{\Rset^n} 
\int_{\Rset^n} 
\frac{\abs{\varphi (y) - \varphi (x)}^p}{\abs{y - x}^{n + sp}}
\dif y
\dif x\;.
\]

\begin{proof}[Proof of \cref{lemma_fractional_Fubini}]
We have for every \(x, y \in \Rset^\ell\), 
\begin{multline}
  \label{eq_peepaaye9onguPh3}
\abs{\varphi (x, t) - \varphi (y, t)}^p\\
\le 2^{p - 1} 
\brk[\Big]{\fint_{B_{\frac{\abs{y - x}}{2}}(\frac{x + y}{2}, t)} \abs{\varphi - \varphi (x, t)}^p
+ \fint_{B_{\frac{\abs{y - x}}{2}}(\frac{x + y}{2}, t)} \abs{\varphi - \varphi (y, t)}^p}\;.
\end{multline}
By monotonicity of the integral, we have for every \(x \in \Rset^\ell\), 
\begin{equation}
  \label{eq_Eeghug8auQueiv7Aepai3goo}
\begin{split}
\int_{\Rset^\ell} 
\fint_{B_{\frac{\abs{y - x}}{2}}(\frac{x + y}{2}, t)}&
    \frac
      {\abs{\varphi (z) - \varphi (x, t)}^p}
      {\abs{y - x}^{n + sp}} \dif z \dif y\\
  &\le 
    \Cl{cst_uahah3phahT1Yimu}
      \int_{\Rset^\ell} 
        \int_{B_{\abs{y - x}}(x, t)} 
          \frac
            {\abs{\varphi (z) - \varphi (x, t)}^p}
            {\abs{y - x}^{n + \ell + sp}} 
          \dif z 
        \dif y
\end{split}
\end{equation}
and thus exchanging the integrals we deduce from \eqref{eq_Eeghug8auQueiv7Aepai3goo} that 
\begin{equation}
  \label{eq_iehieb9Imiaziezo}
\begin{split}
\int_{\Rset^\ell} 
\fint_{B_{\frac{\abs{y - x}}{2}}(\frac{x + y}{2}, t)}&
    \frac
      {\abs{\varphi (z) - \varphi (x, t)}^p}
      {\abs{y - x}^{n + sp}} \dif z \dif y\\
  &\le \Cr{cst_uahah3phahT1Yimu}
    \int_{\Rset^n} 
    \int_{\Rset^\ell \setminus B_{\abs{z - (x, t)}}(x)} 
      \frac
      {\abs{\varphi (z) - \varphi (x, t)}^p}
      {\abs{y - x}^{n + \ell + sp}} 
      \dif y 
    \dif z \\
    &= \Cl{cst_maixiengai0xoo6B}
    \int_{\Rset^n} 
    \frac
    {\abs{\varphi (z) - \varphi (x, t)}^p}
    {\abs{z - (x, t)}^{n + sp}} 
    \dif z\;. 
\end{split}
\end{equation}
Combining the inequalities \eqref{eq_peepaaye9onguPh3} and \eqref{eq_iehieb9Imiaziezo}, we have 
\begin{multline*}
\int_{\Rset^\ell} 
\int_{\Rset^\ell} 
\frac{\abs{\varphi (y, t) - \varphi (x, t)}^p}{\abs{y - x}^{\ell + sp}}
\dif y
\dif x\\
\le 
2^{p - 1}  \Cr{cst_maixiengai0xoo6B}
\brk[\Big]{
\int_{\Rset^\ell} 
\int_{\Rset^n} 
\frac{\abs{\varphi (z) - \varphi (x, t)}^p}{\abs{z - (x, t)}^{n + sp}}
\dif z
\dif x 
+ 
\int_{\Rset^\ell} 
\int_{\Rset^n} 
\frac{\abs{\varphi (z) - \varphi (y, t)}^p}{\abs{z - (y, t)}^{n + sp}}
\dif z
\dif y
}\;,
\end{multline*}
and the conclusion \eqref{eq_aleeziexe1zi4Yaechee5Qui} follows.
\end{proof}

\subsection{Hardy Estimates}

The cancellation condition is also a necessary and sufficient condition for a family of endpoint Hardy inequalities.

\begin{theorem}[Hardy estimate for cancelling operators%
\footnote{%
The Hardy estimate of \cref{theorem_Hardy} is due to Vladimir Gilelevich \familyname{Maz\cprime{}ya} \cite{Mazya_2010} for the operator \((\Delta, \nabla \operatorname{div})\). 
  Our proof follows the proof by Pierre \familyname{Bousquet} and Petru \familyname{Mironescu} \cite{Bousquet_Mironescu_2011} for the latter operator and its adaptations to cancelling operators by Pierre \familyname{Bousquet} and the author \cite{Bousquet_VanSchaftingen_2014}.%
}%
]
  \label{theorem_Hardy}
Let \(n \in \Nset\setminus\set{0}\), let \(V\) and \(E\) be finite-dimensional vector spaces, and let \(A (\Deriv)\) be a homogeneous constant coefficient differential operator
of order \(k \in \Nset \setminus \set{0}\) from 
\(V\) to \(E\) on \(\Rset^n\).
If \(A (\Deriv)\) is injectively elliptic and if \(\ell \in \Nset \setminus \set{0}\) satisfies \(0 < k - \ell < n \), then there exists a constant \(C \in \intvo{0}{+\infty}\) such that for every \(u \in C^\infty_c (\Rset^n, V)\), 
\begin{equation}
\label{eq_ahHaichooxah6pha5thah6ie}
\int_{\Rset^n} \frac{\abs{\Deriv^\ell u(x)}}{\abs{x}^{k - \ell}} \dif x 
\le 
C
\int_{\Rset^n} \abs{A (\Deriv)[u]}\;,
\end{equation}
if and only if the operator \(A (\Deriv)\) is cancelling.
\end{theorem}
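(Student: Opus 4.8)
The plan is to establish the two implications separately, the sufficiency being where the real work lies.

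\emph{Necessity.} I would argue exactly as for the endpoint Sobolev inequality. Given \(e \in \bigcap_{\xi \in \Rset^n \setminus \set{0}} A (\xi)[V]\), apply \cref{lemma_regularization_G_A} to obtain a sequence \((u_j)_{j \in \Nset}\) in \(C^\infty_c(\Rset^n, V)\) with \(\limsup_{j \to \infty} \int_{\Rset^n} \abs{A(\Deriv) u_j} \le C \abs{e}\) and \(u_j \to G_A[e]\) in \(C^\infty_{\mathrm{loc}}(B_1(0) \setminus \set{0})\). Plugging \(u_j\) into \eqref{eq_ahHaichooxah6pha5thah6ie}, keeping on the left-hand side only the integral over \(B_1(0)\), and letting \(j \to \infty\) via Fatou's lemma yields
\[
\int_{B_1(0)} \frac{\abs{\Deriv^\ell G_A[e] (x)}}{\abs{x}^{k - \ell}} \dif x \le C \abs{e} < +\infty .
\]
Since \(\ell > k - n\), \cref{proposition_fundamental_solution} \eqref{it_oocee4ao4DaiK8sheWeechee} shows \(\Deriv^\ell G_A[e]\) is homogeneous of degree \(k - n - \ell\) on \(\Rset^n \setminus \set{0}\) (the term \(\ln \abs{\lambda}\, P_A\) is killed by \(\Deriv^\ell\) as \(\deg P_A = k - n < \ell\)), so the integrand above is homogeneous of degree \(-n\), and its integrability on \(B_1(0)\) forces \(\Deriv^\ell G_A[e] = 0\) on \(\Rset^n \setminus \set{0}\). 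As \(\Rset^n \setminus \set{0}\) is connected (here necessarily \(n \ge 2\)), \(G_A[e]\) coincides on \(\Rset^n\) with a polynomial of degree \(< \ell < k\), which the order-\(k\) operator \(A(\Deriv)\) annihilates, so \cref{proposition_fundamental_solution} \eqref{it_Aix3Ahshe4ahchiekai4Ieph} gives \(e \delta_0 = A(\Deriv) G_A[e] = 0\), i.e.\ \(e = 0\); hence \(A(\Deriv)\) is cancelling.

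\emph{Reduction to \(\ell = k - 1\).} For \(g \in C^\infty_c(\Rset^n, W)\) with \(W\) finite-dimensional and any real \(\beta < n\), integrating radially from the origin and using that \(g\) has compact support gives the weighted one-dimensional Hardy inequality
\[
\int_{\Rset^n} \frac{\abs{g(x)}}{\abs{x}^\beta} \dif x \le \frac{1}{n - \beta} \int_{\Rset^n} \frac{\abs{\Deriv g(x)}}{\abs{x}^{\beta - 1}} \dif x .
\]
Applying this successively to \(g = \Deriv^\ell u, \Deriv^{\ell + 1} u, \dotsc, \Deriv^{k - 2} u\) with \(\beta = k - \ell, k - \ell - 1, \dotsc, 2\) — all admissible since \(1 \le k - \ell < n\) — bounds the left-hand side of \eqref{eq_ahHaichooxah6pha5thah6ie} by a constant times \(\int_{\Rset^n} \abs{\Deriv^{k - 1} u(x)} / \abs{x} \dif x\). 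It therefore suffices to prove \eqref{eq_ahHaichooxah6pha5thah6ie} for \(\ell = k - 1\), in which case the hypothesis reads \(n \ge 2\).

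\emph{The case \(\ell = k - 1\).} Here I would follow the scheme of Bousquet--Mironescu and its adaptation to cancelling operators by Bousquet and the author. Injective ellipticity provides, via \cref{proposition_compatibility_conditions}, an operator \(L(\Deriv)\) with \(\ker L(\xi) = A(\xi)[V]\) for \(\xi \neq 0\), which is cocancelling because \(A(\Deriv)\) is cancelling; setting \(f \defeq A(\Deriv) u\) one has \(L(\Deriv) f = 0\) and, by \cref{propositionEquivalentCancelling}, \(\int_{\Rset^n} f = 0\). Regularizing \(u\) by \(\eta_\varepsilon \ast u\) as in the proof of \cref{theorem_cancelling_necessary_Sobolev} and using \(\Deriv^{k - 1} u = \Deriv^{k - 1} G_A \ast f\) (obtained by differentiating \eqref{eq_jal9Niez6Vu2ohved5OorooG} \(k - 1\) times, which is legitimate because the first \(k - 1\) derivatives of \(G_A\) are locally integrable by \cref{proposition_fundamental_solution} \eqref{it_oocee4ao4DaiK8sheWeechee}), one estimates \(\int_{\Rset^n} \abs{\Deriv^{k - 1} u(x)} / \abs{x} \dif x\) by decomposing \(\Rset^n \setminus \set{0}\) into dyadic annuli \(\set{2^\mu \le \abs{x} < 2^{\mu + 1}}\), \(\mu \in \Zset\), adapted to the singularity of the weight \(\abs{x}^{-1}\) at the origin and at infinity. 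On each annulus one controls the relevant piece of \(\Deriv^{k - 1} u\) either by the cocancelling duality estimate of \cref{theorem_cocancelling} applied to a test field localized at that scale and built from \(G_A\), or, for annuli away from the support of \(u\), by the plain Sobolev bound of \cref{theorem_cancelling_necessary_Sobolev} together with the Calder\'on--Zygmund representation (\cref{theorem_singular_integral}); one then sums the contributions. The estimate of \cref{theorem_cocancelling} is not enough on its own because \(\abs{x}^{-1}\) lies in weak-\(L^n(\Rset^n)\) but not in \(L^n(\Rset^n)\), so the test field one naturally produces fails to be in \(\dot W^{1,n}(\Rset^n)\); what rescues the otherwise borderline-divergent dyadic sum is precisely the cancellation hypothesis, entering through \(\int_{\Rset^n} f = 0\) and the vanishing spherical averages \(\int_{\Sset^{n - 1}} \Deriv^j G_A = 0\) for \(j > k - n\) (\cref{proposition_fundamental_solution} \eqref{it_Zigh3aiy3joo5maewiechua6}), which together produce geometric decay across scales. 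Letting \(\varepsilon \to 0\) then recovers \eqref{eq_ahHaichooxah6pha5thah6ie} for \(\ell = k - 1\). The main obstacle is exactly this scale-localized sharpening of the Bourgain--Brezis/cocancelling estimate and the uniform bookkeeping required to sum the dyadic pieces — the step where cancellation is genuinely used, beyond the injective ellipticity already exploited in the Sobolev case.
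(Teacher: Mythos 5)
Your necessity argument is correct and is essentially the paper's: regularize \(G_A[e]\) via \cref{lemma_regularization_G_A}, pass to the limit by Fatou, and use that \(\abs{\Deriv^\ell G_A[e](x)}/\abs{x}^{k-\ell}\) is homogeneous of degree \(-n\) to force \(\Deriv^\ell G_A[e]=0\), hence \(e=0\) through \cref{proposition_fundamental_solution} \eqref{it_Aix3Ahshe4ahchiekai4Ieph}. Your reduction to \(\ell = k-1\) via the iterated radial Hardy inequality is also valid (the paper instead treats general \(\ell\) directly), though it does not touch the real difficulty.

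The sufficiency in the case \(\ell = k-1\) is where your proposal has a genuine gap, and you have in fact misidentified the tool through which cancellation enters. The paper does not use a dyadic decomposition of the weight together with \cref{theorem_cocancelling}; it splits the kernel as \(\Deriv^{\ell}G_A(x-y) = H_A(x,y) + K_A(x,y)\) with \(H_A(x,y) = \varrho(y/\abs{x})\,\Deriv^\ell G_A(x)\) (the kernel frozen at \(y=0\) on \(\abs{y}\le\abs{x}/2\)), handles \(K_A\) by pointwise bounds and Fubini, and handles \(H_A\) by the \emph{weighted} duality estimate of \cref{proposition_L1_compensation}. That estimate — an algebraic integration-by-parts consequence of writing \(\operatorname{id}_E = \sum_j Q_j\circ L(\xi_j)\), not the \(\dot W^{1,n}\) estimate of \cref{theorem_cocancelling} — converts \(\abs{\int\dualprod{f}{\varphi}}\) into \(\sum_j\int\abs{f(y)}\,\abs{y}^j\abs{\Deriv^j\varphi(y)}\dif y\); applied to the slowly varying test field \(y\mapsto\varrho(y/\abs{x})\Deriv^\ell G_A(x)\) it yields the crucial extra factor \(\abs{y}/\abs{x}\), which is exactly what makes \(\int_{\abs{x}\ge 2\abs{y}}\abs{x}^{-n-1}\dif x\cdot\abs{y}\) summable. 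Your proposed substitutes cannot produce this gain: \(\int_{\Rset^n} f = 0\) is a single global moment condition and gives no scale-by-scale decay of \(\int_{B_r} f\), and the vanishing spherical averages of \(\Deriv^j G_A\) are already consumed in the principal-value representation. Concretely, your dyadic scheme bounds \(\int\abs{\Deriv^{k-1}u}/\abs{x}\) by \(\sum_\mu\norm{\Deriv^{k-1}u}_{L^{n/(n-1)}(A_\mu)}\), i.e.\ by a Lorentz norm \(L^{n/(n-1),1}\) strictly stronger than the \(L^{n/(n-1)}\) norm controlled by \cref{theorem_cancelling_necessary_Sobolev}; establishing that Lorentz improvement is equivalent in difficulty to the Hardy inequality itself, so the "geometric decay across scales" you appeal to is precisely the unproved step. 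To close the gap you need \cref{proposition_L1_compensation} (or an equivalent rewriting of \(\operatorname{id}_E\) in terms of the \(L(\xi_j)\)) applied to the frozen part of the kernel.
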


Our main tool in the proof of \cref{theorem_Hardy} will be the following duality estimate.

\begin{proposition}[Weighted duality estimate%
\footnote{%
\Cref{proposition_L1_compensation} is due to
Pierre \familyname{Bousquet} and to the author \cite{Bousquet_VanSchaftingen_2014}
under the additional assumption that \(L (\Deriv)\) is cocancelling,
and in the general case to Bogdan \familyname{Raiță} \cite{Raita_2019}.%
  }%
  ]
  \label{proposition_L1_compensation}
Let \(n \in \Nset \setminus \{0, 1\}\), let \(E\) and \(F\) be finite-dimensional vector spaces, let \(L (\Deriv)\) be a homogeneous constant coefficient differential operator of order \(\ell\) from \(E\) to \(F\) on \(\Rset^n\). There exists a constant \(C \in \intvo{0}{+\infty}\) such that
for every \(f \in C^\infty (\Rset^n, E) \cap L^1 (\Rset^n, E)\) that satisfies \(L (\Deriv) f = 0\) in \(\Rset^n\) in the sense of distributions and every \(\varphi \in C^1_c (\Rset^n, E)\) such that for every \(x \in \Rset^n\), \begin{equation}
\label{eq_xai4DiegaiH4eChee9ohshoo}
\varphi (x) \in \brk[\Big]{\bigcap_{\xi \in \Rset^n \setminus\set{0}} \ker L (\xi) }^{\perp},                                                                                                                                                                   \end{equation}
one has
  \begin{equation}
  \label{eq_xeiGh4eefai0xiephu2Choo7}
    \abs[\Big]{\int_{\Rset^n} \dualprod{f}{\varphi}}
    \le 
    \sum_{j = 1}^\ell
    C \int_{\Rset^n} \abs{f (x) } \abs{x}^{j} \abs{\Deriv^j \varphi (x)}\dif x\;.
  \end{equation}
\end{proposition}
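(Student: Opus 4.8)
The plan is to reduce the estimate, via the linear algebra of \cref{lemma_decomposition_L_xi_j}, to a one–dimensional statement in a single direction, and then to prove that statement by an integration by parts in the spirit of \cref{lemma_parts_general}, but carried out in bulk over all of $\Rset^n$ and weighted by powers of the transverse variable.

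For the algebraic step, I would fix a Euclidean structure on $E$, set $N \defeq \bigcap_{\xi \in \Rset^n \setminus \set{0}} \ker L (\xi)$, and let $P \in \Lin (E)$ be the orthogonal projection onto $N^\perp$, so that $P = P^*$, $\ker P = N$ and $\operatorname{range} P = N^\perp$. Since $N \subseteq \ker P^*$, \cref{lemma_decomposition_L_xi_j} applied with $Q = P^*$ yields $m \in \Nset$, vectors $\xi_1, \dotsc, \xi_m \in \Rset^n \setminus \set{0}$ (which we may take to be unit, absorbing the scalars $\abs{\xi_j}^\ell$ into the $Q_j$ by homogeneity of $L$) and maps $Q_1, \dotsc, Q_m \in \Lin (F, E)$ with $P = P^* = \sum_{j = 1}^m Q_j \compose L (\xi_j)$, hence also $P = \sum_{j = 1}^m L (\xi_j)^* \compose Q_j^*$. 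As $\varphi (x) \in N^\perp = \operatorname{range} P$ pointwise and $P$ is a projection, $\varphi = P \varphi = \sum_{j = 1}^m L (\xi_j)^* [Q_j^* \varphi]$, whence
\[
 \int_{\Rset^n} \dualprod{f}{\varphi} = \sum_{j = 1}^m \int_{\Rset^n} \dualprod{L (\xi_j)[f]}{Q_j^* \varphi}\;.
\]
Because $\Deriv^i (Q_j^* \varphi) = Q_j^* \Deriv^i \varphi$, the proposition follows once one proves that for a fixed unit vector $\xi \in \Rset^n$, every $f \in (C^\infty \cap L^1) (\Rset^n, E)$ with $L (\Deriv) f = 0$ and every $\psi \in C^\ell_c (\Rset^n, F)$,
\[
 \abs[\Big]{\int_{\Rset^n} \dualprod{L (\xi)[f]}{\psi}} \le C \sum_{i = 1}^\ell \int_{\Rset^n} \abs{f (x)}\, \abs{x}^i\, \abs{\Deriv^i \psi (x)} \dif x\;.
\]

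To establish this unidirectional weighted estimate I would rotate so that $\xi = (0, \dotsc, 0, 1)$ and write, as in \eqref{eq_lah1mahphahJohleeThathai}, $L (\Deriv) = \sum_{j = 0}^\ell \partial_n^{\ell - j} L_j (\Deriv')$ with $L_j (\Deriv')$ homogeneous of degree $j$ on $\Rset^{n - 1}$; then $L (\xi) = L_0$ and, with $g \defeq L_0 f$, the hypothesis $L (\Deriv) f = 0$ becomes $\partial_n^\ell g = - \sum_{j = 1}^\ell \partial_n^{\ell - j} (L_j (\Deriv') f)$. Moreover $\int_{\Rset^n} g = L_0 \int_{\Rset^n} f = 0$, since $\int_{\Rset^n} f \in N \subseteq \ker L (\xi) = \ker L_0$ (that $\int f \in N$ follows exactly as in the proof of \cref{proposition_equiv_cocanc}). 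Next I would recover $g$ from $\partial_n^\ell g$ by an $\ell$-fold primitive in $x_n$, pair against $\psi$, and transfer the $x_n$-integrations and the tangential operators $L_j (\Deriv')$ onto a suitable primitive of $\psi$ — built in the style of \eqref{eq_eNoo6iu8roh0ulieCepohFah}--\eqref{eq_GaechahLae3wai4ooYaik7cu} by averaging $\psi$ over balls of radius comparable to $x_n$ (so that, as in \eqref{eq_di4xeex8Yiekakei4Tei1eli}, its $j$-th derivative is bounded by a constant multiple of $\abs{x_n}^{1 - j}\norm{\Deriv \psi}_{L^\infty}$), multiplied by the polynomial factor $x_n^{\ell - 1}/(\ell - 1)!$ and a cut-off $\theta (x_n / \lambda)$, after which the Fubini slicing of the proof of \cref{lemma_cocancelling_estimate_Lxi} is used to assemble the estimate over $\Rset^n$. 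The powers of $x_n$ carried by this primitive should cancel against the scaling of the ball-average lift to leave precisely the weights $\abs{x}^i$ paired with $\abs{\Deriv^i \psi}$, while the compact support of $\psi$ together with the vanishing $\int g = 0$ disposes of the non-decaying boundary contributions the cut-off leaves behind as $\lambda \to \infty$. Summing over $i$ and over the directions $\xi_j$ then completes the proof; when $\ell \ge 2$ and $\varphi$ is only $C^1_c$ one first treats $\varphi \in C^\infty_c$ and passes to the general case by density, which is how the proposition is applied.

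The step I expect to be the main obstacle is this weighted counterpart of \cref{lemma_parts_general}: the naive $\ell$-fold $x_n$-primitive of $\psi$ is non-local, hence not pointwise controlled by $\sum_i \abs{x}^i \abs{\Deriv^i \psi}$, and the whole-space $L^1$-norm of $f$ that the crude integration by parts produces is far too lossy for a Hardy-type weight. The resolution is the correct choice of primitive — the ball-average extension of \eqref{eq_eNoo6iu8roh0ulieCepohFah}, whose derivatives gain negative powers of the transverse variable, dressed with the polynomial cut-off $\tfrac{x_n^{\ell - 1}}{(\ell - 1)!}\theta (x_n / \lambda)$ — so that inside $L_j (\Deriv')^* \partial_n^{\ell - j}(\text{primitive})$ the $x_n$-powers balance and one is left with a clean bound by $\abs{\Deriv^j \psi}$ times the right power of $\abs{x}$; controlling the $\lambda \to \infty$ limit of the remainders is then where $f \in L^1$ and $\int g = 0$ are used.
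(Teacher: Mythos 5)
Your algebraic reduction coincides with the paper's first step: take the orthogonal projection onto \(\bigl(\bigcap_{\xi \ne 0}\ker L(\xi)\bigr)^{\perp}\), apply \cref{lemma_decomposition_L_xi_j} to it, and write \(\varphi=\sum_{j}L(\xi_j)^*[Q_j^*\varphi]\), so that everything reduces to bounding \(\int_{\Rset^n}\dualprod{L(\xi)[f]}{\psi}\) for a single direction \(\xi\). That unidirectional weighted estimate is indeed true, but the mechanism that proves it is not the one you reach for: it is the polynomial multiplier \(q_\xi(x)\defeq\dualprod{\xi}{x}^{\ell}/\ell!\). Since \(\partial^\alpha q_\xi=\xi^\alpha\) for \(\abs{\alpha}=\ell\), the Leibniz rule gives \(L(\Deriv)^*[q_\xi\psi]=L(\xi)^*[\psi]+\sum_{j=1}^{\ell}(\text{terms contracting }\Deriv^{\ell-j}q_\xi\text{ with }\Deriv^{j}\psi)\); pairing with \(f\), the left-hand side integrates to \(\int\dualprod{L(\Deriv)f}{q_\xi\psi}=0\) (legitimate because \(q_\xi\psi\) is compactly supported), and each remainder is bounded by a constant times \(\abs{x}^{j}\abs{\Deriv^{j}\psi(x)}\) because \(\Deriv^{\ell-j}q_\xi\) is homogeneous of degree \(j\). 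The paper does this in one stroke, without even splitting into directions, by summing the polynomials \(\dualprod{\xi_i}{x}^{\ell}Q_i^*/\ell!\) into a single \(P\) with \(L(\Deriv)^*[P]=T\) and estimating \(L(\Deriv)^*[P]\varphi-L(\Deriv)^*[P\varphi]\).

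The step you flag as the main obstacle is a genuine gap, and the tools you propose for it would not close it. The ball-average lift, the factor \(x_n^{\ell-1}/(\ell-1)!\) (note the exponent: \(\ell-1\), not \(\ell\)), the cut-off \(\theta(x_n/\lambda)\) with \(\lambda\to+\infty\), and the vanishing of \(\int_{\Rset^n}g\) are all tailored to the trace-type estimate of \cref{lemma_parts_general}, whose output is \(\norm{\Deriv\psi}_{L^\infty}\int\abs{f}\). Here that is the wrong shape of bound: the mollification of \(\psi\) at scale \(x_n\) produces errors controlled only by \(x_n\norm{\Deriv\psi}_{L^\infty}\), which cannot be upgraded to the pointwise-weighted quantity \(\int\abs{f(x)}\,\abs{x}^{j}\abs{\Deriv^{j}\psi(x)}\dif x\); the exponent \(\ell-1\) is chosen in \cref{lemma_parts_general} precisely so that a boundary term survives on \(\{x_n=0\}\), which is not wanted in a whole-space identity; and since \(\psi\) is compactly supported there are no contributions at infinity, so neither the cut-off nor \(\int g=0\) plays any role. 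No slicing, Morrey embedding, or Hölder interpolation is needed for this proposition --- only the degree-\(\ell\) polynomial and the Leibniz rule.
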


\begin{proof}
We define the mapping \(T : E \to E\) to be the orthogonal projection on the space \((\bigcap_{\xi \in \Rset^n \setminus \set{0}} \ker L (\xi))^\perp\).
This implies that 
\(
\ker T= T[E]^\perp = \bigcap_{\xi \in \Rset^n \setminus \set{0}} \ker L (\xi) 
\)
 and thus that, by \cref{lemma_decomposition_L_xi_j}, there exists \(m \in \Nset\), 
vectors \(\xi_1, \dotsc, \xi_m \in \Rset^n \setminus \set{0}\) and \(Q_1, \dotsc, Q_m \in \Lin (F, E)\) such that 
\(
    T
  = 
    \smash{\sum_{i = 1}^m Q_i \compose L (\xi_i)}
\).
If we define the polynomial \(P: \Rset^n \to \Lin (E, F)\) for every \(x \in \Rset^n\) by 
\begin{equation*}
P (x) \defeq \sum_{i = 1}^m \frac{\dualprod{\xi_i}{x}^m Q_i^*}{m!},
\end{equation*}
 we have then \(T = L (\Deriv)^*[P]\), and thus in view of \eqref{eq_xai4DiegaiH4eChee9ohshoo}
\[
\int_{\Rset^n} \dualprod{f}{\varphi} = 
\int_{\Rset^n} \dualprod{f}{T[\varphi]} = 
\int_{\Rset^n} \dualprod{f}{L (\Deriv)^*[P] \varphi}.
\]
Since \(L(\Deriv)f = 0\) and by integration by parts, we deduce  that 
\[
\begin{split}
\int_{\Rset^n} \dualprod{f}{L (\Deriv)^*[P] \varphi}
&= \int_{\Rset^n} \dualprod{f}{L (\Deriv)^*[P] \varphi}
- \int_{\Rset^n} \dualprod{L (\Deriv) f}{P \varphi}\\
&=\int_{\Rset^n} \dualprod{f}{L (\Deriv)^*[P] \varphi
- L(\Deriv)^*[ P \varphi]}\;,
\end{split}
\]
and the conclusion \eqref{eq_xeiGh4eefai0xiephu2Choo7} then follows.
\end{proof}

\begin{proof}[Proof of \cref{theorem_Hardy}]
  Let \(G_A : \Rset^n \setminus \set{0} \to \Lin (V, E)\) be the representation kernel given by \cref{proposition_fundamental_solution}. 
  Fixing a function \(\varrho \in C^\infty_c(\Rset^n, \Rset)\) such that \(\varrho = 1\) on \(B_{1/4}(0)\) and \(\varrho=0\) on \(\Rset^n \setminus B_{1/2}(0)\), we define the kernels \(H_A : \Rset^n \times \Rset^n \to \Lin (V, E)\) and \(K_A : \Rset^n \times \Rset^n \to \Lin (V, E)\) for \(x, y \in (\Rset^n \setminus \set{0})  \times \Rset^n \) with \(x \ne y\) by
\[
 H_A (x, y) \defeq \varrho \Bigl( \frac{y}{\abs{x}} \Bigr) \Deriv^{\ell} G_A (x)
\]
and
\[
 K_A (x, y) \defeq \Deriv^{\ell} G_A (x - y) - \varrho \Bigl( \frac{y}{\abs{x}} \Bigr) \Deriv^{\ell} G_A (x)\;,
\]
so that \(\Deriv^\ell G_A (x - y) = H_A (x, y) + K_A (x, y)\).
Letting \(L (\Deriv)\) be the differential operator of order \(m\) given by \cref{proposition_compatibility_conditions}, \(L (\Deriv)\) is cocancelling and \(L (\Deriv) A (\Deriv)u = 0\) in \(\Rset^n\).
By \cref{proposition_L1_compensation} and by the homogeneity of \(\Deriv^{\ell}_A G\) following from the condition that \(\ell > k - n\) (\cref{proposition_fundamental_solution} \eqref{it_oocee4ao4DaiK8sheWeechee}), we have
for every \(x \in \Rset^n \setminus \set{0}\),
\begin{equation}
\begin{split}
\abs[\Big]{\int_{\Rset^n}  H_A (x, y) &[A (\Deriv) u (y)] \dif y}\\
&\le \C \sum_{j = 1}^m \int_{B_{\abs{x}/2}(0)} \abs{A (\Deriv) u(y)} \abs{y}^j \abs{\Deriv_y^j H(x, y)} \dif y\\
&\le \C \sum_{j = 1}^m \int_{B_{\abs{x}/2}(0)} \frac{ \abs{A (\Deriv) u (y)} \abs{y}^j}{\abs{x}^{n - k + \ell + j}}  \dif y\\
&\le \C  \int_{B_{\abs{x}/2}(0)} \frac{ \abs{A (\Deriv) u (y)} \abs{y}}{\abs{x}^{n - k + \ell + 1}}  \dif y\;,
\end{split}
\end{equation}
and thus 
\begin{equation}
  \label{eq_yoh9Acohb7cah3Ohgoo7eish}
  \begin{split}
\int_{\Rset^n} &\abs[\Big]{\int_{\Rset^n}  H_A (x, y) [A (\Deriv) u (y)] \dif y}\frac{\mathrm{d} x}{\abs{x}^{k - \ell}} \\
&\le \Cl{cst_eiL2tahg3iigeidoozeeChei} \int_{\Rset^n} \int_{B_{\abs{x}/2}(0)} \frac{\abs{y}\abs{A (\Deriv) u (y)} }{\abs{x}^{n + 1}} \dif y \dif x\\
&=  \Cr{cst_eiL2tahg3iigeidoozeeChei} \int_{\Rset^n} \int_{\Rset^{n} \setminus B_{2 \abs{y}}(0)} \frac{\abs{y}\abs{A (\Deriv) u (y)} }{\abs{x}^{n + 1}} \dif x \dif y
\le \C \int_{\Rset^n} \abs{A (\Deriv) u}\;.
\end{split}
\end{equation}
Next we have 
\begin{equation}
  \label{eq_uFiel4aemou4aiNgizoow3ob}
\int_{\Rset^n} \abs[\Big]{\int_{\Rset^n}  K_A (x, y) [A (\Deriv) u (y)] \dif y} \frac{\mathrm{d} x}{\abs{x}^{k - \ell}} 
 \le  \int_{\Rset^n} \int_{\Rset^n}  \frac{\abs{K_A (x, y)}}{\abs{x}^{k - \ell}} \dif x \abs{A (\Deriv) u (y)}\dif y\;.
\end{equation}
Again by homogeneity of \(\Deriv^\ell G_A\), we have if \(\abs{x} < 2 \abs{y}\)
\[
 \abs{K_A (x, y)} \le \frac{\C}{\abs{x-y}^{n - k + \ell} }\:,
\]
and if \(\abs{x} \ge 2 \abs{y}\)
\[
\abs{K_A (x, y)} \le  \frac{\C\abs{y}}{\abs{x}^{n - k + \ell + 1} }\;,
\]
so that, since \(k - \ell < n\),
\begin{equation}
  \label{eq_ne4dae2Ocai4Uh2ar1iuk4so}
  \begin{split}
 \int_{\Rset^n}&  \frac{\abs{K_A (x, y)}}{\abs{x}^{k - \ell}} \dif x \\
 &\le \C \Bigl(\int_{B_{2 \abs{y}}(0)} \frac{\mathrm{d} x}{\abs{x - y}^{n - k + \ell}\abs{x}^{k - \ell}}  + \int_{\Rset^n \setminus B_{2 \abs{y}}(0)} \ \frac{\abs{y}}{\abs{x}^{n + 1}} \dif x\Bigr) \le \C\;.
\end{split}
\end{equation}
We conclude from \eqref{eq_uFiel4aemou4aiNgizoow3ob} and \eqref{eq_ne4dae2Ocai4Uh2ar1iuk4so}
that 
\begin{equation}
  \label{eq_datahlieghaivi0ahdooshaY}
  \int_{\Rset^n} \abs[\Big]{\int_{\Rset^n} K_A (x, y) [A (\Deriv) u (y)] \dif y} \frac{\mathrm{d} x}{\abs{x}^{k - \ell}}  \le \C \int_{\Rset^n} \abs{A (\Deriv) u }\;.
\end{equation}
The estimate \eqref{eq_ahHaichooxah6pha5thah6ie} then follows from the inequalities \eqref{eq_yoh9Acohb7cah3Ohgoo7eish} and \eqref{eq_datahlieghaivi0ahdooshaY}.

For the necessity of the cancellation, taking a vector 
\(
e \in \bigcap_{\xi \in \Rset^n \setminus \set{0}} A (\xi)[V] 
\)
and letting the sequence \((u_j)_{j \in \Nset}\) be given by \cref{lemma_regularization_G_A} for this vector \(e\), we have by \eqref{eq_ahHaichooxah6pha5thah6ie} and by Fatou's lemma
\[
\begin{split}
\int_{B_1(0)} \frac{\abs{\Deriv^{\ell} G_A(x)[e]}}{\abs{x}^{k - \ell}} \dif x 
&\le 
 \liminf_{j \to \infty} \int_{\Rset^n} \frac{\abs{\Deriv^{\ell} u_j(x)}}{\abs{x}^{k - \ell}} \dif x \\
& \le 
 \lim_{j \to \infty}
 \Cl{cst_tieFeengeidee0ovaih9yee1} \int_{\Rset^n} \abs{A (\Deriv) u_j}\\
 &\le \C \abs{e}\;.
 \end{split}
\]
Since \(\ell > n - k\), \(\Deriv^\ell G_A[e] : \Rset^n \setminus \set{0} \to V\) is homogeneous of degree \(n - (k - \ell)\) (by \cref{proposition_fundamental_solution} \eqref{it_oocee4ao4DaiK8sheWeechee}); this implies that \(G_A[e] = 0\) on \(\Rset^n\) and thus \(e = 0\).
\end{proof}

\subsection{Uniform Estimate and Weakly Cancelling Operators}

By \cref{theorem_Hardy} with \(\ell = k - (n - 1) > 0\) and the Sobolev representation formula, one has when \(A (\Deriv)\) is injectively elliptic and cancelling that for every \(x \in \Rset^n\)
\begin{equation}
 \abs{\Deriv^{k - n} u (x)} \le  \C \int_{\Rset^n}\frac{\abs{\Deriv^{k - n - 1} u(y)}}{\abs{x - y}^{n - 1}} \dif x 
\le 
\C \int_{\Rset^n} \abs{A (\Deriv)[u]}\;.
\end{equation}
It turns out however that the cancellation condition is not necessary for such an \(L^\infty\) estimate and can be replaced, as showed by Bogdan \familyname{Raiță}, by a weaker condition.

\begin{theorem}[\(L^\infty\) estimate for cancelling operators%
\footnote{%
\Cref{theorem_Linfinity} is due to Bogdan \familyname{Raiță} \cite{Raita_2019}.
  The stronger cancellation condition was proved to be sufficient by Pierre \familyname{Bousquet} and the author \cite{Bousquet_VanSchaftingen_2014}, as a consequence of the Hardy inequality of \cref{theorem_Hardy}.%
  }%
  ]
  \label{theorem_Linfinity}
Let \(n \in \Nset\setminus\set{0}\), let \(V\) and \(E\) be finite-dimensional vector spaces, and let \(A (\Deriv)\) be a homogeneous constant coefficient differential operator
of order \(k \in \Nset \setminus \set{0}\) from 
\(V\) to \(E\) on \(\Rset^n\). 
If \(A (\Deriv)\) is injectively elliptic and if \(k \ge n\), then there exists a constant \(C \in \intvo{0}{+\infty}\) such that for every \(u \in C^\infty_c (\Rset^n, V)\) and every \(x \in \Rset^n\)
\begin{equation}
\label{eq_jupeefaeshie4ainaeci3Quu}
\abs{\Deriv^{k - n} u(x)}
\le 
C
\int_{\Rset^n} \abs{A (\Deriv)[u]}\;,
\end{equation}
if and only if for every 
\(e \in \bigcap_{\xi \in \Rset^n \setminus \set{0}} A (\xi)[V],
\)
one has
\begin{equation}
  \label{eq_aiboh2iesaeMisee2oyahah4}
  \int_{\Sset^{n - 1}} \xi^{\otimes k - n} A(\xi)^{-1} [e] \dif \xi = 0\;.
\end{equation}
\end{theorem}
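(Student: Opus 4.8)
The plan is to combine the representation formula of \cref{proposition_fundamental_solution} with the weighted duality estimate of \cref{proposition_L1_compensation}. Write \(W \defeq \Lin^{k - n}_{\mathrm{sym}}(\Rset^n, V)\), the target of \(\Deriv^{k - n} u\). Since by \cref{proposition_fundamental_solution} \eqref{it_oocee4ao4DaiK8sheWeechee} the derivatives of \(G_A\) up to order \(k - 1\) --- hence a fortiori up to order \(k - n\) --- are locally integrable, differentiating \eqref{eq_jal9Niez6Vu2ohved5OorooG} \(k - n\) times gives for every \(u \in C^\infty_c(\Rset^n, V)\) and \(x \in \Rset^n\)
\[
\Deriv^{k - n} u (x) = \int_{\Rset^n} \Deriv^{k - n} G_A (x - y)\bigl[A (\Deriv) u (y)\bigr] \dif y .
\]
Differentiating the identity of \cref{proposition_fundamental_solution} \eqref{it_oocee4ao4DaiK8sheWeechee} \(k - n\) times in \(x\) and then setting \(\lambda = \abs{z}\), \(x = z/\abs{z}\) yields, for \(z \in \Rset^n \setminus \set{0}\), the decomposition
\[
\Deriv^{k - n} G_A (z) = \Deriv^{k - n} G_A \bigl(\tfrac{z}{\abs{z}}\bigr) - \ln \abs{z}\; C_A , \qquad C_A \defeq \Deriv^{k - n} P_A ,
\]
where \(C_A\) is a constant element of \(\Lin (E, W)\) because \(P_A\) is a homogeneous polynomial of degree \(k - n\), and where the first term is bounded in norm by \(\norm{\Deriv^{k - n} G_A}_{L^\infty(\Sset^{n - 1})}\). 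Moreover, differentiating the identity of \cref{proposition_fundamental_solution} \eqref{it_supahJuacocei6efiey7aife} \(k - n\) times shows that \(C_A[e] = \tfrac{1}{(2 \pi i)^n} \int_{\Sset^{n - 1}} \xi^{\otimes k - n} A (\xi)^{-1}[e] \dif \xi\) for every \(e \in \bigcap_{\xi \in \Rset^n \setminus \set{0}} A (\xi)[V]\); consequently, condition \eqref{eq_aiboh2iesaeMisee2oyahah4} is equivalent to the vanishing of \(C_A\) on the subspace \(\bigcap_{\xi \in \Rset^n \setminus \set{0}} A (\xi)[V]\).

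Assume first that \eqref{eq_aiboh2iesaeMisee2oyahah4} holds; we may take \(n \ge 2\), the case \(n = 1\) being elementary (there the condition and the estimate both hold unconditionally, by integrating \(u^{(k - 1)}\)). Let \(L (\Deriv)\) be the compatibility operator of \cref{proposition_compatibility_conditions}, so that \(\ker L (\xi) = A (\xi)[V]\) for \(\xi \ne 0\), put \(f \defeq A (\Deriv) u\) (whence \(L (\Deriv) f = 0\)), and abbreviate \(\Lambda \defeq \bigcap_{\xi \in \Rset^n \setminus \set{0}} \ker L (\xi) = \bigcap_{\xi \in \Rset^n \setminus \set{0}} A (\xi)[V] \subseteq \ker C_A\). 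Testing the representation formula against \(\omega\) in the unit ball of \(W^*\) and inserting the decomposition of \(\Deriv^{k - n} G_A\) gives
\[
\dualprod{\omega}{\Deriv^{k - n} u (x)} = \int_{\Rset^n} \dualprod[\big]{\omega}{\Deriv^{k - n} G_A \bigl(\tfrac{x - y}{\abs{x - y}}\bigr)[f (y)]} \dif y - \int_{\Rset^n} \ln \abs{x - y}\, \dualprod[\big]{\omega}{C_A[f (y)]} \dif y .
\]
The first integral is at most \(\norm{\Deriv^{k - n} G_A}_{L^\infty(\Sset^{n - 1})} \norm{f}_{L^1(\Rset^n)}\). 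In the second integral, \(\dualprod{\omega}{C_A[f (y)]} = \dualprod{C_A^* \omega}{f (y)}\) and the vector \(g \defeq C_A^* \omega\) lies in \((\ker C_A)^\perp \subseteq \Lambda^\perp\) with \(\abs{g} \le \norm{C_A}\). The whole estimate thus reduces to the bound
\[
\abs[\Big]{\int_{\Rset^n} \ln \abs{x - y}\, \dualprod{g}{f (y)} \dif y} \le C \abs{g}\, \norm{f}_{L^1(\Rset^n)} \qquad \text{for } g \in \Lambda^\perp ,\ L (\Deriv) f = 0 ,\ f \in C^\infty_c .
\]
Translating, we may take \(x = 0\). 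I would approximate \(y \mapsto \ln \abs{y}\, g\) by \(\varphi_{\varepsilon, R}(y) \defeq \chi_R (y)\, \tfrac12 \ln(\abs{y}^2 + \varepsilon^2)\, g \in C^\infty_c(\Rset^n, E)\), with \(\chi_R \in C^\infty_c(\Rset^n, \Rset)\) equal to \(1\) on \(B_R(0)\) and supported in \(B_{2R}(0)\); since \(\varphi_{\varepsilon, R}\) takes values in \(\Rset g \subseteq \Lambda^\perp\), \cref{proposition_L1_compensation} bounds \(\abs{\int_{\Rset^n} \dualprod{f}{\varphi_{\varepsilon, R}}}\) by \(C \sum_{j \ge 1} \int_{\Rset^n} \abs{f (y)}\, \abs{y}^j\, \abs{\Deriv^j \varphi_{\varepsilon, R}(y)} \dif y\). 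Taking \(R\) with \(\supp f \subseteq B_R(0)\), the cutoff is identically \(1\) on \(\supp f\), where \(\abs{\Deriv^j \varphi_{\varepsilon, R}(y)} \le C \abs{g}\, \min(\abs{y}^{-j}, \varepsilon^{-j})\); since \(\abs{y}^j \min(\abs{y}^{-j}, \varepsilon^{-j}) \le 1\), each summand, hence the whole sum, is \(\le C \abs{g}\, \norm{f}_{L^1(\Rset^n)}\), uniformly in \(\varepsilon \in \intvo{0}{1}\) and \(R\). Letting \(\varepsilon \to 0\), \(R \to \infty\) and using dominated convergence gives the bound, hence \eqref{eq_jupeefaeshie4ainaeci3Quu}.

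For the necessity, assume \eqref{eq_jupeefaeshie4ainaeci3Quu}, fix \(e \in \bigcap_{\xi \in \Rset^n \setminus \set{0}} A (\xi)[V]\), and let \((u_j)_{j \in \Nset}\) be the sequence produced by \cref{lemma_regularization_G_A}, so \(\limsup_{j \to \infty} \int_{\Rset^n} \abs{A (\Deriv) u_j} \le C \abs{e}\) and \(u_j \to G_A[e]\) in \(C^\infty_{\mathrm{loc}}(B_1(0) \setminus \set{0})\). Applying \eqref{eq_jupeefaeshie4ainaeci3Quu} to \(u_j\) and letting \(j \to \infty\) yields \(\abs{\Deriv^{k - n}(G_A[e])(x)} \le C \abs{e}\) for every \(x \in B_1(0) \setminus \set{0}\). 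On the other hand, differentiating \cref{proposition_fundamental_solution} \eqref{it_oocee4ao4DaiK8sheWeechee} gives \(\Deriv^{k - n}(G_A[e])(x) = \Deriv^{k - n}(G_A[e])(x / \abs{x}) - \ln \abs{x}\; C_A[e]\) with bounded first term, so sending \(\abs{x} \to 0\) along a fixed direction forces \(C_A[e] = 0\), which by the first paragraph is \eqref{eq_aiboh2iesaeMisee2oyahah4}.

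The main obstacle is the logarithmic duality bound in the sufficiency part, where three points must come together: the weight \(\abs{y}^j\) on the right-hand side of \cref{proposition_L1_compensation} exactly absorbs the \(\abs{y}^{-j}\) blow-up of the \(j\)-th derivatives of \(y \mapsto \ln \abs{x - y}\); the errors introduced by the cutoff \(\chi_R\) are harmless because they are supported off \(\supp f\), while those from the regularization scale like \(\varepsilon^{-j}\) over a ball of radius \(\varepsilon\); and the weakly cancelling condition \eqref{eq_aiboh2iesaeMisee2oyahah4} is exactly what places \(C_A^* \omega\) into \(\Lambda^\perp\), which is the admissibility hypothesis needed to invoke \cref{proposition_L1_compensation} at all.
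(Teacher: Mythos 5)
Your argument is correct and follows essentially the same route as the paper's proof: the decomposition \(\Deriv^{k-n}G_A(h) = \Deriv^{k-n}G_A(h/\abs{h}) + \ln\abs{h}\,\Deriv^{k-n}P_A\), the observation that the weak cancellation condition \eqref{eq_aiboh2iesaeMisee2oyahah4} places the dual test vector in \(\bigl(\bigcap_{\xi \in \Rset^n\setminus\set{0}}A(\xi)[V]\bigr)^\perp\), an application of \cref{proposition_L1_compensation} to the regularized logarithmic part, and the same blow-up argument via \cref{lemma_regularization_G_A} for the necessity. The only difference is cosmetic: you regularize the logarithm by \(\chi_R(y)\,\tfrac12\ln(\abs{y}^2+\varepsilon^2)\) whereas the paper uses the truncation \(\theta(\lambda\ln\abs{h})/\lambda\); both yield the same uniform bound on \(\abs{y}^j\abs{\Deriv^j\varphi(y)}\) that the weighted duality estimate requires.
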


A vector differential operator \(A (\Deriv)\) satisfying the condition \eqref{eq_aiboh2iesaeMisee2oyahah4} is known as a \emph{weakly cancelling} operator \cite{Raita_2019}.

\begin{proof}[Proof of \cref{theorem_Linfinity}]  
We first assume that the condition \eqref{eq_aiboh2iesaeMisee2oyahah4} holds.
We let \(G_A : \Rset^n\setminus \set{0} \to \Lin (V, E)\) be the representation kernel given by \cref{proposition_fundamental_solution}. Differentiating \(k - n\) times the identity \eqref{eq_jal9Niez6Vu2ohved5OorooG}, we get for every \(x \in \Rset^n\)
\begin{equation}  
\label{eq_oog9ik9Nohz1ua9eevaiph1U}
\begin{split}
\Deriv^{k - n} u (x) &= \int_{\Rset^n} \Deriv^{k - n} G_A (x - y)[A (\Deriv) u (y)] \dif y\\
&=  \int_{\Rset^n} \Deriv^{k - n} G_A (h)[A (\Deriv) u (x - h)] \dif h\;.
\end{split}
\end{equation}
With \(P_A : \Rset^n \to \Lin (V, E)\) being the homogeneous polynomial of degree \(k - n\) given by \cref{proposition_fundamental_solution}  \eqref{it_oocee4ao4DaiK8sheWeechee}, 
we observe that \(\Deriv^{k - n} P_A : \Rset^n \to \Lin^{k - n}_{\mathrm{sym}} (\Rset^n, \Lin(E, V))\) is a constant polynomial that we identify to some \(P_A \in \Lin (V, E)\) and 
we have for every \(h \in \Rset^n \setminus \set{0}\),
\begin{equation}
\label{eq_ohhohs2caivo6eeL5Nuquoog}
 \Deriv^{k - n} G_A (h) = \Deriv^{k - n} G_A (h/\abs{h}) + \Deriv^{k - n} P_A \ln \abs{h}\;.
\end{equation}
Since \(D^{k - n} G_A\) is bounded on the unit sphere of \(\Rset^n\), we first have immediately
\begin{equation}
\label{eq_Rek7yeH3noy5uShaiZoof5ph}
 \abs[\Big]{\int_{\Rset^n} \Deriv^{k - n} G_A (h/\abs{h})[A (\Deriv) u (x - h)] \dif y\,}
 \le \C \int_{\Rset^n} \abs{A (\Deriv)u}\;.
\end{equation}
Next, we take a function \(\theta \in C^\infty_c (\Rset, \Rset)\) satisfying \(\theta (t)= t\) when \(\abs{t} \le 1/2\) and \(\theta (t) = 0\) when \(\abs{t} \ge 1\). 
For every \(\lambda \in \intvo{0}{+\infty}\) and every \(\mu \in \Lin_{\mathrm{sym}}^{k - n} (\Rset^n, V)\), 
we define the function \(\varphi_{\lambda, \mu} \in C^\infty_c (\Rset^n, E)\) by the condition that for every \(h \in \Rset^n \setminus \set{0}\) and every \(e \in E\), we have
\begin{equation}
\label{eq_Chieve7raish9jai5haeZahr}
\dualprod{\varphi_{\lambda, \mu} (h)}{e} = \dualprod{\Deriv^{k - n} P_A[e]}{\mu} \frac{\theta(\lambda \ln \abs{h} )}{\lambda}\;. 
\end{equation}
Next, by \cref{proposition_fundamental_solution} \eqref{it_supahJuacocei6efiey7aife} and by our assumption \eqref{eq_aiboh2iesaeMisee2oyahah4}, for every  vector 
\begin{equation*}
e \in \bigcap_{\xi \in \Rset^n \setminus \set{0}} A (\xi)[V]\,, 
\end{equation*}
we have \(\Deriv^{k - n} P_A [e] = 0\).
Hence, for every \(h \in \Rset^n\) and every \(\lambda \in \intvo{0}{+\infty}\), we have 
\begin{equation}
\label{eq_uDee0phohJaiKo9phie9ohLi}
\varphi_{\lambda, \mu} (h) \in \brk[\Big]{\bigcap_{\xi \in \Rset^n \setminus \set{0}} A (\xi)[V]}^\perp
\end{equation}
 in view of \eqref{eq_Chieve7raish9jai5haeZahr}.
Moreover,
for every \(j \in \Nset \setminus \set{0}\) and \(x \in \Rset^n \setminus \set{0}\), we have as a consequence of \eqref{eq_Chieve7raish9jai5haeZahr}
\begin{equation}
  \abs{\Deriv^j \varphi_{\lambda, \mu}(x)} \le \C (1 + \lambda^{j - 1}) \abs{\mu}/\abs{x}^j.
\end{equation}
In view of \eqref{eq_uDee0phohJaiKo9phie9ohLi}, \cref{proposition_L1_compensation} is applicable and yields for every \(\lambda \in \intvo{0}{1}\), 
\begin{equation}
\label{eq_gahj6phie0cheeThoh2yu3Ai}
    \abs[\Big]{\int_{\Rset^n} \dualprod{A (\Deriv)u(x-h)}{\varphi_{\lambda, \mu}(h)} \dif h}
    \le \C\, \abs{\mu} \int_{\Rset^n} \abs{A (\Deriv)u} \;.
\end{equation}
Since on the other hand, we have by \eqref{eq_Chieve7raish9jai5haeZahr}
\begin{equation}
\label{eq_Ohshaihaifa1yeihoo9iXuch}
\begin{split}
 \int_{\Rset^n} &\dualprod{\Deriv^{k - n} P_A [A (\Deriv) u (x - h)]}{\mu} \ln \abs{h} \dif h\\
 &\hspace{2cm}= \lim_{\lambda \to 0} \int_{\Rset^n} \dualprod{\Deriv^{k - n} P_A [A (\Deriv) u (x - h)]}{\mu} \frac{\theta (\lambda \ln \abs{h})}{\lambda}\dif h\\
 &\hspace{2cm}= \lim_{\lambda \to 0} \int_{\Rset^n}  \dualprod{\varphi_{\lambda, \mu}(h)}{A (\Deriv)u(x-h)} \dif h\;,
\end{split}
\end{equation}
we reach the inequality \eqref{eq_jupeefaeshie4ainaeci3Quu} through the identities \eqref{eq_oog9ik9Nohz1ua9eevaiph1U},  \eqref{eq_ohhohs2caivo6eeL5Nuquoog} and \eqref{eq_Ohshaihaifa1yeihoo9iXuch}, and the estimates \eqref{eq_Rek7yeH3noy5uShaiZoof5ph} and \eqref{eq_gahj6phie0cheeThoh2yu3Ai}.

For the necessity of the condition \eqref{eq_aiboh2iesaeMisee2oyahah4}, taking a vector 
\(
e \in \bigcap_{\xi \in \Rset^n \setminus \set{0}} A (\xi)[V] 
\)
and letting the sequence \((u_j)_{j \in \Nset}\) be given by \cref{lemma_regularization_G_A} for this vector \(e\), we have by \eqref{eq_jupeefaeshie4ainaeci3Quu} and by Fatou's lemma
\begin{equation}
\label{eq_oShahkoh9eeb8oijauyeemie}
\begin{split}
\limsup_{j \to \infty} \sup_{x \in \Rset^n} \, \abs{u_j (x)} 
& \le 
 \lim_{j \to \infty}
 \Cl{cst_heoN6aeNgeetie8ush3She7p} \int_{\Rset^n} \abs{A (\Deriv) u_j}
 \le \C \abs{e}\;.
 \end{split}
\end{equation}
On the other hand by \cref{proposition_fundamental_solution} \eqref{it_oocee4ao4DaiK8sheWeechee} and \eqref{it_supahJuacocei6efiey7aife}, we have for every \(e \in E\) and \(x \in \Rset^n \setminus \set{0}\)
\begin{equation}
\label{eq_eisaigohtix5EeTu7eengair}
\begin{split}
  \Deriv^{k - n} G_A (x)[e]
  &=  \Deriv^{k - n} G_A (x/\abs{x})[e] + \ln \abs{x} \, P_A (x)[e]\\
  &=\Deriv^{k - n} G_A (x/\abs{x})[e] +  \ln \abs{x} \int_{\Sset^{n - 1}} \frac{\xi^{\otimes k - n} A (\xi)^{-1} [e]}{(2 \pi i)^n} \dif \xi\;.
\end{split}
\end{equation}  
Since \(u_j \to G_A [e]\) on \(B_1(0)\), we conclude by \eqref{eq_oShahkoh9eeb8oijauyeemie} and \eqref{eq_eisaigohtix5EeTu7eengair} that the condition \eqref{eq_aiboh2iesaeMisee2oyahah4} holds.
\end{proof}

\begin{bibsection}
  \begin{biblist}
          
    \bib{Adams_Fournier_2003}{book}{
        author={Adams, Robert A.},
        author={Fournier, John J. F.},
        title={Sobolev spaces},
        series={Pure and Applied Mathematics (Amsterdam)},
        volume={140},
        edition={2},
        publisher={Elsevier/Academic Press, Amsterdam},
        date={2003},
        pages={xiv+305},
        isbn={0-12-044143-8},
    }
    
    \bib{Agmon_1959}{article}{
        author={Agmon, Shmuel},
        title={The $L_{p}$ approach to the Dirichlet problem},
        part={I},
        subtitle={Regularity theorems},
        journal={Ann. Scuola Norm. Sup. Pisa (3)},
        volume={13},
        date={1959},
        pages={405-448},
    }
    
    \bib{Agmon_1965}{book}{
        author={Agmon, Shmuel},
        title={Lectures on elliptic boundary value problems},
        series={Van Nostrand Mathematical Studies},
        volume={2},
        publisher={Van Nostrand}, 
        address={Princeton, N.J. -- Toronto -- London},
        date={1965},
        pages={v+291},
    }
    \bib{ArroyoRabasa_DePhilippis_Hirsch_Rindler_2019}{article}{
        author={Arroyo-Rabasa, Adolfo},
        author={De Philippis, Guido},
        author={Hirsch, Jonas},
        author={Rindler, Filip},
        title={Dimensional estimates and rectifiability for measures satisfying
        linear PDE constraints},
        journal={Geom. Funct. Anal.},
        volume={29},
        date={2019},
        number={3},
        pages={639-658},
        issn={1016-443X},
        doi={10.1007/s00039-019-00497-1},
    }
    \bib{BoossBavnbek_Wojciechowski_1993}{book}{
        author={Boo\ss -Bavnbek, Bernhelm},
        author={Wojciechowski, Krzysztof P.},
        title={Elliptic boundary problems for Dirac operators},
        series={Mathematics: Theory \& Applications},
        publisher={Birkh\"{a}user},
        address={Boston, Mass.},
        date={1993},
        pages={xviii+307},
        isbn={0-8176-3681-1},
        doi={10.1007/978-1-4612-0337-7},
    }
    
    \bib{Bourgain_Brezis_2002}{article}{
        author={Bourgain, Jean},
        author={Brezis, Ha\"{i}m},
        title={Sur l'\'{e}quation \(\operatorname{div}\,u=f\)},
        journal={C. R. Math. Acad. Sci. Paris},
        volume={334},
        date={2002},
        number={11},
        pages={973-976},
        issn={1631-073X},
  doi={10.1016/S1631-073X(02)02344-0},
    }
    
    \bib{Bourgain_Brezis_2003}{article}{
        author={Bourgain, Jean},
        author={Brezis, Ha\"{i}m},
        title={On the equation $\operatorname{div}\, Y=f$ and application to control of phases},
        journal={J. Amer. Math. Soc.},
        volume={16},
        date={2003},
        number={2},
        pages={393-426},
        issn={0894-0347},
        doi={10.1090/S0894-0347-02-00411-3},
    }
    
    \bib{Bourgain_Brezis_2004}{article}{
        author={Bourgain, Jean},
        author={Brezis, Ha\"{i}m},
        title={New estimates for the Laplacian, the div-curl, and related Hodge systems},
        journal={C. R. Math. Acad. Sci. Paris},
        volume={338},
        date={2004},
        number={7},
        pages={539-543},
        issn={1631-073X},
        doi={10.1016/j.crma.2003.12.031},
    }

    \bib{Bourgain_Brezis_2007}{article}{
        author={Bourgain, Jean},
        author={Brezis, Ha\"{i}m},
        title={New estimates for elliptic equations and Hodge type systems},
        journal={J. Eur. Math. Soc. (JEMS)},
        volume={9},
        date={2007},
        number={2},
        pages={277-315},
        issn={1435-9855},
        doi={10.4171/JEMS/80},
    }

    \bib{Bourgain_Brezis_Mironescu_2004}{article}{
        author={Bourgain, Jean},
        author={Brezis, Haim},
        author={Mironescu, Petru},
        title={\(H^{1/2}\) maps with values into the circle: minimal connections,
        lifting, and the Ginzburg-Landau equation},
        journal={Publ. Math. Inst. Hautes \'{E}tudes Sci.},
        number={99},
        date={2004},
        pages={1-115},
        issn={0073-8301},
        doi={10.1007/s10240-004-0019-5},
    }
          
\bib{Bousquet_Mironescu_2011}{article}{
   author={Bousquet, Pierre},
   author={Mironescu, Petru},
   title={An elementary proof of an inequality of Maz'ya involving $L^1$
   vector fields},
   conference={
      title={Nonlinear elliptic partial differential equations},
   },
   book={
      series={Contemp. Math.},
      volume={540},
      publisher={Amer. Math. Soc.}, 
      address={Providence, R.I.},
   },
   date={2011},
   pages={59-63},
   doi={10.1090/conm/540/10659},
}

\bib{Bousquet_Mironescu_Russ_2013}{article}{
   author={Bousquet, Pierre},
   author={Mironescu, Petru},
   author={Russ, Emmanuel},
   title={A limiting case for the divergence equation},
   journal={Math. Z.},
   volume={274},
   date={2013},
   number={1-2},
   pages={427-460},
   issn={0025-5874},
   doi={10.1007/s00209-012-1077-x},
}

\bib{Bousquet_Russ_Wang_Yung_2019}{article}{
   author={Bousquet, Pierre},
   author={Russ, Emmanuel},
   author={Wang, Yi},
   author={Yung, Po-Lam},
   title={Approximation in higher-order Sobolev spaces and Hodge systems},
   journal={J. Funct. Anal.},
   volume={276},
   date={2019},
   number={5},
   pages={1430-1478},
   issn={0022-1236},
   doi={10.1016/j.jfa.2018.08.003},
}
		
    \bib{Bousquet_VanSchaftingen_2014}{article}{
      author={Bousquet, Pierre},
      author={Van Schaftingen, Jean},
      title={Hardy-Sobolev inequalities for vector fields and canceling differential operators},
      journal={Indiana Univ. Math. J.},
      volume={63},
      date={2014},
      number={5},
      pages={1419-1445},
      issn={0022-2518},
      doi={10.1512/iumj.2014.63.5395},
    }
    
\bib{Breit_Diening_Gmeineder_2020}{article}{
   author={Breit, Dominic},
   author={Diening, Lars},
   author={Gmeineder, Franz},
   title={On the trace operator for functions of bounded \(\mathbb{A}\)-variation},
   journal={Anal. PDE},
   volume={13},
   date={2020},
   number={2},
   pages={559-594},
   issn={2157-5045},
   doi={10.2140/apde.2020.13.559},
}
    \bib{Brezis_2011}{book}{
    author={Brezis, Ha{\"{\i}}m},
    title={Functional analysis, Sobolev spaces and partial differential
    equations},
    series={Universitext},
    publisher={Springer}, 
    address={New York},
    date={2011},
    pages={xiv+599},
    isbn={978-0-387-70913-0},
    doi={10.1007/978-0-387-70914-7},
}
    
    \bib{Brezis_VanSchaftingen_2007}{article}{
      author={Brezis, Ha\"{i}m},
      author={Van Schaftingen, Jean},
      title={Boundary estimates for elliptic systems with \(L^1\)-data},
      journal={Calc. Var. Partial Differential Equations},
      volume={30},
      date={2007},
      number={3},
      pages={369-388},
      issn={0944-2669},
      doi={10.1007/s00526-007-0094-9},
    }
    
    \bib{Brezis_VanSchaftingen_2008}{article}{
      author={Brezis, Ha\"{i}m},
      author={Van Schaftingen, Jean},
      title={Circulation integrals and critical Sobolev spaces: problems of
        optimal constants},
      conference={
        title={Perspectives in partial differential equations, harmonic
          analysis and applications},
      },
      book={
        series={Proc. Sympos. Pure Math.},
        volume={79},
        publisher={Amer. Math. Soc., Providence, R.I.},
      },
      date={2008},
      pages={33-47},
      doi={10.1090/pspum/079/2500488},
    }
    
    \bib{Calderon_Zygmund_1952}{article}{
      author={Calderon, A. P.},
      author={Zygmund, A.},
      title={On the existence of certain singular integrals},
      journal={Acta Math.},
      volume={88},
      date={1952},
      pages={85-139},
      issn={0001-5962},
      doi={10.1007/BF02392130},
    }
    
    \bib{Cantor_1981}{article}{
      author={Cantor, Murray},
      title={Elliptic operators and the decomposition of tensor fields},
      journal={Bull. Amer. Math. Soc. (N.S.)},
      volume={5},
      date={1981},
      number={3},
      pages={235-262},
      issn={0273-0979},
      doi={10.1090/S0273-0979-1981-14934-X},
    }
    
    \bib{Chanillo_VanSchaftingen_Yung_2017_Variations}{article}{
        author={Chanillo, Sagun},
        author={Van Schaftingen, Jean},
        author={Yung, Po-Lam},
        title={Variations on a proof of a borderline Bourgain-Brezis Sobolev
        embedding theorem},
        journal={Chinese Ann. Math. Ser. B},
        volume={38},
        date={2017},
        number={1},
        pages={235-252},
        issn={0252-9599},
        doi={10.1007/s11401-016-1069-y},
    }

    \bib{Chanillo_VanSchaftingen_Yung_2017_Symmetric}{article}{
        author={Chanillo, Sagun},
        author={Van Schaftingen, Jean},
        author={Yung, Po-Lam},
        title={Bourgain-Brezis inequalities on symmetric spaces of non-compact
        type},
        journal={J. Funct. Anal.},
        volume={273},
        date={2017},
        number={4},
        pages={1504-1547},
        issn={0022-1236},
        doi={10.1016/j.jfa.2017.05.005},
    }

\bib{Ciarlet_2013}{book}{
    author={Ciarlet, Philippe G.},
    title={Linear and nonlinear functional analysis with applications},
    publisher={Society for Industrial and Applied Mathematics}, 
    address={Philadelphia, Pa.},
    date={2013},
    pages={xiv+832},
    isbn={978-1-611972-58-0},
}
		
    \bib{Cohen_Dahmen_Daubechies_DeVore_2003}{article}{
   author={Cohen, Albert},
   author={Dahmen, Wolfgang},
   author={Daubechies, Ingrid},
   author={DeVore, Ronald},
   title={Harmonic analysis of the space BV},
   journal={Rev. Mat. Iberoamericana},
   volume={19},
   date={2003},
   number={1},
   pages={235-263},
   issn={0213-2230},
   doi={10.4171/RMI/345},
}
    \bib{Conti_Faraco_Maggi_2005}{article}{
        author={Conti, Sergio},
        author={Faraco, Daniel},
        author={Maggi, Francesco},
        title={A new approach to counterexamples to $L^1$ estimates: Korn's inequality, geometric rigidity, and regularity for gradients of separately convex functions},
        journal={Arch. Ration. Mech. Anal.},
        volume={175},
        date={2005},
        number={2},
        pages={287-300},
        issn={0003-9527},
        doi={10.1007/s00205-004-0350-5},
    }
    
    \bib{Dacorogna_Gangbo_Kneuss_2018}{article}{
      author={Dacorogna, B.},
      author={Gangbo, W.},
      author={Kneuss, O.},
      title={Symplectic factorization, Darboux theorem and ellipticity},
      journal={Ann. Inst. H. Poincar\'{e} Anal. Non Lin\'{e}aire},
      volume={35},
      date={2018},
      number={2},
      pages={327-356},
      issn={0294-1449},
      doi={10.1016/j.anihpc.2017.04.005},
    }
    
    \bib{deLeeuw_Mirkil_1962}{article}{
      author={de Leeuw, Karel},
      author={Mirkil, Hazleton},
      title={Majorations dans \(L_{\infty}\) des op\'{e}rateurs diff\'{e}rentiels \`a
        coefficients constants},
      journal={C. R. Acad. Sci. Paris},
      volume={254},
      date={1962},
      pages={2286-2288},
    }
    \bib{deLeeuw_Mirkil_1964}{article}{
      author={de Leeuw, K.},
      author={Mirkil, H.},
      title={A priori estimates for differential operators in $L_{\infty }$\
        norm},
      journal={Illinois J. Math.},
      volume={8},
      date={1964},
      pages={112-124},
      issn={0019-2082},
doi={10.1215/ijm/1256067459},
    }
    
\bib{Dain_2006}{article}{
   author={Dain, Sergio},
   title={Generalized Korn's inequality and conformal Killing vectors},
   journal={Calc. Var. Partial Differential Equations},
   volume={25},
   date={2006},
   number={4},
   pages={535-540},
   issn={0944-2669},
   doi={10.1007/s00526-005-0371-4},
}

    \bib{DePhilippis_Rindler_2016}{article}{
        author={De Philippis, Guido},
        author={Rindler, Filip},
        title={On the structure of \(\mathcal{A}\)-free measures and applications},
        journal={Ann. of Math. (2)},
        volume={184},
        date={2016},
        number={3},
        pages={1017-1039},
        issn={0003-486X},
        doi={10.4007/annals.2016.184.3.10},
    }
    \bib{Fefferman_1971}{article}{
   author={Fefferman, Charles},
   title={Characterizations of bounded mean oscillation},
   journal={Bull. Amer. Math. Soc.},
   volume={77},
   date={1971},
   pages={587-588},
   issn={0002-9904},
   doi={10.1090/S0002-9904-1971-12763-5},
}
\bib{DiNezza_Palatucci_Valdinoci_2012}{article}{
    author={Di Nezza, Eleonora},
    author={Palatucci, Giampiero},
    author={Valdinoci, Enrico},
    title={Hitchhiker's guide to the fractional Sobolev spaces},
    journal={Bull. Sci. Math.},
    volume={136},
    date={2012},
    number={5},
    pages={521-573},
    issn={0007-4497},
    doi={10.1016/j.bulsci.2011.12.004},
}

\bib{Evans_2010}{book}{
    author={Evans, Lawrence C.},
    title={Partial differential equations},
    series={Graduate Studies in Mathematics},
    volume={19},
    edition={2},
    publisher={American Mathematical Society}, 
    address={Providence, R.I.},
    date={2010},
    pages={xxii+749},
    isbn={978-0-8218-4974-3},
    doi={10.1090/gsm/019},
}

\bib{Fefferman_Stein_1972}{article}{
   author={Fefferman, C.},
   author={Stein, E. M.},
   title={$H^{p}$ spaces of several variables},
   journal={Acta Math.},
   volume={129},
   date={1972},
   number={3-4},
   pages={137-193},
   issn={0001-5962},
   doi={10.1007/BF02392215},
}
    \bib{Gagliardo_1958}{article}{
      author={Gagliardo, Emilio},
      title={Propriet\`a di alcune classi di funzioni in pi\`u variabili},
      journal={Ricerche Mat.},
      volume={7},
      date={1958},
      pages={102-137},
    }

\bib{Gmeineder_Raita_2019}{article}{
   author={Gmeineder, Franz},
   author={Rai\c{t}\u{a}, Bogdan},
   title={Embeddings for $\mathbb{A}$-weakly differentiable functions on
   domains},
   journal={J. Funct. Anal.},
   volume={277},
   date={2019},
   number={12},
   pages={108278, 33},
   issn={0022-1236},
   doi={10.1016/j.jfa.2019.108278},
}
		
\bib{Gmeineder_Raita_VanSchaftingen_2021}{article}{
    author={Gmeineder, Franz},
    author={Rai\c{t}\u{a}, Bogdan},
    author={Van Schaftingen, Jean},
    title={On limiting trace inequalities for vectorial differential operators},
    journal={Indiana Univ. Math. J.},
    volume={70},
    date={2021},
    number={5},
    pages={2133-2176},
    issn={0022-2518},
    doi={10.1512/iumj.2021.70.8682},
}

\bib{Gmeineder_Raita_VanSchaftingen}{article}{
    author={Gmeineder, Franz},
    author={Rai\c{t}\u{a}, Bogdan},
    author={Van Schaftingen, Jean},
    title={Boundary ellipticity and limiting \(L^1\)-estimates on halfspaces},
    note={arXiv:2211.08167},
}

    \bib{Grubb_1990}{article}{
      author={Grubb, Gerd},
      title={Pseudo-differential boundary problems in $L_p$ spaces},
      journal={Comm. Partial Differential Equations},
      volume={15},
      date={1990},
      number={3},
      pages={289-340},
      issn={0360-5302},
      doi={10.1080/03605309908820688},
    }
    
    \bib{Hormander_1958}{article}{
      author={H{\"o}rmander, Lars},
      title={Differentiability properties of solutions of systems of differential equations},
      journal={Ark. Mat.},
      volume={3},
      date={1958},
      pages={527-535},
      issn={0004-2080},
      doi={10.1007/BF02589514},
    }

    \bib{Hormander_1990_I}{book}{
      author={H{\"o}rmander, Lars},
      title={The analysis of linear partial differential operators},
      part={I},
      series={Grundlehren der Mathematischen Wissenschaften},
      volume={256},
      edition={2},
      subtitle={Distribution theory and Fourier analysis},
      publisher={Springer}, 
      address={Berlin},
      date={1990},
      pages={xii+440},
      isbn={3-540-52345-6},
      doi={10.1007/978-3-642-61497-2},
    }

    \bib{Kirchheim_Kristensen_2011}{article}{
      author={Kirchheim, Bernd},
      author={Kristensen, Jan},
      title={Automatic convexity of rank-1 convex functions},
      journal={C. R. Math. Acad. Sci. Paris},
      volume={349},
      date={2011},
      number={7-8},
      pages={407-409},
      issn={1631-073X},
      doi={10.1016/j.crma.2011.03.013},
    }
    
    \bib{Kirchheim_Kristensen_2016}{article}{
      author={Kirchheim, Bernd},
      author={Kristensen, Jan},
      title={On rank one convex functions that are homogeneous of degree one},
      journal={Arch. Ration. Mech. Anal.},
      volume={221},
      date={2016},
      number={1},
      pages={527-558},
      issn={0003-9527},
      doi={10.1007/s00205-016-0967-1},
    }
    
    \bib{Kolyada_1993}{article}{
   author={Kolyada, V. I.},
   title={On the embedding of Sobolev spaces},
   language={Russian},
   journal={Mat. Zametki},
   volume={54},
   date={1993},
   number={3},
   pages={48-71, 158},
   issn={0025-567X},
   translation={
      journal={Math. Notes},
      volume={54},
      date={1993},
      number={3-4},
      pages={908-922 (1994)},
      issn={0001-4346},
   },
}
    
    \bib{Lanzani_Stein_2005}{article}{
      author={Lanzani, Loredana},
      author={Stein, Elias M.},
      title={A note on div curl inequalities},
      journal={Math. Res. Lett.},
      volume={12},
      date={2005},
      number={1},
      pages={57-61},
      issn={1073-2780},
      doi={10.4310/MRL.2005.v12.n1.a6},
    }
    
\bib{Leoni_2017}{book}{
    author={Leoni, Giovanni},
    title={A first course in Sobolev spaces},
    series={Graduate Studies in Mathematics},
    volume={181},
    edition={2},
    publisher={American Mathematical Society}, 
    address={Providence, R.I.},
    date={2017},
    pages={xxii+734},
    isbn={978-1-4704-2921-8},
}

    \bib{Mazya_2010}{article}{
        author={Maz'ya, Vladimir},
        title={Estimates for differential operators of vector analysis involving \(L^1\)-norm},
        journal={J. Eur. Math. Soc. (JEMS)},
        volume={12},
        date={2010},
        number={1},
        pages={221-240},
        issn={1435-9855},
        doi={10.4171/JEMS/195},
    }
    \bib{Mazya_2011}{book}{
        author={Maz'ya, Vladimir},
        title={Sobolev spaces with applications to elliptic partial differential
        equations},
        series={Grundlehren der Mathematischen Wissenschaften},
        volume={342},
        edition={2},
        publisher={Springer, Heidelberg},
        date={2011},
        pages={xxviii+866},
        isbn={978-3-642-15563-5},
        doi={10.1007/978-3-642-15564-2},
    }
    
\bib{Mitrea_Mitrea_2009}{article}{
    author={Mitrea, Irina},
    author={Mitrea, Marius},
    title={A remark on the regularity of the div-curl system},
    journal={Proc. Amer. Math. Soc.},
    volume={137},
    date={2009},
    number={5},
    pages={1729-1733},
    issn={0002-9939},
    doi={10.1090/S0002-9939-08-09671-8},
}

    \bib{Morrey_1966}{book}{
    author={Morrey, Charles B., Jr.},
    title={Multiple integrals in the calculus of variations},
    series={Die Grundlehren der mathematischen Wissenschaften, Band 130},
    publisher={Springer},
    address={New York},
    date={1966},
    pages={ix+506},
    doi={10.1007/978-3-540-69952-1},
}

    \bib{Nirenberg_1959}{article}{
      author={Nirenberg, L.},
      title={On elliptic partial differential equations},
      journal={Ann. Scuola Norm. Sup. Pisa (3)},
      volume={13},
      date={1959},
      pages={115-162},
    }

    \bib{Ornstein_1962}{article}{
      author={Ornstein, Donald},
      title={A non-equality for differential operators in the $L_{1}$ norm},
      journal={Arch. Rational Mech. Anal.},
      volume={11},
      date={1962},
      pages={40-49},
      issn={0003-9527},
      doi={10.1007/BF00253928},
    }

    \bib{Raita_2019}{article}{
        author={Rai\c{t}\u{a}, Bogdan},
        title={Critical \(\mathrm{L}^p\)-differentiability of \(\mathrm{BV}^{\mathbb{A}}\)-maps and canceling operators},
        journal={Trans. Amer. Math. Soc.},
        volume={372},
        date={2019},
        number={10},
        pages={7297-7326},
        issn={0002-9947},
        doi={10.1090/tran/7878},
    }
    
    \bib{Roginskaya_Wojciechowski_2006}{article}{
   author={Roginskaya, Maria},
   author={Wojciechowski, Micha\l },
   title={Singularity of vector valued measures in terms of Fourier
   transform},
   journal={J. Fourier Anal. Appl.},
   volume={12},
   date={2006},
   number={2},
   pages={213-223},
   issn={1069-5869},
   doi={10.1007/s00041-005-5030-9},
}
\bib{Schmitt_Winkler_2000}{thesis}{
  author={Schmitt, B.J.},
  author={Winkler, M.},
  title={On embeddings between $BV$ and $\dot{W}^{s,p}$},
  type={Preprint no. 6},
  date={2000-03-15},
  organization={Lehrstuhl~I f\"ur Mathematik, RWTH Aachen},
}
    \bib{Schulze_1979}{article}{
      author={Schulze, Bert-Wolfgang},
      title={Adjungierte elliptischer Randwert-Probleme und Anwendungen auf
        \"{u}ber- und unterbestimmte Systeme},
      journal={Math. Nachr.},
      volume={89},
      date={1979},
      pages={225-245},
      issn={0025-584X},
      doi={10.1002/mana.19790890120},
    }
    \bib{Smirnov_1993}{article}{
        author={Smirnov, S. K.},
        title={Decomposition of solenoidal vector charges into elementary solenoids, and the structure of normal one-dimensional flows},
        language={Russian},
        journal={Algebra i Analiz},
        volume={5},
        date={1993},
        number={4},
        pages={206-238},
        issn={0234-0852},
        translation={
            journal={St. Petersburg Math. J.},
            volume={5},
            date={1994},
            number={4},
            pages={841-867},
            issn={1061-0022},
        },
    }

\bib{Sobolev_1938}{article}{
    Author = {Sobolev, S.},
    Title = {Sur un th\'eor\`eme d'analyse fonctionnelle},
    Journal = {{Rec. Math. Moscou, n. Ser.}},
    Volume = {4},
    Pages = {471-497},
    Year = {1938},
    Publisher = {Moscow Mathematical Society, Moscow},
    Language = {Russian with French Summary},
}

\bib{Solonnikov_1971}{article}{
  author={Solonnikov, V. A.},
  title={Overdetermined elliptic boundary value problems},
  language={Russian},
  journal={Zap. Nau\v{c}n. Sem. Leningrad. Otdel. Mat. Inst. Steklov. (LOMI)},
  volume={21},
  date={1971},
  pages={112-158},
}
    
\bib{Solonnikov_1975}{article}{
    author={Solonnikov, V. A.},
    title={Inequalities for functions of the classes \(\dot{W}^{\vec{m}}_p(\Rset^n)\)},
    journal={Zapiski Nauchnykh Seminarov Leningradskogo Otdeleniya Matematicheskogo Instituta im. V. A. Steklova Akademii Nauk SSSR},
    number={27},
    year={1972},
    language={Russian},
    pages={194-210},
    translation={
        journal={J. Sov. Math.},
        volume={3},
        date={1975},
        pages={ 549-564},
    },
}
    
    \bib{Spencer_1969}{article}{
      author={Spencer, D. C.},
      title={Overdetermined systems of linear partial differential equations},
      journal={Bull. Amer. Math. Soc.},
      volume={75},
      date={1969},
      pages={179-239},
      issn={0002-9904},
      doi={10.1090/S0002-9904-1969-12129-4},
    }
    
    \bib{Stein_1970}{book}{
      author={Stein, Elias M.},
      title={Singular integrals and differentiability properties of functions},
      series={Princeton Mathematical Series, No. 30},
      publisher={Princeton University Press}, 
      address={Princeton, N.J.},
      date={1970},
      pages={xiv+290},
    }
    \bib{Stein_1993}{book}{
   author={Stein, Elias M.},
   title={Harmonic analysis: real-variable methods, orthogonality, and
   oscillatory integrals},
   series={Princeton Mathematical Series},
   volume={43},
   contribution={
   with the assistance of Timothy S. Murphy
   },
   publisher={Princeton University Press}, 
   address={Princeton, N.J.},
   date={1993},
   pages={xiv+695},
   isbn={0-691-03216-5},
}
    \bib{Strauss_1973}{article}{
      author={Strauss, Monty J.},
      title={Variations of Korn's and Sobolev's equalities},
      conference={
        title={Partial differential equations},
        address={Univ. California,
          Berkeley, Calif.},
        date={1971},
      },
      book={
        series={Proc. Sympos. Pure Math.}, 
        volume={XXIII}, 
        publisher={Amer. Math. Soc.}, 
        address={Providence, R.I.},
      },
      date={1973},
      pages={207-214},
    }

\bib{Taibleson_1964}{article}{
    author={Taibleson, Mitchell H.},
    title={On the theory of Lipschitz spaces of distributions on Euclidean \(n\)-space. I. Principal properties},
    journal={J. Math. Mech.},
    volume={13},
    date={1964},
    pages={407-479},
}    
\bib{Timoshenko_Goodier_1951}{book}{
   author={Timoshenko, S.},
   author={Goodier, J. N.},
   title={Theory of Elasticity},
   edition={2},
   publisher={McGraw-Hill}, 
   address={New York--Toronto--London},
   date={1951},
   pages={xviii+506},
}
\bib{Triebel_1983}{book}{
   author={Triebel, Hans},
   title={Theory of function spaces},
   series={Monographs in Mathematics},
   volume={78},
   publisher={Birkh\"auser},
   place={Basel},
   date={1983},
   pages={284},
   isbn={3-7643-1381-1},
   doi={10.1007/978-3-0346-0416-1},
}

    \bib{VanSchaftingen_2004_circ}{article}{
      author={Van Schaftingen, Jean},
      title={A simple proof of an inequality of Bourgain, Brezis and Mironescu},
      journal={C. R. Math. Acad. Sci. Paris},
      volume={338},
      date={2004},
      number={1},
      pages={23-26},
      issn={1631-073X},
      doi={10.1016/j.crma.2003.10.036},
    }
    
    \bib{VanSchaftingen_2004_div}{article}{
      author={Van Schaftingen, Jean},
      title={Estimates for $L^1$-vector fields},
      journal={C. R. Math. Acad. Sci. Paris},
      volume={339},
      date={2004},
      number={3},
      pages={181-186},
      issn={1631-073X},
      doi={10.1016/j.crma.2004.05.013},
    }
    
    \bib{VanSchaftingen_2004_ARB}{article}{
      author={Van Schaftingen, Jean},
      title={Estimates for $L^1$ vector fields with a second order condition},
      journal={Acad. Roy. Belg. Bull. Cl. Sci. (6)},
      volume={15},
      date={2004},
      number={1-6},
      pages={103-112},
      issn={0001-4141},
    }

    \bib{VanSchaftingen_2006}{article}{
      author={Van Schaftingen, Jean},
      title={Function spaces between BMO and critical Sobolev spaces},
      journal={J. Funct. Anal.},
      volume={236},
      date={2006},
      number={2},
      pages={490-516},
      issn={0022-1236},
      doi={10.1016/j.jfa.2006.03.011},
    }
    
    \bib{VanSchaftingen_2008}{article}{
      author={Van Schaftingen, Jean},
      title={Estimates for $L^1$ vector fields under higher-order differential
        conditions},
      journal={J. Eur. Math. Soc. (JEMS)},
      volume={10},
      date={2008},
      number={4},
      pages={867-882},
      issn={1435-9855},
      doi={10.4171/JEMS/133},
    }
    
    \bib{VanSchaftingen_2010}{article}{
      author={Van Schaftingen, Jean},
      title={Limiting fractional and Lorentz space estimates of differential
        forms},
      journal={Proc. Amer. Math. Soc.},
      volume={138},
      date={2010},
      number={1},
      pages={235-240},
      issn={0002-9939},
      doi={10.1090/S0002-9939-09-10005-9},
    }
    
    \bib{VanSchaftingen_2013}{article}{
      author={Van Schaftingen, Jean},
      title={Limiting Sobolev inequalities for vector fields and canceling
        linear differential operators},
      journal={J. Eur. Math. Soc. (JEMS)},
      volume={15},
      date={2013},
      number={3},
      pages={877-921},
      issn={1435-9855},
      doi={10.4171/JEMS/380},
}
    
\bib{VanSchaftingen_2014}{article}{
    author={Van Schaftingen, Jean},
    title={Limiting Bourgain-Brezis estimates for systems of linear
      differential equations: theme and variations},
    journal={J. Fixed Point Theory Appl.},
    volume={15},
    date={2014},
    number={2},
    pages={273-297},
    issn={1661-7738},
    doi={10.1007/s11784-014-0177-0},
}

\bib{PasekyLectCanc}{article}{
    title={Limiting Sobolev estimates for vector fields and cancelling differential operators},
    author={Van Schaftingen, Jean},
    book={
      editor={Lukeš, Jaroslav},
      editor={Mihula, Zdeněk},
      editor={Pick, Luboš},
      editor={Turčinová, Hana},
      title={Function spaces and applications XII (Pazeky nad Jizerou 2023)},
          publisher={MatfyzPress, Charles University, Prague},
          },
    pages={135–152},
    year={2023},
}

\bib{Willem_2013}{book}{
    author={Willem, Michel},
    title={Functional analysis},
    series={Cornerstones},
    subtitle={Fundamentals and applications},
    publisher={Birkh\"{a}user/Springer, New York},
    date={2013},
    pages={xiv+213},
    isbn={978-1-4614-7003-8},
    isbn={978-1-4614-7004-5},
    doi={10.1007/978-1-4614-7004-5},
}

  \end{biblist}

\end{bibsection}

\end{document}